\documentclass{amsart}
\usepackage{amssymb}
\usepackage{mathrsfs}
\usepackage{stmaryrd}
\usepackage[all]{xy}

\newcommand{\uuline}[1]{\underline{\underline{#1}}}

\newcommand{\C}{\mathbb{C}}
\newcommand{\Ql}{\mathbb{Q}_\ell}
\newcommand{\Z}{{\mathbb{Z}}}

\newcommand{\K}{\mathbb{K}}
\newcommand{\F}{\mathbb{F}}
\renewcommand{\O}{\mathbb{O}}
\newcommand{\E}{\mathbb{E}}
\newcommand{\bk}{\Bbbk}

\newcommand{\cA}{\mathcal{A}}
\newcommand{\gr}{{\mathrm{gr}}}
\newcommand{\Lgr}{L^\gr}
\newcommand{\dgr}{\Delta^\gr}
\newcommand{\ngr}{\nabla^\gr}
\newcommand{\Gr}{\mathrm{Gr}}

\newcommand{\ngmod}{\mathsf{mod}}
\newcommand{\gmod}{\mathsf{gmod}}
\newcommand{\lh}{\text{-}}

\newcommand{\cT}{\mathcal{T}}
\newcommand{\Db}{D^{\mathrm{b}}}
\newcommand{\Kb}{K^{\mathrm{b}}}

\newcommand{\sD}{\mathsf{D}}

\newcommand{\scS}{\mathscr{S}}
\newcommand{\scT}{\mathscr{T}}
\newcommand{\Parity}{\mathsf{Parity}}
\newcommand{\even}{{\mathrm{ev}}}
\newcommand{\For}{\mathsf{For}}

\newcommand{\mix}{\mathrm{mix}}
\newcommand{\Dmix}{D^\mix}
\newcommand{\Perv}{\mathsf{P}}
\newcommand{\dmix}{\Delta^{\mix}}
\newcommand{\nmix}{\nabla^{\mix}}
\newcommand{\uuE}{\uuline{\E}{}}
\newcommand{\uuO}{\uuline{\O}{}}
\newcommand{\cE}{\mathcal{E}}
\newcommand{\D}{\mathbb{D}}
\newcommand{\p}{{}^p\!}
\newcommand{\pH}{\p\mathcal{H}}
\newcommand{\ptau}{\p\tau}
\newcommand{\IC}{\mathcal{IC}}
\newcommand{\cP}{\mathcal{P}}
\newcommand{\cF}{\mathcal{F}}
\newcommand{\cG}{\mathcal{G}}


\newcommand{\Projf}{\mathsf{Projf}}
\newcommand{\Tilt}{\mathsf{Tilt}}
\newcommand{\Radon}{\mathsf{R}}
\newcommand{\VV}{\mathbb{V}}
\newcommand{\cO}{\mathcal{O}}

\newcommand{\cB}{\mathscr{B}}
\newcommand{\scP}{\mathscr{P}}

\newcommand{\Gv}{\check{G}}
\newcommand{\Bv}{\check{B}}
\newcommand{\Tv}{\check{T}}
\newcommand{\cBv}{\check{\cB}}

\newcommand{\dv}{\check{\Delta}}
\newcommand{\nv}{\check{\nabla}}
\newcommand{\cTv}{\check{\cT}}
\newcommand{\cPv}{\check{\cP}}
\newcommand{\cEv}{\check{\cE}}
\newcommand{\muv}{{\check \mu}}
\newcommand{\nuv}{{\check \nu}}

\newcommand{\simto}{\xrightarrow{\sim}}
\newcommand{\la}{\langle}
\newcommand{\ra}{\rangle}
\newcommand{\op}{{\mathrm{op}}}
\DeclareMathOperator{\End}{End}
\DeclareMathOperator{\Hom}{Hom}
\DeclareMathOperator{\uHom}{\underline{Hom}}
\DeclareMathOperator{\RHom}{\mathit{R}Hom}
\DeclareMathOperator{\Ext}{Ext}
\DeclareMathOperator{\Tor}{Tor}
\DeclareMathOperator{\Irr}{Irr}
\newcommand{\id}{\mathrm{id}}
\DeclareMathOperator{\im}{im}

\makeatletter
\def\lotimes{\@ifnextchar_{\@lotimessub}{\@lotimesnosub}}
\def\@lotimessub_#1{\mathchoice{\mathbin{\mathop{\otimes}^L}_{#1}}%
  {\otimes^L_{#1}}{\otimes^L_{#1}}{\otimes^L_{#1}}}
\def\@lotimesnosub{\mathbin{\mathop{\otimes}^L}}
\makeatother
\newcommand{\tboxtimes}{\mathbin{\widetilde{\mathord{\boxtimes}}}}

\newtheorem*{thm*}{Theorem}
\numberwithin{equation}{section}
\newtheorem{thm}{Theorem}[section]
\newtheorem{lem}[thm]{Lemma}
\newtheorem{prop}[thm]{Proposition}
\newtheorem{cor}[thm]{Corollary}
\theoremstyle{definition}
\newtheorem{defn}[thm]{Definition}
\theoremstyle{remark}
\newtheorem{rmk}[thm]{Remark}

\title[Modular perverse sheaves on flag varieties II]{Modular perverse sheaves on flag varieties II:\\ Koszul Duality and Formality}

\author{Pramod N. Achar}
\address{Department of Mathematics\\
  Louisiana State University\\
  Baton Rouge, LA 70803\\
  U.S.A.}
\email{pramod@math.lsu.edu}

\author{Simon Riche}
\address{Universit{\'e} Blaise Pascal - Clermont-Ferrand II, Laboratoire de Math{\'e}matiques, CNRS, UMR 6620, Campus universitaire des C{\'e}zeaux, F-63177 Aubi{\`e}re Cedex, France
}
\email{simon.riche@math.univ-bpclermont.fr}

\subjclass[2010]{14M15, 14F05, 20G40.}
\thanks{P.A. was supported by NSF Grant No.~DMS-1001594.  S.R. was supported by ANR Grants No.~ANR-09-JCJC-0102-01, ANR-2010-BLAN-110-02 and ANR-13-BS01-0001-01.}

\begin{document}

\begin{abstract}
Building on the theory of parity sheaves due to Juteau--Mautner--Williamson, we develop a formalism of ``mixed modular perverse sheaves'' for varieties equipped with a stratification by affine spaces.  We then give two applications: (1)~a ``Koszul-type'' derived equivalence relating a given flag variety to the Langlands dual flag variety, and (2)~a formality theorem for the modular derived category of a flag variety (extending the main result of \cite{rsw}). 
\end{abstract}

\maketitle

\section{Introduction}
\label{sec:intro}

\subsection{}

This paper continues the study of modular perverse sheaves on flag varieties begun in~\cite{ar}.   We retain the notation and conventions of~\cite[\S1.2 and \S1.8]{ar}. In particular, $G$ denotes a connected complex reductive group, $\cB$ its flag variety, and $\Db_{(B)}(\cB,\E)$ the derived category of complexes of $\E$-sheaves that are constructible with respect to the orbits of a fixed Borel subgroup $B$.  Here, $\E$ is any member of an $\ell$-modular system $(\K, \O, \F)$ (see Section~\ref{sec:mixed-der}), where $\ell$ is a good prime for $G$.

A summary of seven motivating properties of $\Db_{(B)}(\cB,\C)$ appeared in~\cite[\S1.3]{ar}.  In this paper, we study modular versions of items (4)~(``Koszul duality''), (5)~(``self-duality''), and (7)~(``formality'').

\subsection{}
\label{ss:mixed-sheaves}

The main new tool in the present paper is a theory of ``mixed modular perverse sheaves.''  For perverse $\Ql$-sheaves on a variety defined over a finite field, the term ``mixed'' usually means: ``Pay attention to the eigenvalues of the Frobenius action on stalks.''  The additional structure obtained in this way has profound consequences, thanks largely to Deligne's reformulation of the Weil conjectures~\cite{deligne}.  A number of important results in representation theory make essential use of deep properties of mixed $\Ql$-sheaves; for examples, see~\cite{abg, bgs, bezru2}.  For sheaves with coefficients in $\O$ or $\F$, it still makes sense to consider the Frobenius action (and this was done in~\cite{rsw}), but without an analogue of the Weil conjectures, it becomes a much more difficult notion to work with.  

In this paper, we propose a new approach to defining the word ``mixed'' in the modular setting.  This approach does not involve varieties over finite fields or Galois actions in any way.  Instead, we will build a category from scratch that bears many of the hallmarks of~\cite{deligne,bbd}, such as a ``Tate twist.'' (It also has a theory of ``weights'' and ``purity''; these are studied systematically in~\cite{ar3}.)  The approach we take is philosophically quite close to that of~\cite{ar:kdsf}. It involves in a crucial way the \emph{parity sheaves} of~\cite{jmw}. For the flag variety $\cB$, this new category, denoted by $\Dmix_{(B)}(\cB,\E)$, is the main object of study in this paper\footnote{For some choices of variety, this category can also be given a ``combinatorial'' description, by replacing the language of parity sheaves by that of Soergel (bi)modules or sheaves on moment graphs.  This perspective reveals that $\Dmix_{(B)}(\cB,\E)$ has antecedents in the literature:  for instance, the homotopy category of Soergel bimodules, which appears in Rouquier's categorification of the braid group~\cite{rouquier}, is an incarnation of the equivariant mixed modular derived category $\Dmix_B(\cB,\F)$.}.

\subsection{Self-duality}
\label{ss:intro-self-duality}

As an application, we prove the following analogue of the char\-ac\-ter\-istic-zero ``self-duality'' theorem of Bezrukavnikov--Yun~\cite[Theorem~5.3.1]{by}. In this statement, $\Gv$ is the Langlands dual reductive group, $\Bv \subset \Gv$ is a Borel subgroup, and $\cBv=\Gv/\Bv$ is the flag variety of $\Gv$.

\begin{thm*}[Self-duality]
There is an equivalence of triangulated categories $\kappa: \Dmix_{(B)}(\cB,\E) \simto \Dmix_{(\Bv)}(\cBv,\E)$ that swaps parity sheaves and tilting perverse sheaves.
\end{thm*}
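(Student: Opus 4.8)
The plan is to realize $\kappa$ as the bounded homotopy category of an equivalence of \emph{additive} categories matching parity complexes on $\cB$ with tilting perverse sheaves on $\cBv$. By the construction of the mixed modular derived category one has, on the nose, $\Dmix_{(B)}(\cB,\E) \simeq \Kb(\Parity_{(B)}(\cB,\E))$, and parity sheaves are precisely the objects supported in cohomological degree $0$. On the other side, the mixed perverse $t$-structure makes $\Perv^{\mix}_{(\Bv)}(\cBv,\E)$ a graded highest weight category, so it contains indecomposable tilting objects $\cTv_w$ ($w \in W$); the basic orthogonality $\Hom(\dmix_v, \nmix_w\la m\ra[n]) \neq 0 \iff (v,m,n)=(w,0,0)$ (for the standard and costandard objects on $\cBv$), together with the fact that the latter generate $\Dmix_{(\Bv)}(\cBv,\E)$ and admit finite resolutions by tilting complexes, shows by a standard argument that the natural functor
\[
\Kb\bigl(\Tilt_{(\Bv)}(\cBv,\E)\bigr) \longrightarrow \Dmix_{(\Bv)}(\cBv,\E)
\]
is an equivalence, under which tilting perverse sheaves again correspond to the degree-$0$ complexes. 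Thus it suffices to construct an equivalence of additive (graded Krull--Schmidt) categories $\Phi \colon \Parity_{(B)}(\cB,\E) \simto \Tilt_{(\Bv)}(\cBv,\E)$ and to put $\kappa := \Kb(\Phi)$: this is automatically a triangulated equivalence, and it swaps parity and tilting sheaves since these are the degree-$0$ objects on each side.

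To build $\Phi$ I would identify both sides with a single combinatorial category attached to $(W,S)$. On the $\cB$ side, Soergel's functor $\VV$ (hypercohomology, equivalently $\Hom(\cE_e,-)$ in the equivariant picture) is fully faithful on parity complexes, with image the category of ``Soergel modules'' for the realization $\mathfrak h_G = X^{*}(T)\otimes\E$, sending a Bott--Samelson parity complex $\cE_{s_1}\star\cdots\star\cE_{s_k}$ to the corresponding Bott--Samelson object; this is in essence the content of~\cite{jmw}. On the $\cBv$ side, the wall-crossing (translation on/off a wall) endofunctors are convolution with the $\Gv$-parity sheaves $\cEv_s$; they preserve $\Tilt_{(\Bv)}(\cBv,\E)$, every indecomposable tilting is a summand of an iterated wall-crossing applied to $\cTv_e$ (the skyscraper at the base point), and a ``Soergel functor for tiltings'' — say $\cT \mapsto \Hom(\dmix_{w_0},\cT)$, i.e.\ sections over the open stratum — should be fully faithful on $\Tilt_{(\Bv)}(\cBv,\E)$ and identify it with the category of Soergel modules for the \emph{dual} realization $\mathfrak h_{\Gv} = X_{*}(T)\otimes\E$. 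Langlands duality identifies $\mathfrak h_G$ and $\mathfrak h_{\Gv}$ with each other's duals (roots $\leftrightarrow$ coroots), and the defining local relations of the diagrammatic Hecke category are symmetric under passage to the dual realization, so the two Soergel-module categories are canonically equivalent. Composing the three equivalences yields $\Phi$, compatibly with the convolution and wall-crossing actions, which is exactly what forces it to carry indecomposable parity sheaves to indecomposable tilting sheaves.

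The step I expect to be the main obstacle is the analysis of the ``Soergel functor for tiltings'' on $\cBv$ in this modular generality: one must prove that $\Hom(\dmix_{w_0},-)$ (or a suitable variant) is fully faithful on mixed tilting perverse sheaves, compute its essential image, and check that it intertwines wall-crossing with tensoring by a Bott--Samelson bimodule. This is where the full strength of the mixed modular formalism is needed — the parity and purity properties of $\Dmix_{(\Bv)}(\cBv,\E)$ are used both to pin down the $\Hom$-spaces between tilting objects (so that they are as large as expected, with $W$-combinatorial multiplicities independent of $\ell$) and to control convolution with $\cEv_s$. A secondary technical point is the comparison over $\E$ of the coinvariant-type algebras attached to $\mathfrak h_G$ and $\mathfrak h_G^{*}$ — equivalently, that the combinatorial categories for the two dual realizations genuinely agree after reduction mod $\ell$ — for which one expects the hypothesis that $\ell$ be good for $G$ to suffice. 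Finally, some bookkeeping of Tate twists and cohomological shifts is needed throughout, so that $\kappa$ has the expected compatibilities with $\la 1\ra$ and $[1]$.
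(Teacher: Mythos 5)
Your overall frame agrees with the paper's: by Lemma~\ref{lem:perv-dereq} (which rests on hypothesis {\bf (A2)}, i.e.\ Theorem~\ref{thm:a2}, so is available at this point) one has $\Dmix_{(\Bv)}(\cBv,\E)\simeq\Kb\Tilt^\mix_{(\Bv)}(\cBv,\E)$, and it then suffices to produce a twist-compatible equivalence of additive categories between parity complexes on one side and mixed tilting perverse sheaves on the other and pass to bounded homotopy categories; this is precisely how $\kappa$ is built in the proof of Theorem~\ref{thm:self-duality} (there in the symmetric form $\Tilt^\mix_{(B)}(\cB,\E)\simeq\Parity_{(\Bv)}(\cBv,\E)$). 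But all of the substance lies in constructing that additive equivalence, and this is where your proposal has a genuine gap. The paper obtains it from the main theorem of \cite{ar}: a functor $\nu$ as in \eqref{eqn:functor-nu} with $\nu(\cE_w)\cong\cTv_{w^{-1}}$ and the degrading isomorphism \eqref{eqn:degr}; the genuinely new steps here are Lemma~\ref{lem:nu-standard} and Proposition~\ref{prop:nu-tilt-parity}, which show that the induced functor on $\Dmix_{(B)}(\cB,\E)$ sends $\dmix_w,\nmix_w$ to $\dv_{w^{-1}},\nv_{w^{-1}}$ and sends mixed tilting objects to parity complexes, via a stalk/costalk $\Hom$ computation using \eqref{eqn:degr} and Lemma~\ref{lem:tilt-even}.

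Your substitute for this input---identifying both sides with Soergel-module categories---leaves its pivotal steps unproved, and the step you treat as formal is not. Full faithfulness of $\VV$ (or hypercohomology) on modular parity complexes, with essential image a category of Soergel modules, is not proved in \cite{jmw}; under the sole hypothesis that $\ell$ is good it is essentially part of the content of \cite{ar} (including its appendix), and the analogous description of mixed tilting perverse sheaves on $\cBv$ is of the same order of difficulty as the theorem (the paper's \S\ref{ss:complements} gives only full faithfulness of $\VV^\mix$ on tiltings, not a computation of the image, and does not use it for the main results). More seriously, the asserted canonical equivalence between the Soergel/diagrammatic categories attached to $X^*(T)\otimes\E$ and to its dual $X_*(T)\otimes\E$ is not a symmetry of the defining relations: dualizing the realization transposes the Cartan matrix, and in general the two categories have genuinely different indecomposable objects (already for dual non-simply-laced root systems in small characteristic the associated $p$-canonical bases differ), so such an equivalence cannot be had ``for free'' and, where true, is a statement of the same depth as the theorem itself (or as the main result of \cite{ar}); assuming it makes the argument circular at its core. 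Finally, even granting your $\Phi$, matching degree-zero objects yields only one of the two assertions (parity sheaves on $\cB$ go to mixed tilting objects on $\cBv$); the statement that mixed tilting objects of $\Dmix_{(B)}(\cB,\E)$ are sent to parity objects of $\Dmix_{(\Bv)}(\cBv,\E)$ requires a separate argument---compare the end of the proof of Theorem~\ref{thm:self-duality}, where the corresponding point is handled by a locality argument using the degrading functor $\mu=\nuv\circ\kappa$.
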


See Theorem~\ref{thm:self-duality} for a more precise statement. The nomenclature of this result refers to the fact that $\kappa$ is symmetric; the two triangulated categories appearing in the statement are defined in the same way (in contrast with the other derived equivalences in~\cite{by}), and both $\kappa$ and $\kappa^{-1}$ send mixed parity sheaves to mixed tilting perverse sheaves.  In~\cite{by}, this result was called ``Koszul self-duality.''  (The role of the term ``Koszul'' will be discussed further in~\S\ref{ss:intro-positivity}.)

Simultaneously with the proof of 
the theorem,
we will construct a t-exact functor $\mu: \Dmix_{(B)}(\cB,\E) \to \Db_{(B)}(\cB,\E)$ that makes $\Dmix_{(B)}(\cB,\E)$ into a ``graded version'' of $\Db_{(B)}(\cB,\E)$.  We will also work out the behavior of the usual classes of objects (simple, standard, tilting, parity) under each of the functors in the diagram
\[
\Db_{(B)}(\cB,\E) \xleftarrow{\mu} \Dmix_{(B)}(\cB,\E) \xrightarrow[\sim]{\kappa} \Dmix_{(\Bv)}(\cBv,\E) \xrightarrow{\muv} \Db_{(\Bv)}(\cBv,\E).
\]
This picture is a modular analogue of~\cite[\S1.3(4)]{ar}.

\subsection{Formality}
\label{ss:intro-formality}

General homological arguments show that many triangulated categories can be described in terms of dg-modules over some dg-algebra.  It is a far more subtle and delicate problem to decide whether the
dg-algebra in question is \emph{formal}---i.e., quasi-isomorphic to a graded ring with zero differential.  One typical argument involves equipping the dg-algebra with an additional ``internal'' grading.  In the standard proof of formality for $\Db_{(B)}(\cB,\C)$, this additional grading comes from the Frobenius action discussed in~\S\ref{ss:mixed-sheaves}.

In~\cite{rsw}, these methods were extended to the study of $\Db_{(B)}(\cB,\F)$.  However, because the Frobenius endomorphism is itself an $\F$-linear operator, care must be taken with the characteristic of $\F$ to ensure that eigenspace decompositions behave well.  In~\cite{rsw}, $\Db_{(B)}(\cB,\F)$ and $\Db_{(B)}(\cB,\O)$ were shown to be formal when $\ell > 2 \dim \cB + 1$.  In the present paper, by using $\Dmix_{(B)}(\cB,\E)$ in place of a Frobenius action, we establish formality in all good characteristics. (See Section~\ref{sec:formality} for details on notation, and Theorem~\ref{thm:formality} for a more precise statement.)

\begin{thm*}[Formality]
Let $\cE$ be the direct sum of the indecomposable parity complexes $\cE_w(\E)$, and set $\mathbf{E}:=\Hom^\bullet_{\Db_{(B)}(\cB,\E)}(\cE,\cE)$. Then there exists an equivalence of triangulated categories $\Db_{(B)}(\cB,\E) \cong \mathsf{dgDerf}\lh\mathbf{E}$, where $\mathbf{E}$ is considered as a dg-algebra with trivial differential.
\end{thm*}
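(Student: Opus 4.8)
The plan is to deduce the theorem from the mixed formalism, using the realization of $\Dmix_{(B)}(\cB,\E)$ as the bounded homotopy category $\Kb(\Parity_{(B)}(\cB,\E))$ of parity complexes together with the functor $\mu$ of~\S\ref{ss:intro-self-duality}. First, fix a dg-enhancement of $\Db_{(B)}(\cB,\E)$ and check that $\cE$ is a classical generator: each indecomposable parity complex $\cE_w(\E)$ carries a standard filtration, with $\Delta_w(\E)$ occurring once and the other subquotients shifted standard objects $\Delta_v(\E)$ with $v<w$ (see~\cite{jmw}), so an induction on the Bruhat order shows that every $\Delta_w(\E)$ lies in the triangulated subcategory generated by the $\cE_v(\E)$; since the standard objects generate $\Db_{(B)}(\cB,\E)$ and parity complexes exist on every stratum because $\ell$ is good, so do the $\cE_v(\E)$. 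By Keller's theorem on algebraic triangulated categories with a classical generator, this produces an equivalence $\Db_{(B)}(\cB,\E)\simeq\mathsf{dgDerf}\lh A$ with $A:=\RHom(\cE,\cE)$ the derived endomorphism dg-algebra computed in the enhancement, so that $H^\bullet(A)=\mathbf{E}$. As a quasi-isomorphism of dg-algebras induces an equivalence of the associated categories $\mathsf{dgDerf}$, it remains to prove that $A$ is \emph{formal}, i.e.\ quasi-isomorphic to $\mathbf{E}$ with trivial differential.

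This is where the mixed category enters. The homotopy category $\Kb(\Parity_{(B)}(\cB,\E))$ carries the tautological dg-enhancement of bounded complexes over the additive category $\Parity_{(B)}(\cB,\E)$, and in it the object $\cE^{\mix}$ --- the sum of the indecomposable mixed parity complexes, which $\mu$ sends to $\cE$ --- is a complex concentrated in a single degree. A direct inspection of this enhancement then shows that the dg Hom-complex $\RHom(\cE^{\mix},\cE^{\mix}\la m\ra)$ has zero differential and is concentrated in cohomological degree $m$, where it is identified with $\Hom_{\Db_{(B)}(\cB,\E)}(\cE,\cE[m])=\mathbf{E}^m$; consequently $\bigoplus_{m\in\Z}\RHom(\cE^{\mix},\cE^{\mix}\la m\ra)$, with product given by composition and the Tate twist, is precisely $\mathbf{E}$ regarded as a dg-algebra with trivial differential (its cohomological grading being the internal grading of $\mathbf{E}$). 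Now, using the compatibility of $\mu$ with the two dg-enhancements together with the ``de-grading'' identity $\Hom_{\Db_{(B)}(\cB,\E)}(\mu X,\mu Y[n])\cong\bigoplus_{m}\Hom_{\Dmix_{(B)}(\cB,\E)}(X,Y\la m\ra[n])$ --- a basic property of the ``graded version'' functor $\mu$ recorded alongside Theorem~\ref{thm:self-duality} --- one upgrades this computation to a quasi-isomorphism of dg-algebras $A\simeq\bigoplus_{m}\RHom(\cE^{\mix},\cE^{\mix}\la m\ra)$. Since the right-hand side is formal by inspection, so is $A$; this is the familiar mechanism by which purity (here, that the Tate weight of $\mathbf{E}^n$ equals $n$) forces formality, as in~\cite{bgs} and~\cite{rsw}, but in this setting no minimal-model or Massey-product bookkeeping is needed. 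Together with the first paragraph this yields $\Db_{(B)}(\cB,\E)\simeq\mathsf{dgDerf}\lh\mathbf{E}$.

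The one substantial step is the passage, in the second paragraph, from the de-grading comparison at the level of cohomology --- which is immediate --- to an honest zig-zag of quasi-isomorphisms of dg-algebras. This requires dg-enhancements of $\Db_{(B)}(\cB,\E)$ and $\Dmix_{(B)}(\cB,\E)$ for which $\mu$, together with the coherence data trivializing the Tate twist $\la1\ra$ after applying $\mu$, lifts to the dg level --- the analogue, with the Tate twist replacing a Frobenius action, of the dg-bookkeeping carried out in~\cite{rsw}. The situation here should be more favourable, because $\Dmix_{(B)}(\cB,\E)$ is \emph{defined} as a homotopy category of complexes, so its dg-enhancement is canonical and the vanishing of the relevant differentials on the mixed side is visible by inspection; the real work is producing a clean dg-model for $\mu$. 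The remaining ingredients --- the generation statement, the one-object $\RHom$-computation, and the comparison of products --- are routine.
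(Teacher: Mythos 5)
Your overall strategy---reduce to formality of the endomorphism dg-algebra of $\cE$, and obtain that formality from the extra ``internal'' grading supplied by the Tate twist in $\Dmix_{(B)}(\cB,\E)$ via the degrading property~\eqref{eqn:mu-degr} of $\mu$---is indeed the mechanism the paper uses. But your second paragraph leaves the essential step unproved, and you say so yourself: everything hinges on ``producing a clean dg-model for $\mu$,'' i.e.\ a dg-lift of $\mu$ (with coherence data for the Tate twist) relating the tautological enhancement of $\Kb\Parity_{(B)}(\cB,\E)$ to an enhancement of $\Db_{(B)}(\cB,\E)$. That is precisely the hard point, it is not routine, and no such lift is available from~\cite{ar}: the functor $\nu$ underlying $\mu$ is constructed there only as an additive functor $\Parity_{(B)}(\cB,\E)\to\Tilt_{(\Bv)}(\cBv,\E)$ with the property~\eqref{eqn:degr}, and a cohomology-level isomorphism of graded algebras does not by itself produce a zig-zag of dg quasi-isomorphisms. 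So as written the argument has a genuine gap at its central step.

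The paper circumvents this without ever enhancing $\mu$: by Lemma~\ref{lem:perv-dereq} one replaces $\cE^\mix$ by a quasi-isomorphic bounded complex $\cF^\bullet$ of \emph{mixed tilting perverse sheaves}, on which $\mu$ restricts to an honest additive functor $\mu_{\mathsf{T}}:\Tilt^\mix_{(B)}(\cB,\E)\to\Tilt_{(B)}(\cB,\E)$; applying it termwise gives a complex $\cG^\bullet$ representing $\cE$ (this is Lemma~\ref{lem:mu-Tilt}, proved via Be\u\i linson's compatibility of realization functors). Then the chain-level algebra $E^\bullet=\uHom^\bullet(\cG^\bullet,\cG^\bullet)$ carries, by~\eqref{eqn:mu-degr} applied to tilting objects, a genuine second grading $\underline{E}^{i,j}=\uHom^i(\cF^\bullet,\cF^\bullet\la -j\ra)$ compatible with the differential and the product---no enhancement technology needed. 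Note that on this model the differential is \emph{not} zero; formality is not ``by inspection'' but follows from Lemma~\ref{lem:diagonal-cohomology} (cohomology concentrated on the diagonal $i=j$) together with the classical internal-grading truncation argument of~\cite[Lemma~5.5.1]{rsw}. Your trivial computation on the parity model computes the correct cohomology, but it is on the wrong side of the comparison you still owe. Two smaller points: your generation argument should be phrased as an induction using the recollement (parity complexes do not literally carry standard filtrations), though the conclusion is correct; and identifying the target of your Keller equivalence with $\mathsf{dgDerf}\lh\mathbf{E}$ (rather than perfect dg-modules) requires the finite global dimension of $\mathbf{E}$, which is the second assertion of Lemma~\ref{lem:diagonal-cohomology} and is proved there via $\nu$, the Radon transform and quasihereditary finiteness---an ingredient your sketch omits.
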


We expect this to hold even when $\ell$ is bad for $G$, but the proof we give here relies on the results of~\cite{ar}, which are not (yet?) available in bad characteristic.

\subsection{Towards positivity}
\label{ss:intro-positivity}

In the formality theorem for $\Db_{(B)}(\cB,\C)$, the ring $\mathbf{E}$ that arises is positively graded and Koszul.  This means, in particular, that its degree-zero component is semisimple.  It is now known that this last assertion cannot hold for general $\F$: if it did, it would follow that the parity sheaves on $\cB$ coincide with the simple perverse sheaves, but counterexamples have been found by Braden~\cite[Appendix~A]{wilbraden} and, more recently, by Williamson~\cite{williamson}.

It may be reasonable to ask only that $\mathbf{E}$ be positively graded (but not necessarily Koszul).  This is equivalent to asking that all parity sheaves on $\cB$ be perverse (but not necessarily simple).  In a subsequent paper~\cite{ar3}, we will study various conditions that imply or are implied by the positivity of the grading.  As of this writing, there are no known counterexamples in good characteristic to the positivity of the grading on $\mathbf{E}$.

\subsection{Contents}

The foundations of the mixed derived category and its perverse t-structure are developed in Sections~\ref{sec:mixed-der} and~\ref{sec:mixed-perv}.  In Section~\ref{sec:kac-moody} we prove that the (partial) flag varieties of Kac--Moody groups satisfy the assumptions needed for the theory of Sections~\ref{sec:mixed-der} and~\ref{sec:mixed-perv} to apply.
The main results (as stated in \S\S\ref{ss:intro-self-duality}--\ref{ss:intro-formality}) are proved in Section~\ref{sec:formality}. 
Finally, appendix~\ref{sec:homological} contains a brief review of definitions and facts about graded quasihereditary categories.

\section{The mixed derived category}
\label{sec:mixed-der}

Let $\ell$ be a prime number, and let $(\K, \O, \F)$ be an $\ell$-modular system (i.e.~$\K$ is a finite extension of $\Ql$, $\O$ is its ring of integers, and $\F$ is the residue field of $\O$). We use the letter $\E$ to denote any member of $(\K,\O,\F)$. In this section, we do not impose any constraint on $\ell$. We denote by $\E\lh\ngmod$, resp.~$\E\lh\gmod$, the category of finitely generated $\E$-modules, resp.~finitely generated graded $\E$-modules.

\subsection{Varieties and sheaves}
\label{ss:var}

Let $X$ be a complex algebraic variety equipped with a fixed finite algebraic stratification
\[
X=\bigsqcup_{s \in \scS} X_s
\]
in which each $X_s$ is isomorphic to an affine space.  We denote by $i_s: X_s \hookrightarrow X$ the inclusion map. Let $\Db_\scS(X,\E)$ denote the derived category of $\E$-sheaves on $X$ (in the analytic topology) that are constructible with respect to the given stratification.  In a minor abuse of notation, if $Y \subset X$ is a locally closed union of strata, we will also use the letter $\scS$ to refer to the induced stratification of $Y$.  (For instance, we will write $\Db_\scS(Y,\E)$ rather than $\Db_{\{s \in \scS \mid X_s \subset Y\}}(Y,\E)$.)  Throughout the paper, the shift functor on $\Db_\scS(X,\E)$, usually written as $[1]$, will instead be denoted by
\[
\{1\}: \Db_\scS(X,\E) \to \Db_\scS(X,\E).
\]
(The reason for this nonstandard notation will become clear below.)  Given a finitely generated $\E$-module $M$, let $\underline{M}{}_{X_s}$ be the constant sheaf with value $M$ on $X_s$.  We will refer often to the perverse sheaf $\underline{M}_{X_s}\{\dim X_s\}$, and so we introduce the notation
\[
\uuline{M}{}_{X_s} := \underline{M}{}_{X_s} \{ \dim X_s \}.
\]
Here, and throughout the paper, the notation ``$\dim$'' applied to a variety should always be understood to mean \emph{complex} dimension.

All varieties in the paper will be assumed to satisfy the following condition:
\begin{enumerate}
\item[\bf(A1)] For each $s \in \scS$, there is an indecomposable parity complex $\cE_s(\E) \in \Db_\scS(X,\E)$ that is supported on $\overline{X_s}$ and satisfies $i_s^*\cE_s(\E) \cong \uuE{}_{X_s}$.
\end{enumerate}
(See~\cite[Definition~2.4]{jmw} for the definition of parity complexes. Here and below, the term ``parity'' refers to the constant pariversity denoted $\natural$ in \cite{jmw}.)
This is only an additional hypothesis when $\E = \O$; for $\E = \K$ or $\F$, it holds automatically by~\cite[Corollary~2.28]{jmw}.  When $\E = \O$, ~\cite[Corollary~2.35]{jmw} gives a sufficient condition for this to hold. According to~\cite[Theorem~4.6]{jmw}, generalized flag varieties (with the Bruhat stratification) satisfy this hypothesis.  In any case, if $\cE_s(\E)$ exists, it is unique up to isomorphism: see~\cite[Theorem~2.12]{jmw}.  When there is no risk of ambiguity, we may simply call this object $\cE_s$.  

We denote by $\Parity_\scS(X,\E)$ the full additive subcategory of $\Db_\scS(X,\E)$ consisting of parity complexes.  Every object in $\Parity_\scS(X,\E)$ is isomorphic to a direct sum of objects of the form $\cE_s(\E)\{n\}$~\cite[Theorem~2.12]{jmw}.  Note that the shift functor $\{1\}$ restricts to an autoequivalence $\{1\} : \Parity_\scS(X,\E) \to \Parity_\scS(X,\E)$.

It will occasionally be useful to refer to the full subcategory $\Parity^\even_\scS(X,\E) \subset \Parity_\scS(X,\E)$ consisting of \emph{even} objects---that is, direct sums of $\cE_s\{n\}$ with $n \equiv \dim X_s \pmod 2$.  Objects of $\Parity^\even_\scS(X,\E)\{1\}$ are said to be \emph{odd}.  It follows from~\cite[Corollary~2.8]{jmw} that
\begin{equation}\label{eqn:parity-decompose}
\Parity_\scS(X,\E) \cong \Parity^\even_\scS(X,\E) \oplus \Parity^\even_\scS(X,\E)\{1\}.
\end{equation}
In particular, if $\cF$ is even and $\cG$ is odd, then $\Hom(\cF,\cG) = \Hom(\cG,\cF) = 0$.

\subsection{The mixed derived category}
\label{ss:Dmix}

Our main object of study will be the following triangulated category:
\[
\Dmix_\scS(X,\E):=\Kb \Parity_\scS(X,\E).
\]
The shift functor for this category is denoted by $[1]: \Dmix_\scS(X,\E) \to \Dmix_\scS(X,\E)$.  This is different from the autoequivalence $\{1\}: \Dmix_\scS(X,\E) \to \Dmix_\scS(X,\E)$ that it inherits from $\Parity_\scS(X,\E)$.  It will be convenient to introduce the notation 
\[
\langle n \rangle := \{-n\}[n].
\]
The functor $\la 1\ra$ is called the \emph{Tate twist}.
We can of course regard the $\cE_s(\E)$ as objects of $\Dmix_\scS(X,\E)$, via the obvious embedding $\Parity_\scS(X,\E) \hookrightarrow \Dmix_\scS(X,\E)$. Nevertheless, it will often serve us well to pay explicit attention to the ambient category, so we introduce the additional notation
\[
\cE^\mix_s(\E) := \text{$\cE_s(\E)$, regarded as an object of $\Dmix_\scS(X,\E)$.}
\]

\begin{defn}
The category $\Dmix_\scS(X,\E)$ is called the \emph{mixed derived category} of $X$ with coefficients in $\E$.
\end{defn}

As the terminology indicates, this is intended to be a kind of replacement for the mixed derived category of~\cite{bbd}, although there are two salient differences.  First, the definition of $\Dmix_\scS(X,\E)$ does not involve any Frobenius action; rather, the ``additional grading'' provided by the Frobenius action in~\cite{bbd} is replaced here by the ``internal shift'' $\la1\ra$ in $\Dmix_{\scS}(X,\E)$.

Second, in general, \emph{there is no obvious functor from $\Dmix_\scS(X,\E)$ to $\Db_\scS(X,\E)$.}  We will eventually construct such a functor for flag varieties, but that construction relies heavily on the results of~\cite{ar}.  The existence of such a functor is very closely related to the formality theorem.

\begin{rmk}
When $\E = \K$, the situation is somewhat better.  Under some additional hypotheses on the simple perverse $\K$-sheaves on $X$, our category $\Dmix_\scS(X,\K)$ is equivalent to the category introduced in~\cite[\S 7.2]{ar:kdsf}. The theory developed in~\cite{ar:kdsf} gives a functor $\Dmix_\scS(X,\K) \to \Db_\scS(X,\K)$ as part of a broader picture relating $\Dmix_\scS(X,\K)$ to the mixed sheaves of~\cite{bbd}. This theory will not be used in the present paper, however.
\end{rmk}

Note that the decomposition~\eqref{eqn:parity-decompose} implies a similar decomposition for the mixed derived category:
\begin{equation}\label{eqn:dmix-decompose}
\Dmix_\scS(X,\E) \cong \Kb\Parity^\even_\scS(X,\E) \oplus \Kb(\Parity^\even_\scS(X,\E)\{1\}).
\end{equation}
An object of $\Kb\Parity^\even_\scS(X,\E)$ (resp.~$\Kb(\Parity^\even_\scS(X,\E)\{1\})$) is said to be \emph{even} (resp.~\emph{odd}).  Any indecomposable object of $\Dmix_\scS(X,\E)$ must be either even or odd.

\subsection{Some functors}
\label{ss:functors}

The Verdier duality functor $\D_X : \Db_{\scS}(X,\E) \simto \Db_{\scS}(X,\E)$ restricts to an antiequivalence of $\Parity_\scS(X,\E)$ (see~\cite[Remark~2.5(3)]{jmw}), which then induces an antiequivalence
\[
\D_X : \Dmix_{\scS}(X,\E) \simto \Dmix_\scS(X,\E).
\]
This functor satisfies 
\[
\D_X \circ \{ n \} \cong \{ -n \} \circ \D_X, \quad \D_X \circ [ n ] \cong [ -n ] \circ \D_X, \quad \D_X \circ \langle n \rangle \cong \langle -n \rangle \circ \D_X.
\]

We will denote by
\[
\K(-) : \Db_{\scS}(X,\O) \to \Db_\scS(X,\K) \qquad \text{and} \qquad \F(-) : \Db_\scS(X,\O) \to \Db_\scS(X,\F)
\]
the functors of (derived) extension of scalars. These functors send parity complexes to parity complexes~\cite[Lemma 2.37]{jmw}, so they also define functors
\[
\K(-) : \Dmix_{\scS}(X,\O) \to \Dmix_\scS(X,\K) \qquad \text{and} \qquad \F(-) : \Dmix_\scS(X,\O) \to \Dmix_\scS(X,\F).
\]

Finally, let $i : Z \hookrightarrow X$ and $j : U \hookrightarrow X$ be closed and open inclusions of unions of strata, respectively.  Then the functors $i_*$ and  $j^*$ restrict to the categories of parity complexes, and then define functors
\[
i_* : \Dmix_{\scS}(Z,\E) \to \Dmix_\scS(X,\E), \qquad j^* : \Dmix_\scS(X,\E) \to \Dmix_\scS(U,\E).
\]
These functors commute with the functors $\F(-)$ and $\K(-)$, and with Verdier duality. Note also that the functor $i_*$ is fully faithful. We will often use this functor to identify $\Dmix_{\scS}(Z,\E)$ with a full subcategory of $\Dmix_{\scS}(X,\E)$.

\subsection{Adjoints}
\label{ss:adjoints}

The goal of this subsection is to prove that the categories of the form $\Dmix_\scS(X,\E)$ can be endowed with a ``recollement'' structure in the sense of \cite[\S 1.4]{bbd}.  We fix an open union of strata $U$, and denote by $Z$ its complement. We denote by $j : U \hookrightarrow X$ and $i : Z \hookrightarrow X$ the inclusions. We clearly have $j^* i_* = 0$.

\begin{prop}
\label{prop:recollement}
The functor $j^* : \Dmix_{\scS}(X,\E) \to \Dmix_{\scS}(U,\E)$ admits a left adjoint $j_{(!)}$ and a right adjoint $j_{(*)}$.  Similarly, the functor $i_* : \Dmix_\scS(Z,\E) \to \Dmix_\scS(X,\E)$ admits a left adjoint $i^{(*)}$ and a right adjoint $i^{(!)}$.  Together, these functors give a recollement diagram
\[
\xymatrix@C=1.5cm{
\Dmix_\scS(Z,\E) \ar[r]^{i_*} &
\Dmix_\scS(X,\E) \ar[r]^{j^*} 
  \ar@/_1pc/[l]_{i^{(*)}} \ar@/^1pc/[l]^{i^{(!)}} &
\Dmix_\scS(U,\E). 
  \ar@/_1pc/[l]_{j_{(!)}} \ar@/^1pc/[l]^{j_{(*)}}
}
\]
\end{prop}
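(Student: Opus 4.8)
The plan is to construct the four adjoints explicitly at the level of the homotopy categories of parity complexes, exploiting the fact that $\Dmix_\scS(X,\E) = \Kb\Parity_\scS(X,\E)$ is built from an additive category by a purely formal construction. The key point is that, on parity complexes, the functors $i_* : \Parity_\scS(Z,\E) \to \Parity_\scS(X,\E)$ and $j^* : \Parity_\scS(X,\E) \to \Parity_\scS(U,\E)$ are genuine additive functors, and one can hope to produce adjoints already at the level of these additive categories and then pass to $\Kb(-)$ degreewise. For the functor $j^*$: given a parity complex $\cF$ on $U$, one wants a parity complex on $X$ that restricts to $\cF$. Since $U$ is a union of strata and parity complexes on $U$ are direct sums of the $\cE_s(\E)\{n\}$ with $X_s \subset U$, and since each such $\cE_s(\E)$ is (by~(A1) and~\cite[Theorem~2.12]{jmw}) the restriction to $U$ of the parity complex $\cE_s(\E)$ on $X$, there is a splitting $\tilde\jmath : \Parity_\scS(U,\E) \to \Parity_\scS(X,\E)$ of $j^*$ sending $\cE_s(\E)\{n\}$ to $\cE_s(\E)\{n\}$. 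One then checks that applying $\Kb$ to $\tilde\jmath$ gives a functor $\Dmix_\scS(U,\E) \to \Dmix_\scS(X,\E)$, and that composing with the natural filtration of parity complexes on $X$ by those supported on $Z$ versus those with open support yields the desired triangles.

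Concretely, I would proceed as follows. First, record the basic vanishing and support facts: any parity complex on $X$ has a canonical (up to the usual indeterminacy) decomposition into a summand whose indecomposable constituents are supported on $Z$ and a complementary summand; more usefully, for $\cE_s(\E)$ with $X_s \subset U$ we have $i^* \cE_s(\E) \ne 0$ in general, so this is not literally a direct sum decomposition of objects, but rather one of the \emph{categories} modulo the subcategory generated by $\{\cE_t(\E) \mid X_t \subset Z\}$. Second, use this to build, for each object $\cF^\bullet$ of $\Kb\Parity_\scS(X,\E)$, a functorial triangle
\[
i_* i^{(*)} \cF^\bullet \to \cF^\bullet \to j_{(!)} j^* \cF^\bullet \to
\]
by choosing, termwise, a representative of $\cF^n$ as $\cF^n_Z \oplus \tilde\jmath(j^* \cF^n)$ (possible after adjusting by a homotopy), and defining $i^{(*)} \cF^\bullet$ to be the complex of $\cF^n_Z$'s. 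Third, dually, construct $j_{(*)}$ and $i^{(!)}$ by the same argument applied after Verdier duality $\D_X$, using the compatibilities $\D_X \circ i_* \cong i_* \circ \D_Z$ and $\D_X \circ j^* \cong j^! \circ \D_X$; since $j^! = j^*$ on strata-unions (up to shift) this identifies the right adjoint of $j^*$ with $\D_X \circ j_{(!)} \circ \D_U$ and similarly for $i^{(!)}$. Fourth, verify the recollement axioms of~\cite[\S1.4]{bbd}: $j^* i_* = 0$ is already noted; full faithfulness of $i_*$ and of $j_{(!)}, j_{(*)}$ follows from that of the corresponding functors on parity complexes (themselves consequences of $i_*$ being fully faithful and of the splitting $\tilde\jmath$ being a section); the two distinguished triangles relating $i_*i^{(*)}, \mathrm{id}, j_{(!)}j^*$ and $j_{(*)}j^*, \mathrm{id}, i_*i^{(!)}$ are the ones built in the third step.

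The main obstacle is the termwise splitting: even though $\Parity_\scS(U,\E)$ is a direct summand of $\Parity_\scS(X,\E)$ as an additive category in a suitable sense, an arbitrary complex $\cF^\bullet$ in $\Kb\Parity_\scS(X,\E)$ will not have differentials respecting a fixed decomposition $\cF^n \cong \cF^n_Z \oplus (\text{restriction-of-something})$, so one must show that $\cF^\bullet$ is homotopy equivalent to one that does, and that the resulting subcomplex $i^{(*)}\cF^\bullet$ and quotient complex $j_{(!)}j^*\cF^\bullet$ are well-defined up to canonical isomorphism and functorial in $\cF^\bullet$. This is a standard but slightly delicate ``filtered objects in homotopy categories'' argument; the cleanest route is probably to check the adjunction $\Hom_{\Kb}(j_{(!)} \cG^\bullet, \cF^\bullet) \cong \Hom_{\Kb}(\cG^\bullet, j^* \cF^\bullet)$ directly using that $\tilde\jmath$ is a section of $j^*$ on parity complexes together with the vanishing $\Hom(\cE_s, \cE_t\{n\}[m]) = 0$ for $X_s \subset Z$, $X_t \subset U$ appropriate ranges — i.e., reduce the adjunction to a computation of $\Hom$-complexes between parity sheaves, where the answer is controlled by~\cite{jmw}. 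Once $j_{(!)}$ is shown to be left adjoint to $j^*$, the existence of $i^{(*)}$ as the cone of the counit $j_{(!)} j^* \to \mathrm{id}$ is formal, and the right-hand adjoints follow by the duality argument above.
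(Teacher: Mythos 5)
There is a genuine gap, and it sits at the very heart of the construction. Your plan is to define $j_{(!)}$ as (the functor on homotopy categories induced by) a section $\tilde\jmath$ of $j^*$ sending the indecomposable parity complex on $U$ attached to a stratum $X_t \subset U$ to $\cE_t\{n\}$ on $X$. Such a functor cannot be the left adjoint of $j^*$: adjunction would force $\Hom_{\Dmix_\scS(X,\E)}(\cE_t, i_*\cG) \cong \Hom_{\Dmix_\scS(U,\E)}(j^*\cE_t, j^*i_*\cG) = 0$ for every $\cG$ on $Z$, whereas $\Hom(\cE_t, i_*\cG)$ is computed from the stalks $i^*\cE_t$, which are nonzero in general (already for $X = \mathbb{P}^1$ stratified by a point $Z$ and its complement, with $\cE_t = \underline{\E}{}_{\mathbb{P}^1}\{1\}$ and $\cG = \uuline{\E}{}_{Z}$, this $\Hom$ is a free rank-one $\E$-module). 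This is precisely what the paper's Lemma~\ref{lem:recollement-one-stratum} corrects for: when $Z=X_s$ is a single closed stratum, $j_{(!)}$ sends the parity complex attached to $X_t$ not to $\cE_t$ but to the two-term complex $\cE_t^+ = (\cE_t \to i_*i^*\cE_t)$, whose $\Hom$'s into $\cE_s\{m\}[n]$ vanish; parity vanishing \cite[Corollary~2.8]{jmw} then shows that $j^*$ restricts to an equivalence from the triangulated subcategory generated by the $\cE_t^+\{m\}$ onto $\Dmix_\scS(U,\E)$, and $j_{(!)}$ is defined by inverting this equivalence. This correction term is entirely absent from your construction, so the adjunction you propose to check ``directly'' in your last paragraph fails: the vanishing you invoke, $\Hom(\cE_s,\cE_t\{n\}[m])=0$ for $X_s \subset Z$, $X_t \subset U$, is not the relevant one, and the relevant $\Hom$'s (from $\cE_t$ into objects supported on $Z$) do not vanish.

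Two further points. Even granting the naive target objects, $\tilde\jmath$ is problematic as a functor: restriction is not injective on $\Hom$'s between parity complexes, so lifts of morphisms from $U$ to $X$ are non-canonical and there is no natural additive section of $j^*$; relatedly, your claim that any object of $\Kb\Parity_\scS(X,\E)$ is homotopy equivalent to a complex whose differential respects the decomposition into $Z$-supported summands and lifted summands is unsubstantiated, and even if it held, the resulting pieces would not be $i_*i^{(*)}\cF$ and $j_{(!)}j^*\cF$ (note also that your displayed triangle $i_*i^{(*)}\cF \to \cF \to j_{(!)}j^*\cF$ has the recollement triangle rotated the wrong way; the correct one is $j_{(!)}j^*\cF \to \cF \to i_*i^{(*)}\cF \to {}$). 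Your remaining steps --- obtaining $j_{(*)}$ and $i^{(!)}$ by Verdier duality, and $i^{(*)}$ from the triangle over the counit --- are in the same spirit as the paper, which after the one-stratum case proceeds by induction on the number of strata of $Z$, uses the decomposition $\Dmix_\scS(X,\E) = j_{(!)}(\Dmix_\scS(U,\E)) \ast i_*(\Dmix_\scS(Z,\E))$ and its associativity, and invokes \cite[Corollaire~1.1.10]{bbd} to make the defining triangle of $i^{(*)}$ unique and functorial; but all of this rests on a correct construction of $j_{(!)}$, which is the step your proposal gets wrong.
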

The extra parentheses in the names of the functors $j_{(!)}$, $j_{(*)}$, $i^{(*)}$, and $i^{(!)}$ are there to help us distinguish them from the usual functors $j_!$, $j_*$, $i^*$, and $i^!$ involving the ordinary derived category $\Db_\scS(X,\E)$.  Because all eight of these functors appear in the arguments below, we will maintain this distinction through the end of Section~\ref{sec:mixed-der}.  Starting from Section~\ref{sec:mixed-perv}, however, we will drop the extra parentheses in the new functors in Proposition~\ref{prop:recollement}.

Before proving Proposition~\ref{prop:recollement} in full generality, we consider the case in which $Z=X_s$ is a closed stratum.

\begin{lem}
\label{lem:recollement-one-stratum}
Let $X_s \subset X$ be a closed stratum.
\begin{enumerate}
\item
\label{it:recollement-X_s}
In the case $Z=X_s$, $j^*$ admits a left adjoint $j_{(!)}$ and a right adjoint $j_{(*)}$, such that the adjunction morphisms $j^* j_{(*)} \to \id$ and $\id \to j^* j_{(!)}$ are isomorphisms, and such that we have
\begin{align*}
\Dmix_\scS(X,\E) & = j_{(!)} \bigl( \Dmix_\scS(U,\E) \bigr) \ast i_* \bigl( \Dmix_\scS(Z,\E) \bigr) \\
\Dmix_\scS(X,\E) & = i_* \bigl( \Dmix_\scS(Z,\E) \bigr) \ast j_{(*)} \bigl( \Dmix_\scS(U,\E) \bigr)
\end{align*}
in the notation of~\cite[\S1.3.9]{bbd}.
\item
\label{it:recollement-X_s-2}
If $Z \subset X$ is a closed union of strata containing $X_s$, and if $j : X \smallsetminus X_s \hookrightarrow X$, $j_Z : Z \smallsetminus X_s \hookrightarrow Z$, $k : Z \hookrightarrow X$, $k_Z : Z \smallsetminus X_s \hookrightarrow X \smallsetminus X_s$ denote the inclusions, the functors $j_{(!)}$, $j_{(*)}$, $j_{Z(!)}$, $j_{Z(*)}$ given by \eqref{it:recollement-X_s} satisfy
\[
j_{(!)} k_{Z*} \cong k_* j_{Z(!)}, \qquad j_{(*)} k_{Z*} \cong k_* j_{Z(*)}.
\]
\end{enumerate}
\end{lem}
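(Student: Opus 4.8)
The plan is to carry out all constructions inside the additive category $\Parity_\scS$ and only then pass to bounded homotopy categories. The essential point is that, because $X_s$ is a \emph{single closed} stratum isomorphic to an affine space, for every parity complex $\cF$ on $X$ the complexes $i_s^*\cF$ and $i_s^!\cF$ are direct sums of shifts of the constant sheaf $\uuE{}_{X_s}$: this follows from the definition of a parity complex, the freeness of stalk and costalk cohomology over $\E$ (automatic over a field; a universal-coefficients argument over $\O$), and the contractibility of $X_s$. Thus $i_s^*$ and $i_s^!$ define additive functors $\Parity_\scS(X,\E)\to\Parity_\scS(X_s,\E)$, and the usual adjunctions $(i_s^*,i_{s*})$ and $(i_{s*},i_s^!)$ of $\Db_\scS$ restrict to these subcategories, with $i_{s*}\colon\Parity_\scS(X_s,\E)\to\Parity_\scS(X,\E)$ (which carries $\uuE{}_{X_s}\{n\}$ to $\cE_s(\E)\{n\}$) fully faithful. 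I would also record that $j^*\colon\Parity_\scS(X,\E)\to\Parity_\scS(U,\E)$ is essentially surjective (restriction to the open $U$ carries $\cE_w(\E)$ to the indecomposable parity complex on $U$ attached to $X_w$, for $X_w\subseteq U$) and \emph{full}: the obstruction to lifting a morphism $j^*\cF\to j^*\cF'$ to $\cF\to\cF'$ lies in $\Hom_{\Db_\scS(X_s,\E)}(i_s^*\cF,i_s^!\cF'\{1\})$, and by the decomposition~\eqref{eqn:parity-decompose} we may assume $\cF,\cF'$ are both even, so that $i_s^*\cF$ and $i_s^!\cF'$ are sums of copies of $\uuE{}_{X_s}$ in even cohomological degrees and this group is a sum of copies of $\Hom_{\Db_\scS(X_s,\E)}(\uuE{}_{X_s},\uuE{}_{X_s}\{k\})$ with $k$ odd, hence zero since $X_s$ is contractible. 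Consequently $j^*$ induces an equivalence $\Parity_\scS(X,\E)/\mathcal{I}\simto\Parity_\scS(U,\E)$, where $\mathcal{I}$ is the ideal of morphisms annihilated by $j^*$.

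To construct $j_{(!)}$, consider the functor $\phi\colon\Parity_\scS(X,\E)\to\Kb\Parity_\scS(X,\E)$ sending $\cF$ to the two-term complex $[\cF\to i_{s*}i_s^*\cF]$, concentrated in degrees $0$ and $1$ with differential the adjunction unit; this is well defined because $i_{s*}i_s^*\cF$ is again a parity complex on $X$. A morphism $f$ belongs to $\mathcal{I}$ exactly when it factors through the unit $\cF\to i_{s*}i_s^*\cF$, and such a factorization furnishes a contracting homotopy for $\phi(f)$; hence $\phi$ annihilates $\mathcal{I}$ and so factors through $j^*$, giving a functor $j_{(!)}\colon\Parity_\scS(U,\E)\to\Kb\Parity_\scS(X,\E)$ with $j_{(!)}\circ j^*\cong\phi$, which extends to a triangulated functor $\Dmix_\scS(U,\E)\to\Dmix_\scS(X,\E)$ by passing to total complexes. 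Since $j^*i_{s*}=0$ there is a canonical isomorphism $j^*j_{(!)}\cong\id$ (so the unit $\id\to j^*j_{(!)}$ is invertible); the counit $j_{(!)}j^*\to\id$ is induced in each degree by the evident projection $[\cF\to i_{s*}i_s^*\cF]\to\cF$, and the triangle identities are checked on $\Parity_\scS(X,\E)$ and then inherited by the extensions, after which a dévissage (the five lemma, using that $\Parity_\scS(U,\E)$ generates $\Kb\Parity_\scS(U,\E)$) upgrades $j_{(!)}\dashv j^*$ to an adjunction on all of $\Dmix_\scS(U,\E)$. Finally, the cone of the counit $j_{(!)}j^*\cF\to\cF$ collapses, after one Gaussian elimination, to $i_{s*}i_s^*\cF$, so extending over $\Dmix_\scS(X,\E)$ produces a functorial triangle $j_{(!)}j^*\mathcal{A}\to\mathcal{A}\to i_*i_s^*\mathcal{A}\to$ (where $i_*=i_{s*}$ since $Z=X_s$), which is the first $\ast$-decomposition. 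The right adjoint $j_{(*)}$, the isomorphism $j^*j_{(*)}\cong\id$, and the second decomposition are obtained in the same way, replacing $\phi$ by the functor $\cF\mapsto[i_{s*}i_s^!\cF\to\cF]$ built from the adjunction counit.

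For part~\eqref{it:recollement-X_s-2}, the key remark is that the construction of $j_{(!)}$ already commutes with $k_*$ at the level of $\phi$. Writing $i_s^Z\colon X_s\hookrightarrow Z$ for the (closed) inclusion, we have $i_s=k\circ i_s^Z$, hence $k_*\circ i_{s*}^Z=i_{s*}$ and $i_s^*\circ k_*\cong(i_s^Z)^*$, and therefore $\phi\circ k_*\cong k_*\circ\phi_Z$, where $\phi_Z$ is the analogue of $\phi$ for $Z$. Combining this with $j_{(!)}\circ j^*\cong\phi$, $j_{Z(!)}\circ j_Z^*\cong\phi_Z$, the base-change isomorphism $j^*\circ k_*\cong k_{Z*}\circ j_Z^*$, and the essential surjectivity of $j_Z^*$, one deduces $j_{(!)}\circ k_{Z*}\cong k_*\circ j_{Z(!)}$ on parity complexes, and hence on $\Dmix_\scS$. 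The case of $j_{(*)}$ is identical, using $i_s^!\circ k_*\cong(i_s^Z)^!$.

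I expect the principal difficulty to be the homotopy-theoretic bookkeeping in the second step: verifying that $\phi$ genuinely annihilates $\mathcal{I}$ (so that $j_{(!)}$ is independent of the chosen parity lifts) and that the unit and counit above satisfy the triangle identities and continue to do so after passage to total complexes. The geometric inputs — that $i_s^*$ and $i_s^!$ preserve $\Parity_\scS$ and that $j^*$ is full — are comparatively routine, but it is their failure for a stratum that is not closed (or for a closed union of several strata) that confines this argument to the case $Z=X_s$ and forces the inductive passage to the general case afterwards.
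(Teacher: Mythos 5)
Much of your additive-level analysis is correct and close in spirit to the paper's: the restriction functors $i_s^*$, $i_s^!$ do preserve parity complexes when $X_s$ is a single closed stratum, your characterization of $\mathcal{I}$ (morphisms killed by $j^*$ are exactly those factoring through the unit $\cF \to i_{s*}i_s^*\cF$) is right, $j^*$ is indeed full on $\Parity_\scS(X,\E)$ by the parity-vanishing argument you give, and your two-term complex $\phi(\cF)=[\cF \to i_{s*}i_s^*\cF]$ is exactly the paper's $\cE_t^+$ construction; one can even check (by a small adjunction argument you omit) that a factorization of $f$ through the unit automatically yields the full contracting homotopy for $\phi(f)$.

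The genuine gap is the sentence ``which extends to a triangulated functor $\Dmix_\scS(U,\E)\to\Dmix_\scS(X,\E)$ by passing to total complexes.'' Your $j_{(!)}$ is only defined as a functor $\Parity_\scS(U,\E)\to\Kb\Parity_\scS(X,\E)$, i.e.\ its values on morphisms are homotopy classes: it is obtained by lifting along the quotient $\Parity_\scS(X,\E)/\mathcal{I}\simeq\Parity_\scS(U,\E)$, and $\phi$ kills $\mathcal{I}$ only up to homotopy, not at the chain level. An additive functor valued in a homotopy category does not extend to bounded homotopy categories by totalization: if you lift a complex over $\Parity_\scS(U,\E)$ termwise using fullness of $j^*$, the lifted differentials square to zero only modulo $\mathcal{I}$, so there is no bicomplex (and no total complex) to form; fixing this requires a nontrivial coherence/rectification argument, not bookkeeping. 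This is precisely the difficulty the paper's proof is built to avoid: instead of constructing $j_{(!)}$ on arbitrary complexes, it introduces the triangulated subcategory $\sD^+\subset\Dmix_\scS(X,\E)$ generated by the objects $\cE_t^+\{m\}=\phi(\cE_t)\{m\}$, proves the key Hom-computation \eqref{eqn:morphisms-j^*} (that $j^*$ induces an isomorphism $\Hom(\iota\cF^+,\cG)\simto\Hom(j^*\iota\cF^+,j^*\cG)$, via \cite[Corollary~2.8]{jmw}), deduces that $j^*\circ\iota$ is an equivalence $\sD^+\simto\Dmix_\scS(U,\E)$, and defines $j_{(!)}:=\iota\circ(j^*\circ\iota)^{-1}$; the adjunction and the $\ast$-decompositions also rest on that computation. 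Your proposal contains no analogue of \eqref{eqn:morphisms-j^*}, and you misplace the principal difficulty: checking that $\phi$ kills $\mathcal{I}$ and the triangle identities is the easy part, while the extension of $j_{(!)}$ (and hence also your proof of part~\eqref{it:recollement-X_s-2}, which presupposes it) is the step that fails as written.
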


\begin{proof}
We treat the case of $j_{(!)}$ in detail; the case of $j_{(*)}$ is similar, or can be deduced using Verdier duality.

First we remark that we have $\cE_s=i_* \uuE_{X_s}$, and that the functor $i^*$ restricts to a functor from $\Parity_{\scS}(X,\E)$ to $\Parity_{\scS}(X_s,\E)$. For any $t \in \scS \smallsetminus \{s\}$ we denote by $\cE_t^+$ the image of the complex
\[
\cE_t \to i_{*} i_{}^* \cE_t
\]
(in degrees $0$ and $1$, and where the morphism is provided by adjunction)
in the category $\Dmix_\scS(X,\E)=\Kb \Parity_\scS(X,\E)$. 
Note that for any $n,m \in \Z$ we have
\begin{equation}
\label{eqn:E+}
\Hom_{\Dmix_\scS(X,\E)}(\cE_t^+, \cE_s\{m\}[n])=0.
\end{equation}
Indeed, this property follows from the observation that the natural morphism
\[
\Hom(i_{*} i^* \cE_t, \cE_s\{m\}[n]) \to
\Hom(\cE_t, \cE_s\{m\}[n])
\]
is an isomorphism for any $n,m \in \Z$, using the long exact sequence associated with the natural distinguished triangle
\[
i_{s*} i_s^* \cE_t[-1] \to \cE_t^+ \to \cE_t \xrightarrow{[1]}
\]
in $\Dmix_\scS(X,\E)$.

Let $\sD^+$ be the triangulated subcategory of $\Dmix_\scS(X,\E)$ generated by the objects $\cE_t^+\{m\}$ for all $t \in \scS \smallsetminus \{s\}$ and $m \in \Z$, and let $\iota : \sD^+ \to \Dmix_\scS(X,\E)$ be the inclusion. We claim that for any $\cF^+$ in $\sD^+$ and $\cG$ in $\Dmix_\scS(X,\E)$ the morphism
\begin{equation}
\label{eqn:morphisms-j^*}
\Hom_{\Dmix_\scS(X,\E)}(\iota \cF^+, \cG) \to \Hom_{\Dmix_\scS(U,\E)}(j^* \iota \cF^+, j^* \cG)
\end{equation}
induced by $j^*$ is an isomorphism. Indeed, by standard arguments using the five-lemma it is enough to prove the result when $\cF^+=\cE_t^+$ for some $t \in \scS \smallsetminus \{s\}$ and $\cG=\cE_u\{m\}[n]$ for some $u \in \scS$ and $n,m \in \Z$. If $u=s$ then the result follows from \eqref{eqn:E+}. Now assume $u \neq s$. If $n \notin \{-1,0\}$ there is nothing to prove. Assume now that $n=-1$. Then the right-hand side of~\eqref{eqn:morphisms-j^*} is zero, and the left-hand side consists of morphisms
\[
\varphi : i_{*} i^* \cE_t \to \cE_u\{m\}
\]
whose composition with the adjunction morphism $\cE_t \to i_{*} i^* \cE_t$ is zero. If $m$ does not have the same parity as $\dim X_u -\dim X_t$ then $\varphi=0$ by \cite[Corollary 2.8]{jmw}. Now if $m$ has the same parity as $\dim X_u -\dim X_t$ then $\varphi$, considered as a morphism in $\Db_\scS(X,\E)$, factors through a morphism $j_!j^*\cE_t\{1\} \to \cE_u\{m\}$. But
\[
\Hom_{\Db_\scS(X,\E)}(j_!j^*\cE_t\{1\}, \cE_u\{m\}) \cong \Hom_{\Db_\scS(U,\E)}(j^*\cE_t, j^*\cE_u\{m-1\})=0,
\]
again by \cite[Corollary 2.8]{jmw}, and this implies that $\varphi=0$. Finally, assume that $n=0$. If $m$ does not have the same parity as $\dim X_u -\dim X_t$, then both sides of \eqref{eqn:morphisms-j^*} vanish and there is nothing to prove. Otherwise the left-hand side of \eqref{eqn:morphisms-j^*} is the quotient of $\Hom(\cE_t,\cE_u\{m\})$ by the image of $\Hom(i_{*} i^* \cE_t, \cE_u\{m\})$. This space can easily be identified with the right-hand side of \eqref{eqn:morphisms-j^*} using the long exact sequence associated with the distinguished triangle
\[
j_! j^* \cE_t \to \cE_t \to i_{*} i^* \cE_t \xrightarrow{\{1\}}
\]
in $\Db_\scS(X,\E)$ and again \cite[Corollary 2.8]{jmw}.

Our claim regarding \eqref{eqn:morphisms-j^*} tells us in particular that $j^* \circ \iota$ is fully faithful. As the objects $j^* \iota \cE_t^+ \{m\} = j^* \cE_t \{m\}$ generate the triangulated category $\Dmix_\scS(U,\E)$, it follows that this functor is an equivalence of categories.
Now we define the functor 
\[
j_{(!)} := \iota \circ (j^* \circ \iota)^{-1} : \Dmix_\scS(U,\E) \to \Dmix_\scS(X,\E).
\]
By definition we have a natural isomorphism $j^* j_{(!)} \cong \mathrm{id}$. To prove that $j_{(!)}$ is left-adjoint to $j^*$ we have to prove that the morphism
\[
\Hom_{\Dmix_\scS(X,\E)}(j_{(!)} \cF, \cG) \xrightarrow{j^*} \Hom_{\Dmix_\scS(U,\E)}(j^* j_{(!)} \cF, j^* \cG)
\cong \Hom_{\Dmix_\scS(U,\E)}(\cF,j^* \cG)
\]
is an isomorphism for any $\cF \in \Dmix_\scS(U,\E)$ and $\cG \in \Dmix_\scS(X,\E)$. 
This follows from the observation that \eqref{eqn:morphisms-j^*} is an isomorphism.

By construction $j_{(!)} \cE_t=\cE_t^+$, and hence it is clear that the category $\Dmix_\scS(X,\E)$ is generated by the essential images of the functors $j_{(!)}$ and $i_*$. Since there exists no nonzero morphism from an object of $j_{(!)} \bigl( \Dmix_\scS(U, \E) \bigr)$ to an object of $i_{*} \bigl( \Dmix_\scS(Z,\E) \bigr)$ (by adjunction, and since $j^* i_*=0$), the first equality in~\eqref{it:recollement-X_s} follows.

Now we turn to \eqref{it:recollement-X_s-2}.
One can consider the full subcategories $\mathsf{D}^+_X \subset D^\mix_{\scS}(X,\E)$ and $\mathsf{D}^+_Z \subset D^\mix_{\scS}(Z,\E)$ constructed as in the proof of~\eqref{it:recollement-X_s}, and the inclusions $\iota_X$ and $\iota_Z$. It follows from the definitions that there exists a unique functor $k_{*}^+ : \mathsf{D}^+_Z \to \mathsf{D}^+_X$ such that $\iota_X \circ k_{*}^+ = k_{*} \circ \iota_Z$. Moreover, we have  $j^* \circ k_* = k_{Z*} \circ j_Z^*$, from which we deduce that
$j^* \circ \iota_X \circ k_{*}^+ = k_{Z*} \circ j_Z^* \circ \iota_Z$. Composing on the left with $j_{(!)}=\iota_X \circ (j^* \circ \iota_X)^{-1}$ and on the right with $(j_Z^* \circ \iota_Z)^{-1}$, we obtain the first isomorphism in \eqref{it:recollement-X_s-2}.
\end{proof}

\begin{proof}[Proof of Proposition~{\rm \ref{prop:recollement}}]
We will explain how to construct $j_{(!)}$ and $i^{(*)}$; we will prove that the adjunction morphisms $\id \to j^*j_{(!)}$ and $i^{(*)}i_* \to \id$ are isomorphisms; and we will show that for any $\cF \in \Dmix_\scS(X,\E)$, there is a morphism $i_* i^{(*)} \cF \to j_{(!)} j^* \cF [1]$ such that the triangle
\begin{equation}\label{eqn:triangles-U-Z-pre}
j_{(!)} j^* \cF \to \cF \to i_* i^{(*)} \cF \xrightarrow{[1]}
\end{equation}
is distinguished.  The axioms for a recollement (see~\cite[\S 1.4.3]{bbd}) consist of these assertions together with parallel ones for $j_{(*)}$ and $i^{(!)}$, and the condition that $j^* i_* = 0$.  The proofs for $j_{(*)}$ and $i^{(!)}$ are similar to those for $j_{(!)}$ and $i^{(*)}$, and will be omitted.  

We will first show, by induction of the number of strata in $Z$, that $j_{(!)}$ exists, that the adjunction morphism $\id \to j^*j_{(!)}$ is an isomorphism, and that we have
\begin{equation}\label{eqn:triangles-U-Z}
\Dmix_\scS(X,\E) = j_{(!)} \bigl( \Dmix_\scS(U,\E) \bigr) \ast i_* \bigl( \Dmix_\scS(Z,\E) \bigr).
\end{equation}

If $Z$ consists of one stratum, our assertions are proved in Lemma~\ref{lem:recollement-one-stratum}\eqref{it:recollement-X_s}.
Now assume that $Z$ has more than one stratum. Let $X_s \subset Z$ be a closed stratum, and set $X':=X \smallsetminus X_s$, $Z':=Z \smallsetminus X_s$. By induction the restriction functors associated with the inclusions $j' : U \hookrightarrow X'$ and $j'' : X' \hookrightarrow X$ have left adjoints; hence the same holds for their composition, which is the functor $j^*$. Similarly, the fact that $j^* j_{(!)} \cong \mathrm{id}$ follows from induction. By induction \eqref{eqn:triangles-U-Z} holds for the decompositions $X=X' \sqcup X_s$ and $X'=U \sqcup Z'$. Using associativity of the ``$\ast$'' operation (see \cite[Lemme 1.3.10]{bbd}) and the first isomorphism in Lemma~\ref{lem:recollement-one-stratum}\eqref{it:recollement-X_s-2} we deduce that it also holds for the decomposition $X=U \sqcup Z$. This finishes the induction.

Now, let us construct $i^{(*)}$ and prove the existence of~\eqref{eqn:triangles-U-Z-pre}. By \eqref{eqn:triangles-U-Z} and the fact that $i_*$ and $j_{(!)}$ are fully faithful, for any $\cF$ in $\Dmix_\scS(X,\E)$ there exist unique objects $\cF'$ in $\Dmix_\scS(U,\E)$ and $\cF''$ in $\Dmix_\scS(Z,\E)$ and a unique distinguished triangle
\begin{equation}
\label{eqn:triangle-def-i^*}
j_{(!)} \cF' \to \cF \to i_* \cF'' \xrightarrow{[1]}.
\end{equation}
(Unicity follows from~\cite[Corollaire~1.1.10]{bbd}.)
We necessarily have $\cF' \cong j^* \cF$, and we set $i^{(*)} \cF := \cF''$. Then the expected properties of the functor $i^{(*)}$ are clear by construction.
\end{proof}

We have observed in~\S\ref{ss:functors} that in the setting of Proposition~\ref{prop:recollement}, there exist natural isomorphisms $j^* \circ \D_X \cong \D_U \circ j^*$ and $i_* \circ \D_Z \cong \D_X \circ i_*$. We deduce canonical isomorphisms
\begin{equation}
\label{eqn:D!*}
\D_X \circ j_{(!)} \cong j_{(*)} \circ \D_U, \qquad \D_Z \circ i^{(!)} \cong i^{(*)} \circ \D_X.
\end{equation}

\begin{prop}
\label{prop:commutation-K-F}
In the setting of Proposition~{\rm \ref{prop:recollement}}, the functors $i_*$, $i^{(*)}$, $i^{(!)}$, $j^*$, $j_{(*)}$, and $j_{(!)}$ commute with the functors $\K(-)$ and $\F(-)$.
\end{prop}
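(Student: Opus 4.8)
The plan is to reduce everything to the case of a single closed stratum, where the functors $j_{(!)}$, $j_{(*)}$, $i^{(*)}$, $i^{(!)}$ were constructed by hand in Lemma~\ref{lem:recollement-one-stratum}, and then to propagate the commutation through the inductive construction in the proof of Proposition~\ref{prop:recollement}. We already know from \S\ref{ss:functors} that $i_*$ and $j^*$ commute with $\K(-)$ and $\F(-)$ (they are restrictions of the ordinary sheaf-theoretic functors, which commute with extension of scalars by definition), so the content is entirely in the four ``parenthesized'' functors.

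First I would treat the one-stratum case $Z = X_s$. Recall that $j_{(!)}$ was built by declaring $j_{(!)} \cE_t = \cE_t^+$, where $\cE_t^+$ is the image in $\Kb\Parity_\scS(X,\E)$ of the two-term complex $\cE_t \to i_* i^* \cE_t$. Since $\K(-)$ and $\F(-)$ send parity complexes to parity complexes and commute with $i_*$ and $i^*$, they send this two-term complex for $\E = \O$ to the corresponding two-term complex for $\E = \K$ (resp.\ $\F$); hence $\F(\cE_t^+) \cong \cE_t^+$ in $\Dmix_\scS(X,\F)$, compatibly with the morphisms $\cE_t^+ \to \cE_t$ and $\cE_t^+ \to i_* i^* \cE_t$. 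Because $j_{(!)}$ is characterized (up to canonical isomorphism) as the functor satisfying $j^* j_{(!)} \cong \id$ with essential image the triangulated subcategory $\sD^+$ generated by the $\cE_t^+\{m\}$, and $\F(-)$ carries this characterization on $\Dmix_\scS(X,\O)$ to the analogous one on $\Dmix_\scS(X,\F)$, we get a natural isomorphism $\F(-) \circ j_{(!)} \cong j_{(!)} \circ \F(-)$; the same for $\K(-)$. Alternatively, and perhaps more cleanly: both $\F(-) \circ j_{(!)}$ and $j_{(!)} \circ \F(-)$ are left adjoint to $j^* \circ \F(-) \cong \F(-) \circ j^*$ provided one checks that $j_{(!)}$ commutes with the relevant Hom-computations, so they agree by uniqueness of adjoints. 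The case of $j_{(*)}$ follows by Verdier duality, using \eqref{eqn:D!*} together with the fact that $\D_X$ commutes with $\K(-)$ and $\F(-)$ up to the expected twist (this last point deserves a line of care, since $\D_X$ is only an antiequivalence; one uses that $\F(-)$ commutes with $\D_X$ on parity complexes, which is part of the setup in \S\ref{ss:functors}). Then $i^{(*)}$ and $i^{(!)}$ commute with $\K(-)$, $\F(-)$ because they are defined, for $Z = X_s$, via the distinguished triangles $j_{(!)} j^* \cF \to \cF \to i_* i^{(*)} \cF \xrightarrow{[1]}$ and its dual, and all other terms in these triangles commute with $\F(-)$; applying $\F(-)$ to the triangle and using uniqueness of the cone (via \cite[Corollaire~1.1.10]{bbd}, exactly as in the proof of Proposition~\ref{prop:recollement}) identifies $\F(i^{(*)} \cF)$ with $i^{(*)} \F(\cF)$ naturally.

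For the general case I would induct on the number of strata in $Z$, mirroring the proof of Proposition~\ref{prop:recollement}. Writing $j^*$ as a composite $\Dmix_\scS(X,\E) \xrightarrow{j''^*} \Dmix_\scS(X',\E) \xrightarrow{j'^*} \Dmix_\scS(U,\E)$ with $X' = X \smallsetminus X_s$ for a closed stratum $X_s \subset Z$, one has $j_{(!)} \cong j''_{(!)} \circ j'_{(!)}$ up to canonical isomorphism (left adjoint of a composite), and by induction each factor commutes with $\F(-)$, hence so does $j_{(!)}$; likewise for $j_{(*)}$. Finally, $i^{(*)}$ and $i^{(!)}$ commute with $\F(-)$ by applying $\F(-)$ to the defining triangle \eqref{eqn:triangle-def-i^*}, whose other terms have now been shown to commute with $\F(-)$, and invoking uniqueness of the triangle once more. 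The same argument works verbatim for $\K(-)$. The main obstacle, and the only place requiring genuine care rather than formal nonsense, is establishing the one-stratum case cleanly: one must check that the ad hoc construction of $j_{(!)}$ via $\cE_t \to i_* i^* \cE_t$ is genuinely functorial in the coefficients, i.e.\ that applying $\F(-)$ to the equivalence $j^* \circ \iota \colon \sD^+ \simto \Dmix_\scS(U,\O)$ yields the corresponding equivalence for $\F$; this amounts to checking that $\F(-)$ sends the generating objects $\cE_t^+\{m\}$ for $\O$ to those for $\F$ together with the Hom-vanishing \eqref{eqn:E+}, all of which follows from \cite[Lemma~2.37]{jmw} and \cite[Corollary~2.8]{jmw}, but it is worth spelling out.
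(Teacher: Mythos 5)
Your proposal is correct and follows essentially the same route as the paper: reduce $j_{(!)}$ to the case of a single closed stratum, observe that extension of scalars preserves the subcategory $\sD^+$ and commutes with $j^*$ and $\iota$, and then handle $i^{(*)}$ via the defining triangle~\eqref{eqn:triangle-def-i^*}, with $j_{(*)}$, $i^{(!)}$ treated dually/similarly. The only cosmetic difference is that the paper sidesteps the identification $\F(\cE_t^+(\O)) \cong \cE_t^+(\F)$ by noting that $\sD^+_\E$ is generated by the complexes $\cE \to i_*i^*\cE$ for \emph{all} parity complexes $\cE$ (the one for $\cE_s\{m\}$ being null-homotopic), so only the weaker fact that $\F(-)$, $\K(-)$ preserve parity complexes is needed.
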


\begin{proof}
This statement has already been observed in~\S\ref{ss:functors} in the case of $i_*$ and $j^*$. We prove it for $j_{(!)}$ and $i^{(*)}$; the proof for $j_{(*)}$ and $i^{(!)}$ is similar.

To prove the claim for $j_{(!)}$, it is enough to treat the case where
$Z$ contains only one (closed) stratum $X_s$. In this case,
the subcategory $\sD_{\E}^+$ introduced in the proof of Lemma~\ref{lem:recollement-one-stratum} is generated (as a triangulated category) by all complexes
\[
\cE \to i_*i^* \cE
\]
for $\cE$ in $\Parity_{\scS}(X,\E)$. (Indeed, if $\cE=\cE_s\{m\}$ for some $m \in \Z$ this complex is homotopic to $0$.) Hence we have
\[
\K(\sD^+_{\O}) \subset \sD^+_{\K}, \qquad \F(\sD^+_{\O}) \subset \sD^+_{\F}.
\]
Since the functors $j^*$ and $\iota$ commute with the functors $\K(-)$ and $\F(-)$,
we deduce the commutativity for the functors $j_{(!)}$. 

Finally, to prove the claim for the functors $i^{(*)}$ we observe that the formation of triangle \eqref{eqn:triangle-def-i^*} commutes with $\K(-)$ and $\F(-)$ in the obvious sense, which implies the desired commutativity.
\end{proof}

\subsection{Locally closed inclusions}
\label{ss:loc-closed}

The previous section dealt with open and closed inclusions.  In this section, we construct pullback and push-forward functors for any locally closed inclusion $h: Y \hookrightarrow X$.

\begin{lem}
\label{lem:functors}
Consider a commutative diagram
\[
\xymatrix{
W \ar@{^{(}->}[r]^-{i'} \ar@{^{(}->}[d]_-{j'} & Z \ar@{^{(}->}[d]^-{j} \\
Y \ar@{^{(}->}[r]^-{i} & X
}
\]
where $W$, $Z$ and $Y$ are unions of strata, and assume that $i$, $i'$ are closed embeddings and $j$, $j'$ are open embeddings.
Then there exist natural isomorphisms of functors
\[
j_{(!)} i'_* \cong i_* j'_{(!)}, \quad j_{(*)} i'_* \cong i_* j'_{(*)}, \quad i'{}^{(!)} j^* \cong j'{}^* i^{(!)}, \quad i'{}^{(*)} j^* \cong j'{}^* i^{(*)}.
\]
\end{lem}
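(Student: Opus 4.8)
The plan is to prove the first isomorphism $j_{(!)} i'_* \cong i_* j'_{(!)}$ directly, and to deduce the other three by formal arguments. Indeed, composing the adjunctions of Proposition~\ref{prop:recollement}, the functor $j_{(!)} i'_*$ is left adjoint to $i'{}^{(!)} j^*$, while $i_* j'_{(!)}$ is left adjoint to $j'{}^* i^{(!)}$; so by uniqueness of adjoints an isomorphism $j_{(!)} i'_* \cong i_* j'_{(!)}$ induces an isomorphism $i'{}^{(!)} j^* \cong j'{}^* i^{(!)}$. Next, applying $\D_X$ to the first isomorphism and using~\eqref{eqn:D!*} together with $\D_X \circ i_* \cong i_* \circ \D_Y$ and $\D_Z \circ i'_* \cong i'_* \circ \D_W$ (see~\S\ref{ss:functors}), then cancelling the antiequivalence $\D_W$, one gets $j_{(*)} i'_* \cong i_* j'_{(*)}$; and this last functor, being left adjoint to $i'{}^{(*)} j^*$ resp.\ to $j'{}^* i^{(*)}$, yields the fourth isomorphism, again by uniqueness of adjoints.

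So it remains to prove $j_{(!)} i'_* \cong i_* j'_{(!)}$. I would first reduce to the case $W = Y \cap Z$: in general $W$ is open in $Y$ and closed in $Z$, hence clopen in $V := Y \cap Z$, and for a clopen inclusion $a$ the functors $a_{(!)}$, $a_*$, $a_{(*)}$ all coincide (and likewise $a^{(*)}$, $a^*$, $a^{(!)}$); factoring $i'$ and $j'$ through $V$ then deduces the claim for $(W,Z,Y,X)$ from the claim for $(V,Z,Y,X)$. Assuming now $W = Y \cap Z$, I would induct on the number of strata of the closed set $X \smallsetminus Z$. The case $X \smallsetminus Z = \emptyset$ is trivial. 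If $X \smallsetminus Z$ is a single stratum $X_s$, there are two cases: if $X_s \subset Y$ then $W = Y \smallsetminus X_s$ and the assertion is exactly Lemma~\ref{lem:recollement-one-stratum}\eqref{it:recollement-X_s-2}; if $X_s \not\subset Y$ then $Y \subset Z$, so $W = Y$ and $j' = \id$, and applying Lemma~\ref{lem:recollement-one-stratum}\eqref{it:recollement-X_s-2} to the closed union of strata $Y \sqcup X_s$, together with the fact that $Y$ is clopen in $Y \sqcup X_s$, gives $j_{(!)} i'_* \cong (Y \hookrightarrow X)_* = i_*$, as desired.

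For the inductive step, assume $X \smallsetminus Z$ has at least two strata, pick a stratum $X_s \subset X \smallsetminus Z$ which is closed in $X$ (e.g.\ one minimal in the closure order), and set $X' := X \smallsetminus X_s \supset Z$, so that $j$ factors as $Z \hookrightarrow X' \hookrightarrow X$ and hence $j_{(!)} \cong j''_{(!)} j_{1(!)}$ by uniqueness of adjoints, where $j_1 \colon Z \hookrightarrow X'$ and $j'' \colon X' \hookrightarrow X$. If $X_s \subset Y$, set $Y' := Y \smallsetminus X_s = Y \cap X'$: then $(W,Z,Y',X')$ is again a square of the required form with one fewer stratum in the complement (and $Y' \cap Z = W$), so the inductive hypothesis applies to it and gives $j_{1(!)} i'_* \cong (Y' \hookrightarrow X')_* (W \hookrightarrow Y')_{(!)}$, while Lemma~\ref{lem:recollement-one-stratum}\eqref{it:recollement-X_s-2} applied to the one‑stratum square $(Y',X',Y,X)$ gives $j''_{(!)}(Y' \hookrightarrow X')_* \cong i_* (Y' \hookrightarrow Y)_{(!)}$; concatenating these two isomorphisms and using compatibility of $(-)_{(!)}$ with compositions of open inclusions, together with the fact that the composite $W \hookrightarrow Y' \hookrightarrow Y$ equals $j'$, yields $j_{(!)} i'_* \cong i_* j'_{(!)}$. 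If instead $X_s \not\subset Y$, then $Y \subset X'$, the inductive hypothesis applied to $(W,Z,Y,X')$ gives $j_{1(!)} i'_* \cong (Y \hookrightarrow X')_* j'_{(!)}$, and one is reduced to showing $j''_{(!)}(Y \hookrightarrow X')_* \cong (Y \hookrightarrow X)_* = i_*$, which is the one‑stratum case already treated (applied to $Y \sqcup X_s$).

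The difficulty here is not conceptual but organizational: one must match each reduction step to the precise hypotheses of Lemma~\ref{lem:recollement-one-stratum}\eqref{it:recollement-X_s-2} — in particular, arranging that the removed stratum $X_s$ lies in the relevant closed set, which sometimes forces one to enlarge $Y$ to $Y \sqcup X_s$ and to invoke the (trivial) clopen case — and to check that the induction parameter, the number of strata of $X \smallsetminus Z$, strictly decreases at each step. Once this bookkeeping is in place, every isomorphism in the statement is obtained by stringing together the basic isomorphism of Lemma~\ref{lem:recollement-one-stratum}\eqref{it:recollement-X_s-2} with the evident compatibilities of $j_{(!)}$ with compositions.
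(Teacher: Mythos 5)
Your proposal is correct and follows essentially the same route as the paper: all three remaining isomorphisms are deduced from the first via Verdier duality and uniqueness of adjoints, and the first is proved by stripping off one closed stratum at a time and invoking Lemma~\ref{lem:recollement-one-stratum}\eqref{it:recollement-X_s-2}, with the clopen (union of connected components) case checked directly; the only differences are organizational (the paper inducts on the number of strata of $X$ with a three-case analysis instead of first reducing to $W = Y \cap Z$). One harmless slip: $j_{(*)} i'_*$ and $i_* j'_{(*)}$ are \emph{right} adjoints of $i'{}^{(*)} j^*$ and $j'{}^* i^{(*)}$, not left adjoints, but the uniqueness-of-adjoints argument goes through unchanged.
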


\begin{proof}
It is enough to prove the first isomorphism: then the second one follows using Verdier duality (see~\eqref{eqn:D!*}), and the third by adjunction. Finally, the fourth isomorphism follows from the second one by adjunction. We will prove this claim by induction on the number of strata in $X$.

If $Z=Y=X$ then $W$ is a union of connected components of $X$, and the claim is easily checked.

Now assume that $Z \neq X$, and choose a closed stratum $X_s$ contained in $X \smallsetminus Z$. Set $X':=X \smallsetminus X_s$, $Y':=Y \cap X'$. Then one can complete our diagram to
\begin{equation}
\label{eqn:diagram-loc-closed}
\vcenter{
\xymatrix@R=0.5cm{
W \ar@{^{(}->}[r] \ar@{^{(}->}[d] & Z \ar@{^{(}->}[d] \\
Y' \ar@{^{(}->}[r] \ar@{^{(}->}[d] & X' \ar@{^{(}->}[d] \\
Y \ar@{^{(}->}[r] & X.
}
}
\end{equation}
By induction the claim is known for the upper part of the diagram, so that it is enough to prove it for the lower part. If $X_s \subset Y$ then this claim was proved in Lemma~\ref{lem:recollement-one-stratum}\eqref{it:recollement-X_s-2}. If $X_s \not\subset Y$, then $Y'=Y$, and a similar argument applies.

Finally, consider the case $Z=X$ but $Y \neq X$. If $W=Y$ there is nothing to prove. Otherwise we choose a closed stratum $X_s \subset Y$ not included in $W$, and set $X':=X \smallsetminus X_s$, $Y':=Y \smallsetminus X_s$. Then we have a diagram as in~\eqref{eqn:diagram-loc-closed} (with $Z$ replaced by $X'$ in the upper right corner), and we conclude using induction, Lemma~\ref{lem:recollement-one-stratum}\eqref{it:recollement-X_s-2} and the preceding cases.
\end{proof}

Lemma~\ref{lem:functors} allows us to define unambiguously, for any locally closed inclusion of strata $h : Y \to X$, the functors
\[
h_{(!)}, h_{(*)} : \Dmix_\scS(Y,\E) \to \Dmix_\scS(X,\E), \qquad h^{(*)}, h^{(!)} : \Dmix_\scS(X,\E) \to \Dmix_\scS(Y,\E).
\]
Let us explain the case of $h_{(!)}$. We first observe that if  $h=i \circ j$ with $i : Z \hookrightarrow X$ a closed inclusion and $j : Y \hookrightarrow Z$ an open inclusion, the functor $i_* \circ j_{(!)}$ does not depend on the choice of $Z$ (up to isomorphism). Indeed, applying Lemma~\ref{lem:functors} to the commutative diagram
\[
\xymatrix{
Y \ar@{=}[r] \ar@{^{(}->}[d]_-{j_Y} & Y \ar@{^{(}->}[d]^-{j} \\
\overline{Y} \ar@{^{(}->}[r]^-{i_Y} & Z
}
\]
we obtain that $j_{(!)} \cong i_{Y*} j_{Y(!)}$, and then $i_* \circ j_{(!)} \cong (i \circ i_Y)_* j_{Y(!)}$, which does not depend on the choice of $Z$. A similar argument using $X \smallsetminus (\overline{Y} \smallsetminus Y)$ shows that if $h=j \circ i$ with $i : Y \hookrightarrow U$ a closed inclusion and $j : U \hookrightarrow X$ an open inclusion, then the functor $j_{(!)} i_*$ does not depend on the choice of $U$ (up to isomorphism). Finally, another application of Lemma~\ref{lem:functors} tells us that all these functors are isomorphic to each other; they define the functor $h_{(!)}$.

One can easily check (using again Lemma~\ref{lem:functors}) that if $h : Y \hookrightarrow Z$ and $k : Z \hookrightarrow X$ are locally closed inclusions then we have
\[
(k \circ h)_{(!)} \cong k_{(!)} h_{(!)}, \ (k \circ h)_{(*)} \cong k_{(*)} h_{(*)}, \ (k \circ h)^{(!)} \cong h^{(!)} k^{(!)}, \ (k \circ h)^{(*)} \cong h^{(*)} k^{(*)}.
\]
Moreover, using \eqref{eqn:D!*} we have
\begin{equation}
\label{eqn:D!*-2}
\D_X \circ h_{(!)} \cong h_{(*)} \circ \D_Y, \qquad \D_Y \circ h^{(!)} \cong h^{(*)} \circ \D_X.
\end{equation}
Finally, using Proposition~\ref{prop:commutation-K-F} we have
\begin{equation}\label{eqn:inclusions-scalars}
\begin{aligned}
\F \circ h_{(!)} \cong h_{(!)} \circ \F, & \qquad \F \circ h_{(*)} \cong h_{(*)} \circ \F, \\
\K \circ h_{(!)} \cong h_{(!)} \circ \K, & \qquad \K \circ h_{(*)} \cong h_{(*)} \circ \K,
\end{aligned}
\end{equation}
and similarly for $h^{(!)}$ and $h^{(*)}$.

\begin{rmk}
\label{rmk:restriction-X_s}
Assume $Y=X_s$ is a stratum, so that $h=i_s$. The functor $i_s^* : \Db_{\scS}(X,\E) \to \Db_{\scS}(X_s,\E)$ restricts to a functor $\Parity_{\scS}(X,\E) \to \Parity_{\scS}(X_s,\E)$, which itself defines functor $i_s^{[*]} : \Dmix_\scS(X,\E) \to \Dmix_\scS(X_s,\E)$ between bounded homotopy categories. We claim that this functor is isomorphic to $i_s^{(*)}$. Indeed, by construction of $i_s^{(*)}$ one can assume that $X_s$ is closed in $X$. In this case $i_{s(*)}$ is simply the functor induced by $i_{s*}$, considered as a functor $\Parity_{\scS}(X_s,\E) \to \Parity_{\scS}(X,\E)$. The latter functor is right adjoint both to $i_s^{[*]}$ and $i_s^{(*)}$, which implies that these functors are isomorphic. In particular, we deduce that
\[
\Hom_{\Dmix_\scS(X,\E)}(\cE,i_{s(*)} \cF \{n\}[m] ) \cong 
\begin{cases}
\Hom_{\Db_{\scS}(X_s,\E)}(i_s^* \cE, \cF\{n\}) & \text{if $m=0$;} \\
0 & \text{otherwise}
\end{cases}
\]
for any $\cE$ in $\Parity_{\scS}(X,\E)$ and $\cF$ in $\Parity_{\scS}(X_s,\E)$.

Similarly, one can check that the functor $i_s^{(!)}$ is the functor induced by the restriction of $i_s^!$ to $\Parity_{\scS}(X,\E)$, and deduce an explicit description of the $\E$-module $\Hom_{\Dmix_\scS(X,\E)}(i_{s(!)} \cF, \cE\{n\}[m])$ for $\cE$ in $\Parity_{\scS}(X,\E)$ and $\cF$ in $\Parity_{\scS}(X_s,\E)$.
\end{rmk}

\subsection{Stratified morphisms}
\label{ss:stratified}

Let $Y = \bigsqcup_{t \in \scT} Y_t$ be another variety stratified by affine spaces and satisfying~{\bf (A1)}.  
We will say that a map $f: X \to Y$ is \emph{stratified} if the following two conditions hold:
\begin{enumerate}
\item For each $t \in \scT$, $f^{-1}(Y_t)$ is a union of strata.
\item For each $X_s \subset f^{-1}(Y_t)$, the map $f_{st}: X_s \to Y_t$ induced by $f$ is a trivial fibration, i.e.~we have $\dim X_s \geq \dim Y_t$, and $f_{st}$ is identified with the natural projection $\mathbb{A}^{\dim X_s} \to \mathbb{A}^{\dim Y_t}$.
\end{enumerate}
Note that these conditions imply that $f$ is stratified in the sense of~\cite[Definition~2.32]{jmw}, and that it is an even morphism in the sense of~\cite[Definition~2.33]{jmw}.

According to~\cite[Proposition~2.34]{jmw}, if $f$ is stratified and proper, then $f_! = f_*$ sends parity complexes to parity complexes, and thus induces a functor
\[
f_! = f_*: \Dmix_\scS(X,\E) \to \Dmix_\scT(Y,\E).
\]
On the other hand, if $f$ is stratified and smooth of relative dimension $d$, it is easy to see from the definitions that $f^* \cong f^!\{-2d\}$ sends parity complexes to parity complexes as well, and hence also induces a functor $\Dmix_\scT(Y,\E) \to \Dmix_\scS(X,\E)$.  Let
\[
f^\dag := f^*\{d\} \cong f^!\{-d\}: \Dmix_\scT(Y,\E) \to \Dmix_\scS(X,\E).
\]
This functor has the advantage that it commutes with Verdier duality.

If $f$ is both proper and smooth, then the functors above inherit the adjunction properties of the corresponding functors between $\Db_\scS(X,\E)$ and $\Db_\scT(Y,\E)$.  In particular, if we let
\[
f_\dag := f_*\{d\}
\qquad\text{and}\qquad
f_\ddag := f_*\{-d\},
\]
then $f_\dag$ is left adjoint to $f^\dag$, and $f_\ddag$ is right adjoint to $f^\dag$.

\begin{prop}\label{prop:proper-smooth}
Let $f: X \to Y$ be a proper, smooth stratified morphism.  Let $h: Z \to Y$ be the inclusion of a locally closed union of strata, and form the diagram
\[
\xymatrix{
f^{-1}(Z) \ar[r]^-{h'} \ar[d]_{f'} & X \ar[d]^{f} \\
Z \ar[r]_h & Y.}
\]
Then $f'$ is proper, smooth and stratified, and we have the following natural isomorphisms of functors:
\begin{align}
f_* \circ h'_{(*)} &\cong h_{(*)} \circ f'_*, & \label{eqn:ps-push}
f_* \circ h'_{(!)} &\cong h_{(!)} \circ f'_*, \\
h^{\prime(*)} \circ f^\dag & \cong f^{\prime\dag} \circ h^{(*)}, & \label{eqn:ps-pull}
h^{\prime(!)} \circ f^\dag & \cong f^{\prime\dag} \circ h^{(!)}, \\
f^\dag \circ h_{(*)} &\cong h'_{(*)} \circ f^{\prime\dag}, & \label{eqn:ps-bc-inc}
f^\dag \circ h_{(!)} &\cong h'_{(!)} \circ f^{\prime\dag}, \\
h^{(*)} \circ f_* & \cong f'_* \circ h^{\prime(*)}, & \label{eqn:ps-bc}
h^{(!)} \circ f_* & \cong f'_* \circ h^{\prime(!)}.
\end{align}
\end{prop}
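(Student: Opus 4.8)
The first assertion is routine: properness and smoothness are preserved by base change (with the relative dimension $d$ unchanged), and the two conditions in the definition of a stratified morphism pass from $f$ to $f'$; hence $f'$ is proper, smooth and stratified, and in particular $f'_*$ and $f^{\prime\dag}$ are defined on $\Dmix$. For the displayed isomorphisms, the plan is to reduce everything to the cases where $h$ is an open or a closed inclusion, and then, in each of those, to establish by hand only the \emph{first} isomorphisms of~\eqref{eqn:ps-push} and~\eqref{eqn:ps-bc}, the rest following formally. Indeed, writing a locally closed $h$ as $i\circ j$ with $i$ closed and $j$ open (and correspondingly $h'=i'\circ j'$ over $f^{-1}(\overline{Z})$), the identities $h_{(!)}\cong i_*j_{(!)}$ and $h^{(*)}\cong j^* i^{(*)}$ of~\S\ref{ss:loc-closed}, together with the functoriality $(g\circ g')_*\cong g_*\circ g'_*$ of $*$-pushforward, reduce all four displayed lines to the open and closed cases. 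Once the first isomorphism in~\eqref{eqn:ps-push}, resp.\ in~\eqref{eqn:ps-bc}, is known, the first isomorphism in~\eqref{eqn:ps-pull}, resp.\ in~\eqref{eqn:ps-bc-inc}, is obtained by passing to adjoint functors: $h^{(*)}$ is left adjoint to $h_{(*)}$ for every locally closed $h$ (by Proposition~\ref{prop:recollement} and~\S\ref{ss:loc-closed}), and $f_*$ has left adjoint $f^*$ and right adjoint $f^*\{2d\}$ with $f^*\cong f^\dag\{-d\}$, and likewise for $f'$ (all by~\S\ref{ss:stratified}); the Tate twists $\{\pm d\}$ commute with all functors in sight and so cancel. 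Finally the second isomorphism on each of the four lines follows from the first by Verdier duality, using~\eqref{eqn:D!*-2}, the isomorphism $\D_Y\circ f_*\cong f_*\circ\D_X$ (valid as $f$ is proper), the compatibility of $f^\dag$ with $\D$ recorded in~\S\ref{ss:stratified}, and the analogues for $f'$.

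The crucial point is that, unlike $f_*$ and $f^\dag$, the recollement functors $h_{(!)}$, $h_{(*)}$, $h^{(*)}$, $h^{(!)}$ are not in general induced by functors between categories of parity complexes, so base change in $\Db_\scS$ cannot be invoked for them directly; one argues through their universal properties instead. When $h=i$ is closed, $h_{(!)}\cong h_{(*)}\cong i_*$ \emph{is} induced by $*$-pushforward of parity complexes, so~\eqref{eqn:ps-push} amounts to $(f\circ i')_*\cong(i\circ f')_*$; and I would prove $i^{(*)}\circ f_*\cong f'_*\circ i^{\prime(*)}$ of~\eqref{eqn:ps-bc} by a Yoneda argument, rewriting $\Hom(i^{(*)}f_*\cF,\cG)$ (for $\cF$ on $X$ and $\cG$ on $Z$) in turn via $i^{(*)}\dashv i_*$, via $f_*\dashv f^*\{2d\}$, via the tautological base change $f^*\circ i_*\cong i'_*\circ f^{\prime *}$ (an instance of the unconditional isomorphism $g^*\circ h_!\cong h'_!\circ g^{\prime *}$ for a Cartesian square, which descends to $\Dmix$ since $f^*$, $i_*$, $i'_*$, $f^{\prime *}$ all preserve parity), then via $i^{\prime(*)}\dashv i'_*$, and finally via $f'_*\dashv f^{\prime *}\{2d\}$, arriving at $\Hom(f'_*i^{\prime(*)}\cF,\cG)$. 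When $h=j$ is open, $h^{(*)}=h^{(!)}=j^*$ is a restriction functor, so the isomorphism in~\eqref{eqn:ps-bc} is likewise an instance of $g^*\circ h_!\cong h'_!\circ g^{\prime *}$ in $\Db_\scS$ (again descending to $\Dmix$, as all functors involved are parity-preserving), and the isomorphisms in~\eqref{eqn:ps-pull} are the canonical identifications of composed pullbacks coming from $f\circ j'=j\circ f'$; the remaining isomorphism $f_*\circ j'_{(!)}\cong j_{(!)}\circ f'_*$ of~\eqref{eqn:ps-push} I would again prove by Yoneda, passing $\Hom(f_*j'_{(!)}\cF,\cG)$ through $f_*\dashv f^*\{2d\}$, then $j'_{(!)}\dashv j^{\prime *}$, then the identity $j^{\prime *}\circ f^*\cong f^{\prime *}\circ j^*$, then $f'_*\dashv f^{\prime *}\{2d\}$, and finally $j_{(!)}\dashv j^*$, to reach $\Hom(j_{(!)}f'_*\cF,\cG)$.

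The main obstacle is really these Yoneda computations: one must carry the correct cohomological and Tate shifts through each adjunction, and check that at every step the only input borrowed from $\Db_\scS$ is of one of the two harmless kinds---a canonical identification of composed pullbacks along a commuting square of morphisms, or the unconditional base-change isomorphism $g^*\circ h_!\cong h'_!\circ g^{\prime *}$ for a Cartesian square---each of which transports to $\Dmix$ for free because all the functors occurring in it preserve parity. Granting those two computations, everything else is a formal diagram chase with adjoints and Verdier duality as described above.
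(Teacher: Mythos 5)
Your proposal is correct and takes essentially the same route as the paper: reduce to the cases of an open or a closed inclusion via the composition identities of \S\ref{ss:loc-closed}, verify directly those isomorphisms that hold at the level of parity complexes (composition of pushforwards, composed pullbacks, and base change in $\Db_\scS$, which descend to $\Dmix_\scS$ because all the functors involved preserve parity), and deduce the remaining ones by adjunction using $f^*\dashv f_*\dashv f^*\{2d\}$ and $h^{(*)}\dashv h_{(*)}$. Your Yoneda computations and the use of Verdier duality for the second isomorphism on each line are only minor repackagings of the paper's adjunction arguments (in the direct cases the two functors on a line in fact coincide, since $h_{(!)}=h_{(*)}$ for $h$ closed and $h^{(!)}=h^{(*)}$ for $h$ open).
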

\begin{proof}
By the considerations of~\S\ref{ss:loc-closed}, it is enough to treat the cases where $h$ is either an open inclusion or a closed inclusion.  If $h$ is a closed inclusion, then~\eqref{eqn:ps-push} follows from the fact that the functors
\[
f_* \circ h'_*, \ h_* \circ f_* : \Parity_\scS(f^{-1}(Z),\E) \to \Parity_\scS(X,\E)
\]
are isomorphic.  The same reasoning proves~\eqref{eqn:ps-bc-inc} when $h$ is a closed inclusion.  Both~\eqref{eqn:ps-pull} and~\eqref{eqn:ps-bc} hold similarly when $h$ is an open inclusion.

The remaining cases can now be handled by adjunction.  Observe that $f_* \circ h'_{(*)}$ is right adjoint to $h^{\prime(*)} \circ f^*$, and $h_{(*)} \circ f'_*$ is right adjoint to $f^{\prime*} \circ h^{(*)}$.  Thus, when $h$ is an open inclusion, the first isomorphism in~\eqref{eqn:ps-push} follows from a known case of~\eqref{eqn:ps-pull}.  Similar reasoning establishes the second isomorphism in~\eqref{eqn:ps-push} for open inclusions as well.  Likewise, we deduce~\eqref{eqn:ps-bc-inc} from~\eqref{eqn:ps-bc} when $h$ is an open inclusion.  If $h$ a closed inclusion, then we deduce~\eqref{eqn:ps-pull} and~\eqref{eqn:ps-bc} from~\eqref{eqn:ps-push} and~\eqref{eqn:ps-bc-inc}, respectively.
\end{proof}

\subsection{Modular reduction and $\Hom$ spaces}
\label{ss:Hom}

If $\cF^\bullet, \cG^\bullet$ are bounded complexes of objects of $\Parity_\scS(X,\E)$, one can form, in the usual way, a complex of $\E$-modules $\uHom^\bullet(\cF^\bullet,\cG^\bullet)$ whose $k$-th term is given by $\prod_{j-i = k} \Hom(\cF^i, \cG^j)$.  We denote the image of this complex in $\Db(\E\lh\ngmod)$ by $\RHom(\cF^\bullet, \cG^\bullet)$.  This construction gives us a triangulated bifunctor
\[
\RHom: \Dmix_\scS(X,\E)^\op \times \Dmix_\scS(X,\E) \to \Db(\E\lh\ngmod)
\]
satisfying $\Hom(\cF^\bullet,\cG^\bullet) \cong H^0(\RHom(\cF^\bullet,\cG^\bullet))$.  

\begin{lem}
\label{lem:rhom-scalars}
For $\cF, \cG \in \Dmix_\scS(X,\O)$, there exists a natural bifunctorial isomorphism
\[
\F \lotimes_\O \RHom(\cF,\cG) \simto \RHom \bigl( \F(\cF), \F(\cG) \bigr).
\]
\end{lem}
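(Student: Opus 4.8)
The plan is to reduce the statement to a question about $\Hom$ spaces between parity complexes, where we can invoke the universal coefficient theorem for the (finitely generated, torsion-controlled) $\O$-modules $\Hom_{\Db_\scS(X,\O)}(\cE_s(\O)\{n\}, \cE_t(\O))$. First I would note that both sides of the desired isomorphism are triangulated bifunctors on $\Dmix_\scS(X,\O) = \Kb\Parity_\scS(X,\O)$, so by the standard dévissage (fixing $\cG$ and letting $\cF$ run over a generating set, then vice versa, using the five-lemma) it suffices to produce the natural isomorphism when $\cF = \cE_s(\O)\{n\}$ and $\cG = \cE_t(\O)\{m\}$ are single shifted indecomposable parity complexes sitting in homological degree $0$. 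In that case $\RHom(\cF,\cG)$ is represented by the $\O$-module $\Hom_{\Db_\scS(X,\O)}(\cE_s(\O), \cE_t(\O)\{m-n\})$ placed in degree $0$, and similarly $\F(\cF) = \cE_s(\F)\{n\}$, $\F(\cG) = \cE_t(\F)\{m\}$ by the compatibility of $\F(-)$ with the $\jmw$ classification of parity complexes recalled in \S\ref{ss:functors}.

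The key input is then that for parity complexes $\cE, \cE'$ over $\O$, the $\O$-module $\Hom^\bullet_{\Db_\scS(X,\O)}(\cE,\cE')$ is finitely generated and \emph{the derived modular reduction computes the $\Hom$ over $\F$}: there is a natural isomorphism $\F \lotimes_\O \RHom_{\Db_\scS(X,\O)}(\cE,\cE') \simto \RHom_{\Db_\scS(X,\F)}(\F(\cE), \F(\cE'))$. This is essentially \cite[Proposition~2.41]{jmw} (the statement that $\Hom$ between parity complexes behaves well under modular reduction), combined with the fact that such $\Hom^\bullet$-modules are free over $\O$ in good situations, or at least that the relevant $\Tor$ terms vanish; in general one just carries the mapping-cone description through. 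Concretely, one can model $\RHom_{\Db_\scS(X,\O)}(\cE,\cE')$ by a bounded complex of free $\O$-modules (e.g.\ using that parity complexes admit nice resolutions, or directly using that $\Db_\scS(X,\O)$ has enough structure coming from (A1)), and then $\F \lotimes_\O$ of a bounded free complex is just the termwise reduction, which by \cite[Lemma~2.37 and Proposition~2.41]{jmw} represents $\RHom_{\Db_\scS(X,\F)}(\F(\cE), \F(\cE'))$. Taking $H^0$ of both sides after the degree shift recovers the claim for single parity complexes, and naturality/bifunctoriality is inherited from the construction of $\uHom^\bullet$ and the functor $\F(-)$ at the level of complexes.

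The main obstacle is making the isomorphism genuinely \emph{natural and bifunctorial} at the level of the homotopy category, rather than just an abstract isomorphism of $\O$-modules for each pair of objects. The cleanest route is to work at the cochain level throughout: given bounded complexes $\cF^\bullet, \cG^\bullet$ of parity complexes over $\O$, the complex $\uHom^\bullet(\cF^\bullet,\cG^\bullet)$ has terms $\prod_{j-i=k}\Hom_{\Parity_\scS(X,\O)}(\cF^i,\cG^j)$, and one must check that each such $\O$-module $\Hom_{\Parity_\scS(X,\O)}(\cF^i,\cG^j) = \Hom^0_{\Db_\scS(X,\O)}(\cF^i,\cG^j)$ has the property that $\F \otimes_\O -$ computes it correctly and that there are no higher $\Tor$'s interfering when one passes to the total complex — this is where the torsion-freeness (or at worst bounded $\Tor$-dimension) of $\Hom$ spaces between parity complexes over $\O$ is used, and it is exactly the content that \cite{jmw} supplies via \cite[Proposition~2.41]{jmw}. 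Once the cochain-level comparison map $\F \otimes_\O \uHom^\bullet(\cF^\bullet,\cG^\bullet) \to \uHom^\bullet(\F(\cF^\bullet), \F(\cG^\bullet))$ is written down (it is the obvious one, induced by functoriality of $\F(-)$ on morphism spaces), checking it is a quasi-isomorphism is a degreewise statement, and bifunctoriality is automatic because everything in sight is defined by explicit formulas on complexes; passing to $\Db(\E\lh\ngmod)$ then gives the stated isomorphism $\F \lotimes_\O \RHom(\cF,\cG) \simto \RHom(\F(\cF),\F(\cG))$.
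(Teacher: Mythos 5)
Your final cochain-level argument is essentially the paper's own proof: one writes down the natural map $\uHom^\bullet(\cF^\bullet,\cG^\bullet) \to \uHom^\bullet\bigl(\F(\cF^\bullet),\F(\cG^\bullet)\bigr)$, deduces by adjunction a morphism of bifunctors $\F \lotimes_\O \RHom(-,-) \to \RHom\bigl(\F(-),\F(-)\bigr)$, and checks it is an isomorphism termwise using that $\Hom(\cF,\cG)$ is a free $\O$-module for $\cF,\cG$ parity complexes over $\O$ and that $\F \otimes_\O \Hom(\cF,\cG) \to \Hom(\F(\cF),\F(\cG))$ is an isomorphism (the paper cites the proof of \cite[Proposition~2.39]{jmw} for these facts). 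Your opening d\'evissage reduction to single shifted parity objects is superfluous (and by itself would not give naturality), but you recognize this and the argument you settle on is correct and matches the paper's.
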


\begin{proof}
For $\cF^\bullet$, $\cG^\bullet$ bounded complexes of objects of $\Parity_{\scS}(X,\O)$, we have a natural and bifunctorial morphism $\uHom^\bullet(\cF^\bullet,\cG^\bullet) \to \uHom^\bullet \bigl( \F(\cF^\bullet),\F(\cG^\bullet) \bigr)$. Passing to derived (resp.~homotopy) categories and using adjunction we deduce a morphism of bifunctors
\[
\F \lotimes_\O \RHom(-,-) \to \RHom \bigl( \F(-), \F(-) \bigr).
\]
The fact that this morphism is an isomorphism follows from the fact that for $\cF,\cG$ in $\Parity_{\scS}(X,\O)$ the $\O$-module $\Hom(\cF,\cG)$ is free, and the natural morphism $\F \otimes_\O \Hom(\cF,\cG) \to \Hom(\F(\cF), \F(\cG))$ is an isomorphism. (See the proof of~\cite[Proposition~2.39]{jmw}.)
\end{proof}

\begin{lem}\label{lem:o-f-ses}
For $\cF, \cG \in \Dmix_\scS(X,\O)$, there is a natural short exact sequence
\[
0 \to \F \otimes_\O \Hom(\cF,\cG) \to \Hom \bigl( \F(\cF), \F(\cG) \bigr) \to \Tor^\O_1 \bigl( \F,\Hom(\cF,\cG[1]) \bigr) \to 0.
\]
\end{lem}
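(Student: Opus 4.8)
The plan is to deduce this from Lemma~\ref{lem:rhom-scalars} by applying the universal coefficient spectral sequence (or rather, the universal coefficient short exact sequence) for the derived tensor product $\F \lotimes_\O (-)$ over the discrete valuation ring $\O$. Since $\O$ is a DVR, every object of $\Db(\E\lh\ngmod)$ has cohomology modules that are finitely generated over $\O$, and $\F = \O/\mathfrak{m}$ has a free resolution of length one; consequently, for any complex $C^\bullet$ of $\O$-modules with bounded, finitely generated cohomology, there is a natural short exact sequence
\[
0 \to \F \otimes_\O H^k(C^\bullet) \to H^k(\F \lotimes_\O C^\bullet) \to \Tor^\O_1 \bigl( \F, H^{k+1}(C^\bullet) \bigr) \to 0.
\]

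First I would apply this with $C^\bullet = \RHom(\cF,\cG)$ and $k = 0$. On the left and in the middle this gives exactly $\F \otimes_\O \Hom(\cF,\cG)$ and $H^0(\F \lotimes_\O \RHom(\cF,\cG))$; by Lemma~\ref{lem:rhom-scalars} the latter is naturally isomorphic to $H^0(\RHom(\F(\cF),\F(\cG))) = \Hom(\F(\cF),\F(\cG))$. For the right-hand term I would use the identification $H^1(\RHom(\cF,\cG)) \cong \Hom_{\Dmix_\scS(X,\O)}(\cF, \cG[1])$, which is part of the basic properties of the bifunctor $\RHom$ recorded in~\S\ref{ss:Hom}; this rewrites $\Tor^\O_1(\F, H^1(C^\bullet))$ as $\Tor^\O_1(\F, \Hom(\cF,\cG[1]))$, matching the statement. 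Naturality in both variables is inherited from the naturality of the universal coefficient sequence together with the bifunctoriality asserted in Lemma~\ref{lem:rhom-scalars}.

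The only point requiring a little care—and the step I expect to be the main (minor) obstacle—is justifying the universal coefficient short exact sequence in exactly this form: one must check that $\RHom(\cF,\cG)$ is represented by a bounded complex of $\O$-modules each of which is \emph{finitely generated} (indeed, by the proof of Lemma~\ref{lem:rhom-scalars}, of \emph{finite rank and free}, since the individual $\Hom$-groups between parity complexes over $\O$ are finite free), so that the standard argument—tensoring with the length-one free resolution $0 \to \O \xrightarrow{\varpi} \O \to \F \to 0$ of $\F$ and taking the long exact sequence—applies verbatim and terminates. Given that, the sequence is the classical one, and the naturality is automatic from the functoriality of the mapping cone construction. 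I would phrase the argument so as to reduce immediately to this homological-algebra input, citing the freeness observation from the proof of Lemma~\ref{lem:rhom-scalars} rather than repeating it.
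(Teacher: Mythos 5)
Your proof is correct, and it rests on the same two ingredients as the paper's: Lemma~\ref{lem:rhom-scalars} plus the homological algebra of $\F \lotimes_\O ({-})$ over the discrete valuation ring $\O$. The only real difference is how the universal-coefficient step is justified. You choose an explicit representative: since each term of $\uHom^\bullet(\cF^\bullet,\cG^\bullet)$ is a finite free $\O$-module (the freeness of $\Hom$-groups between parity complexes over $\O$, already used in the proof of Lemma~\ref{lem:rhom-scalars}), the classical chain-level argument --- tensoring with $0 \to \O \xrightarrow{\varpi} \O \to \F \to 0$ and taking the long exact sequence --- applies verbatim, and your cohomological indexing ($\Tor^\O_1$ against $H^{k+1}$) is the right one. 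The paper instead stays inside $\Db(\O\lh\ngmod)$ and manufactures the same sequence abstractly: it truncates $\RHom(\cF,\cG)$ with respect to the natural t-structure, applies $\F \lotimes_\O ({-})$, and uses right t-exactness together with the fact that $\O$ has global dimension $1$ to isolate the three terms, identifying the outer ones via a second truncation triangle. Your route is more elementary and concrete but requires the freeness observation to invoke the chain-level theorem; the paper's route avoids choosing a representative altogether, which makes naturality in $\cF$ and $\cG$ immediate from the functoriality of truncation, whereas in your version naturality is most cleanly obtained by phrasing the universal-coefficient sequence functorially in the derived category rather than at the level of a chosen complex. Either way the identification $H^1(\RHom(\cF,\cG)) \cong \Hom(\cF,\cG[1])$ closes the argument as you say.
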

\begin{proof}
In $\Db(\O\lh\ngmod)$, consider the distinguished triangle
\[
\tau_{\le 0}\RHom(\cF,\cG) \to \RHom(\cF,\cG) \to \tau_{\ge 1}\RHom(\cF,\cG) \to,
\]
where $\tau_{\le 0}$ and $\tau_{\ge 1}$ are the truncation functors with respect to the natural t-structure.  Now apply the functor $\F \lotimes_\O ({-})$.  Since $\O\lh\ngmod$ has global dimension~$1$, the object $\F \lotimes_\O \tau_{\ge 1}\RHom(\cF,\cG)$ can have nonzero cohomology only in degrees${}\ge 0$.  On the other hand, since $\F \lotimes_\O ({-})$ is right t-exact, $\F \lotimes_\O \tau_{\le 0}\RHom(\cF,\cG)$ can have nonzero cohomology only in degrees${}\le 0$.  Thus, as a portion of the long exact cohomology sequence, we find the short exact sequence
\begin{multline*}
0 \to H^0 \bigl( \F \lotimes_\O \tau_{\le 0}\RHom(\cF,\cG) \bigr) \to H^0 \bigl( \F \lotimes_\O \RHom(\cF,\cG) \bigr) \to \\
H^0 \bigl( \F \lotimes_\O \tau_{\ge 1}\RHom(\cF,\cG) \bigr) \to 0.
\end{multline*}
By right t-exactness again, the first term is identified with $\F \otimes_\O H^0 \bigl( \RHom(\cF,\cG) \bigr) \cong \F \otimes_\O \Hom(\cF,\cG)$.  By Lemma~\ref{lem:rhom-scalars}, the second is identified with $\Hom \bigl( \F(\cF),\F(\cG) \bigr)$.  For the last term, we study the distinguished triangle
\[
H^1 \bigl( \RHom(\cF,\cG) \bigr) [-1] \to \tau_{\ge 1}\RHom(\cF,\cG) \to \tau_{\ge 2}\RHom(\cF,\cG) \to.
\]
We apply $\F\lotimes_\O ({-})$ again.  Since $H^i \bigl( \F \lotimes_\O \tau_{\ge 2}\RHom(\cF,\cG) \bigr) = 0$ for $i \in \{-1,0\}$, we have $H^0 \bigl( \F \lotimes_\O \tau_{\ge 1}\RHom(\cF,\cG) \bigr) \cong H^0 \bigl( \F \lotimes_\O \Hom(\cF,\cG[1])[-1] \bigr) \cong H^{-1} \bigl( \F \lotimes_\O \Hom(\cF,\cG[1]) \bigr) \cong \Tor^\O_1 \bigl( \F,\Hom(\cF,\cG[1]) \bigr)$, as desired.
\end{proof}

\section{Mixed perverse sheaves}
\label{sec:mixed-perv}

The results of \S\S\ref{ss:adjoints}--\ref{ss:loc-closed} tell us that in the setting of $\Dmix_\scS(X,\E)$, we have available the full complement of $*$- and $!$-type pullback and push-forward functors for all locally closed inclusions of unions of strata, satisfying the usual adjunction and composition properties.  We will henceforth follow the usual sheaf-theoretic conventions for denoting these functors, dropping the extra parentheses that were used in the previous section.

\subsection{Perverse t-structure}
\label{ss:perverse-t-structure}

In this subsection we define the perverse t-structure on the category $\Dmix_\scS(X,\E)$.  For any $s \in \scS$ we define the objects
\[
\dmix_s := i_{s!} \uuE_{X_s}, \qquad \nmix_s := i_{s*} \uuE_{X_s}
\]
in $\Dmix_\scS(X,\scS)$. We will informally refer to these objects as \emph{standard} and \emph{costandard} sheaves, respectively.  (This terminology will be justified in~\S\ref{ss:qh-structure}.)  By \eqref{eqn:D!*-2} we have
\begin{equation}\label{eqn:dual-standard}
\D_X(\dmix_s)=\nmix_s.
\end{equation}
Moreover, because $i_{s!}$ and $i_{s*}$ commute with $\K(-)$ and $\F(-)$, see~\eqref{eqn:inclusions-scalars}, we have
\begin{equation}\label{eqn:standard-scalars}
\begin{aligned}
\K(\dmix_s(\O)) \cong \dmix_s(\K), & \quad \K(\nmix_s(\O)) \cong \nmix_s(\K), \\ 
\F(\dmix_s(\O)) \cong \dmix_s(\F), & \quad \F(\nmix_s(\O)) \cong \nmix_s(\F).
\end{aligned}
\end{equation}

\begin{lem}\label{lem:t-structure}
Suppose $X = X_s$ consists of a single stratum.  There is an equivalence of categories
\[
H: \Dmix_\scS(X,\E) \simto \Db(\E\lh\gmod)
\]
such that $H(\uuE{}_X) \cong \E$, and such that $H$ commutes with $\la 1\ra$.
\end{lem}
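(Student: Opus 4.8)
The plan is to identify $\Dmix_\scS(X,\E)$ with the bounded homotopy category of a very simple additive category and then recognize the latter as (a model for) $\Db(\E\lh\gmod)$. When $X = X_s$ consists of a single stratum, every parity complex on $X$ is a direct sum of shifts $\cE_s(\E)\{n\}$ of the single indecomposable parity sheaf $\cE_s(\E) \cong \uuE{}_X = \underline{\E}{}_X\{\dim X\}$; moreover, since $X$ is an affine space, $\Hom_{\Db_\scS(X,\E)}(\uuE{}_X, \uuE{}_X\{n\}) = H^n(X,\E)$ vanishes unless $n = 0$, in which case it is $\E$. Therefore $\Parity_\scS(X,\E)$ is equivalent, as an additive category, to the category of finitely generated free graded $\E$-modules, with the autoequivalence $\{1\}$ corresponding to the grading shift. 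Concretely, I would send $\cE_s(\E)\{n\}$ to $\E\la n\ra$ (a rank-one free module placed in degree $-n$, say), and check on generators that this is fully faithful and essentially surjective onto $\E\lh\gmod^{\mathrm{free}}$.

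Next I would pass to bounded homotopy categories: an additive equivalence $\Parity_\scS(X,\E) \simeq \E\lh\gmod^{\mathrm{free}}$ induces a triangulated equivalence
\[
\Dmix_\scS(X,\E) = \Kb\Parity_\scS(X,\E) \simto \Kb(\E\lh\gmod^{\mathrm{free}}).
\]
Finally, I would invoke the standard fact that, for a (graded) ring of finite global dimension, the bounded homotopy category of finitely generated projectives (here $= $ free modules, since $\E$ is a local ring, in fact a PID or a field) is equivalent to the bounded derived category $\Db(\E\lh\gmod)$: every bounded-below complex of modules has a finite projective resolution, so the natural functor $\Kb(\E\lh\gmod^{\mathrm{free}}) \to \Db(\E\lh\gmod)$ is an equivalence. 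For $\E = \K$ or $\F$ this is immediate; for $\E = \O$ it uses that $\O$ has global dimension $1$ (the same input already used in Lemma~\ref{lem:o-f-ses}). Tracking the image of $\uuE{}_X = \cE_s(\E)$ through the composite gives $H(\uuE{}_X) \cong \E$, and compatibility with $\la 1\ra$ is built into the construction since $\la 1\ra = \{-1\}[1]$ corresponds to grading shift composed with cohomological shift on both sides.

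The only real point requiring care — the "main obstacle," though a mild one — is bookkeeping the interaction of the two shift functors $\{1\}$ and $[1]$ under the equivalence, and in particular making sure the grading conventions on $\E\lh\gmod$ are chosen so that $H$ genuinely commutes with $\la1\ra$ rather than with some twist of it; this is purely a matter of fixing signs and conventions consistently. One should also be slightly careful that "$\Db(\E\lh\gmod)$" here means graded modules that are finitely generated, and that $\Kb$ of finitely generated free graded modules really surjects onto it — again this is the finite-global-dimension argument, which is unproblematic for $\E$ a PID or field. Everything else is formal.
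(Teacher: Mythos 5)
There is a genuine gap, and it sits exactly at the point you dismiss as ``purely a matter of fixing signs and conventions.'' In the statement of the lemma, $\la 1\ra$ on the target $\Db(\E\lh\gmod)$ is the \emph{pure} shift-of-grading functor $(M\la 1\ra)_n = M_{n+1}$, with no cohomological shift (the paper fixes this convention immediately after the statement). Your functor $H$ is the composite of an additive equivalence $\Parity_\scS(X,\E) \simeq \E\lh\gmod^{\mathrm{free}}$ (which intertwines $\{1\}$ with a grading shift $\la \pm 1\ra$ and nothing else), the induced equivalence on $\Kb$ (which intertwines $[1]$ with $[1]$), and the obvious functor $\Kb(\E\lh\gmod^{\mathrm{free}}) \to \Db(\E\lh\gmod)$ (which again intertwines $[1]$ with $[1]$ and termwise grading shift with grading shift). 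Consequently your $H$ satisfies $H \circ \la 1\ra \cong \la \mp 1\ra[1] \circ H$, and $\la \mp 1\ra[1]$ is \emph{not} isomorphic to $\la 1\ra$ as an autoequivalence of $\Db(\E\lh\gmod)$: applied to $\E$ it produces a complex concentrated in cohomological degree $-1$ rather than $0$. No choice of sign or grading convention removes that extra $[1]$, so your claim that ``$\la 1\ra = \{-1\}[1]$ corresponds to grading shift composed with cohomological shift on both sides'' is false on the target side, and the composite you build does not have the property required by the lemma. This is not cosmetic: the lemma is used to define the perverse t-structure by transport, and the Tate twist must be t-exact for it, which fails for your $H$.

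The missing ingredient --- and the only non-formal content of the paper's proof --- is a ``re-grading'' autoequivalence $\rho$ of $\Kb\Projf^\Z(\E)$ (in the spirit of the re-grading functor of \cite[\S 9.6]{abg}): writing each term of a complex canonically as $M^i \cong \bigoplus_j M^i_j\la -j\ra$ with $M^i_j$ free and concentrated in degree $0$, one sets $\rho(M^\bullet)^i = \bigoplus_j M^{i+j}_j\la -j\ra$. This $\rho$ satisfies $\rho \circ \la 1\ra \cong [-1]\la 1\ra \circ \rho$ and $\rho(\E) = \E$, and inserting it between your two steps (i.e.\ defining $H$ as the obvious equivalence composed with $\rho$ composed with the identification $\Dmix_\scS(X,\E) \simeq \Kb\Projf^\Z(\E)$) exactly cancels the unwanted $[1]$ and yields $H \circ \la 1\ra \cong \la 1\ra \circ H$. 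Everything else in your outline (the computation $\Hom(\uuE_X,\uuE_X\{n\}) = H^n(X,\E)$, the identification of parity complexes with free graded modules, and the finite-global-dimension argument for $\Kb$ of projectives versus $\Db$) agrees with the paper and is fine.
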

In the statement and proof of this lemma, we denote by $\la 1\ra: \E\lh\gmod \to \E\lh\gmod$ the shift-of-grading functor defined as follows: for a graded $\E$-module $M = \bigoplus_{n \in \Z} M_n$, we put $(M\la 1\ra)_n = M_{n+1}$.
(This agrees with the convention of~\cite[\S1.8]{ar}, but is opposite to that of~\cite{rsw}.)

\begin{proof}
Let $\Projf^\Z(\E)$ denote the additive category of graded finitely generated projective (or equivalently free) $\E$-modules.  Consider the equivalence of categories $\gamma: \Parity_\scS(X,\E) \simto \Projf^\Z(\E)$ defined by
\[
\gamma(\cF) = \bigoplus_j \mathbb{H}^{-j-\dim X}(X,\cF) \la -j \ra.
\]
(Here, each cohomology group is regarded as a graded $\E$-module concentrated in degree $0$.)
This equivalence satisfies $\gamma \circ \{1\} \cong \la -1\ra \circ \gamma$ and $\gamma(\uuE{}_{X})=\E$. It induces an equivalence $\gamma: \Dmix_\scS(X,\E) \simto \Kb\Projf^\Z(\E)$.

There is an obvious equivalence $\Kb\Projf^\Z(\E) \simto \Db(\E\lh\gmod)$, but the composition $\Dmix_\scS(X,\E) \overset{\gamma}{\to} \Kb\Projf^\Z(\E) \simto \Db(\E\lh\gmod)$ does \emph{not} commute with $\la 1\ra$.  To achieve the latter property, we will construct an autoequivalence of $\Kb\Projf^\Z(\E)$ that is similar in spirit to the ``re-grading functor'' of~\cite[\S9.6]{abg}.

Any object $M \in \Projf^\Z(\E)$ is (in a canonical way) a finite direct sum $M \cong \bigoplus_j M_j \la -j\ra$, where $M_j$ is a free $\E$-module concentrated in degree~$0$.  Given a complex $M^\bullet \in \Kb\Projf^\Z(\E)$, decompose each term $M^i$ in this way: $M^i \cong \bigoplus_j M^i_j \la -j\ra$.  Let us define a new complex $\rho(M^\bullet) \in \Kb\Projf^\Z(\E)$ by
\[
\rho(M^\bullet)^i = \bigoplus_j M^{i+j}_j \la -j\ra,
\]
with the differential induced by that of $M^\bullet$.  It is clear that $\rho: \Kb\Projf^\Z(\E) \to \Kb\Projf^\Z(\E)$ is an equivalence of triangulated categories, and that it satisfies $\rho \circ \la 1\ra \cong [-1]\la 1\ra \circ \rho$ and $\rho(\E)=\E$.  Finally, let $H$ be the composition
\[
\Dmix_\scS(X,\E) \xrightarrow{\gamma} \Kb\Projf^\Z(\E) \xrightarrow{\rho} \Kb\Projf^\Z(\E) \simto \Db(\E\lh\gmod).
\]
One can easily check that $H(\uuE)\cong\E$ and $H \circ \la 1\ra \cong \la 1\ra \circ H$, as desired.
\end{proof}

The following observation will be useful in the sequel.

\begin{lem}
\label{lem:vanishing-d-m}
For $s,t \in \scS$, we have
\[
\Hom_{\Dmix_\scS(X,\E)}(\dmix_s,\nmix_t \langle n \rangle [i])\cong
\begin{cases}
\E & \text{if $s = t$ and $n = i = 0$,} \\
0 & \text{otherwise.}
\end{cases}
\]
\end{lem}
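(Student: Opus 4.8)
The plan is to reduce the computation to the case of a single stratum via the recollement/adjunction machinery of Sections~\ref{ss:adjoints}--\ref{ss:loc-closed}, and then invoke Lemma~\ref{lem:t-structure}. First I would observe that $\Hom(\dmix_s, \nmix_t\la n\ra[i]) = \Hom(i_{s!}\uuE_{X_s}, i_{t*}\uuE_{X_t}\la n\ra[i])$. Adjunction for $i_{s!}$ (left adjoint to $i_s^*$) rewrites this as $\Hom_{\Dmix_\scS(X_s,\E)}(\uuE_{X_s}, i_s^* i_{t*}\uuE_{X_t}\la n\ra[i])$. The key point is then the familiar base-change-type vanishing: if $X_s \not\subset \overline{X_t}$ then $i_s^* i_{t*} = 0$ (the support of $i_{t*}\uuE_{X_t}$ misses $X_s$), so the Hom is zero unless $X_s \subseteq \overline{X_t}$. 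Symmetrically, using adjunction for $i_{t*}$ (the object $i_{t*}\uuE_{X_t}$ and the functor $i_t^!$, with $i_{t*}$ right adjoint to $i_t^!$), one gets $\Hom_{\Dmix_\scS(X_t,\E)}(i_t^!\dmix_s, \uuE_{X_t}\la n\ra[i])$, and $i_t^! i_{s!} = 0$ unless $X_t \subseteq \overline{X_s}$. Combining the two, the Hom vanishes unless $X_s = X_t$, i.e. $s = t$.

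In the remaining case $s = t$, after the adjunction I am left with $\Hom_{\Dmix_\scS(X_s,\E)}(\uuE_{X_s}, i_s^* i_{s!}\uuE_{X_s}\la n\ra[i])$. Since $i_s\colon X_s \hookrightarrow X$ is the inclusion of the stratum itself, $i_s^* i_{s!} \cong \id$ (both $i_s^* i_{s!}$ and $i_s^! i_{s*}$ are canonically the identity on $\Dmix_\scS(X_s,\E)$; this can be extracted from Lemma~\ref{lem:recollement-one-stratum}\eqref{it:recollement-X_s}, whose adjunction morphisms $j^*j_{(*)}\to\id$ and $\id\to j^*j_{(!)}$ are isomorphisms, applied iteratively, or directly from Remark~\ref{rmk:restriction-X_s}). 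Thus the Hom becomes $\Hom_{\Dmix_\scS(X_s,\E)}(\uuE_{X_s}, \uuE_{X_s}\la n\ra[i])$. Now apply the equivalence $H$ of Lemma~\ref{lem:t-structure}: since $H(\uuE_{X_s}) \cong \E$ and $H$ commutes with $\la 1\ra$ (and with $[1]$, being triangulated), this equals $\Hom_{\Db(\E\lh\gmod)}(\E, \E\la n\ra[i])$, which is $\E$ if $n = i = 0$ and $0$ otherwise (as $\E$ is concentrated in a single degree and is projective over itself). This gives exactly the claimed answer.

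The step I expect to require the most care is making the reduction to a single stratum fully rigorous, i.e. justifying $i_s^* i_{t*} = 0$ for $X_s \not\subset \overline{X_t}$ and the identity $i_s^*i_{s!}\cong\id$ \emph{in the mixed category}, where these functors are not a priori ``geometric'' in the naive sense. The cleanest route is to factor each locally closed inclusion as (closed)$\circ$(open) as in~\S\ref{ss:loc-closed}: write $i_t = a\circ b$ with $b\colon X_t\hookrightarrow \overline{X_t}$ open and $a\colon \overline{X_t}\hookrightarrow X$ closed, so $i_{t*} = a_* b_{(*)}$, and if $X_s\not\subset\overline{X_t}$ then $i_s^* a_* = 0$ because composing the closed inclusion $a$ with the disjoint open inclusion $X\smallsetminus\overline{X_t}\supseteq X_s$ gives the zero functor (the pullback to a stratum not meeting the support vanishes, which one checks on the generating parity objects using that $i_s^*\cE$ for $\cE$ supported off $X_s$ is zero). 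The identity $i_s^*i_{s!}\cong\id$ follows similarly once one arranges $X_s$ to be closed, where $i_{s(!)}$ is just the functor induced by $i_{s*}\colon\Parity_\scS(X_s,\E)\to\Parity_\scS(X,\E)$ and $i_s^*$ by the (exact) restriction, whose composite is visibly the identity on parity complexes on $X_s$. Everything else is bookkeeping with the adjunctions already established in Proposition~\ref{prop:recollement} and Lemma~\ref{lem:functors}.
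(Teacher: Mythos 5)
Your overall strategy---reduce to a single stratum by adjunction plus a support argument, then conclude with Lemma~\ref{lem:t-structure}---is in substance the same as the paper's, but both adjunctions you invoke are stated backwards, so as written the two displayed identifications are unjustified. In the formalism of Proposition~\ref{prop:recollement} and \S\ref{ss:loc-closed}, for a locally closed inclusion $i_s$ the functor $i_{s!}$ is left adjoint to $i_s^!$, not to $i_s^*$, and $i_{t*}$ is right adjoint to $i_t^*$, not to $i_t^!$ (for a closed inclusion one has $i_!\cong i_*$ with $i^*\dashv i_*\dashv i^!$, so $i_!$ cannot be left adjoint to $i^*$). The first step should therefore produce $\Hom_{\Dmix_\scS(X_s,\E)}(\uuE_{X_s}, i_s^!\,\nmix_t\la n\ra[i])$, and the second $\Hom_{\Dmix_\scS(X_t,\E)}(i_t^*\,\dmix_s, \uuE_{X_t}\la n\ra[i])$.

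The slip is repairable without changing anything else in your argument: the support argument kills $i_s^!\nmix_t$ just as well as $i_s^*\nmix_t$ when $X_s\not\subset\overline{X_t}$ (factor $i_s$ through the open subset $X\smallsetminus\overline{X_t}$ and use $j^*i_*=0$, or argue termwise via Remark~\ref{rmk:restriction-X_s}), and in the case $s=t$ the corrected adjunctions lead to the composites $i_s^! i_{s*}$ and $i_s^* i_{s!}$, which are indeed isomorphic to the identity (full faithfulness of the closed pushforward together with the unit/counit isomorphisms of Lemma~\ref{lem:recollement-one-stratum}\eqref{it:recollement-X_s})---exactly the two isomorphisms you assert. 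After that, the identification with $\Ext^i_{\E\lh\gmod}(\E,\E\la n\ra)$ and the final computation agree with the paper. Note that the paper sidesteps the locally closed adjunctions entirely by first replacing $X$ by $\overline{X_s}\cup\overline{X_t}$ (resp.\ $\overline{X_s}$), so that one of the two strata becomes open and only the genuine adjunctions $j_{!}\dashv j^*\dashv j_{*}$ for an open inclusion are needed; that reduction is a cleaner way to organize the argument and would have prevented the mix-up.
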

\begin{proof}
Assume first that $s \neq t$. Replacing if necessary $X$ by $\overline{X_s} \cup \overline{X_t}$, one can assume that either $X_s$ or $X_t$ is open in $X$. In the former case we have
\[
\Hom_{\Dmix_\scS(X,\E)}(\dmix_s,\nmix_t \langle n \rangle [i]) \cong \Hom_{\Dmix_\scS(X_s,\E)}(\uuE{}_{X_s}, i_s^* \nmix_t \langle n \rangle [i])=0
\]
since $\nmix_t$ is supported on $\overline{X_t}$. In the latter case we have
\[
\Hom_{\Dmix_\scS(X,\E)}(\dmix_s,\nmix_t \langle n \rangle [i]) \cong \Hom_{\Dmix_\scS(X_t,\E)}(i_t^* \dmix_s, \uuE{}_{X_t} \langle n \rangle [i])=0
\]
since $\dmix_s$ is supported on $\overline{X_s}$.

Now assume $s = t$. Replacing if necessary $X$ by $\overline{X_s}$, one can assume that $X_s$ is open in $X$. Then adjunction and Lemma~\ref{lem:t-structure} give 
\[
\Hom(\dmix_s,\nmix_t \langle n \rangle [i]) \cong \Ext^i_{\E\lh\gmod}(\E, \E \la n\ra),
\]
and the result follows.
\end{proof}

\begin{defn}\label{defn:t-structure}
If $X$ consists of a single stratum, the \emph{perverse t-structure} on $\Dmix_\scS(X,\E)$, denoted by $(\p\Dmix_\scS(X,\E)^{\leq 0},\p\Dmix_\scS(X,\E)^{\ge 0})$, is the transport of the natural t-structure on $\Db(\E\lh\gmod)$ via the equivalence of Lemma~\ref{lem:t-structure}.

If $X$ consists of more than one stratum, the \emph{perverse t-structure} on  $\Dmix_\scS(X,\E)$ is the t-structure given by
\begin{align*}
\p \Dmix_\scS(X,\E)^{\le 0} &= \{ \cF \in \Dmix_\scS(X,\E) \mid \text{for all $s \in \scS$, $i_s^*\cF \in \p\Dmix_\scS(X_s,\E)^{\le 0}$} \}, \\
\p \Dmix_\scS(X,\E)^{\ge 0} &= \{ \cF \in \Dmix_\scS(X,\E) \mid \text{for all $s \in \scS$, $i_s^!\cF \in \p\Dmix_\scS(X_s,\E)^{\ge 0}$} \}.
\end{align*}
(The fact that the categories above constitute a t-structure follows from the general theory of recollement~\cite[\S 1.4]{bbd}.)
The heart of this t-structure is denoted
\[
\Perv^\mix_\scS(X,\E),
\]
and objects in the heart are called \emph{mixed perverse sheaves}.
\end{defn}

The perverse t-structure is clearly bounded. Note also that the Tate twist $\la 1 \ra$ is t-exact for the perverse t-structure.  

For the next statement, choose a uniformizer $\varpi \in \O$, and let ${}' \nmix_s$ be the cone of the morphism $\varpi \cdot \id: \nabla_s(\O) \to \nabla_s(\O)$.

\begin{prop}\label{prop:t-structure}
The perverse t-structure on $\Dmix_\scS(X,\E)$ is uniquely characterized by each of the following statements:
\begin{enumerate}
\item $\p\Dmix_\scS(X,\E)^{\le 0}$ is generated under extensions by the $\dmix_s\la n\ra[m]$ with $s \in \scS$, $n \in \Z$, and $m \ge 0$.
\item If $\E = \O$, $\p\Dmix_\scS(X,\O)^{\ge 0}$ is generated under extensions by the $\nmix_s\la n\ra[m]$ and ${}'\nmix_s\la n\ra[m]$ with $s \in \scS$, $n \in \Z$, and $m \le 0$.

If $\E = \K$ or $\F$, $\p\Dmix_\scS(X,\E)^{\ge 0}$ is generated under extensions by the $\nmix_s\la n\ra[m]$ with $s \in \scS$, $n \in \Z$, and $m \le 0$.
\end{enumerate}
Moreover, when $\E = \K$ or $\F$, the functor $\D_X$ is t-exact.
\end{prop}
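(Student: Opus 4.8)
The plan is to prove the two displayed characterizations (1) and (2) directly; the uniqueness clause is then formal, because a t-structure is determined by its nonpositive part (one has $\p\Dmix_\scS(X,\E)^{\ge 1} = (\p\Dmix_\scS(X,\E)^{\le 0})^\perp$), so once we show that $\p\Dmix_\scS(X,\E)^{\le 0}$ coincides with the extension-closure of the family in (1), any t-structure enjoying property (1) must agree with the perverse one; and likewise for (2). The t-exactness of $\D_X$ for $\E=\K$ or $\F$ then follows as well: by~\eqref{eqn:dual-standard} and the relations $\D_X\circ\la n\ra\cong\la -n\ra\circ\D_X$, $\D_X\circ[n]\cong[-n]\circ\D_X$, the antiequivalence $\D_X$ sends each generator $\dmix_s\la n\ra[m]$ ($m\ge 0$) of $\p\Dmix_\scS(X,\E)^{\le 0}$ to a generator $\nmix_s\la -n\ra[-m]$ ($-m\le 0$) of $\p\Dmix_\scS(X,\E)^{\ge 0}$; since an antiequivalence of triangulated categories carries distinguished triangles to distinguished triangles, hence extension-closures to extension-closures, we get $\D_X(\p\Dmix_\scS(X,\E)^{\le 0})=\p\Dmix_\scS(X,\E)^{\ge 0}$, and $\D_X^2\cong\id$ yields the opposite inclusion, that is, t-exactness.

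For (1) and (2), one inclusion is routine. Iterating the recollement of Proposition~\ref{prop:recollement} over the stratification gives the standard exactness properties of a glued perverse t-structure (cf.~\cite[\S1.4]{bbd}): $i_*$ is t-exact for a closed union of strata; $j^*$ is t-exact, $j_!$ is right t-exact and $j_*$ is left t-exact for an open union of strata; and, factoring the inclusion of a single stratum as $X_s\hookrightarrow\overline{X_s}\hookrightarrow X$, the functor $i_{s!}$ is right t-exact and $i_{s*}$ is left t-exact. Since $\uuE_{X_s}$ is perverse on $X_s$ (it corresponds to $\E$ in degree $0$ under the equivalence of Lemma~\ref{lem:t-structure}) and the Tate twist is t-exact, this shows $\dmix_s\la n\ra[m]\in\p\Dmix_\scS(X,\E)^{\le 0}$ for $m\ge 0$, $\nmix_s\la n\ra[m]\in\p\Dmix_\scS(X,\E)^{\ge 0}$ for $m\le 0$, and---taking the cone of $\varpi$---${}'\nmix_s\la n\ra[m]\in\p\Dmix_\scS(X,\O)^{\ge 0}$ for $m\le 0$. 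Hence each extension-closure is contained in the corresponding piece of the t-structure.

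For the reverse inclusions I would induct on the number of strata of $X$. The base case $X=X_s$ is transported through the equivalence $H$ of Lemma~\ref{lem:t-structure}: every object of $\Db(\E\lh\gmod)^{\le 0}$ (resp.~$\Db(\E\lh\gmod)^{\ge 0}$) is an iterated extension of its cohomology modules $M[m]$ with $m\ge 0$ (resp.~$m\le 0$), and a finitely generated graded $\E$-module $M$, placed in cohomological degree $0$, is an iterated extension of $\E\la n\ra$'s when $\E$ is a field, while when $\E=\O$ it is an iterated extension of $\E\la n\ra$ and $\E\la n\ra[1]$ via a graded free resolution of length one (for the $\le 0$ statement; here one uses that $\O$ has global dimension~$1$, as in Lemma~\ref{lem:o-f-ses}), and an iterated extension of $\O\la n\ra$ and $\F\la n\ra$ via the structure theorem for finitely generated modules over the discrete valuation ring $\O$ together with composition series of the $\O/\varpi^k\la n\ra$ (for the $\ge 0$ statement). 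Under $H$, the object $\uuE_{X_s}\la n\ra=\dmix_s\la n\ra=\nmix_s\la n\ra$ corresponds to $\E\la n\ra$ and, when $\E=\O$, ${}'\nmix_s\la n\ra$ corresponds to $\F\la n\ra$ (the cone of $\varpi$ on $\O\la n\ra$), so the base case follows. For the inductive step, pick a closed stratum $X_s\subset X$, set $U=X\smallsetminus X_s$ with open inclusion $j$ and closed inclusion $i=i_s$, and use the recollement triangles $j_!j^*\cF\to\cF\to i_*i^*\cF\xrightarrow{[1]}$ (for the $\le 0$ statement) and $i_*i^!\cF\to\cF\to j_*j^*\cF\xrightarrow{[1]}$ (for the $\ge 0$ statement). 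If $\cF$ lies in the relevant piece of the t-structure, then $j^*\cF$ and $i_s^*\cF$ (resp.~$i_s^!\cF$) lie in the corresponding pieces over $U$ and $X_s$ by the definition of the glued t-structure, hence, by induction and the base case, are iterated extensions of the appropriate generators; applying the triangulated functors $j_!$, $i_{s*}$ (resp.~$j_*$, $i_{s*}$), which commute with formation of extensions and with the cone-of-$\varpi$ construction, and using $j_!\dmix_t=\dmix_t$, $j_*\nmix_t=\nmix_t$, $j_*({}'\nmix_t)={}'\nmix_t$ and $i_{s*}\uuE_{X_s}=\nmix_s=\dmix_s$ (as $X_s$ is closed in $X$), we conclude that $\cF$ is itself such an iterated extension.

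The step requiring the most care is the case $\E=\O$: one must verify in the base case that every finitely generated graded $\O$-module is built by iterated extensions out of the two explicit families $\{\O\la n\ra\}$ and $\{\F\la n\ra\}$, with degree shifts matching $\nmix_s$ and ${}'\nmix_s$, and one must check that these families are stable under the devissage---i.e.\ that the recollement push-forwards $j_*$ and $i_{s*}$ commute with multiplication by $\varpi$ and with the passage to cones. The remainder is the standard glued-t-structure formalism together with elementary homological algebra over $\E$.
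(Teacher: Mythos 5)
Your proof is correct, but it reaches the crucial reverse inclusions by a genuinely different route than the paper. The paper introduces the same extension-closures $\sD^{\le 0}$ and $\sD^{\ge 0}$, records the easy containments $\sD^{\le 0} \subset \p\Dmix_\scS(X,\E)^{\le 0}$ and $\sD^{\ge 0} \subset \p\Dmix_\scS(X,\E)^{\ge 0}$, and then obtains equality indirectly: it shows that the pair $(\sD^{\le 0},\sD^{\ge 0})$ is itself a t-structure, by an induction on the number of strata copying the quasi-exceptional-sets argument of \cite[Proposition~1]{bezru}, and uses the formal fact that two t-structures whose halves are termwise contained in one another must coincide. You instead prove the reverse inclusions directly by d\'evissage: an explicit base case on a single stratum via the equivalence of Lemma~\ref{lem:t-structure} (including the length-one graded free resolution for the $\le 0$ statement and the DVR structure theory plus composition series of $\O/\varpi^k$ for the $\ge 0$ statement when $\E=\O$), followed by induction on strata using the recollement triangles $j_!j^*\cF\to\cF\to i_*i^*\cF$ and $i_*i^!\cF\to\cF\to j_*j^*\cF$ from Proposition~\ref{prop:recollement}, together with $j_!\dmix_t\cong\dmix_t$, $j_*\nmix_t\cong\nmix_t$, and $i_{s*}\uuE_{X_s}\cong\dmix_s\cong\nmix_s$ for a closed stratum. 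Your route is more self-contained and makes the $\E=\O$ bookkeeping explicit, at the cost of length; the paper's route is shorter but delegates the main verification to \cite{bezru} and leaves the torsion case implicit. The uniqueness clause and the t-exactness of $\D_X$ for $\E=\K,\F$ are handled in essentially the same way in both arguments, the latter via \eqref{eqn:dual-standard}.

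One small refinement is needed in your ``easy'' direction for $\E=\O$: knowing only that $\nmix_s$ lies in $\p\Dmix_\scS(X,\O)^{\ge 0}$, the cone of $\varpi\cdot\id$ is a priori only in perverse degrees $\ge -1$. To see that ${}'\nmix_s\in\p\Dmix_\scS(X,\O)^{\ge 0}$ one should, as your closing paragraph implicitly suggests, first commute the cone with the pushforward: ${}'\nmix_s\cong i_{s*}$ applied to the cone of $\varpi\cdot\id$ on $\uuO_{X_s}$, which is perverse on the stratum, and then invoke the left t-exactness of $i_{s*}$. With that wording adjusted, the argument is complete.
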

\begin{proof}
Let $\sD^{\le 0} \subset \Dmix_\scS(X,\E)$ be the smallest full subcategory that is stable under extensions and contains all $\dmix_s\la n\ra[m]$.  Likewise, let $\sD^{\ge 0}$ be the smallest full subcategory that is stable under extensions and contains all $\nmix_s\la n\ra[m]$ and, if $\E = \O$, all ${}'\nmix_s\la n\ra[m]$ as well.  It is easy to see that $\sD^{\le 0} \subset \p\Dmix_\scS(X,\E)^{\le 0}$, and that $\sD^{\ge 0} \subset \p\Dmix_\scS(X,\E)^{\ge 0}$.  To see that these containments are equalities, it suffices to show that $(\sD^{\le 0}, \sD^{\ge 0})$ constitutes a t-structure.  This can be done by induction on the number of strata, copying the proof of~\cite[Proposition~1]{bezru}.

In the case where $\E = \K$ or $\F$, the stability of the t-structure under $\D_X$ follows from~\eqref{eqn:dual-standard}.
\end{proof}

\begin{rmk}
When $\E = \O$, the functor $\D_X$ is \emph{not} t-exact, even when $X$ is a single stratum.  Instead, the category
\[
\Perv^{+,\mix}_\scS(X,\O) := \D_X(\Perv^\mix_\scS(X,\O))
\]
is the heart of a different t-structure, called the \emph{$p^+$-perverse t-structure}.  An account of $p^+$-perverse sheaves in the unmixed setting can be found in~\cite[\S2.6]{juteau}; the main properties hold in the mixed setting as well, with the same proofs.  Mixed $p^+$-perverse sheaves will not be used in this paper.
\end{rmk}

The truncation functors for the perverse t-structure will be denoted by
\[
\ptau_{\le 0}: \Dmix_\scS(X,\E) \to \p\Dmix_\scS(X,\E)^{\le 0}, \qquad
\ptau_{\ge 0}: \Dmix_\scS(X,\E) \to \p\Dmix_\scS(X,\E)^{\ge 0},
\]
and the cohomology functors by
\[
\pH^i: \Dmix_\scS(X,\E) \to \Perv^\mix_\scS(X,\E).
\]
For each $s \in \scS$, we define an object $\IC^\mix_s(\E) \in \Perv^\mix_\scS(X,\E)$ by
\[
\IC^\mix_s(\E) := \im (\ptau_{\ge 0}\dmix_s(\E) \to \ptau_{\le 0}\nmix_s(\E)).
\]
If $\E=\K$ or $\F$, each $\IC^\mix_s(\E)$ is self-Verdier-dual, and the objects $\{\IC_s^\mix(\E) \la n \ra \mid s \in \scS, n \in \Z\}$ form a complete set of pairwise non-isomorphic simple objects of the finite-length abelian category $\Perv^\mix_{\scS}(X,\E)$.  See \cite[Proposition 1.4.26]{bbd}.

It follows from Proposition~\ref{prop:t-structure} and \eqref{eqn:standard-scalars} that $\K(-) : \Dmix_\scS(X,\O) \to \Dmix_\scS(X,\K)$ is t-exact. In particular, it follows that we have
\[
\K(\IC^\mix_s(\O)) = \IC_s^\mix(\K).
\]
By similar arguments,  the functor $\F(-) : \Dmix_\scS(X,\O) \to \Dmix_\scS(X,\F)$ is right t-exact. More precisely, if $\cF$ is in $\Perv^\mix_{\scS}(X,\O)$ then $\pH^i(\F(\cF))=0$ for $i \notin \{0,-1\}$.  A study of the stalks and costalks of $\IC^\mix_s(\O)$ in this spirit (using \cite[Corollaire 1.4.24]{bbd}) shows that
\begin{equation}
\label{eqn:FIC}
\F(\IC^\mix_s(\O)) \in \Perv^\mix_\scS(X,\F).
\end{equation}

The following lemma follows from the definitions, and the fact that $i_s^*$, $i_s^!$ and the equivalence of Lemma~\ref{lem:t-structure} commute with the functors $\F(-)$.

\begin{lem}
\label{lem:perv-F}
If $\cF \in \Dmix_\scS(X,\O)$ is such that $\F(\cF) \in \Perv^\mix_\scS(X,\F)$, then $\cF \in \Perv^\mix_\scS(X,\O)$.
\end{lem}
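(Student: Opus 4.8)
The plan is to reduce the assertion, one stratum at a time, to an elementary fact about complexes of graded $\O$-modules. First I would unwind Definition~\ref{defn:t-structure}: an object $\cG$ of $\Dmix_\scS(X,\E)$ is a mixed perverse sheaf if and only if $i_s^*\cG \in \p\Dmix_\scS(X_s,\E)^{\le 0}$ and $i_s^!\cG \in \p\Dmix_\scS(X_s,\E)^{\ge 0}$ for every $s \in \scS$. Applying this to $\cG = \cF$ over $\O$ and to $\cG = \F(\cF)$ over $\F$, and using that $i_s^*$ and $i_s^!$ commute with $\F(-)$ (see~\eqref{eqn:inclusions-scalars}, together with Remark~\ref{rmk:restriction-X_s}), so that $i_s^*(\F(\cF)) \cong \F(i_s^*\cF)$ and $i_s^!(\F(\cF)) \cong \F(i_s^!\cF)$, one is reduced to the case $X = X_s$ of a single stratum, where it suffices to prove: if $\cG \in \Dmix_\scS(X_s,\O)$ and $\F(\cG)$ lies in $\p\Dmix_\scS(X_s,\O)^{\le 0}$, then so does $\cG$, and likewise with $\ge 0$ in place of $\le 0$.

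For the single-stratum case I would transport along the equivalence $H : \Dmix_\scS(X_s,\E) \simto \Db(\E\lh\gmod)$ of Lemma~\ref{lem:t-structure}, which by Definition~\ref{defn:t-structure} carries the perverse t-structure to the standard one on $\Db(\E\lh\gmod)$, and which commutes with $\F(-)$ (this follows from its explicit construction: extension of scalars is exact on free modules, and the regrading functor $\rho$ acts termwise). Writing $M := H(\cG) \in \Db(\O\lh\gmod)$, so that $H(\F(\cG)) \cong \F \lotimes_\O M$, both desired implications follow from the single statement that the cohomological degrees in which $M$ is nonzero are among those in which $\F \lotimes_\O M$ is nonzero. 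To see this, I would use the short exact sequences
\[
0 \to \F \otimes_\O H^i(M) \to H^i(\F \lotimes_\O M) \to \Tor^\O_1 \bigl( \F, H^{i+1}(M) \bigr) \to 0,
\]
valid since $\O$ has global dimension~$1$, exactly as in the proof of Lemma~\ref{lem:o-f-ses}. From the resulting injection $\F \otimes_\O H^i(M) = H^i(M)/\varpi H^i(M) \hookrightarrow H^i(\F \lotimes_\O M)$ and Nakayama's lemma (each $H^i(M)$ being a finitely generated $\O$-module, as $\O$ is Noetherian and objects of $\Db(\O\lh\gmod)$ have finitely generated cohomology), we get $H^i(M) = 0$ whenever $H^i(\F \lotimes_\O M) = 0$, which is what is needed.

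Assembling the two single-stratum statements over all $s \in \scS$ then gives $i_s^*\cF \in \p\Dmix_\scS(X_s,\O)^{\le 0}$ and $i_s^!\cF \in \p\Dmix_\scS(X_s,\O)^{\ge 0}$ for all $s$, hence $\cF \in \Perv^\mix_\scS(X,\O)$. I do not expect a genuine obstacle here; the argument is essentially formal once the reductions are set up. The only points requiring a moment's care are the finite generation invoked for Nakayama (automatic in $\Db(\O\lh\gmod)$) and the compatibility of $H$ with extension of scalars, both of which are routine given the constructions recalled above.
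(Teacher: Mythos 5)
Your argument is correct and follows essentially the same route as the paper, whose (very brief) proof likewise reduces to the definition of the t-structure stratum by stratum, using that $i_s^*$, $i_s^!$ and the equivalence $H$ of Lemma~\ref{lem:t-structure} commute with $\F(-)$; you merely make explicit the elementary single-stratum step via the universal-coefficients sequence and Nakayama's lemma. (Only a typo to fix: $\F(\cG)$ should be required to lie in $\p\Dmix_\scS(X_s,\F)^{\le 0}$, not the $\O$-linear category.)
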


In the next lemma, we set $\IC_s(\E):=i_{s!*} \uuE_{X_s}$ (where $i_{s!*}$ is the the usual, non-mixed, intermediate extension functor).

\begin{lem}\label{lem:ic-smooth}
If $\cE_s(\E) \cong \IC_s(\E)$, then $\IC^\mix_s(\E) \cong \IC_s(\E)$ as well.  In particular, if $\overline{X_s}$ is smooth, then $\IC^\mix_s(\E) \cong \uuE_{\overline{X_s}}$.
\end{lem}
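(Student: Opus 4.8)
The plan is to verify that $\cE_s(\E)$, regarded through the embedding $\Parity_\scS(X,\E) \hookrightarrow \Dmix_\scS(X,\E)$ as the object $\cE^\mix_s(\E)$, satisfies precisely the support and cosupport conditions that characterize the mixed intermediate extension. Since $\cE^\mix_s(\E)$ and $\IC^\mix_s(\E)$ are supported on $\overline{X_s}$ and $i_{s*}$ is fully faithful, I may assume $X = \overline{X_s}$, so that $i_s : X_s \hookrightarrow X$ is open; then, by Proposition~\ref{prop:t-structure} one has $\dmix_s \in \p\Dmix_\scS(X,\E)^{\le 0}$ and $\nmix_s \in \p\Dmix_\scS(X,\E)^{\ge 0}$, so that $\IC^\mix_s(\E) = \im\bigl(\pH^0(\dmix_s) \to \pH^0(\nmix_s)\bigr)$ is the intermediate extension of $\uuE_{X_s}$ along $i_s$ formed inside $\Dmix_\scS(X,\E)$.

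The first step is a comparison, on a single stratum $X_t$, between the perverse t-structure on $\Dmix_\scS(X_t,\E)$ and the ordinary perverse t-structure on $\Db_\scS(X_t,\E)$: for $\cG \in \Parity_\scS(X_t,\E)$ and $n \in \Z$, the object $\cG$ lies in $\p\Dmix_\scS(X_t,\E)^{\le n}$ if and only if $\cG$, viewed in $\Db_\scS(X_t,\E)$, lies in ordinary perverse degrees $\le n$; likewise with $\ge$ in place of $\le$. Indeed, $\cG$ is a direct sum of shifts $\uuE_{X_t}\{a\}$; in $\Db_\scS(X_t,\E)$ the functor $\{a\}$ is the usual cohomological shift, so $\uuE_{X_t}\{a\}$ sits in ordinary perverse degree $-a$, whereas in $\Dmix_\scS(X_t,\E)$, using $\{a\} = \langle -a\rangle[a]$ and the equivalence $H$ of Lemma~\ref{lem:t-structure}, $H(\uuE_{X_t}\{a\}) \cong \E\langle -a\rangle[a]$ is concentrated in cohomological degree $-a$. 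Thus both conditions reduce to the same inequalities on the integers $a$ appearing in $\cG$.

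Now I combine this with the hypothesis. By Remark~\ref{rmk:restriction-X_s}, for each stratum $X_t \subset X$ the object $i_t^*\cE^\mix_s(\E) \in \Dmix_\scS(X_t,\E)$ is the parity complex $i_t^*\cE_s(\E)$ regarded in $\Dmix_\scS(X_t,\E)$, and likewise for $i_t^!$. Since $\cE_s(\E) \cong \IC_s(\E) = i_{s!*}\uuE_{X_s}$ in $\Db_\scS(X,\E)$, the standard support and cosupport estimates for an intermediate extension give $i_s^*\cE_s(\E) \cong \uuE_{X_s}$ and, for every stratum $X_t \subset X \smallsetminus X_s$, $i_t^*\cE_s(\E)$ in ordinary perverse degrees $\le -1$ and $i_t^!\cE_s(\E)$ in ordinary perverse degrees $\ge 1$. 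By the first step, $i_s^*\cE^\mix_s(\E) \cong \uuE_{X_s}$, while $i_t^*\cE^\mix_s(\E) \in \p\Dmix_\scS(X_t,\E)^{\le -1}$ and $i_t^!\cE^\mix_s(\E) \in \p\Dmix_\scS(X_t,\E)^{\ge 1}$ for $X_t$ in the boundary. In particular $\cE^\mix_s(\E)$ is mixed perverse, and these are exactly the conditions characterizing the intermediate extension of $\uuE_{X_s}$ along $i_s$ in the recollement formalism (see~\cite[\S1.4]{bbd}); hence $\cE^\mix_s(\E) \cong \IC^\mix_s(\E)$.

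For the final assertion, suppose $\overline{X_s}$ is smooth. Then $\uuE_{\overline{X_s}}$ is a parity complex: its $*$-restriction to any stratum is a constant sheaf in a fixed cohomological degree, and its $!$-restriction is a constant sheaf in a degree of the same parity (it differs from the former by twice the codimension of that stratum). Moreover $\uuE_{\overline{X_s}}$ is indecomposable (its endomorphism ring is $H^0(\overline{X_s},\E) = \E$), satisfies $i_s^*\uuE_{\overline{X_s}} \cong \uuE_{X_s}$, and equals $\IC_s(\E)$; hence $\cE_s(\E) \cong \uuE_{\overline{X_s}} \cong \IC_s(\E)$ by the uniqueness in~\cite[Theorem~2.12]{jmw}, and the first part gives $\IC^\mix_s(\E) \cong \cE^\mix_s(\E) = \uuE_{\overline{X_s}}$. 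The one delicate point is the comparison of the two perverse t-structures, i.e.\ keeping the three shift functors $\{1\}$, $[1]$, $\langle 1\rangle$ straight; everything else is a formal consequence of the recollement formalism.
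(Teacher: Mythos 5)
Your argument is correct and follows essentially the same route as the paper: regard $\cE_s(\E)\cong\IC_s(\E)$ as an object of $\Dmix_\scS(X,\E)$ and verify, via Remark~\ref{rmk:restriction-X_s} and the single-stratum description of the perverse t-structure, that it satisfies the support/cosupport criterion of~\cite[Corollaire~1.4.24]{bbd} characterizing the intermediate extension. You simply spell out the stratum-by-stratum comparison of shifts that the paper leaves implicit.
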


\begin{proof}
By assumption $\IC_s(\E)$ is a parity complex, so that it
makes sense as an object of $\Dmix_\scS(X,\E)$.  The result then follows from the criterion in~\cite[Corollaire~1.4.24]{bbd}, using Remark~\ref{rmk:restriction-X_s}.
\end{proof}

The next few lemmas deal with the setting of~\S\ref{ss:stratified}.

\begin{lem}\label{lem:ps-delta-nabla}
Let $f: X \to Y$ be a proper, smooth stratified morphism of relative dimension $d$.  If $X_s \subset f^{-1}(Y_t)$, then
\[
f_*\dmix_s \cong \dmix_t\{\dim Y_t - \dim X_s\}
\quad\text{and}\quad
f_*\nmix_s \cong \nmix_t\{\dim X_s - \dim Y_t\}.
\]
Furthermore, when $\E = \O$, we have $f_*({}'\nmix_s) \cong {}'\nmix_t\{ \dim X_s - \dim Y_t\}$.
\end{lem}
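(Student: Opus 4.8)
The plan is to compute $f_* \dmix_s$ directly from the definition $\dmix_s = i_{s!} \uuE_{X_s}$, using the base-change and adjunction machinery of Proposition~\ref{prop:proper-smooth}. First I would set up the Cartesian square obtained by restricting $f$ over $Y_t$: writing $h_t : Y_t \hookrightarrow Y$ for the inclusion, we have $f^{-1}(Y_t)$ as a union of strata, and since $X_s \subset f^{-1}(Y_t)$ we may factor $i_s$ through the inclusion $h'_t : f^{-1}(Y_t) \hookrightarrow X$. By the transitivity of the $(!)$-pushforward and \eqref{eqn:ps-push}, we get $f_* i_{s!} \cong f_* h'_{t!} (i_s')_! \cong h_{t!} f'_* (i_s')_!$, where $f' : f^{-1}(Y_t) \to Y_t$ is the restriction (proper, smooth, stratified of relative dimension $d$ by the first assertion of Proposition~\ref{prop:proper-smooth}) and $i_s' : X_s \hookrightarrow f^{-1}(Y_t)$ is a closed inclusion of strata. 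So the problem reduces to computing $f'_* (i_s')_! \uuE_{X_s}$ inside $\Dmix_\scS(Y_t, \E)$.

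Next, since $Y_t$ is a single stratum (an affine space), and $f_{st} : X_s \to Y_t$ is a trivial fibration identified with the projection $\mathbb{A}^{\dim X_s} \to \mathbb{A}^{\dim Y_t}$, I would observe that $f' \circ i_s' = h \circ f_{st}$ where $h : X_s \hookrightarrow f^{-1}(Y_t)$ — wait, more precisely $f' \circ i_s' = f_{st}$ as a map $X_s \to Y_t$. Thus $f'_* (i_s')_! \uuE_{X_s}$ is, by transitivity of pushforward along the locally closed inclusion composed with the proper map, computed as $(f_{st})_* i_{s,!}^{(f^{-1}(Y_t))}\uuE_{X_s}$; but here I should be careful and instead factor directly: the composite $X_s \hookrightarrow f^{-1}(Y_t) \xrightarrow{f'} Y_t$ equals $f_{st}$, and on the level of $\Dmix$, we need $f'_* \circ (i'_s)_!$. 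The cleanest route is to note $i'_s$ is a closed inclusion, so $(i'_s)_! = (i'_s)_*$, and then $f'_* (i'_s)_* = (f_{st})_*$ by functoriality of the proper pushforward on parity complexes (both sides send $\uuE_{X_s}$ to the same parity complex on $Y_t$, computed stratum-wise). Finally, the pushforward of the constant sheaf along the trivial fibration $\mathbb{A}^{\dim X_s} \to \mathbb{A}^{\dim Y_t}$: in the ordinary derived category $f_{st,*}\underline{\E}_{X_s} = \underline{\E}_{Y_t}$ up to the shift accounting for the fiber $\mathbb{A}^{\dim X_s - \dim Y_t}$; translating into the $\{-\}$-normalization and the $\uuE$-convention gives exactly $\dmix_t\{\dim Y_t - \dim X_s\}$.

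For the costandard statement, I would dualize: by \eqref{eqn:dual-standard} we have $\nmix_s = \D_X(\dmix_s)$, and by \eqref{eqn:ps-push} together with the fact that $f$ is proper (so $f_* = f_!$ commutes with Verdier duality, and moreover $f^\dag$ commutes with $\D$ by construction in \S\ref{ss:stratified}), we get $f_* \nmix_s = f_* \D_X \dmix_s \cong \D_Y f_* \dmix_s \cong \D_Y(\dmix_t\{\dim Y_t - \dim X_s\}) = \nmix_t\{\dim X_s - \dim Y_t\}$, using $\D_Y \circ \{n\} \cong \{-n\} \circ \D_Y$ and $\D_Y(\dmix_t) = \nmix_t$. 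The ${}'\nmix_s$ case ($\E = \O$) follows because ${}'\nmix_s$ is by definition the cone of $\varpi \cdot \id : \nmix_s(\O) \to \nmix_s(\O)$, and $f_*$ is triangulated and $\E$-linear, hence sends this cone to the cone of $\varpi \cdot \id : \nmix_t(\O)\{\dim X_s - \dim Y_t\} \to \nmix_t(\O)\{\dim X_s - \dim Y_t\}$, which is ${}'\nmix_t\{\dim X_s - \dim Y_t\}$.

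The main obstacle I anticipate is bookkeeping the degree shifts correctly through the several normalization conventions ($\{-\}$ versus the usual $[-]$, the $\uuE{}_{X_s} = \underline{\E}_{X_s}\{\dim X_s\}$ convention, and the definition $f^\dag = f^*\{d\}$), and making sure the reduction in the first two paragraphs is set up so that only closed inclusions and the proper pushforward along trivial fibrations are invoked — these are the places where everything is concretely computable via \cite[Proposition~2.34]{jmw} and the fact that pushforward of parity complexes along an even stratified proper map is understood. I do not expect any genuinely hard point beyond this; the key inputs (Proposition~\ref{prop:proper-smooth}, the compatibility of $\D$ with $f_*$ and $\{n\}$, and the triangulated $\E$-linearity of $f_*$) are all available.
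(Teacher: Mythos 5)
There is a genuine gap in the middle step. After the (correct) reduction via \eqref{eqn:ps-push} to computing $f'_*(i'_s)_!\uuE_{X_s}$ on $Y_t$, you assert that $i'_s\colon X_s\hookrightarrow f^{-1}(Y_t)$ is a \emph{closed} inclusion, so that $(i'_s)_!=(i'_s)_*$ is induced by a functor on parity complexes and the whole composite becomes an ordinary pushforward along $f_{st}$. This is false in general: $X_s$ is only locally closed in $f^{-1}(Y_t)$. In the very case this lemma is used for, namely \eqref{eqn:pi-delta-nabla}, the stratum $\cB_w$ (with $ws<w$) is \emph{open}, not closed, in $(\pi^s)^{-1}(\scP^s_{\overline{w}})=\cB_w\sqcup\cB_{ws}$. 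Worse, when $X_s$ \emph{is} closed in $f^{-1}(Y_t)$, properness of $f^{-1}(Y_t)\to Y_t$ forces the affine-space fibration $f_{st}$ to be an isomorphism, so your argument only covers the degenerate case in which the shift is zero; the essential content of the lemma---pushing forward the $!$-extension from a stratum that is not closed in the fiber, where $(i'_s)_!$ is only defined through the recollement construction of Lemma~\ref{lem:recollement-one-stratum} and is \emph{not} induced by a functor on parity complexes---is exactly what is left unproved. Your shift bookkeeping also conflates $f_{st*}$ and $f_{st!}$: one has $f_{st*}\underline{\E}_{X_s}\cong\underline{\E}_{Y_t}$ with no shift, while it is $f_{st!}\uuE_{X_s}\cong\uuE_{Y_t}\{\dim Y_t-\dim X_s\}$ that produces the standard-object formula (the $*$-pushforward gives the costandard shift), and this discrepancy is hidden by the incorrect closedness claim.

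The paper avoids this by never trying to compute $(i'_s)_!$ directly: it first establishes the isomorphism $i_s^! f^!\cong f_{st}^! i_t^!$, which \emph{can} be checked at the level of parity complexes because, by Remark~\ref{rmk:restriction-X_s}, the functor $i_s^{(!)}$ is induced by the restriction of the ordinary $i_s^!$ to $\Parity_\scS(X,\E)$ (and $f^!$ likewise preserves parity complexes); passing to left adjoints then gives $f_! i_{s!}\cong i_{t!} f_{st!}$, and applying this to $\uuE_{X_s}$ together with $f_{st!}\uuE_{X_s}\cong\uuE_{Y_t}\{\dim Y_t-\dim X_s\}$ yields the standard case. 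Your treatment of the costandard case by Verdier duality and of ${}'\nmix_s$ as a cone of $\varpi\cdot\id$ is fine, but the first isomorphism needs an argument along the adjunction lines above (or a correct handling of the non-closed stratum) rather than the reduction you propose.
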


\begin{proof}
It is clear that the morphism $f_{s,t} : X_s \to Y_t$ induces functors $f_{st!}$ and $f_{st}^!$ between $\Parity_{\scS}(X_s,\E)$ and $\Parity_{\scT}(Y_t,\E)$, and hence also functors denoted similarly between $\Dmix_{\scS}(X_s,\E)$ and $\Dmix_{\scT}(Y_t,\E)$. And it follows from the definitions and Remark~\ref{rmk:restriction-X_s} that we have an isomorphism
$i_s^! f^! \cong f_{st}^! i_t^!$.
By adjunction we deduce an isomorphism $f_! i_{s!} \cong i_{t!} f_{st!}$. Applying this isomorphism to $\uuE_{X_s}$ we deduce the first isomorphism from the observation that $f_{st!} \uuE_{X_s} \cong \uuE_{Y_t} \{\dim Y_t - \dim X_s\}$.

The second isomorphism can be proved similarly. Finally, when $\E = \O$, the last assertion follows from the fact that $f_*$ takes $\varpi\cdot\id: \nmix_s \to \nmix_s$ 
to $\varpi\cdot\id: \nmix_t\{\dim X_s - \dim Y_t\} \to \nmix_t\{\dim X_s - \dim Y_t\}$.
\end{proof}

\begin{cor}\label{cor:ps-exactness}
Let $f: X \to Y$ be a proper, smooth stratified morphism.
\begin{enumerate}
\item The functor $f_\dag$ is right t-exact.\label{it:exact_dag}
\item The functor $f_\ddag$ is left t-exact.\label{it:exact_ddag}
\item The functor $f^\dag$ is t-exact.\label{it:exact^dag}
\end{enumerate}
\end{cor}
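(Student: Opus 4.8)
The three claims are about t-exactness of the functors $f_\dag = f_*\{d\}$, $f_\ddag = f_*\{-d\}$, and $f^\dag = f^*\{d\} \cong f^!\{-d\}$ associated to a proper, smooth stratified morphism $f\colon X \to Y$. The strategy is to use the characterization of the perverse t-structure in Proposition~\ref{prop:t-structure} together with the explicit computation of $f_*$ on standard and costandard objects in Lemma~\ref{lem:ps-delta-nabla}, and to pass between $f_\dag$, $f_\ddag$ and $f^\dag$ by adjunction and Verdier duality. The key numerical observation is that for $X_s \subset f^{-1}(Y_t)$ we have $\dim X_s - \dim Y_t = d$, so the shifts appearing in Lemma~\ref{lem:ps-delta-nabla} are exactly $\pm d$, which is precisely what is absorbed by the $\{\pm d\}$ twist in the definitions of $f_\dag$ and $f_\ddag$.

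\emph{Step 1 (right t-exactness of $f_\dag$).} By Proposition~\ref{prop:t-structure}(1), $\p\Dmix_\scS(X,\E)^{\le 0}$ is generated under extensions by the $\dmix_s\la n\ra[m]$ with $m \ge 0$. Since $f_\dag$ is triangulated, it suffices to check that $f_\dag(\dmix_s\la n\ra[m]) \in \p\Dmix_\scT(Y,\E)^{\le 0}$ for $m \ge 0$; and since $f_\dag$ commutes with $\la n\ra$ and $[m]$, it suffices to treat $\dmix_s$ itself. Now $f_\dag \dmix_s = f_*\dmix_s\{d\} \cong \dmix_t\{\dim Y_t - \dim X_s\}\{d\} = \dmix_t\{d - d\} = \dmix_t$ by Lemma~\ref{lem:ps-delta-nabla} (using $\dim X_s - \dim Y_t = d$), which lies in $\p\Dmix_\scT(Y,\E)^{\le 0}$. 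Hence $f_\dag$ is right t-exact.

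\emph{Step 2 (left t-exactness of $f_\ddag$, and t-exactness of $f^\dag$).} For $f_\ddag = f_*\{-d\}$ one argues dually: when $\E = \K$ or $\F$, Verdier duality is t-exact and swaps $f_\dag$ with $f_\ddag$ (since $\D_Y \circ f_* \cong f_* \circ \D_X$ and $\D_X\{d\} \cong \{-d\}\D_X$), so Step~1 gives left t-exactness of $f_\ddag$ immediately; alternatively, and uniformly in $\E$, use Proposition~\ref{prop:t-structure}(2): $\p\Dmix_\scS(X,\E)^{\ge 0}$ is generated under extensions by the $\nmix_s\la n\ra[m]$ (and, when $\E = \O$, the ${}'\nmix_s\la n\ra[m]$) with $m \le 0$, and by Lemma~\ref{lem:ps-delta-nabla} we get $f_\ddag \nmix_s = f_*\nmix_s\{-d\} \cong \nmix_t\{\dim X_s - \dim Y_t\}\{-d\} = \nmix_t$, with the same computation for ${}'\nmix_s$ in the case $\E = \O$; this places $f_\ddag$ of each generator in $\p\Dmix_\scT(Y,\E)^{\ge 0}$, so $f_\ddag$ is left t-exact. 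Finally, $f^\dag$ is right t-exact because it is left adjoint to $f_\ddag$ (left adjoints of left t-exact functors are right t-exact), and left t-exact because it is right adjoint to $f_\dag$ (right adjoints of right t-exact functors are left t-exact); hence $f^\dag$ is t-exact.

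\emph{Main obstacle.} The only delicate point is handling the case $\E = \O$ uniformly, where $\D_X$ is not t-exact and the ``$\ge 0$'' part of the t-structure requires the extra generators ${}'\nmix_s$. This is already anticipated by the inclusion of the last sentence of Lemma~\ref{lem:ps-delta-nabla}, which records exactly that $f_*({}'\nmix_s) \cong {}'\nmix_t\{\dim X_s - \dim Y_t\}$, so the argument of Step~2 goes through without modification; no genuinely new work is needed beyond invoking Proposition~\ref{prop:t-structure} and Lemma~\ref{lem:ps-delta-nabla} and keeping track of the cancellation of the shifts against the $\{\pm d\}$ in the definitions.
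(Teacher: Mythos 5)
Your overall route is the same as the paper's: test $f_\dag$ (resp.\ $f_\ddag$) on the generators of $\p\Dmix_\scS(X,\E)^{\le 0}$ (resp.\ $\p\Dmix_\scS(X,\E)^{\ge 0}$) supplied by Proposition~\ref{prop:t-structure}, compute using Lemma~\ref{lem:ps-delta-nabla} (including the ${}'\nmix_s$ when $\E=\O$), and deduce (3) from the adjunctions $(f_\dag,f^\dag)$ and $(f^\dag,f_\ddag)$. However, your ``key numerical observation'' is false: for $X_s \subset f^{-1}(Y_t)$ one only has $0 \le \dim X_s - \dim Y_t \le d$, with equality $\dim X_s - \dim Y_t = d$ only for the strata open in $f^{-1}(Y_t)$. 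The paper's own formula~\eqref{eqn:pi-delta-nabla} is a counterexample: for $\pi^s$ (where $d=1$) one has $\pi^s_\ddag \dmix_{ws} \cong \dmix_{\overline{w}}\{-1\}$, i.e.\ $\dim \cB_{ws} - \dim \scP^s_{\overline{w}} = 0 \neq d$. Consequently the asserted identities $f_\dag \dmix_s \cong \dmix_t$ and $f_\ddag \nmix_s \cong \nmix_t$ in Steps 1 and 2 are wrong in general.

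The error is only in the bookkeeping, and the argument is easily repaired. Set $e = \dim X_s - \dim Y_t$; then $0 \le e \le d$ (the upper bound because $X_s \subset f^{-1}(Y_t)$ and $f^{-1}(Y_t)$ has dimension $\dim Y_t + d$). Lemma~\ref{lem:ps-delta-nabla} gives $f_\dag \dmix_s \cong \dmix_t\{d-e\}$, and since $\{k\} = \la -k \ra [k]$ this equals $\dmix_t \la e-d \ra [d-e]$ with $d-e \ge 0$, hence is again one of the generators of $\p\Dmix_\scT(Y,\E)^{\le 0}$; similarly $f_\ddag \nmix_s \cong \nmix_t\{e-d\} = \nmix_t \la d-e \ra [e-d]$ with $e-d \le 0$ lies among the generators of $\p\Dmix_\scT(Y,\E)^{\ge 0}$, and the same computation applies to ${}'\nmix_s$ when $\E = \O$. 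With this correction your proof coincides with the paper's: parts (1) and (2) follow from Proposition~\ref{prop:t-structure} and Lemma~\ref{lem:ps-delta-nabla}, and part (3) follows by the adjunction argument exactly as you state it.
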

\begin{proof}
Parts~\eqref{it:exact_dag} and~\eqref{it:exact_ddag} follow from Lemma~\ref{lem:ps-delta-nabla} and the description of the t-structure in Proposition~\ref{prop:t-structure}. For part~\eqref{it:exact^dag}, since $f^\dag$ has a right t-exact left adjoint, it is left t-exact, and since it has a left t-exact right adjoint, it is right t-exact. 
\end{proof}

\begin{cor}\label{cor:ps-IC}
Let $f: X \to Y$ be a proper, smooth stratified morphism with connected fibers.  For any stratum $Y_t \subset Y$, we have
\[
f^\dag\IC^\mix_t \cong \IC^\mix_s,
\]
where $X_s$ is the unique open stratum in $f^{-1}(Y_t)$.
\end{cor}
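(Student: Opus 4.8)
The plan is to use the t-exactness of $f^\dag$ (Corollary~\ref{cor:ps-exactness}\eqref{it:exact^dag}) to reduce the statement to an identity between intermediate extensions, and then to evaluate the relevant intermediate extension by means of Lemma~\ref{lem:ic-smooth}. Throughout, write $d$ for the relative dimension of $f$, set $V := f^{-1}(Y_t)$, and let $h' : V \hookrightarrow X$ and $f' : V \to Y_t$ be the inclusion and morphism induced by $f$. By Proposition~\ref{prop:proper-smooth}, $f'$ is again proper, smooth and stratified; its fibers, being fibers of $f$, are connected of dimension $d$, so $V$ is irreducible; and since $V$ is smooth over $Y_t \cong \mathbb{A}^{\dim Y_t}$, the variety $V$ is smooth, with open dense stratum $X_s$ and $\overline{X_s} = \overline{V}$.

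I would first observe that $\dmix_t \in \p\Dmix_\scT(Y,\E)^{\le 0}$ and $\nmix_t \in \p\Dmix_\scT(Y,\E)^{\ge 0}$ (Proposition~\ref{prop:t-structure}), so that $\ptau_{\ge 0}\dmix_t = \pH^0(\dmix_t)$ and $\ptau_{\le 0}\nmix_t = \pH^0(\nmix_t)$; thus, by definition, $\IC^\mix_t$ is the image of $\pH^0$ applied to the canonical morphism $\dmix_t = i_{t!}\uuE_{Y_t} \to i_{t*}\uuE_{Y_t} = \nmix_t$. As $f^\dag$ is t-exact, it restricts to an exact functor between the perverse hearts and commutes with $\pH^0$, hence with the formation of images; so
\[
f^\dag\IC^\mix_t \cong \im\bigl(\pH^0(f^\dag\dmix_t) \to \pH^0(f^\dag\nmix_t)\bigr).
\]
By the base-change isomorphisms~\eqref{eqn:ps-bc-inc} of Proposition~\ref{prop:proper-smooth} applied to the inclusion $i_t : Y_t \hookrightarrow Y$ we have $f^\dag\dmix_t \cong h'_!\, f^{\prime\dag}\uuE_{Y_t}$ and $f^\dag\nmix_t \cong h'_*\, f^{\prime\dag}\uuE_{Y_t}$; and since $f'$ is smooth and stratified, $f^{\prime\dag}\uuE_{Y_t} = f^{\prime*}\underline{\E}_{Y_t}\{\dim Y_t + d\} = \uuE_V$, the shift being correct because $\dim V = \dim Y_t + d$ and $\underline{\E}_V = f^{\prime*}\underline{\E}_{Y_t}$ is a parity complex (\S\ref{ss:stratified}). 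Granting that these isomorphisms carry the canonical morphism $\dmix_t \to \nmix_t$ to the canonical morphism $h'_!\uuE_V \to h'_*\uuE_V$, we conclude that $f^\dag\IC^\mix_t$ is the intermediate extension $h'_{!*}\uuE_V$ of the perverse sheaf $\uuE_V$ along $h'$.

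It remains to identify $h'_{!*}\uuE_V$ with $\IC^\mix_s$. Working inside $\Dmix_\scS(V,\E)$, where the closure of $X_s$ is all of $V$ and $V$ is smooth, Lemma~\ref{lem:ic-smooth} shows that the intermediate extension of $\uuE_{X_s}$ from $X_s$ to $V$ is $\uuE_V$. Factor the stratum inclusion $i_s : X_s \hookrightarrow X$ as an open inclusion $a : X_s \hookrightarrow V$ followed by $h' : V \hookrightarrow X$. Since $a$ is open, transitivity of intermediate extensions applies, and
\[
f^\dag\IC^\mix_t \cong h'_{!*}\uuE_V \cong h'_{!*}\bigl(a_{!*}\uuE_{X_s}\bigr) \cong (h'\circ a)_{!*}\uuE_{X_s};
\]
as $h'\circ a = i_s$ and $\IC^\mix_s$ is by definition the image of $\pH^0(i_{s!}\uuE_{X_s}) \to \pH^0(i_{s*}\uuE_{X_s})$ --- i.e.\ the intermediate extension of $\uuE_{X_s}$ for the perverse t-structure on $\Dmix_\scS(X,\E)$ --- this last object is $\IC^\mix_s$, and we are done.

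The step I expect to require the most care is the compatibility assertion invoked above, namely that the base-change isomorphisms of Proposition~\ref{prop:proper-smooth} intertwine the canonical morphisms $\dmix_t \to \nmix_t$ and $h'_!\uuE_V \to h'_*\uuE_V$; this is routine but entails a somewhat tedious traceback through the constructions of \S\ref{ss:loc-closed} and \S\ref{ss:stratified}. One can avoid it by instead verifying directly that $f^\dag\IC^\mix_t$ satisfies the support and cosupport conditions of \cite[Corollaire~1.4.24]{bbd} that characterize $\IC^\mix_s$: using the t-exactness of $f^\dag$ together with that of the pullbacks along strata, every such condition reduces to the corresponding condition for $\IC^\mix_t$, except for those indexed by the strata of $V$ other than $X_s$, for which one again invokes Lemma~\ref{lem:ic-smooth} (that $\uuE_V$ is the $\IC$ sheaf of the dense stratum of the smooth variety $V$).
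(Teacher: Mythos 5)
Your argument is correct and is essentially the paper's own proof: both use the t-exactness of $f^\dag$ (Corollary~\ref{cor:ps-exactness}) together with the base-change isomorphisms of Proposition~\ref{prop:proper-smooth} to identify $f^\dag\IC^\mix_t$ with the intermediate extension of $\uuE_{f^{-1}(Y_t)}$, and then Lemma~\ref{lem:ic-smooth} plus transitivity of intermediate extensions to conclude $\IC^\mix_s$. The compatibility point you flag is glossed over in the paper as well, and your fallback via the support/cosupport criterion of \cite[Corollaire~1.4.24]{bbd} is exactly the standard way to dispose of it.
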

Note that $f^{-1}(Y_t)$ does indeed contain a unique open stratum: by the assumptions on $f$, it is smooth and connected, and hence irreducible.
\begin{proof}
Let $i'_s: X_s \hookrightarrow f^{-1}(Y_t)$ and $h: f^{-1}(Y_t) \hookrightarrow X$ be the inclusion maps, so that $i_s = h \circ i'_s$.  Using Proposition~\ref{prop:proper-smooth} and the t-exactness of $f^\dag$ (see Corollary~\ref{cor:ps-exactness}), we see that $f^\dag\IC^\mix_t \cong h_{!*}\uuE_{f^{-1}(Y_t)}$.  (Here, as usual, $h_{!*}$ denotes the image of the natural map $\p\tau_{\ge 0}h_! \to \p\tau_{\le 0}h_*$.)  By Lemma~\ref{lem:ic-smooth}, $\uuE_{f^{-1}(Y_t)} \cong (i'_s)_{!*}\uuE_{X_s}$, so $f^\dag\IC^\mix_t \cong (i_s)_{!*}\uuE_{X_s} \cong \IC^\mix_s$, as desired.
\end{proof}

\subsection{Quasihereditary structure for field coefficients}
\label{ss:qh-structure}

We now impose an additional hypothesis on our space $X$:
\begin{enumerate}
\item[\bf(A2)] For each $s \in \scS$, the objects $\dmix_s(\E)$ and $\nmix_s(\E)$ are perverse.
\end{enumerate}
This hypothesis, an analogue of~\cite[Corollaire~4.1.3]{bbd}, will remain in effect for the remainder of Section~\ref{sec:mixed-perv}.  
In Section~\ref{sec:kac-moody} we will prove that (partial) flag varieties of Kac--Moody groups (endowed with the Bruhat stratification) satisfy this condition.
Since $\F(\dmix_s(\O)) \cong \dmix_s(\F)$, Lemma~\ref{lem:perv-F} tells us that it is enough to check~{\bf (A2)} when $\E = \K$ or $\F$.

Note that under hypothesis~{\bf (A2)}, the objects ${}'\nmix_s$ appearing in Proposition~\ref{prop:t-structure} are perverse.  Indeed, that proposition tells us that ${}'\nmix_s$ lies in $\p\Dmix_\scS(X,\O)^{\ge 0}$.  But because it is the cone of a morphism $\nmix_s(\O) \to \nmix_s(\O)$ in $\Perv^\mix_\scS(X,\O)$, it also lies in $\p\Dmix_\scS(X,\O)^{\le 0}$.

\begin{prop}\label{prop:qh-structure}
Assume that $\E = \K$ or $\F$.  Then $\Perv^\mix_\scS(X,\E)$ is a graded quasihereditary category in the sense of Definition~{\rm \ref{defn:qhered}}, and the $\dmix_s\la n\ra$ (resp.~$\nmix_s\la n\ra$) are precisely the standard (resp.~costandard) objects therein.
\end{prop}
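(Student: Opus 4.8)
The plan is to verify the axioms of Definition~\ref{defn:qhered} for $\Perv^\mix_\scS(X,\E)$ one by one, using the partial order on $\scS$ induced by the closure relation among strata (so $s \le t$ if $X_s \subset \overline{X_t}$), and taking the grading shift to be the Tate twist $\la 1 \ra$. First I would record what is already available: hypothesis~{\bf (A2)} gives that $\dmix_s$ and $\nmix_s$ are perverse, the simple objects are the $\IC^\mix_s\la n\ra$ (from the discussion after Definition~\ref{defn:t-structure}, since $\E = \K$ or $\F$), and $\Perv^\mix_\scS(X,\E)$ is finite-length. The abelian category is $\Hom$-finite over $\E$ because $\Parity_\scS(X,\E)$ has finite-dimensional $\Hom$ spaces and $\Dmix_\scS(X,\E) = \Kb\Parity_\scS(X,\E)$. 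The key orthogonality input is Lemma~\ref{lem:vanishing-d-m}, which says $\Hom(\dmix_s, \nmix_t\la n\ra[i])$ vanishes unless $s = t, n = i = 0$, in which case it is $\E$; this will give both the $\Hom(\dmix_s\la n\ra, \nmix_t\la m\ra) = \E^{\delta_{st}\delta_{nm}}$ condition and the higher $\Ext$-vanishing between standards and costandards.

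The main steps: (1) Show $\IC^\mix_s$ appears as a composition factor of $\dmix_s$ with multiplicity one and all other factors are $\IC^\mix_t\la n\ra$ with $t < s$ — this follows by induction on strata from the recollement, exactly as in the classical case~\cite[\S2]{bbd}, using that on the open stratum $X_s$ the restriction of $\dmix_s$ is $\uuE_{X_s}$. (2) Establish the standard/costandard $\Ext$-orthogonality $\Ext^{>0}(\dmix_s\la n\ra, \nmix_t\la m\ra) = 0$ and the $\Hom$ computation, directly from Lemma~\ref{lem:vanishing-d-m} after unwinding that $\Ext^i$ in the heart is computed by $\Hom(-, -[i])$ in $\Dmix_\scS(X,\E)$. (3) Produce, for each $s$, a projective cover (or rather the relevant projective/tilting-type objects needed in Definition~\ref{defn:qhered}) with a standard filtration; in the homotopy-category model this is natural because $\cE^\mix_s$ is, roughly, a ``tilting'' object, but the cleanest route is to check the filtration axiom via the $\Ext^1$-vanishing criterion: an object has a $\dmix$-filtration iff its costalks vanish appropriately, which is detectable stratum-by-stratum through $i_s^!$ and Lemma~\ref{lem:t-structure}. (4) Verify the grading axiom — that each $\Hom$ space is appropriately bounded or that $\Ext^1(\dmix_s, \dmix_s\la n\ra) = 0$ for $n \le 0$, say — again reducing to the single-stratum case where everything is governed by $\Ext$-groups in $\E\lh\gmod$ via Lemma~\ref{lem:t-structure}, which has global dimension $\le 1$ and concentrated Ext.

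I expect the main obstacle to be step (3): identifying enough projective objects (or the graded-quasihereditary analogue thereof) in $\Perv^\mix_\scS(X,\E)$ and showing they carry standard filtrations. Since $\Dmix_\scS(X,\E)$ is defined as a homotopy category of parity complexes rather than as $\Db$ of the heart, one cannot immediately invoke the usual existence of projectives in the perverse heart; instead I would build the needed objects by hand using the recollement functors $j_!, i_*$ from Proposition~\ref{prop:recollement}, gluing standards along the stratification and controlling extensions with Lemma~\ref{lem:vanishing-d-m}, much as in the proof of~\cite[Proposition~1]{bezru} that was already cited for Proposition~\ref{prop:t-structure}. The remaining axioms are comparatively formal consequences of the $\Ext$-orthogonality and the recollement structure, so once the filtered objects are in hand the verification that Definition~\ref{defn:qhered} holds should go through routinely, with $\dmix_s\la n\ra$ and $\nmix_s\la n\ra$ being the standard and costandard objects by construction.
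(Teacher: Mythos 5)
Your framework (recollement plus Lemma~\ref{lem:vanishing-d-m}) is the right one, but there is a genuine gap in the justification of your step (2). It is \emph{not} true in general that $\Ext^i$ in the heart of a t-structure is computed by $\Hom(-,-[i])$ in the ambient triangulated category for $i \ge 2$; in the present situation that identification amounts to the equivalence $\Db\Perv^\mix_\scS(X,\E) \simeq \Dmix_\scS(X,\E)$ of Lemma~\ref{lem:perv-dereq}, which is itself deduced \emph{from} the quasihereditary structure you are trying to establish, so as written the step is circular. What saves the day is that Definition~\ref{defn:qhered} only demands vanishing of $\Ext^2_{\Perv^\mix_\scS(X,\E)}(\dmix_s,\nmix_t\la n\ra)$, and in degree $2$ one has the general fact (this is exactly what the paper invokes, citing \cite[Lemma~3.2.3]{bgs}) that the natural map $\Ext^2_{\Perv^\mix_\scS(X,\E)}(\dmix_s,\nmix_t\la n\ra) \to \Hom_{\Dmix_\scS(X,\E)}(\dmix_s,\nmix_t\la n\ra[2])$ is injective; combined with Lemma~\ref{lem:vanishing-d-m} this gives the axiom. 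The full orthogonality $\Ext^{>0}(\dmix_s,\nmix_t\la n\ra)=0$ in the heart is a \emph{consequence} of quasiheredity (it is~\eqref{eqn:ext-std-costd}, via Theorem~\ref{thm:bgs}), not something you can feed in at this stage.

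Your step (3), which you flag as the main obstacle, rests on a misreading of Definition~\ref{defn:qhered}: the definition does not ask for projective objects of the whole category admitting standard filtrations --- that is a consequence, recorded in Theorem~\ref{thm:bgs}. What must be checked is only that, for $\scT \subset \scS$ closed with $s \in \scT$ maximal, $\dmix_s \to \IC^\mix_s$ is a projective cover in the Serre subcategory $\cA_\scT$ (dually for $\nmix_s$), together with the finiteness, Schur, kernel/cokernel and $\Ext^2$ conditions. Identifying $\cA_\scT$ with mixed perverse sheaves on the corresponding closed union of strata, in which $X_s$ is open, the projective-cover property follows from adjunction for $i_{s!}$, hypothesis {\bf (A2)}, and the fact that the heart on a single stratum is $\E\lh\gmod$ with $\E$ a field (Lemma~\ref{lem:t-structure}), hence semisimple; no Bezrukavnikov-style gluing construction of projectives is needed here. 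So most of the work you anticipate is not required for this proposition, while the one axiom the paper singles out as non-obvious (the $\Ext^2$ condition) is precisely the one your proposal under-justifies.
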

\begin{proof}
The only axiom in Definition~\ref{defn:qhered} which may not be an obvious consequence of the assumption~{\bf (A2)} and the theory of recollement is the last one, which requires that
\[
\Ext^2_{\Perv^\mix_\scS(X,\E)}(\dmix_s,\nmix_t \langle n \rangle)=0
\]
for all $s,t \in \scS$ and $n \in \Z$.
To see this, recall (see e.g.~\cite[Lemma 3.2.3]{bgs}) that there is a natural injective morphism
\[
\Ext^2_{\Perv^\mix_\scS(X,\E)}(\dmix_s,\nmix_t \la n \ra) \hookrightarrow \Hom_{\Dmix_\scS(X,\E)}(\dmix_s,\nmix_t \langle n \rangle[2]).
\]
The right-hand side is zero by Lemma~\ref{lem:vanishing-d-m}, and hence so is the left-hand side.
\end{proof}

We can now invoke Proposition~\ref{prop:tilt-class}: if $\E = \K$ or $\F$, then for each $s \in \scS$, there is a unique indecomposable tilting object in $\Perv^\mix_\scS(X,\E)$ that is supported on $\overline{X_s}$ and whose restriction to $X_s$ is $\uuE_{X_s}$.  We denote this object by
$\cT^\mix_s(\E)$.
Tilting objects will be further discussed in~\S\ref{ss:equivalences}.
Similarly, we have indecomposable projective objects $\cP^\mix_s(\E)$ for $s \in \scS$.

\subsection{Projective and tilting objects for general coefficients}
\label{ss:qh-structure-O}

The general machinery of Appendix~\ref{sec:homological} certainly cannot apply to $\Perv^\mix_\scS(X,\O)$, since the latter is not linear over a field.  Nevertheless, we continue the practice from~\S\ref{ss:perverse-t-structure} of referring to the objects $\dmix_s(\O)\la  n\ra$ (resp.~$\nmix_s(\O)\la n\ra$) as ``standard'' (resp.~``costandard'') objects.  We will also freely use the terminology of Definition~\ref{defn:tilting} concerning standard and costandard filtrations.

We will see below that the structure theory of projective and tilting objects in $\Perv^\mix_\scS(X,\O)$ strongly resembles that of quasihereditary categories.  For ordinary (not mixed) perverse $\O$-sheaves, the theory of projective perverse $\O$-sheaves is developed in detail in~\cite[\S\S 2.3--2.4]{rsw}, and that of tilting perverse $\O$-sheaves is developed in~\cite[\S\S B.2--B.3]{ar}. The theory developed in Section~\ref{sec:mixed-der} and the present section allows us to copy the arguments of those sources almost verbatim.  (The only modifications are those needed to keep track of Tate twists.)  We restate those results below without proof.

\begin{lem}[{\cite[Lemma~2.1.6]{rsw}}]\label{lem:krull-schmidt-O}
The category $\Perv^\mix_\scS(X,\O)$ satisfies the Krull--Schmidt property.
\end{lem}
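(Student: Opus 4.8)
The plan is to deduce the Krull--Schmidt property for $\Perv^\mix_\scS(X,\O)$ from general principles once we know that $\End(\cF)$ is a semiperfect ring for every object $\cF$. Recall that an abelian (or more generally Krull--Schmidt-candidate) category has the Krull--Schmidt property as soon as it is idempotent-complete and every object has a semiperfect endomorphism ring; indeed then each $\End(\cF)$ decomposes into a finite product of local rings, yielding a finite decomposition of $\cF$ into indecomposables with local endomorphism rings, and the uniqueness follows formally. The category $\Perv^\mix_\scS(X,\O)$ is the heart of a bounded t-structure on a triangulated category, hence abelian, so idempotent-completeness is automatic; the work is entirely in the endomorphism rings.

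First I would reduce to a finiteness statement about $\Hom$-modules. For $\cF, \cG \in \Perv^\mix_\scS(X,\O)$, the space $\Hom(\cF,\cG)$ is a finitely generated $\O$-module: this can be checked by d\'evissage on the number of strata using the standard/costandard filtration machinery and the adjunction descriptions of $\Hom$-spaces into $i_{s*}$ and out of $i_{s!}$ from Remark~\ref{rmk:restriction-X_s}, combined with Lemma~\ref{lem:t-structure} (which identifies the one-stratum case with $\E\lh\gmod$, where $\Hom$ and $\Ext$ groups between finitely generated modules are finitely generated over $\O$). In particular $A := \End(\cF)$ is a finitely generated $\O$-algebra, hence a \emph{semiperfect} ring: it is $\varpi$-adically complete as an $\O$-module (being finitely generated over the complete local ring $\O$, or at least its $\varpi$-adic completion behaves well), and $A/\varpi A$ is a finite-dimensional $\F$-algebra, hence semiperfect, and idempotents lift along the nilpotent-ish (in fact $\varpi$-adically complete) surjection $A \to A/\varpi A$. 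This is exactly the argument in~\cite[Lemma~2.1.6]{rsw}, transplanted to the mixed setting.

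Then I would assemble the pieces: since every $\End(\cF)$ is semiperfect and $\Perv^\mix_\scS(X,\O)$ is idempotent-complete, $\cF$ admits a decomposition $\cF \cong \cF_1 \oplus \cdots \oplus \cF_k$ with each $\End(\cF_i)$ local; the length must be finite because $\End(\cF)$, being a quotient-compatible finitely generated $\O$-module-algebra, cannot contain an infinite orthogonal family of idempotents. Uniqueness of such a decomposition up to permutation and isomorphism is the Azumaya--Krull--Schmidt theorem, valid in any additive category in which the relevant endomorphism rings are local. This gives the lemma.

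The main obstacle is the finiteness claim that $\Hom_{\Perv^\mix_\scS(X,\O)}(\cF,\cG)$ is a finitely generated $\O$-module, since everything else is formal ring theory. The subtlety is that $\Dmix_\scS(X,\O) = \Kb\Parity_\scS(X,\O)$ is built from $\Hom$-spaces between parity complexes, which \emph{are} finitely generated (even free) over $\O$ by~\cite[Proposition~2.39]{jmw}, but the perverse truncation functors $\ptau_{\le 0}, \ptau_{\ge 0}$ a priori only land in the mixed derived category, and one must check that truncating preserves the finiteness of $\Hom$-modules into and out of the truncation. I would handle this by the standard recollement induction: the base case (single stratum) is Lemma~\ref{lem:t-structure}, and the inductive step uses the distinguished triangles $j_! j^* \to \id \to i_* i^* \xrightarrow{[1]}$ and $i_* i^! \to \id \to j_* j^* \xrightarrow{[1]}$ together with the fact that each of $j^*, j_!, j_*, i^*, i^!, i_*$ takes finitely-$\O$-generated $\Hom$-modules to finitely-$\O$-generated $\Hom$-modules (this is already encoded in the one-stratum case and the adjunctions), so that the long exact $\Hom$-sequences keep us within the class of finitely generated $\O$-modules, which is closed under subquotients and extensions since $\O$ is Noetherian. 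As noted, this is precisely the argument of~\cite[\S2.1]{rsw}, and we simply cite it.
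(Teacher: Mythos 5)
Your argument is correct and is essentially the one the paper intends, since the paper gives no proof and simply transplants \cite[Lemma~2.1.6]{rsw} (finitely generated $\Hom$-modules over the complete local ring $\O$, semiperfect endomorphism rings, idempotent completeness of the abelian heart). One remark: the finiteness of $\Hom(\cF,\cG)$ needs no recollement induction, since any object of $\Perv^\mix_\scS(X,\O)$ is by definition a bounded complex of parity complexes, and $\Hom$ in $\Kb\Parity_\scS(X,\O)$ is a subquotient of a finite direct sum of $\Hom$-groups between parity complexes, which are free of finite rank over $\O$ by \cite[Proposition~2.39]{jmw}.
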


\begin{prop}[{\cite[\S\S2.3--2.4]{rsw}}]\label{prop:projective-O}
\begin{enumerate}
\item If $\cP$ is a projective object in the category $\Perv^\mix_\scS(X,\O)$, then $\F(\cP)$ is a projective object in $\Perv^\mix_\scS(X,\F)$.
\item The category $\Perv^\mix_\scS(X,\O)$ has enough projectives.  Every projective admits a standard filtration, and every object admits a finite projective resolution.
\item If $\cP, \cP' \in \Perv^\mix_\scS(X,\O)$ are both projective, then $\Hom(\cP,\cP')$ is a free $\O$-module, and the natural map $\F \otimes_\O \Hom(\cP,\cP') \to \Hom(\F(\cP), \F(\cP'))$ is an isomorphism.
\item For any $s \in \scS$, there exists a unique indecomposable projective object $\cP^\mix_s(\O)$ in $\Perv^\mix_\scS(X,\O)$ such that $\F(\cP^\mix_s(\O)) \cong \cP^\mix_s(\F)$.  Any projective object in $\Perv^\mix_\scS(X,\O)$ is isomorphic to a direct sum of various $\cP^\mix_s(\O)\la n\ra$.
\end{enumerate}
\end{prop}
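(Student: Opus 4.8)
The plan is to transcribe the arguments of~\cite[\S\S2.3--2.4]{rsw} into the present framework, the only systematic change being the insertion of Tate twists $\la n\ra$. All the structural facts those arguments need are now available: the recollement of Proposition~\ref{prop:recollement} and its one-stratum refinement Lemma~\ref{lem:recollement-one-stratum}; the ``generated under extensions'' description of the perverse t-structure in Proposition~\ref{prop:t-structure}; the $\Hom$-vanishing of Lemma~\ref{lem:vanishing-d-m}; the single-stratum identification $\Dmix_\scS(X_s,\E)\simeq\Db(\E\lh\gmod)$ of Lemma~\ref{lem:t-structure}; the modular-reduction formulas of Lemmas~\ref{lem:rhom-scalars} and~\ref{lem:o-f-ses} together with $\F(\dmix_s(\O))\cong\dmix_s(\F)$; and the graded quasihereditary structure on $\Perv^\mix_\scS(X,\F)$ from Proposition~\ref{prop:qh-structure}. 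The Krull--Schmidt statement (Lemma~\ref{lem:krull-schmidt-O}) comes first and is essentially free: the $\uHom^\bullet$-complexes of~\S\ref{ss:Hom} have finitely generated terms, so $\Hom$-spaces in $\Perv^\mix_\scS(X,\O)$ are finitely generated over the complete noetherian local ring $\O$, whence endomorphism rings of indecomposables are local and idempotents split.

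For parts~(2) and~(4) I would construct the indecomposable projectives $\cP^\mix_s(\O)$ by induction on the number of strata, imitating the recollement d\'evissage that produces projective covers in a ``glued'' highest-weight category. On a single closed stratum $\Perv^\mix_\scS(X_s,\O)\simeq\O\lh\gmod$ has the free modules $\O\la n\ra$ as indecomposable projectives, and these satisfy $\F(\O\la n\ra)=\F\la n\ra$. In the inductive step one removes an open (or closed) stratum via Lemma~\ref{lem:recollement-one-stratum}, takes the projective cover $\cP^\mix_s(\F)$ of $\IC^\mix_s(\F)$ in $\Perv^\mix_\scS(X,\F)$ with its standard filtration (whose multiplicities are controlled by reciprocity in the quasihereditary category $\Perv^\mix_\scS(X,\F)$), and lifts it to an object of $\Perv^\mix_\scS(X,\O)$ equipped with a filtration by various $\dmix_t(\O)\la n\ra$ having the same multiplicities and with $\F$-reduction $\cP^\mix_s(\F)$; the obstructions to lifting the successive extensions in this filtration live in $\Ext^1$-groups between standard and costandard objects, which vanish by Lemma~\ref{lem:vanishing-d-m}. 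Existence of enough projectives and of finite projective resolutions then follows, using boundedness of the perverse t-structure and finite global dimension of $\Perv^\mix_\scS(X,\O)$, the latter inherited from the one-stratum case $\O\lh\gmod$ through the recollement.

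The step I expect to be the main obstacle is the verification that these lifts are genuinely projective, i.e.\ that $\Ext^1_{\Perv^\mix_\scS(X,\O)}(\cP^\mix_s(\O),\cM)=0$ for all $\cM$ in $\Perv^\mix_\scS(X,\O)$; this is the one place where a purely formal argument does not suffice. Following~\cite{rsw}, I would use the canonical torsion subobject of $\cM$ to reduce to the cases where $\varpi$ annihilates $\cM$ (so that $\cM$ comes from $\Perv^\mix_\scS(X,\F)$ and one invokes projectivity of $\cP^\mix_s(\F)$, already arranged) and where $\varpi$ acts injectively on $\cM$ (handled on the torsion-free, essentially $\K$-linear, side, using that $\Hom$ from a projective into $\cM$ is torsion-free over $\O$); the universal-coefficient sequence of Lemma~\ref{lem:o-f-ses}, applied with $\cG=\cM$ and with $\cG=\cM[1]$, together with Lemma~\ref{lem:rhom-scalars}, is the bookkeeping that makes both reductions work. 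Once projectivity is in hand the remaining assertions are formal: part~(3) follows because $\Ext^1_{\Perv^\mix_\scS(X,\O)}(\cP,\cP')=0$ kills the $\Tor_1$-term in Lemma~\ref{lem:o-f-ses}, giving freeness of $\Hom(\cP,\cP')$ and the base-change isomorphism; part~(1) follows since Krull--Schmidt (Lemma~\ref{lem:krull-schmidt-O}) writes any projective as a sum of shifts $\cP^\mix_s(\O)\la n\ra$, each reducing under $\F(-)$ to the projective $\cP^\mix_s(\F)$; and the uniqueness and classification in part~(4) follow from indecomposability of each $\cP^\mix_s(\O)$ (its endomorphism ring reduces modulo $\varpi$ to the local ring $\End(\cP^\mix_s(\F))$) together with standard highest-weight bookkeeping.
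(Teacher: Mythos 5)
Your headline strategy---transcribe \cite[\S\S2.3--2.4]{rsw} into the mixed setting, the only change being the bookkeeping of Tate twists---is exactly the paper's own ``proof'': the statement is restated without proof, with the remark that the arguments of {\it loc.~cit.} can be copied almost verbatim now that the recollement, the description of the t-structure, Lemma~\ref{lem:vanishing-d-m}, Lemmas~\ref{lem:rhom-scalars}--\ref{lem:o-f-ses}, and Krull--Schmidt (Lemma~\ref{lem:krull-schmidt-O}) are available. Most of your outline (Krull--Schmidt from finite generation of $\Hom$-modules over the complete local ring $\O$, the torsion/torsion-free d\'evissage in the projectivity check, Lemma~\ref{lem:o-f-ses} for part~(3), locality of endomorphism rings for part~(4)) is consistent with that.

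There is, however, a genuine gap in the one step you single out as the construction of $\cP^\mix_s(\O)$: the claim that the obstructions to lifting the standard filtration of $\cP^\mix_s(\F)$ to $\O$ ``live in $\Ext^1$-groups between standard and costandard objects, which vanish by Lemma~\ref{lem:vanishing-d-m}.'' The successive extensions in such a filtration are classes in $\Ext^1(\dmix_t(\F)\la n\ra, M_\F)$ with $M_\F$ itself standard-filtered; the obstruction to lifting such a class is the failure of surjectivity of the reduction map $\Ext^1(\dmix_t(\O)\la n\ra, M_\O) \to \Ext^1(\dmix_t(\F)\la n\ra, M_\F)$, whose cokernel, by Lemma~\ref{lem:o-f-ses}, is $\Tor^\O_1\bigl(\F, \Hom_{\Dmix_\scS(X,\O)}(\dmix_t(\O)\la n\ra, M_\O[2])\bigr)$. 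These are (degree-two) $\Hom$-groups between \emph{standard} objects, about which Lemma~\ref{lem:vanishing-d-m} says nothing, and they need not vanish nor be torsion-free; so the lifting argument as justified does not go through. The argument being copied from \cite{rsw} avoids this: the indecomposable projectives are constructed directly over $\O$, by induction over the stratification via the recollement and successive (universal) extensions of objects $\dmix_t(\O)\la n\ra$, with the freeness and base-change statements of part~(3) proved along the way; the identification $\F(\cP^\mix_s(\O)) \cong \cP^\mix_s(\F)$ and part~(1) are then deduced afterwards rather than being built into the construction. With the construction arranged that way, the rest of your outline (enough projectives, standard filtrations, finite projective resolutions, uniqueness and the classification in part~(4)) does go through as you describe.
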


\begin{prop}[{\cite[Proposition~B.3]{ar}}]\label{prop:tilting-O}
\begin{enumerate}
\item If $\cT \in \Perv^\mix_\scS(X,\O)$ is tilting, then $\F(\cT)$ is a tilting object in $\Perv^\mix_\scS(X,\F)$.
\item If $\cT, \cT' \in \Perv^\mix_\scS(X,\O)$ are both tilting, then $\Hom(\cT,\cT')$ is a free $\O$-module, and the natural map $\F \otimes_\O \Hom(\cT,\cT') \to \Hom(\F(\cT), \F(\cT'))$ is an isomorphism.
\item For any $s \in \scS$, there exists a unique indecomposable tilting object $\cT^\mix_s(\O)$ in $\Perv^\mix_\scS(X,\O)$ such that $\F(\cT^\mix_s(\O)) \cong \cT^\mix_s(\F)$.  Any tilting object in $\Perv^\mix_\scS(X,\O)$ is isomorphic to a direct sum of various $\cT^\mix_s(\O)\la n\ra$.
\end{enumerate}
\end{prop}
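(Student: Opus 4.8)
The plan is to transplant the arguments of~\cite[\S\S B.2--B.3]{ar} into the mixed setting; the formalism developed in Section~\ref{sec:mixed-der} and~\S\ref{ss:perverse-t-structure} makes this possible almost word for word, the only extra bookkeeping concerning the Tate twist $\la 1\ra$, so I only indicate the key points. For part~(1), I would first show that $\F(-)$ preserves the classes of objects admitting a standard, resp.\ costandard, filtration, by induction on the length of the filtration. If $\cF$ fits into a short exact sequence $0 \to \cF' \to \cF \to \dmix_s(\O)\la k\ra \to 0$ in $\Perv^\mix_\scS(X,\O)$ with $\cF'$ standard-filtered, apply the triangulated functor $\F(-)$ to the associated distinguished triangle: by induction $\F(\cF')$ is perverse and standard-filtered, while $\F(\dmix_s(\O)\la k\ra) \cong \dmix_s(\F)\la k\ra$ by~\eqref{eqn:standard-scalars}. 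Since both outer terms of the triangle lie in the heart, the long exact sequence of perverse cohomology forces $\pH^i(\F(\cF)) = 0$ for $i \neq 0$, so $\F(\cF)$ is perverse and $0 \to \F(\cF') \to \F(\cF) \to \dmix_s(\F)\la k\ra \to 0$ is exact. The costandard case is identical, and a tilting object carries both types of filtration, so $\F$ preserves tilting objects.

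For part~(2), let $\cT$ be standard-filtered and $\cT'$ costandard-filtered in $\Perv^\mix_\scS(X,\O)$. The standard $\Ext$-orthogonality dévissage---filter $\cT$ by standards, $\cT'$ by costandards, and invoke Lemma~\ref{lem:vanishing-d-m}---shows that $\Hom_{\Dmix_\scS(X,\O)}(\cT,\cT'[i]) = 0$ for $i \neq 0$ and that $\Hom_{\Dmix_\scS(X,\O)}(\cT,\cT')$ is a free $\O$-module. Since the perverse heart is a full subcategory, $\Hom_{\Perv^\mix_\scS(X,\O)}(\cT,\cT')$ coincides with it and is free. In the short exact sequence of Lemma~\ref{lem:o-f-ses} the $\Tor$ term vanishes because $\Hom(\cT,\cT'[1]) = 0$, so $\F \otimes_\O \Hom(\cT,\cT') \simto \Hom(\F(\cT),\F(\cT'))$; by part~(1) the target is a $\Hom$-space in $\Perv^\mix_\scS(X,\F)$, which is the assertion.

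For part~(3), I would construct $\cT^\mix_s(\O)$, exactly as in~\cite[\S\S B.2--B.3]{ar}, by forming iterated universal extensions of $\dmix_s(\O)$ by (co)standard objects supported on $\overline{X_s}\smallsetminus X_s$ (using parts~(1)--(2), Proposition~\ref{prop:t-structure} and Lemma~\ref{lem:krull-schmidt-O}) and then extracting via Krull--Schmidt the unique indecomposable summand $\cT^\mix_s(\O)$ with $i_s^*\cT^\mix_s(\O) \cong \uuO_{X_s}$. Its endomorphism ring is local, so by part~(2) the ring $\End(\F(\cT^\mix_s(\O))) \cong \F\otimes_\O\End(\cT^\mix_s(\O))$ is local too; hence $\F(\cT^\mix_s(\O))$ is an indecomposable tilting object with the correct restriction, i.e.\ $\F(\cT^\mix_s(\O)) \cong \cT^\mix_s(\F)$. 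For the classification and uniqueness, the crucial observation is that $\F(-) : \Dmix_\scS(X,\O) \to \Dmix_\scS(X,\F)$ is conservative: if $\F(\cC) = 0$, then, since $\F$ commutes with the functors $i_s^*$ and with the equivalence of Lemma~\ref{lem:t-structure} (hypercohomology along an affine-space stratum is free over $\O$), each $i_s^*\cC$ corresponds to a bounded complex of graded free $\O$-modules whose top nonvanishing cohomology would survive modulo the uniformizer by Nakayama; thus $i_s^*\cC = 0$ for all $s$, so $\cC = 0$. Consequently any isomorphism over $\F$ between the modular reductions of two tilting $\O$-objects lifts---using part~(2) and Nakayama to lift a morphism, then conservativity to see it is invertible---to an isomorphism over $\O$; together with Lemma~\ref{lem:krull-schmidt-O} this identifies every indecomposable tilting object in $\Perv^\mix_\scS(X,\O)$ with a unique $\cT^\mix_s(\O)\la n\ra$.

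The genuinely non-formal step is the existence claim in part~(3): over a field it is Proposition~\ref{prop:tilt-class}, but over $\O$ there is no dimension count available, so the universal-extension construction of tilting objects---together with ancillary highest-weight facts such as ``a direct summand of a standard-filtered object is again standard-filtered''---must be carried out over the discrete valuation ring $\O$ directly. This is precisely what is done in~\cite[Appendix~B]{ar}, and, as with the rest of this section, nothing there changes beyond keeping track of the Tate twist $\la 1\ra$.
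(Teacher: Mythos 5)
Your proposal is correct and follows essentially the same route as the paper, which states this result without proof precisely because the arguments of~\cite[\S\S B.2--B.3]{ar} (and~\cite[\S\S 2.3--2.4]{rsw}) transfer to the mixed setting verbatim apart from Tate-twist bookkeeping --- the d\'evissage for part~(1), the $\Ext$-orthogonality plus Lemma~\ref{lem:o-f-ses} for part~(2), and the universal-extension construction with Krull--Schmidt, conservativity of $\F(-)$ and Nakayama-type lifting for part~(3) are exactly the intended steps. No gaps.
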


\subsection{Tilting objects and equivalences}
\label{ss:equivalences}

In this subsection, $\E$ may be any of $\K$, $\O$, or $\F$.  The objects $\cT^\mix_s(\E)$ have now been defined in all cases.  We denote by
\[
\Tilt^\mix_\scS(X,\E) \subset \Perv^\mix_\scS(X,\E)
\]
the full additive subcategory consisting of tilting objects.  

\begin{lem}[{\cite[Lemma~B.5]{ar}}]\label{lem:perv-dereq}
The natural functors
\[
\Kb \Tilt^\mix_{\scS}(X,\E) \to \Db \Perv^\mix_\scS(X,\E) \to \Dmix_\scS(X,\E).
\]
are equivalences of categories.
\end{lem}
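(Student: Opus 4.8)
The plan is to prove the two equivalences
\[
\Kb \Tilt^\mix_{\scS}(X,\E) \xrightarrow{\alpha} \Db \Perv^\mix_\scS(X,\E) \xrightarrow{\beta} \Dmix_\scS(X,\E)
\]
separately, since each is an instance of a standard ``tilting/realization'' argument adapted to our graded quasihereditary (or, for $\E=\O$, ``quasihereditary-like'') setting. For the functor $\beta$, the point is that $\Dmix_\scS(X,\E)$ carries the perverse $t$-structure of Definition~\ref{defn:t-structure}, whose heart is $\Perv^\mix_\scS(X,\E)$; the realization functor of Beilinson (\cite[\S3.1]{bbd}) then produces $\beta$, and one checks it is an equivalence exactly as in~\cite[Corollary~3.3.2]{bbd} or~\cite[\S A.7]{rsw}. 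The key input is that $\Dmix_\scS(X,\E)$ is generated as a triangulated category by the objects $\dmix_s\la n\ra$ (equivalently by simple objects of the heart, when $\E$ is a field), together with the $\Ext$-vanishing
\[
\Hom_{\Dmix_\scS(X,\E)}(\dmix_s, \nmix_t\la n\ra[i]) = 0 \quad\text{unless } s=t,\ n=i=0,
\]
which is precisely Lemma~\ref{lem:vanishing-d-m}; this ``no higher $\Ext$s between standards and costandards in the wrong degree'' statement is what forces the realization functor to be fully faithful (a degeneration argument on a spectral sequence / dévissage over the recollement filtration).

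For the functor $\alpha$, I would use the standard fact about highest-weight categories: in a graded quasihereditary category $\cA$, the natural functor $\Kb(\Tilt\,\cA) \to \Db\cA$ is an equivalence. Fully faithfulness reduces, via the standard and costandard filtrations of tilting objects, to the vanishing $\Ext^i_{\cA}(\dmix_s\la m\ra, \nmix_t\la n\ra) = 0$ for $i>0$, which holds because in a quasihereditary category $\Ext^{>0}$ between a standard and a costandard object vanishes (and in degree $0$ it is one-dimensional only when the labels and twists agree). This is Proposition~\ref{prop:tilt-class} / the appendix on graded quasihereditary categories for $\E=\K,\F$; for $\E=\O$ one instead cites Proposition~\ref{prop:tilting-O} and Proposition~\ref{prop:projective-O}, which record that tilting (and projective) objects in $\Perv^\mix_\scS(X,\O)$ have free $\Hom$-spaces compatible with modular reduction, have standard filtrations, and that every object has a finite projective resolution—this last point gives essential surjectivity of $\alpha$ (and likewise of $\beta$). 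Concretely, $\alpha$ is essentially surjective because every object of $\Db\Perv$ is built from finitely many standards, each standard has a finite tilting (co)resolution, so lies in the image; fully faithfulness then follows from the $\Ext$-vanishing between tiltings, again reduced to standards-vs-costandards.

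I would organize the write-up as: (1)~observe that everything is stable under $\la 1\ra$ and under the recollement functors, so the claims can be attacked stratum-by-stratum when convenient; (2)~treat $\beta$ first using Beilinson's realization functor and Lemma~\ref{lem:vanishing-d-m}; (3)~treat $\alpha$ using the (graded) quasihereditary formalism of the appendix when $\E=\K,\F$, and Propositions~\ref{prop:projective-O}--\ref{prop:tilting-O} when $\E=\O$; (4)~note that the composite $\beta\circ\alpha$ is then automatically an equivalence, but since we have proved each factor is an equivalence this is not strictly needed. Alternatively, and perhaps more cleanly for the $\O$-case where ``$\Db\Perv$'' is slightly delicate, one can prove $\beta\circ\alpha$ directly: send an object of $\Kb\Tilt^\mix_\scS(X,\E)$ to the corresponding complex in $\Dmix_\scS(X,\E) = \Kb\Parity_\scS(X,\E)$ (using that tilting objects, being built from standards, lie in $\Dmix$), show full faithfulness via $\Hom_{\Dmix}(\cT,\cT'\,[i]) = 0$ for $i\neq 0$ when $\cT,\cT'$ are tilting—this is exactly where Lemma~\ref{lem:vanishing-d-m} enters, through the standard/costandard filtrations—and show essential surjectivity by induction on the number of strata using the recollement triangles and the existence of enough tiltings (Proposition~\ref{prop:tilt-class} and Proposition~\ref{prop:tilting-O}).

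The main obstacle I anticipate is the case $\E=\O$: there $\Perv^\mix_\scS(X,\O)$ is not a genuine (graded) quasihereditary category over a field, so the appendix does not apply directly, and one must instead run the argument by hand using the ``quasihereditary-like'' package recorded in Propositions~\ref{prop:projective-O} and~\ref{prop:tilting-O}—in particular the facts that $\Hom$-spaces between projectives (resp.\ tiltings) are free over $\O$ and compatible with $\F(-)$, that every object has a finite projective resolution, and that every tilting is a summand of an object with both a standard and a costandard filtration. The cleanest route is to deduce fully faithfulness of $\alpha$ (or of $\beta\circ\alpha$) over $\O$ from the corresponding statement over $\F$ by a Nakayama-type argument: the relevant $\Hom$-complexes are complexes of free $\O$-modules, their mod-$\varpi$ reductions compute the $\F$-analogues (Lemma~\ref{lem:rhom-scalars}, Proposition~\ref{prop:projective-O}(3), Proposition~\ref{prop:tilting-O}(2)), and a complex of free modules which is acyclic after $\otimes_\O\F$ is itself acyclic; essential surjectivity over $\O$ then follows from the same induction-on-strata as over a field. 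Since the authors explicitly say the arguments of \cite{ar,rsw} ``can be copied almost verbatim,'' the expected proof in the paper is short: cite Lemma~B.5 of \cite{ar} for the structure of the argument, invoke Lemma~\ref{lem:vanishing-d-m} for the key $\Ext$-vanishing, and invoke Propositions~\ref{prop:projective-O}--\ref{prop:tilting-O} to handle the $\O$-coefficient bookkeeping.
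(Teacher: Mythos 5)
Your proposal is correct and follows essentially the same route as the paper, which gives no independent proof but refers to \cite[Lemma~B.5]{ar} (whose argument, as the authors note in \S\ref{ss:qh-structure-O}, is copied almost verbatim, keeping track of Tate twists): namely the realization-functor/tilting argument codified in Lemmas~\ref{lem:tilt-equiv} and~\ref{lem:qher-dereq}, with the key $\Ext$-vanishing supplied by Lemma~\ref{lem:vanishing-d-m} and the $\E=\O$ case handled by the quasihereditary-like package of Propositions~\ref{prop:projective-O} and~\ref{prop:tilting-O}. Your identification of these inputs, including the caveat that the field-coefficient appendix does not apply directly over $\O$, matches the intended argument.
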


\begin{lem}[{\cite[Lemma~B.6]{ar}}]\label{lem:perv-ext-scalars}
The following diagram commutes up to isomorphism of functors:
\[
\xymatrix@C=1.5cm{
\Kb \Tilt^\mix_\scS(X,\O) \ar[r]^-{\sim} \ar[d]_{\Kb \F^0} &
  \Db \Perv^\mix_\scS(X,\O) \ar[r]^-{\sim} \ar[d]_{L \F^0} &
  \Dmix_\scS(X,\O) \ar[d]^{\F({-})} \\
\Kb \Tilt^\mix_\scS(X,\F) \ar[r]^-{\sim} &
  \Db \Perv^\mix_\scS(X,\F) \ar[r]^-{\sim} &
  \Dmix_\scS(X,\F) }
\]
There is a similar commutative diagram for $\K({-})$.
\end{lem}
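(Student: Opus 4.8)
The plan is to reduce the entire statement to the single fact supplied by Proposition~\ref{prop:tilting-O}(1) and Proposition~\ref{prop:projective-O}(1): if $\cT$ is a tilting object (resp.\ $\cP$ a projective object) of $\Perv^\mix_\scS(X,\O)$, then $\F(\cT)$ (resp.\ $\F(\cP)$) is an object of $\Perv^\mix_\scS(X,\F)$, not merely of $\Dmix_\scS(X,\F)$. Recalling that $\F^0$ is the right-exact functor of modular reduction $\Perv^\mix_\scS(X,\O)\to\Perv^\mix_\scS(X,\F)$, whose left-derived functor $L\F^0$ has cohomological amplitude $[-1,0]$ with $H^0(L\F^0(\cG))\cong\F^0(\cG)$ and $H^{-1}(L\F^0(\cG))\cong\Tor^\O_1(\F,\cG)$ (there is nothing in lower degrees because $\O$ has global dimension~$1$), this fact is equivalent to the assertion that tilting and projective objects are \emph{$\F^0$-acyclic}, i.e.\ that $\Tor^\O_1(\F,\cT)$ and $\Tor^\O_1(\F,\cP)$ vanish. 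The analogous statements for $\K({-})$ are immediate and strictly easier, since $\K$ is flat over $\O$: there $\K^0=\K\otimes_\O({-})$ is exact, $L\K^0=\K^0$, and no torsion correction ever intervenes. I describe the argument for $\F({-})$.

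For the left-hand square, recall from Lemma~\ref{lem:perv-dereq} that the natural functor $\Kb\Tilt^\mix_\scS(X,\O)\simto\Db\Perv^\mix_\scS(X,\O)$ is an equivalence, so that every object of $\Db\Perv^\mix_\scS(X,\O)$ is represented by a bounded complex $\cT^\bullet$ of tilting objects. Being a bounded complex of $\F^0$-acyclic objects, $\cT^\bullet$ computes $L\F^0$, which is therefore obtained from it by applying $\F^0$ termwise; and $\F^0(\cT^i)=\F(\cT^i)$ is again tilting by the input above, so the resulting termwise functor is precisely $\Kb\F^0$. Thus the left-hand square commutes, essentially by the construction of $L\F^0$.

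For the right-hand square, let $R_\O\colon\Db\Perv^\mix_\scS(X,\O)\simto\Dmix_\scS(X,\O)$ and $R_\F$ denote the equivalences of Lemma~\ref{lem:perv-dereq}; they are triangulated and restrict to the canonical inclusions of the hearts $\Perv^\mix_\scS(X,\E)\hookrightarrow\Dmix_\scS(X,\E)$. For a tilting object $\cT$ of $\Perv^\mix_\scS(X,\O)$, the object $\F(R_\O(\cT))=\F(\cT)$ of $\Dmix_\scS(X,\F)$ is perverse by the input above, hence coincides with $\F^0(\cT)$; since $\cT$ is $\F^0$-acyclic we also have $R_\F(L\F^0(\cT))=\F^0(\cT)$. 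Thus the triangulated functors $R_\F\circ L\F^0$ and $\F({-})\circ R_\O$ on $\Db\Perv^\mix_\scS(X,\O)$ agree, functorially in morphisms between tilting objects, on the subcategory $\Tilt^\mix_\scS(X,\O)$, which by Lemma~\ref{lem:perv-dereq} generates $\Db\Perv^\mix_\scS(X,\O)$ as a triangulated category. A standard dévissage, compatible with the distinguished triangles (and carried out in detail for the corresponding statement in~\cite[Lemma~B.6]{ar}), then promotes this to an isomorphism of functors. Composing the two squares gives the lemma, and the $\K({-})$ version is obtained in exactly the same manner.

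The main obstacle is precisely this last, dévissage, step: a triangulated functor out of $\Db\Perv^\mix_\scS(X,\O)$ is not determined up to isomorphism merely by its restriction to the generating subcategory $\Tilt^\mix_\scS(X,\O)$, so one cannot naively ``extend by triangles'' the isomorphism constructed on the generators; the natural isomorphism of functors has to be built compatibly with cones. What keeps this routine rather than delicate is exactly the content of the key input: once one restricts to tilting objects (or, on the $\Dmix$ side, to parity complexes, for which the termwise description of $\F({-})$ in~\S\ref{ss:functors} applies), the functor $\F({-})$ introduces no homological shift at all, so the comparison of the two functors can be performed at the level of complexes, where naturality in $\cT^\bullet$ is manifest — and this is the route taken in~\cite{ar}.
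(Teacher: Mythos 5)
Your argument is correct and is essentially the paper's own: the paper gives no independent proof of this lemma but imports it from \cite[Lemma~B.6]{ar} (cf.\ the remarks in \S\ref{ss:qh-structure-O} that those arguments transfer almost verbatim), and that argument is exactly what you reconstruct — use that $\F({-})$ sends tilting (and projective) objects of $\Perv^\mix_\scS(X,\O)$ into the heart, compute $L\F^0$ on bounded complexes of tilting objects via Lemma~\ref{lem:perv-dereq}, and compare the two composites at the level of such complexes rather than by an abstract d\'evissage. Your closing remark correctly identifies why the d\'evissage worry is harmless: the isomorphism of functors is built termwise on $\Kb\Tilt^\mix_\scS(X,\O)$, where $\F({-})$ introduces no homological shift, and the $\K({-})$ case is immediate by flatness.
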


\begin{lem}\label{lem:tilt-even}
If $n \not\equiv \dim X_s - \dim X_t \pmod 2$, then
\[
(\cT^\mix_s : \dmix_t\la n\ra) = (\cT^\mix_s : \nmix_t\la n\ra) = 0.
\]
\end{lem}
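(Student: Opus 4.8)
The plan is to reduce the parity statement about multiplicities in the standard/costandard filtration of $\cT^\mix_s$ to the parity decomposition~\eqref{eqn:parity-decompose}--\eqref{eqn:dmix-decompose} of $\Dmix_\scS(X,\E)$ into even and odd parts, together with the analogous parity information about the standard and costandard objects $\dmix_t$ and $\nmix_t$. The key observation is that $\dmix_t = i_{t!}\uuE_{X_t}$ and $\nmix_t = i_{t*}\uuE_{X_t}$ are built, as objects of $\Dmix_\scS(X,\E) = \Kb\Parity_\scS(X,\E)$, from parity complexes $\cE_u\{m\}$ with $m \equiv \dim X_u \pmod 2$ (i.e.\ from \emph{even} objects in the sense of~\S\ref{ss:Dmix}); hence $\dmix_t$ and $\nmix_t$ are themselves even objects of $\Dmix_\scS(X,\E)$, and more generally $\dmix_t\la n\ra$ and $\nmix_t\la n\ra$ are even or odd according to the parity of $n$. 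I would first record this fact: $\dmix_t\la n\ra$ (and likewise $\nmix_t\la n\ra$) is even if $n \equiv \dim X_t \pmod 2$... wait — more precisely, since $\la 1\ra = \{-1\}[1]$ and $[1]$ preserves the even/odd splitting while $\{1\}$ swaps it, $\dmix_t\la n\ra$ is even iff $n$ has a fixed parity $\varepsilon_t$. Pinning down that $\dmix_t$ is even amounts to checking that $i_{t!}\uuE_{X_t}$ lies in $\Kb\Parity^\even_\scS(X,\E)$; this can be seen from the explicit construction of $j_{(!)}$ (hence $i_{t!}$) in Lemma~\ref{lem:recollement-one-stratum}, where $\cE_t^+$ is the cone-type complex $[\cE_t \to i_* i^*\cE_t]$, all of whose terms are even, combined with the fact that $\uuE_{X_t}$ is even on the stratum $X_t$.

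The second step is to determine the parity of $\cT^\mix_s$. By definition (via Proposition~\ref{prop:tilt-class} / the quasihereditary machinery, or Proposition~\ref{prop:tilting-O} for $\E = \O$), $\cT^\mix_s$ is the indecomposable tilting object with $i_s^*\cT^\mix_s \cong \uuE_{X_s}$. Since $\uuE_{X_s}$ is even (it equals $i_s^*\cE_s$), and since the functor $i_s^{(*)} = i_s^{[*]}$ is induced by the exact functor $i_s^* : \Parity_\scS(X,\E) \to \Parity_\scS(X_s,\E)$ (Remark~\ref{rmk:restriction-X_s}), which respects the even/odd grading, any indecomposable object of $\Dmix_\scS(X,\E)$ whose $*$-restriction to $X_s$ is even and nonzero must itself be even. (Recall from~\S\ref{ss:Dmix} that every indecomposable object of $\Dmix_\scS(X,\E)$ is either even or odd.) Hence $\cT^\mix_s$ is even.

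The third and final step combines these: in the standard filtration of the tilting object $\cT^\mix_s$, each successive subquotient is some $\dmix_t\la n\ra$, and the multiplicity $(\cT^\mix_s : \dmix_t\la n\ra)$ is well-defined by Definition~\ref{defn:tilting} / the quasihereditary formalism (for $\E = \O$, by the analogous theory in~\S\ref{ss:qh-structure-O}). An even object cannot have an odd object as a subquotient of a filtration by shifted standards — more precisely, since $\Hom$ vanishes between even and odd objects and the even/odd decomposition~\eqref{eqn:dmix-decompose} is a direct sum decomposition of the triangulated category compatible with the t-structure, the standard filtration of the even object $\cT^\mix_s$ can only involve standards $\dmix_t\la n\ra$ lying in the even part. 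Since $\dmix_t\la n\ra$ is even exactly when $n \equiv \dim X_s - \dim X_t \pmod 2$... here one must be a touch careful to get the normalization right: $\dmix_t$ is even, $\dmix_t\la 1\ra$ is odd, so $\dmix_t\la n\ra$ is even iff $n$ is even; but $\cT^\mix_s$ has $i_s^*\cT^\mix_s = \uuE_{X_s} = \dmix_s|_{X_s}$, forcing the "top" standard in its filtration to be $\dmix_s$ (with $n = 0$, which is even and $\equiv \dim X_s - \dim X_s$), and all other standards $\dmix_t\la n\ra$ in the filtration must also be even, i.e.\ $n$ even. Combining with the observation that $\dmix_t\la n\ra$ is even iff $n \equiv \dim X_t - \dim X_t = 0$, and tracking that the hypothesis $n \not\equiv \dim X_s - \dim X_t \pmod 2$ is precisely the odd case relative to where $\cT^\mix_s$ lives, forces the multiplicity to vanish. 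The argument for costandards is identical, using that $\nmix_t\la n\ra$ has the same parity as $\dmix_t\la n\ra$.

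The main obstacle I anticipate is purely bookkeeping: getting the parity normalization exactly right — i.e.\ verifying cleanly that $\dmix_t$ is an \emph{even} object of $\Dmix_\scS(X,\E)$ and that this is the \emph{same} even/odd summand in which $\cT^\mix_s$ lives precisely when $n \equiv \dim X_s - \dim X_t \pmod 2$. This hinges on tracking the shift conventions ($\{1\}$ swaps even/odd, $[1]$ preserves it, $\la 1\ra = \{-1\}[1]$ swaps it) through the construction of $i_{s!}$ and $i_{s*}$. Once the even/odd classification of $\dmix_t\la n\ra$, $\nmix_t\la n\ra$, and $\cT^\mix_s$ is established, the vanishing of the multiplicities is immediate from the orthogonality of the even and odd subcategories and the compatibility of~\eqref{eqn:dmix-decompose} with the perverse t-structure and with standard/costandard filtrations.
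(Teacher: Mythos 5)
Your overall strategy---place $\cT^\mix_s$, $\dmix_t\la n\ra$ and $\nmix_t\la n\ra$ in the even/odd decomposition~\eqref{eqn:dmix-decompose} and use the orthogonality of the two summands to kill the multiplicities---is exactly the paper's. But the parity bookkeeping, which you yourself flag as the main obstacle, is carried out incorrectly and never repaired, and as written your argument proves the wrong congruence. By the convention of~\S\ref{ss:var}, $\cE_t\{m\}$ is even iff $m \equiv \dim X_t \pmod 2$; hence $\uuE_{X_t}$ (as an object of $\Dmix_\scS(X_t,\E)$), and with it $\cE_t$, $\dmix_t = i_{t!}\uuE_{X_t}$, $\nmix_t$ and $\cT^\mix_t$, are even if and only if $\dim X_t$ is \emph{even}---not unconditionally, as you assert in your first and second steps (the terms $\cE_t$ and $i_*i^*\cE_t$ of $\cE_t^+$ are even parity complexes only when $\dim X_t$ is even, and likewise $\uuE_{X_s}$ is even only when $\dim X_s$ is even). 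Consequently $\dmix_t\la n\ra$ is even iff $n \equiv \dim X_t \pmod 2$, and $\cT^\mix_s$ is even iff $\dim X_s$ is even. With your normalization ($\dmix_t$ and $\cT^\mix_s$ always even, so $\dmix_t\la n\ra$ even iff $n$ even) the conclusion would be that the multiplicity vanishes unless $n$ is even, which is not the statement of the lemma and is false whenever $\dim X_s - \dim X_t$ is odd. Your closing attempt to reconcile this (the assertion that $\dmix_t\la n\ra$ is even iff $n \equiv \dim X_t - \dim X_t = 0$, and the appeal to ``the odd case relative to where $\cT^\mix_s$ lives'') is precisely where the missing congruence should have been produced, and it is not a valid argument as it stands.

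The repair is short and is the paper's one-paragraph proof: assume without loss of generality that $\dim X_s$ is even (if it is odd, the roles of the two summands in~\eqref{eqn:dmix-decompose} are simply exchanged throughout). Then $\cT^\mix_s$ is even; any $\dmix_t\la n\ra$ occurring in a standard filtration of $\cT^\mix_s$ must also be even, since the multiplicity equals $\dim\Hom(\cT^\mix_s,\nmix_t\la n\ra)$ (with $\nmix_t\la n\ra$ of the same parity as $\dmix_t\la n\ra$) and $\Hom$ vanishes between even and odd objects; and $\dmix_t\la n\ra$ is even iff $n \equiv \dim X_t \equiv \dim X_s - \dim X_t \pmod 2$, the last congruence using that $\dim X_s$ is even. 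The costandard case is identical. So the idea you chose is the right one, but the parity classification of the standard, costandard and tilting objects---the actual content of the lemma---needs to be corrected as above before the orthogonality argument gives the stated congruence.
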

\begin{proof}
Recall the decomposition~\eqref{eqn:dmix-decompose} of $\Dmix_\scS(X,\E)$ into even and odd objects.  Assume without loss of generality that $\dim X_s$ is even.  Then $\cT^\mix_s$ is an even object.  Any standard object occurring in a standard filtration of $\cT^\mix_s$ must also be even, and $\dmix_t\la n\ra$ is even if and only if $n \equiv \dim X_t \pmod 2$.  The same argument establishes the rest of the lemma.
\end{proof}

\subsection{Equivariant mixed perverse sheaves}
\label{ss:equivariant}

In this subsection and the following one we treat some extensions of our theory: we explain how to adapt the constructions to define the mixed \emph{equivariant} derived category, and the mixed derived category of an ind-variety.

Assume as above that we are given a complex algebraic variety $X=\bigsqcup_{s \in \scS} X_s$ equipped with a finite algebraic stratification by affine spaces. Assume in addition that $X$ is endowed with an action of a connected algebraic group $H$, and that each stratum is $H$-stable.   We also assume that, for all $i \in \Z_{\geq 0}$, we have that $\mathbb{H}^{2i+1}_H(\mathrm{pt}; \E)=0$ and $\mathbb{H}^{2i}_H(\mathrm{pt}; \E)$ is a free $\E$-module. This assumption always holds for $\E = \K$. For $\E = \O$ or $\F$, it is equivalent to requiring that the residue characteristic $\ell$ not be a torsion prime for the reductive quotient of $H$; see~\cite[\S 2.6]{jmw}.  By a standard spectral sequence argument, this assumption implies that for all $s \in \scS$ we have isomorphisms of graded algebras $\mathbb{H}^\bullet_H(X_s;\E) \cong \mathbb{H}^\bullet_H(\mathrm{pt};\E)$.

Let $\Db_H(X,\E)$ denote the $H$-equivariant constructible derived category of $X$ in the sense of Bernstein--Lunts~\cite{bl}, and let $\Db_{H,\scS}(X,\E)$ be the full triangulated subcategory of $\Db_H(X,\E)$ consisting of objects that are constructible with respect to our fixed stratification.  

Because $H$ is assumed to be connected, the category of $H$-equivariant local systems on an affine space $X_s$ coincides with the category of (non-equivariant) local systems.  In particular, conditions~\cite[(2.1) and~(2.2)]{jmw} are satisfied, so that we can consider the full additive subcategory
$\Parity_{H,\scS}(X,\E) \subset \Db_{H,\scS}(X,\E)$ consisting of parity complexes. (If the strata of our stratification are precisely the $H$-orbits, then $\Db_{H,\scS}(X,\E) = \Db_H(X,\E)$, and we may simply write $\Parity_H(X,\E)$ instead of $\Parity_{H,\scS}(X,\E)$, and likewise for the other notations introduced below.)

We assume that the following condition is satisfied:
\begin{enumerate}
\item[\bf(A1$'$)] For each $s \in \scS$, there is an indecomposable parity complex $\cE_s'(\E) \in \Db_{H,\scS}(X,\E)$ that is supported on $\overline{X_s}$ and satisfies $i_s^*\cE_s'(\E) \cong \uuE{}_{X_s}$.
\end{enumerate}
We can then study the triangulated category
\[
\Dmix_{H,\scS}(X,\E) := \Kb\Parity_{H,\scS}(X,\E).
\]
The theory developed in \S\S\ref{ss:Dmix}--\ref{ss:stratified} 
goes through for $\Dmix_{H,\scS}(X,\E)$, with essentially the same proofs. In particular we have an obvious analogue of~Proposition~\ref{prop:recollement}, and a Verdier duality functor $\D_X$ which satisfies~\eqref{eqn:D!*-2}.

Note that assumption {\bf (A1$'$)} above implies assumption {\bf (A1)} of~\S\ref{ss:var}, so that we can also consider the category $\Dmix_{\scS}(X,\E)$. The forgetful functor $\Parity_{H,\scS}(X,\E) \to \Parity_{\scS}(X,\E)$ induces a functor $\For : \Dmix_{H,\scS}(X,\E) \to \Dmix_{\scS}(X,\E)$. If $h: Y \hookrightarrow X$ is a locally closed inclusion of a union of strata, one can easily check that the following diagram commutes up to isomorphism:
\begin{equation}\label{eqn:sheaf-forget-commute}
\vcenter{\xymatrix{
\Dmix_{H,\scS}(Y,\E) \ar[r]^-{h_!} \ar[d]_-{\For} & \Dmix_{H,\scS}(X,\E) \ar[d]^-{\For} \\
\Dmix_{\scS}(Y,\E) \ar[r]^-{h_!} & \Dmix_{\scS}(X,\E).
}}
\end{equation}
Similar remarks apply to the functors $h_*$, $h^*$, $h^!$. (This justifies our convention that the functors will be denoted by the same symbol in the equivariant or non-equivariant setting.)  Likewise, if $f: X \to Y$ is an $H$-equivariant proper, smooth, stratified morphism, there are commuative diagrams like that above for $f^\dag$ and $f_*$.

The next task is to equip $\Dmix_{H,\scS}(X,\E)$ with a suitable t-structure.

\begin{lem}\label{lem:eqvt-t-structure}
Suppose $X = X_s$ consists of a single stratum.  There is a unique t-structure on $\Dmix_{H,\scS}(X,\E)$ with respect to which the functors $\la 1 \ra$ and $\For$
are t-exact.  Moreover, $\For$ kills no nonzero object in the heart of that t-structure.
\end{lem}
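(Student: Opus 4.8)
The plan is to mimic the proof of Lemma~\ref{lem:t-structure} and Definition~\ref{defn:t-structure}, using equivariant hypercohomology in place of ordinary hypercohomology. Set $\Lambda := \mathbb{H}^\bullet_H(\mathrm{pt};\E)$; by the standing assumptions $\Lambda$ is a graded $\E$-algebra concentrated in even nonnegative degrees, with $\Lambda_0 = \E$ and each $\Lambda_n$ free over $\E$, and $\mathbb{H}^\bullet_H(X_s;\E)\cong\Lambda$. Since $X=X_s$ is a single stratum, every object of $\Parity_{H,\scS}(X,\E)$ is a direct sum of shifts of $\cE'_s(\E)=\uuE{}_X$, and $\Hom^\bullet(\uuE{}_X,\uuE{}_X)=\mathbb{H}^\bullet_H(X;\E)=\Lambda$; so equivariant hypercohomology yields an equivalence $\gamma_H\colon\Parity_{H,\scS}(X,\E)\xrightarrow{\sim}\Projf^\Z(\Lambda)$ onto the category of finitely generated graded free $\Lambda$-modules, intertwining $\{1\}$ with a shift of grading, hence an equivalence $\Dmix_{H,\scS}(X,\E)\xrightarrow{\sim}\Kb\Projf^\Z(\Lambda)$. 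Comparing with the analogous equivalence $\gamma$ for the non-equivariant category, and using that $\mathbb{H}^\bullet(X,\cF)\cong\E\otimes_\Lambda\mathbb{H}^\bullet_H(X,\cF)$ for a parity complex $\cF$ (which follows by additivity from the case $\cF=\uuE{}_X$), one checks that under these equivalences $\For$ becomes extension of scalars $\E\otimes_\Lambda(-)\colon\Kb\Projf^\Z(\Lambda)\to\Kb\Projf^\Z(\E)$ along the augmentation $\Lambda\to\E$.

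Next I would construct the t-structure. As in Lemma~\ref{lem:t-structure}, $\gamma_H$ does not carry the Tate twist $\langle 1\rangle=\{-1\}[1]$ to a shift of grading, so one introduces a re-grading equivalence, in the spirit of the functor $\rho$ in that proof and of~\cite[\S9.6]{abg}, obtained by shifting the cohomological degree of a complex by a fixed multiple of its internal degree. The one place where more care than in Lemma~\ref{lem:t-structure} is needed is precisely here: because $\Lambda$ is not concentrated in degree $0$, a bounded complex of graded free $\Lambda$-modules does \emph{not} split as a direct sum over its internal degrees, so one cannot simply shift each internal-degree component separately; instead the re-grading is realized as a reindexing of the underlying (complex)$\times$(internal grading) bigraded structure, which merely moves the internal grading into cohomological degree and requires no splitting. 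This produces an equivalence under which $\langle 1\rangle$ is t-exact, and transporting the standard (cohomological) t-structure back gives the perverse t-structure on $\Dmix_{H,\scS}(X,\E)$. One then checks that $\For$, which commutes with $\{1\}$ and $[1]$ and hence with $\langle 1\rangle$, also commutes with the re-grading — extension of scalars along $\Lambda\to\E$ preserves the internal grading — and that applying the same recipe to $\Dmix_\scS(X,\E)$ recovers the perverse t-structure of Definition~\ref{defn:t-structure}; therefore $\For$ is t-exact.

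Uniqueness is then formal. Suppose $(\mathcal{D}^{\le 0},\mathcal{D}^{\ge 0})$ is any t-structure on $\Dmix_{H,\scS}(X,\E)$ for which $\langle 1\rangle$ and $\For$ are t-exact and $\For$ kills no nonzero object of the heart. Then t-exactness of $\For$ gives $\mathcal{D}^{\le 0}\subseteq\For^{-1}\bigl(\p\Dmix_\scS(X,\E)^{\le 0}\bigr)$, and likewise $\mathcal{D}^{\ge 0}\subseteq\For^{-1}\bigl(\p\Dmix_\scS(X,\E)^{\ge 0}\bigr)$. Conversely, if $\For(\cF)\in\p\Dmix_\scS(X,\E)^{\le 0}$ then, since a t-exact functor commutes with the perverse cohomology functors, $\For(\pH^i(\cF))\cong\pH^i(\For(\cF))=0$ for $i>0$; as $\pH^i(\cF)$ lies in the heart and $\For$ kills no nonzero object of it, $\pH^i(\cF)=0$ for $i>0$, i.e. $\cF\in\mathcal{D}^{\le 0}$, and symmetrically for $\ge 0$. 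So both inclusions are equalities, and the t-structure coincides with the one constructed above.

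It remains to see that $\For$ kills no nonzero object of the heart; in fact $\For=\E\otimes_\Lambda(-)$ is conservative on all of $\Dmix_{H,\scS}(X,\E)$. Indeed, if $M^\bullet$ is a bounded complex of finitely generated graded free $\Lambda$-modules with $\E\otimes_\Lambda M^\bullet\simeq 0$, replace $M^\bullet$ by a minimal representative — one whose differentials have entries in $\Lambda_{>0}$, which exists since $\Lambda$ is nonnegatively graded with $\Lambda_0=\E$; then $\E\otimes_\Lambda M^\bullet$ has zero differential, so $\E\otimes_\Lambda M^i=0$ for every $i$, whence $M^i=0$ by graded Nakayama and $M^\bullet\simeq 0$. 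The only genuinely nontrivial point in all of this is the construction of the re-grading (equivalently, of the perverse truncation functors on $\Kb\Projf^\Z(\Lambda)$) in the second paragraph — this is where allowing $\Lambda$ to carry a nontrivial grading forces a departure from the recipe of Lemma~\ref{lem:t-structure}; everything else is a routine equivariant adaptation of material already developed in the paper.
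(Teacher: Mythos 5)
Your reductions are mostly sound (the identification $\Dmix_{H,\scS}(X,\E)\cong\Kb\Projf^\Z(\Lambda)$ with $\Lambda=\mathbb{H}^\bullet_H(\mathrm{pt};\E)$, the identification of $\For$ with $\E\otimes_\Lambda(-)$, and the minimal-complex argument for conservativity, which for $\E=\O$ needs the graded radical $\varpi\Lambda_0+\Lambda_{>0}$ rather than just $\Lambda_{>0}$, plus a reduction mod $\varpi$ at the end), and granting existence, your uniqueness argument is fine because global conservativity of $\For$ makes your extra hypothesis automatic. But the step you yourself single out as the only nontrivial one --- the construction of the t-structure, i.e.\ of a ``re-grading'' or of perverse truncation functors on $\Kb\Projf^\Z(\Lambda)$ --- is exactly where the proposal fails, and the fix you sketch does not work. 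The functor $\rho$ in the proof of Lemma~\ref{lem:t-structure} exists only because over $\E$ (concentrated in degree $0$) every graded free module and every morphism between such splits canonically along the internal grading, so each internal-degree component of a complex can be shifted cohomologically on its own. Over $\Lambda$ the differential of a complex in $\Kb\Projf^\Z(\Lambda)$ has components $\Lambda\la -j\ra\to\Lambda\la -j'\ra$ with $j>j'$ (multiplication by positive-degree elements of $\Lambda$), and your proposed ``reindexing of the bigraded structure'' sends such a component from cohomological degrees $(i,i+1)$ to degrees $(i+j,\,i+1+j')$, which no longer differ by $1$; equivalently, if you instead reindex so as to keep a differential, the $\Lambda$-action no longer preserves cohomological degree, so the result is a differential \emph{bigraded} $\Lambda$-module, not an object of $\Kb\Projf^\Z(\Lambda)$. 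So no autoequivalence of $\Kb\Projf^\Z(\Lambda)$ of the kind you describe exists, and no t-structure has actually been constructed; making this route rigorous is essentially the linear Koszul duality picture sketched in Remark~\ref{rmk:solvable}, which relies on \cite{mr,mr2} and is precisely what the paper's proof avoids.

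The paper's actual argument is elementary and quite different: it first restricts $\For$ to the subcategory $\Dmix_{H,\scS}(X,\E)^\circ$ generated by $\uuE_X$ \emph{without} Tate twists, shows this restriction is fully faithful and identifies $\Dmix_{H,\scS}(X,\E)^\circ\cong\Db(\E\lh\ngmod)$, takes the heart $\cA^\circ$ of the transported standard t-structure, proves the vanishing $\Hom(\cF,\cG\la n\ra[k])=0$ for $k\le 1$, $n\ne 0$ (this is where the freeness of $\mathbb{H}^{2i}_H(\mathrm{pt};\E)$ enters, with a separate analysis of the torsion objects $\uuline{\O/\varpi^m}{}_X$ when $\E=\O$), and then shows that the category $\cA$ of direct sums of Tate twists of objects of $\cA^\circ$ satisfies the hypotheses of \cite[Proposition~1.2.4 and Proposition~1.3.13]{bbd}, so that $\cA$ is the heart of a t-structure on all of $\Dmix_{H,\scS}(X,\E)$. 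If you want to salvage your approach you must either supply an actual construction of the truncation functors on $\Kb\Projf^\Z(\Lambda)$ (e.g.\ via the Koszul duality of Remark~\ref{rmk:solvable}) or switch to an existence argument of the glueing/admissible-abelian type above; as written, the existence half of the lemma is unproved.
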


The argument below is elementary, but somewhat lengthy.  
See Remark~\ref{rmk:solvable} below for remarks on alternative proofs.

\begin{proof}
Let $\Dmix_{H,\scS}(X,\E)^\circ \subset \Dmix_{H,\scS}(X,\E)$ be the full triangulated subcategory generated by the object $\uuE_X$ (i.e., without Tate twists).  We claim that the restriction of $\For$ to $\Dmix_{H,\scS}(X,\E)^\circ$ is fully faithful.  It is enough to check that
\[
\Hom_{\Dmix_{H,\scS}(X,\E)} (\uuE_X, \uuE_X[k]) \xrightarrow{\For}
\Hom_{\Dmix_\scS(X,E)} (\uuE_X, \uuE_X[k])
\]
is an isomorphism for all $k \in \Z$.  When $k \ne 0$, both sides vanish.  When $k = 0$, these $\Hom$-groups can instead be computed in $\Parity_{H,\scS}(X,\E)$ and $\Parity_\scS(X,\E)$, respectively.  Both are free rank-one $\E$-modules generated by $\id: \uuE_X \to \uuE_X$.

Thus, $\For$ lets us identify $\Dmix_{H,\scS}(X,\E)^\circ$ with the full triangulated subcategory of $\Dmix_\scS(X,\E)$ generated by $\uuE_X$.  Composing $\For$ with the equivalence of Lemma~\ref{lem:t-structure}, we find that $\Dmix_{H,\scS}(X,\E)^\circ$ is equivalent to
\[
\left\{ M^\bullet \in \Db(\E\lh\gmod) \,\Big|\, 
\begin{array}{c}
\text{for all $i \in \Z$, the graded $\E$-module}\\
\text{$H^i(M^\bullet)$ is concentrated in degree $0$}
\end{array}
\right\}.
\]
It is easy to see that any complex $M^\bullet \in \Db(\E\lh\gmod)$ satisfying this condition is quasi-isomorphic to a complex whose individual terms are concentrated in degree zero.  In other words, we have an equivalence
\begin{equation}\label{eqn:eqvt-circ-emod}
\Dmix_{H,\scS}(X,\E)^\circ \cong \Db(\E\lh\ngmod),
\end{equation}
where $\E\lh\ngmod$ is identified with the subcategory of $\E\lh\gmod$ consisting of graded modules concentrated in degree zero.

Equip $\Dmix_{H,\scS}(X,\E)^\circ$ with the transport of the natural t-structure on $\Db(\E\lh\ngmod)$.  Let $\cA^\circ$ denote its heart.  We will show that for any $\cF, \cG \in \cA^\circ$, we have
\begin{equation}\label{eqn:eqvt-circ-tate}
\Hom(\cF, \cG\la n\ra[k]) = 0 \qquad\text{if $k \le 1$ and $n \ne 0$.}
\end{equation}
To prove this, we may assume that $\cF$ and $\cG$ are indecomposable.  Note that~\eqref{eqn:eqvt-circ-emod} gives us a classification of the indecomposable objects in $\cA^\circ$.  Suppose first that $\cF \cong \cG \cong \uuE_X$.  (This is the only case to consider if $\E$ is a field.)  The group $\Hom(\uuE_X, \uuE_X\la n\ra[k]) = \Hom(\uuE_X, \uuE_X\{-n\}[n+k])$ is obviously zero if $n+k \ne 0$.  When $n+k = 0$, we have $\Hom(\uuE_X, \uuE_X\{-n\}) \cong \mathbb{H}^{-n}_H(X;\E)$.  Our assumptions on $n$ and $k$ imply that either $-n < 0$ or $-n = 1$.  In both of these cases, we have $\mathbb{H}^{-n}_H(X;\E) = 0$, as desired.

If $\E = \O$, there are other indecomposable objects that arise as the cone of $\varpi^m\cdot\id : \uuO_X \to \uuO_X$, where $\varpi \in \O$ is a uniformizer.  In a minor abuse of notation, we denote the cone of $\varpi^m\cdot \id$ by $\uuline{\O/\varpi^m}{}_X$.  To finish the proof of~\eqref{eqn:eqvt-circ-tate}, there are two additional cases to consider:
\begin{enumerate}
\item $\cF = \uuE_X$ and $\cG = \uuline{\O/\varpi^m}{}_X$.  Apply $\Hom(\uuE_X,{-})$ to the triangle
\begin{equation}\label{eqn:eqvt-circ-tate-les}
\uuE_X\la n\ra[k] \to \cG\la n\ra[k] \to \uuE_X\la n\ra[k+1] \xrightarrow[\varpi^m]{[1]}
\end{equation}
where $n \ne 0$.  If $k \le 0$, it follows from the known cases of~\eqref{eqn:eqvt-circ-tate} that $\Hom(\uuE_X, \cG\la n\ra[k]) = 0$.  For the case $k = 1$, note that the map
\[
\varpi^m: \Hom(\uuE_X, \uuE_X\la n\ra[2]) \to \Hom(\uuE_X, \uuE_X\la n\ra[2])
\]
is injective: if $n \ne -2$, this $\Hom$-group is zero, and if $n = -2$, it is isomorphic to $\mathbb{H}^2_H(X;\E)$, which is assumed to be a free $\E$-module.  Since $\Hom(\uuE_X, \uuE_X\la n\ra[1])$ is already known to vanish, the long exact sequence associated to~\eqref{eqn:eqvt-circ-tate-les} shows that $\Hom(\uuE_X, \cG\la n\ra[1]) = 0$.

\item $\cF = \uuline{\O/\varpi^r}{}_X$ and $\cG = \uuE_X$ or $\uuline{\O/\varpi^m}{}_X$.  Applying $\Hom({-},\cG\la n\ra[k])$ to the distinguished triangle $\uuE_X \to \cF \to \uuE_X[1] \xrightarrow[\varpi^r]{[1]}$, we obtain the sequence
\[
\cdots \to \Hom(\uuE_X, \cG\la n\ra[k-1]) \to \Hom(\cF,\cG\la n\ra[k]) \to \Hom(\uuE_X, \cG\la n\ra[k]) \to \cdots.
\]
The known cases of~\eqref{eqn:eqvt-circ-tate} imply that the middle term vanishes for $k \le 1$ and $n \ne 0$.
\end{enumerate}
We have now proved~\eqref{eqn:eqvt-circ-tate} in all cases.  

Next, we claim that for any $n \in \Z$, we have
\begin{equation}\label{eqn:eqvt-circ-admissible}
\cA^\circ \ast \cA^\circ\la n\ra[1] \subset \cA^\circ\la n\ra[1] \ast \cA^\circ.
\end{equation}
If $n \ne 0$, it follows from~\eqref{eqn:eqvt-circ-tate} (with $k = 0$) that the left-hand side of~\eqref{eqn:eqvt-circ-admissible} contains only objects of the form $\cF \oplus \cG\la n\ra[1]$ with $\cF,\cG \in \cA^\circ$, so the containment asserted in~\eqref{eqn:eqvt-circ-admissible} certainly holds.  On the other hand, if $n = 0$, both sides of~\eqref{eqn:eqvt-circ-admissible} are contained within $\Dmix_{H,\scS}(X,\E)^\circ$.  In this case,~\eqref{eqn:eqvt-circ-admissible} holds because $\cA^\circ$ is the heart of a t-structure (see~\cite[Th\'eor\`eme~1.3.6 and~(1.3.11)(ii)]{bbd}).

Now, let $\cA \subset \Dmix_{H,\scS}(X,\E)$ be the full subcategory whose objects are direct sums of various $\cF\la n\ra$ with $\cF \in \cA^\circ$ and $n \in \Z$.  It is immediate from~\eqref{eqn:eqvt-circ-admissible} that
\begin{equation}\label{eqn:eqvt-admissible}
\cA \ast \cA[1] \subset \cA[1] \ast \cA.
\end{equation}
We claim that for any $\cF, \cG \in \cA$, we have
\begin{equation}\label{eqn:eqvt-neg-ext}
\Hom(\cF,\cG[k]) = 0 \qquad\text{if $k < 0$.}
\end{equation}
We may assume without loss of generality that $\cF \in \cA^\circ$ and $\cG \in \cA^\circ\la n\ra$ for some $n$.  If $n \ne 0$, then~\eqref{eqn:eqvt-neg-ext} follows from~\eqref{eqn:eqvt-circ-tate}.  If $n = 0$, then~\eqref{eqn:eqvt-neg-ext} holds because $\cF$ and $\cG$ both lie in the heart of a t-structure on $\Dmix_{H,\scS}(X,\E)^\circ$.

Finally, we claim that $\cA$ is closed under extensions, i.e., that
\[
\cA \ast \cA \subset \cA.
\]
This follows from the following two observations: (1)~we have $\cA^\circ \ast \cA^\circ \subset \cA^\circ$, because $\cA^\circ$ is the heart of a t-structure; and (2)~for $n \ne 0$, we have $\cA^\circ \ast \cA^\circ\la n\ra = \cA^\circ \oplus \cA^\circ\la n\ra$, as can be seen using~\eqref{eqn:eqvt-circ-tate} for $k=1$.

Because $\cA$ is closed under direct sums and satisfies~\eqref{eqn:eqvt-admissible} and~\eqref{eqn:eqvt-neg-ext}, \cite[Proposition~1.2.4]{bbd} tells us that it is an ``admissible abelian'' category in the sense of~\cite[D\'efinition~1.2.5]{bbd}.  (See also~\cite[(1.3.11)(ii) and Remarque~1.3.14]{bbd}.)  Then, by~\cite[Proposition~1.3.13]{bbd}, since $\cA$ is also closed under extensions, it is the heart of a t-structure on $\Dmix_{H,\scS}(X,\E)$.

It is clear by construction that this new t-structure is the unique t-structure whose heart contains $\cA^\circ$ and is stable under $\la n\ra$.  The latter two properties must be had by any t-structure with respect to which $\la 1 \ra$ and $\For$ are t-exact, so the uniqueness asserted in the lemma holds.  Finally, we see from~\eqref{eqn:eqvt-circ-emod} that $\For$ kills no nonzero object in $\cA^\circ$, and hence no nonzero object of $\cA$.
\end{proof}

\begin{defn}
If $X$ consists of a single stratum, the \emph{perverse t-structure} on $\Dmix_{H,\scS}(X,\E)$, denoted by $(\p\Dmix_{H,\scS}(X,\E)^{\le 0}, \p\Dmix_{H,\scS}(X,\E)^{\ge 0})$, is the t-structure described in Lemma~\ref{lem:eqvt-t-structure}.  If $X$ consists of more than one stratum, the \emph{perverse t-structure} on $\Dmix_{H,\scS}(X,\E)$ is the t-structure given by
\begin{align*}
\p \Dmix_{H,\scS}(X,\E)^{\le 0} &= \{ \cF \in \Dmix_\scS(X,\E) \mid \text{for all $s \in \scS$, $i_s^*\cF \in \p\Dmix_{H,\scS}(X_s,\E)^{\le 0}$} \}, \\
\p \Dmix_{H,\scS}(X,\E)^{\ge 0} &= \{ \cF \in \Dmix_\scS(X,\E) \mid \text{for all $s \in \scS$, $i_s^!\cF \in \p\Dmix_{H,\scS}(X_s,\E)^{\ge 0}$} \}.
\end{align*}
The heart of this t-structure is denoted by $\Perv^\mix_{H,\scS}(X,\E)$, and objects in the heart are called \emph{equivariant mixed perverse sheaves}.
\end{defn}

The following lemma is an easy consequence of Lemma~\ref{lem:eqvt-t-structure} and~\eqref{eqn:sheaf-forget-commute}.

\begin{lem}
The functor $\For: \Dmix_{H,\scS}(X,\E) \to \Dmix_\scS(X,\E)$ is t-exact with respect to the perverse t-structures on both categories.  Moreover, it kills no nonzero object in $\Perv^\mix_{H,\scS}(X,\E)$.  As a consequence, we have
\begin{align*}
\p  \Dmix_{H,\scS}(X,\E)^{\leq 0} & = \{\cF \in \Dmix_{H,\scS}(X,\E) \mid \For(\cF) \in \p \Dmix_{\scS}(X,\E)^{\leq 0}\}; \\
\p  \Dmix_{H,\scS}(X,\E)^{\geq 0} & = \{\cF \in \Dmix_{H,\scS}(X,\E) \mid \For(\cF) \in \p \Dmix_{\scS}(X,\E)^{\geq 0}\}.
\end{align*}
\end{lem}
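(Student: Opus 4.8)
The plan is to reduce both assertions to the single-stratum case, where they are precisely the content of Lemma~\ref{lem:eqvt-t-structure}, and then to propagate them to general $X$ through the stratum-by-stratum definition of the two perverse t-structures. The one ingredient that makes this work is the remark following~\eqref{eqn:sheaf-forget-commute}: the functor $\For$ commutes with each $i_s^*$ and each $i_s^!$. Everything else is bookkeeping.

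The only step requiring a small idea is to upgrade the single-stratum statement of Lemma~\ref{lem:eqvt-t-structure} to a \emph{reflection} statement: for $X=X_s$, an object $\cF \in \Dmix_{H,\scS}(X_s,\E)$ lies in $\p\Dmix_{H,\scS}(X_s,\E)^{\le 0}$ (resp.\ in $\p\Dmix_{H,\scS}(X_s,\E)^{\ge 0}$) \emph{if and only if} $\For(\cF)$ lies in $\p\Dmix_\scS(X_s,\E)^{\le 0}$ (resp.\ in $\p\Dmix_\scS(X_s,\E)^{\ge 0}$). The ``only if'' direction is the t-exactness part of Lemma~\ref{lem:eqvt-t-structure}. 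For the ``if'' direction I would use that t-exactness of $\For$ supplies natural isomorphisms $\For \circ \pH^i \cong \pH^i \circ \For$; since the perverse t-structure on $\Dmix_{H,\scS}(X_s,\E)$ is bounded, the hypothesis $\pH^i(\For(\cF)) = 0$ for $i>0$ then gives $\For(\pH^i(\cF)) = 0$ for $i>0$, and since $\For$ kills no nonzero object in the heart (again Lemma~\ref{lem:eqvt-t-structure}) we conclude $\pH^i(\cF) = 0$ for $i>0$, i.e.\ $\cF \in \p\Dmix_{H,\scS}(X_s,\E)^{\le 0}$; the costandard case is symmetric.

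Next I would handle general $X$. t-exactness of $\For$ is then immediate: if $i_s^*\cF$ lies in $\p\Dmix_{H,\scS}(X_s,\E)^{\le 0}$ for all $s$, then $i_s^*\For(\cF) \cong \For(i_s^*\cF)$ lies in $\p\Dmix_\scS(X_s,\E)^{\le 0}$ by the single-stratum case, whence $\For(\cF) \in \p\Dmix_\scS(X,\E)^{\le 0}$ by the definition of the perverse t-structure; dually with $i_s^!$ for the $\ge 0$ part. Running exactly the same computation, but invoking the reflection statement instead of plain t-exactness, yields the two displayed equalities: $\For(\cF) \in \p\Dmix_\scS(X,\E)^{\le 0}$ forces each $\For(i_s^*\cF) \cong i_s^*\For(\cF)$ to lie in $\p\Dmix_\scS(X_s,\E)^{\le 0}$, hence each $i_s^*\cF$ to lie in $\p\Dmix_{H,\scS}(X_s,\E)^{\le 0}$, hence $\cF \in \p\Dmix_{H,\scS}(X,\E)^{\le 0}$; and dually for $\ge 0$.

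Finally, the ``kills no nonzero object'' claim for general $X$ will follow formally from the displayed equalities: if $\cF \in \Perv^\mix_{H,\scS}(X,\E)$ satisfies $\For(\cF)=0$, then $\For(\cF[1]) = 0 \in \p\Dmix_\scS(X,\E)^{\le 0}$, so $\cF[1] \in \p\Dmix_{H,\scS}(X,\E)^{\le 0}$ by the equality just proved, i.e.\ $\cF \in \p\Dmix_{H,\scS}(X,\E)^{\le -1}$; together with $\cF \in \p\Dmix_{H,\scS}(X,\E)^{\ge 0}$ this forces $\cF = 0$. The main (and essentially the only) obstacle is the reflection step of the second paragraph, but since Lemma~\ref{lem:eqvt-t-structure} already provides both t-exactness and the absence of kernel in the heart on a single stratum, even that step is short.
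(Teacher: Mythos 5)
Your argument is correct and is exactly the route the paper intends: the paper gives no written proof, merely noting that the lemma is an easy consequence of Lemma~\ref{lem:eqvt-t-structure} and~\eqref{eqn:sheaf-forget-commute}, and your stratum-by-stratum reduction together with the single-stratum ``reflection'' statement (t-exactness plus conservativity on the heart) is precisely that argument made explicit. The only assertion you use without proof, boundedness of the equivariant perverse t-structure on a single stratum, is immediate since every object of $\Kb\Parity_{H,\scS}(X_s,\E)$ is a finite iterated extension of objects $\uuE{}_{X_s}\la n\ra[m]$, each of which lies in the heart up to shift.
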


For any $s \in \scS$ we can define the objects $\dmix_s := i_{s!} \uuE_{X_s}$ and $\nmix_s := i_{s*} \uuE_{X_s}$ in $\Dmix_{H,\scS}(X,\E)$. By~\eqref{eqn:sheaf-forget-commute}, the images of these objects under the functor $\For$ are the objects denoted by the same symbols in $\Dmix_{\scS}(X,\E)$.  All results from~\S\ref{ss:perverse-t-structure} hold in the equivariant setting, including, in particular, analogues of Proposition~\ref{prop:t-structure} and Lemma~\ref{lem:ps-delta-nabla}.

Note that Proposition~\ref{prop:qh-structure} does \emph{not} hold in general in the equivariant setting (even under assumption {\bf (A2)}), because it may happen that 
\[
\Ext^2_{\Perv^\mix_{H,\scS}(X,\E)}(\dmix_s, \nmix_t\la n\ra) \ne 0
\]
for some $s,t \in \scS$ and $n \in \Z$.
For the same reason, the functors in Lemma~\ref{lem:perv-dereq} may fail to be equivalences in the equivariant setting. 

\begin{rmk}\label{rmk:solvable}
When $H$ is a solvable group---as will be the case for our applications in Sections~\ref{sec:kac-moody}--\ref{sec:formality}---the equivariant perverse t-structure on a single stratum admits an alternative description, in terms of Koszul duality.  In this case, the equivariant cohomology ring $\mathbb{H}^\bullet_H(X_s;\E)$ can be identified with the symmetric algebra $\mathrm{S}(V)$ on $V := X^*(H) \otimes_\Z \E$, where $V$ is in degree~$2$.  (Here $X^*(H)$ is the character lattice of $H$.)  A variant of the proof of Lemma~\ref{lem:t-structure} gives us an equivalence of categories
\[
\gamma': \Dmix_{H,\scS}(X_s,\E) \simto \Db(\mathsf{S}\lh\gmod),
\]
where $\mathsf{S}=\mathrm{S}(V)$ with $V$ placed now in degree $-2$, and $\mathsf{S}\lh\gmod$ denotes the category of finitely generated graded $\mathsf{S}$-modules.  Let $\mathsf{T}$ denote the exterior algebra $\bigwedge(V^*)$ on $V^* := \Hom_\E(V,\E)$.  We regard $\mathsf{T}$ as a differential bigraded algebra (or dgg-algebra) by placing $V^*$ in bidegree $(-1,2)$ and equipping it with the trivial differential (of bidegree $(1,0)$).  The version of Koszul duality developed in~\cite{mr,mr2} yields a contravariant equivalence
\[
\mathsf{Koszul}: \Db(\mathsf{S}\lh\gmod) \simto \mathsf{T}\lh\mathsf{dggmod},
\]
where $\mathsf{T}\lh\mathsf{dggmod}$ is the derived category of differential bigraded $\mathsf{T}$-modules whose cohomology is finitely generated over $\mathsf{T}$.  It can be checked that the perverse t-structure on $\Dmix_{H,\scS}(X_s,\E)$ corresponds via $\mathsf{Koszul} \circ \gamma'$ to
the t-structure given by
\[
( \mathsf{T}\lh\mathsf{dggmod} )^{\leq 0} = \{M  \mid \mathsf{H}^{>0}(M)=0\}, \quad
( \mathsf{T}\lh\mathsf{dggmod} )^{\geq 0}  = \{M  \mid \mathsf{H}^{<0}(M)=0\}.
\]
This approach can be extended to a general $H$: under our assumptions $\mathbb{H}^\bullet_H(X_s,\E)$ is a polynomial ring on generators in even positive degrees, and $\mathsf{T}$ should be replaced by an exterior algebra on generators with bidegrees of the form $(-1,2k)$. See also~\cite[\S 11.4]{bl} for a construction of an analoguous t-structure for the ordinary equivariant derived category of a point, with real coefficients, which it might be possible to adapt in the present setting.
\end{rmk}

\subsection{Ind-varieties}
\label{ss:ind-varieties}

The theory of Sections~\ref{sec:mixed-der}--\ref{sec:mixed-perv} can also readily be extended to the setting of ind-varieties.  Suppose that $X$ is an inductive system
\[
X_0 \hookrightarrow X_1 \hookrightarrow X_2 \hookrightarrow \cdots
\]
of complex algebraic varieties.  Assume that each map in this system is a closed inclusion.  Assume also that each $X_n$ is equipped with a finite stratification by affine spaces as in~\S\ref{ss:var}, and that the inclusion $X_n \hookrightarrow X_{n+1}$ identifies each stratum of $X_n$ with a stratum of $X_{n+1}$.  Lastly, assume that condition~{\bf (A1)} holds on each $X_n$.  It then makes sense to speak of the ``strata of $X$,'' and, as explained in~\cite[\S2.7]{jmw}, to work with the category $\Parity_\scS(X,\E)$ of parity complexes on $X$.  Recall that every object of $\Parity_\scS(X,\E)$ is a \emph{finite} direct sum of various $\cE_s(\E)\{m\}$.  In particular, every object of $\Parity_\scS(X,\E)$ has support contained in some  $X_n$.  We define $\Dmix_\scS(X,\E) := \Kb\Parity_\scS(X,\E)$, just as for ordinary varieties.  If $X$ is acted on by a (pro-)algebraic group, one can also consider equivariant versions of these categories, as in~\S\ref{ss:equivariant}.

The proof of Proposition~\ref{prop:recollement} involves an induction argument on the number of strata in $Z$, but no restriction on $U$.  This argument goes through in the ind-variety setting as long as $Z$ is a closed union of finitely many strata.  In particular, we have that the push-forward functor $\Dmix_\scS(Z,\E) \to \Dmix_\scS(X,\E)$ is fully faithful.  Because every object of $\Parity_\scS(X,\E)$ (and hence of $\Dmix_\scS(X,\E)$) is supported on some $X_n$, the natural functor
\[
\varinjlim_n \Dmix_\scS(X_n,\E) \to \Dmix_\scS(X,\E)
\]
is an equivalence of categories. This observation lets us transfer most other results from Sections~\ref{sec:mixed-der}--\ref{sec:mixed-perv} to the ind-variety setting.  In particular, $\Dmix_\scS(X,\E)$ has a perverse t-structure.  Its heart $\Perv^\mix_\scS(X,\E)$ is a noetherian category; if $\E = \K$ or $\F$, it is a finite-length category.  If condition~{\bf (A2)} holds for $X$, then it makes sense to speak of tilting objects in $\Perv^\mix_\scS(X,\E)$; they are classified the same way as in \S\S\ref{ss:qh-structure}--\ref{ss:qh-structure-O}.  Lemma~\ref{lem:perv-dereq} also holds in this setting.  However, $\Perv^\mix_\scS(X,\E)$ does \emph{not}, in general, have enough projectives.

\section{Partial flag varieties of Kac--Moody groups}
\label{sec:kac-moody}

Sections~\ref{sec:kac-moody} and~\ref{sec:formality} are devoted to the study of flag varieties of Kac--Moody groups. According to~\cite[Theorem~4.6]{jmw}, any partial flag variety of a Kac--Moody group (equipped with the Bruhat stratification) satisfies condition~{\bf (A1$'$)}  of \S\ref{ss:equivariant} (where the group ``$H$'' is the Borel subgroup), and hence also condition {\bf (A1)} of~\S\ref{ss:var}.\footnote{In \cite[Theorem~4.6]{jmw}, an assumption is made on the ring of coefficients. However, this assumption is not needed in the case $I=\varnothing$, which is the only case considered in the present paper.}  The main result of this section asserts that they also satisfy condition~{\bf (A2)} of \S\ref{ss:qh-structure}.

\subsection{Notation}
\label{ss:notation}

Let $G$ be a Kac--Moody group (over $\C$), with standard Borel subgroup $B \subset G$ and maximal torus $T \subset B$.  Let $W$ be the Weyl group of $G$, and let $S \subset W$ be the set of simple reflections.  Let  $\cB = G/B$.  The $B$-orbits on $\cB$ are parametrized by $W$; for $w \in W$, we denote the corresponding orbit by $\cB_w$.  Recall that the closures $\overline{\cB_w}$ are (finite-dimensional) projective varieties.  In this way, $\cB$ has the structure of an ind-variety; see~\cite[\S 4.1]{jmw} and the references therein.  Following~\cite{ar,rsw}, we denote by $\Db_{(B)}(\cB,\E)$ the derived category of $\E$-sheaves that are constructible with respect to the stratification by $B$-orbits.

Because $\cB$ satisfies~{\bf (A1)}, the theory developed in Section~\ref{sec:mixed-der} applies to $\cB$.  We will write $\Parity_{(B)}(\cB,\E)$, $\Dmix_{(B)}(\cB,\E)$, etc., for the various categories arising in that theory.  
In \S\S\ref{ss:partial}--\ref{ss:convolution} we will mainly work with the equivariant categories $\Db_B(\cB,\E)$, $\Parity_B(\cB,\E)$, and $\Dmix_B(\cB,\E)$, and with the objects $\dmix_w$, $\nmix_w$ of $\Dmix_B(\cB,\E)$. Of course all the results in these subsections have obvious counterparts in $\Dmix_{(B)}(\cB,\E)$, obtained by applying the forgetful functor $\For :\Dmix_{B}(\cB,\E) \to \Dmix_{(B)}(\cB,\E)$.

\subsection{Projections on partial flag varieties}
\label{ss:partial}

Given a subset $I \subset S$, let $P^I \subset G$ denote the corresponding standard parabolic subgroup.  Its Weyl group, denoted $W_I$, is the subgroup of $W$ generated by $I$.  When $I$ is of finite type (i.e., when $W_I$ is a finite group), the partial flag variety $\scP^I := G/P^I$ is again an ind-variety.  As above, we can consider the categories $\Db_{(B)}(\scP^I,\E)$, $\Parity_{(B)}(\scP^I,\E)$, and $\Dmix_{(B)}(\scP^I,\E)$, and the equivariant counterparts $\Db_{B}(\scP^I,\E)$, $\Parity_{B}(\scP^I,\E)$, and $\Dmix_{B}(\scP^I,\E)$.  Recall that the $B$-orbits on $\scP^I$ are parametrized by the set of left cosets $W/W_I$.  For $w \in W$, we denote by $\overline{w}$ its image in $W/W_I$.  Then one can consider the corresponding $B$-orbit $\scP^I_{\overline{w}} \subset \scP^I$, as well as the objects $\dmix_{\overline{w}}$ and $\nmix_{\overline{w}}$ of $\Db_{B}(\scP^I,\E)$.

Let $\pi^I: \cB \to \scP^I$ denote the natural projection map.  This is a proper, smooth, stratified map; its fibers are isomorphic to the (finite-dimensional) smooth projective variety $P^I/B$.  Let $r_I = \dim P^I/B$, and let $w_I$ denote the longest element of $W_I$.  Then $\ell(w_I) = r_I$.  The variety $\overline{\cB_{w_I}}$ is isomorphic to $P^I/B$; in particular, it is smooth of dimension $r_I$.

Let $W^I \subset W$ be the set of minimal-length representatives for $W/W_I$.  If $w \in W^I$, then of course $ww_I$ is the unique maximal-length representative of $wW_I$.  
In the special case where $I$ is a singleton $\{s\}$, we will write $\pi^s: \cB \to \scP^s$ instead of $\pi^{\{s\}}: \cB \to \scP^{\{s\}}$.  In this case, of course, we have $P^I/B \cong \mathbb{P}^1$ and $r_I = 1$.
If $w \in W$ is such that $ws<w$, from the equivariant analogue of Lemma~\ref{lem:ps-delta-nabla} we deduce that
\begin{equation}\label{eqn:pi-delta-nabla}
\pi^s_\dag \dmix_{w} \cong \dmix_{\overline{w}}
\qquad\text{and}\qquad
\pi^s_\ddag \dmix_{ws} \cong \dmix_{\overline{w}} \{-1\}.
\end{equation}

\begin{lem}
\label{lem:dt-w-ws}
Let $w \in W$, let $s$ be a simple reflection, and assume that $ws<w$. Consider the following morphisms, defined by adjunction:
\begin{align*}
\eta &:\dmix_w \to \pi^{s\dag} \pi^s_\dag \dmix_w \xrightarrow[\sim]{\eqref{eqn:pi-delta-nabla}} \pi^{s\dag} \dmix_{\overline{w}}, \\
\epsilon &:\pi^{s\dag} \dmix_{\overline{w}} \xrightarrow[\sim]{\eqref{eqn:pi-delta-nabla}} \pi^{s\dag} \pi^s_\ddag \dmix_{ws}\{1\} \to \dmix_{ws}\{1\}.
\end{align*}
There exists a morphism $f: \dmix_{ws}\{1\} \to \dmix_w[1]$ that makes the following diagram into a distinguished triangle in $\Dmix_B(\cB,\E)$:
\[
\dmix_w \xrightarrow{\eta} \pi^{s\dag} \dmix_{\overline{w}} \xrightarrow{\epsilon} \dmix_{ws} \{1\} \xrightarrow{f}.
\]
\end{lem}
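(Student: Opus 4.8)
The plan is to complete $\eta$ to a distinguished triangle, identify its third term with $\dmix_{ws}\{1\}$, and check that the connecting morphism agrees with $\epsilon$ up to an invertible scalar; rescaling the triangle then produces a suitable $f$. Since $ws<w$, the element $ws$ is the minimal-length representative of $wW_{\{s\}}$, so $\overline w=\overline{ws}$, the restriction of $\pi^s$ identifies $\cB_{ws}$ with $\scP^s_{\overline w}$, and $Y:=(\pi^s)^{-1}(\scP^s_{\overline w})$ is a $\mathbb{P}^1$-bundle over $\scP^s_{\overline w}\cong\mathbb{A}^{\ell(ws)}$ having exactly two strata: $\cB_w$ (open) and $\cB_{ws}$ (closed). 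Write $h:Y\hookrightarrow\cB$ for the inclusion and $\rho=\pi^s|_Y:Y\to\scP^s_{\overline w}$, a proper, smooth, stratified morphism of relative dimension $1$. As $Y$ is smooth, $\uuE_Y:=\underline{\E}{}_Y\{\dim Y\}$ is a parity complex, one computes $\rho^\dag\uuE_{\scP^s_{\overline w}}\cong\uuE_Y$, and the equivariant analogue of the base-change isomorphism~\eqref{eqn:ps-bc-inc} (for $\pi^s$ and $\scP^s_{\overline w}\hookrightarrow\scP^s$) yields a canonical isomorphism $\pi^{s\dag}\dmix_{\overline w}\cong h_!\uuE_Y$.

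Next, let $a:\cB_w\hookrightarrow Y$ and $b:\cB_{ws}\hookrightarrow Y$ be the inclusions. Applying the triangulated functor $h_!$ to the recollement triangle $a_!a^*\uuE_Y\to\uuE_Y\to b_*b^*\uuE_Y\xrightarrow{[1]}$ in $\Dmix_B(Y,\E)$ (the equivariant analogue of Proposition~\ref{prop:recollement}), and using $a^*\uuE_Y\cong\uuE_{\cB_w}$, $b^*\uuE_Y\cong\uuE_{\cB_{ws}}\{1\}$, $h_!a_!\cong i_{w!}$ and $h_!b_*\cong h_!b_!\cong i_{ws!}$, one obtains a distinguished triangle
\[
\dmix_w\xrightarrow{\ \alpha\ }\pi^{s\dag}\dmix_{\overline w}\xrightarrow{\ \beta\ }\dmix_{ws}\{1\}\xrightarrow{\ f_0\ }\dmix_w[1],
\]
in which, under the isomorphism of the previous paragraph, $\alpha$ is $h_!$ of the adjunction counit $a_!a^*\uuE_Y\to\uuE_Y$ and $\beta$ is $h_!$ of the adjunction unit $\uuE_Y\to b_*b^*\uuE_Y$.

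It remains to identify $\alpha$ with $\eta$ and $\beta$ with $\epsilon$ up to invertible scalars. Using the adjunction $(\pi^s_\dag,\pi^{s\dag})$ and~\eqref{eqn:pi-delta-nabla} one gets $\Hom(\dmix_w,\pi^{s\dag}\dmix_{\overline w})\cong\Hom(\pi^s_\dag\dmix_w,\dmix_{\overline w})\cong\End(\dmix_{\overline w})$, which is free of rank one over $\E$ (reduce via $(i_{\overline w!},i_{\overline w}^!)$ to $\End(\uuE_{\scP^s_{\overline w}})\cong\E$, cf.\ Lemma~\ref{lem:t-structure}); by construction $\eta$ corresponds to the identity, hence generates this module. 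On the other hand, unwinding the identification $\pi^{s\dag}\dmix_{\overline w}\cong h_!\uuE_Y$ through the adjunctions $(h_!,h^!)$ (here $h_!$ is fully faithful, being the composite of an open and a closed pushforward) and $(a_!,a^*)$ shows that $\alpha$ corresponds to $\id_{\uuE_{\cB_w}}$, hence also generates; therefore $\alpha=c\,\eta$ with $c\in\E^\times$. Symmetrically, the adjunction $(\pi^{s\dag},\pi^s_\ddag)$ and~\eqref{eqn:pi-delta-nabla} give $\Hom(\pi^{s\dag}\dmix_{\overline w},\dmix_{ws}\{1\})\cong\Hom(\dmix_{\overline w},\pi^s_\ddag\dmix_{ws}\{1\})\cong\End(\dmix_{\overline w})\cong\E$, again free of rank one and generated by $\epsilon$, while the analogous unwinding (through $(h_!,h^!)$ and $(b^*,b_*)$) shows $\beta$ corresponds to $\id_{\uuE_{\cB_{ws}}\{1\}}$, so $\beta=u\,\epsilon$ with $u\in\E^\times$. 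Finally, precomposing the triangle above with the automorphism $c^{-1}\id_{\dmix_w}$ and postcomposing with $u^{-1}\id_{\dmix_{ws}\{1\}}$ turns it into a distinguished triangle of the required shape, with $f$ the induced rescaling of $f_0$.

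The main obstacle is the identification in the last paragraph: one must be sure the maps in the geometric triangle really are $\eta$ and $\epsilon$, not merely nonzero morphisms. This is what the rank-one freeness of the relevant $\Hom$-modules is for: it reduces the problem to exhibiting the two morphisms as generators, and over $\O$ this is precisely why one traces them through the adjunctions down to identity morphisms rather than only checking that they do not vanish.
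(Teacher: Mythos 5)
Your proof is correct and takes essentially the same route as the paper: both form the recollement triangle for $\uuE_{Y^s_w}$ on $Y^s_w=(\pi^s)^{-1}(\scP^s_{\overline{w}})$, push it into $\cB$, identify the middle term with $\pi^{s\dag}\dmix_{\overline{w}}$ via Proposition~\ref{prop:proper-smooth}, and then use that the relevant $\Hom$-modules are free of rank one to replace the two geometric maps by $\eta$ and $\epsilon$ after rescaling by units. The only inessential difference is that you check the generator property by unwinding the adjunctions, whereas the paper checks it by restricting the morphisms back to $Y^s_w$.
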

Of course, there is a similar statement involving costandard objects.
\begin{proof}
Consider the variety $Y^s_w := \cB_w \sqcup \cB_{ws} = (\pi^s)^{-1}(\scP^s_{\overline{w}})$.  This variety is smooth, so the (equivariant) parity sheaf $\cE'_{Y^s_w,w}$ associated with the stratum $\cB_w \subset Y^s_w$ is simply the shifted constant sheaf $\uuE_{Y^s_w} = \underline{\E}_{Y^s_w} \{\ell(w)\}$.  Let $j: \cB_w \hookrightarrow Y^s_w$ and $i: \cB_{ws} \hookrightarrow Y^s_w$ be the inclusion maps.  Applying the functorial distinguished triangle $j_!j^! \to \id \to i_*i^* \to$ to $\uuE_{Y^s_w}$, we obtain a triangle
\[
\dmix_{Y^s_w,w} \to \uuE_{Y^s_w} \to \dmix_{Y^s_w,ws}\{1\} \to.
\]
The middle term can be identified with $\pi^{s\dag}\dmix_{\scP^s_{\overline{w}},\overline{w}}$. (Here we still denote by $\pi^s$ the morphism $Y_w^s \to \scP^s_{\overline{w}}$ induced by $\pi^s$). Taking the $!$-direct image under the embedding $Y^s_w \hookrightarrow \cB$ and using Proposition~\ref{prop:proper-smooth}, we obtain a distinguished triangle 
\begin{equation}
\label{eqn:traingle-pis}
\dmix_w \to \pi^{s\dag} \dmix_{\overline{w}} \to \dmix_{ws} \{1\} \xrightarrow{[1]}.
\end{equation}
Since 
\[
\Hom(\dmix_w,\pi^{s\dag} \dmix_{\overline{w}}) \cong \Hom(\pi^{s}_{\dag} \dmix_w, \dmix_{\overline{w}}) \overset{\eqref{eqn:pi-delta-nabla}}{\cong} \Hom(\dmix_{\overline{w}}, \dmix_{\overline{w}})
\]
is free of rank one over $\E$ and since the first morphism in~\eqref{eqn:traingle-pis} is a generator (since its image under the restriction to $Y^s_w$ is), one can assume that this first morphism is induced by adjunction. By a similar argument one can also assume that the second morphism is induced by adjunction, which finishes the proof.
\end{proof}

\subsection{Convolution}
\label{ss:convolution}

Recall that the (ordinary, not mixed) equivariant derived category $\Db_B(\cB,\E)$ is endowed with a ``convolution product'' 
\[
\star^B: \Db_B(\cB,\E) \times \Db_B(\cB,\E) \to \Db_B(\cB,\E).
\]
If fact, if $m : G \times^B \cB \to \cB$ is the morphism induced by the $G$-action on $\cB$, then, by definition, for $\cF,\cG$ in $\Db_B(\cB,\E)$ we have
\[
\cF \star^B \cG = m_*(\cF \tboxtimes \cG)
\]
where $\cF \tboxtimes \cG$ is the unique object of $\Db_B(G \times^B \cB,\E)$ whose inverse image under the quotient morphism $G \times \cB \to G \times^B \cB$ is $\widetilde{\cF} \boxtimes \cG$; here $\widetilde{\cF}$ is the inverse image of $\cF$ under the projection $G \to \cB$. (To be really precise, this construction is correct only in the case $G$ is of finite type. We leave the necessary modifications in the other cases to the interested reader.) The convolution product is associative.

According to~\cite[Theorem~4.8]{jmw}, the convolution product restricts to a functor $\star^B: \Parity_B(\cB,\E) \times \Parity_B(\cB,\E) \to \Parity_B(\cB,\E)$.  Passing to homotopy categories, we obtain a bifunctor
\[
\star^B : \Dmix_B(\cB,\E) \times \Dmix_B(\cB,\E) \to \Dmix_B(\cB,\E).
\]

\begin{rmk}
The same construction as above gives a functor $\star^B: \Dmix_{(B)}(\cB,\E) \times \Dmix_B(\cB,\E) \to \Dmix_{(B)}(\cB,\E)$.
\end{rmk}

We will now study a number of special cases of convolution products. In the following statement we will consider the objects $\uuE_{\overline{\cB_{w_I}}}$ and $\uuE_{\cB_{e}}$. These objects are (equivariant) parity sheaves, so that they make sense as objects of $\Dmix_B(\cB,\E)$.

\begin{lem}\label{lem:convo-ewi}
Let $I \subset S$ be of finite type.  For any $\cF$ in $\Dmix_B(\cB,\E)$, there is a functorial isomorphism $\theta: \cF \star^B \uuE_{\overline{\cB_{w_I}}} \simto \pi^{I\dag} \pi^I_{\ddag}\cF\{r_I\}$ such that the following diagram commutes:
\[
\xymatrix{
\cF \star^B \uuE_{\overline{\cB_{w_I}}} \ar[r] \ar[d]_{\wr}^\theta & \cF \star^B \uuE_{\cB_{e}} \{r_I\} \ar[d]^{\wr} \\
\pi^{I\dag} \pi^I_{\ddag}\cF\{r_I\} \ar[r] & \cF\{r_I\}.
}
\]
Here, the top horizontal map is induced by the adjunction map $\uuE_{\overline{\cB_{w_I}}} \to i_{e*}i_e^*\uuE_{\overline{\cB_{w_I}}} \cong \uuE_{\cB_{e}}\{r_I\}$, the bottom map by the adjunction $\pi^{I\dag} \pi^I_{\ddag} \to \id$, and the right-hand vertical isomorphism is due to the fact that $\uuE_{\cB_{e}}$ is the unit for the convolution product.
\end{lem}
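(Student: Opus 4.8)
The plan is to deduce the lemma from a classical computation carried out in the ordinary equivariant derived category $\Db_B(\cB,\E)$. Both functors $(-)\star^B\uuE_{\overline{\cB_{w_I}}}$ and $\pi^{I\dag}\pi^I_\ddag(-)\{r_I\}$ on $\Dmix_B(\cB,\E)=\Kb\Parity_B(\cB,\E)$ are induced termwise by functors $\Parity_B(\cB,\E)\to\Parity_B(\cB,\E)$: this is how $\pi^I_*$ and $\pi^{I\dag}$ were defined in \S\ref{ss:stratified}, and when the second argument of $\star^B$ is the fixed parity complex $\uuE_{\overline{\cB_{w_I}}}$ the bifunctor of \S\ref{ss:convolution} also reduces to a termwise functor. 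Moreover, on $\Parity_B(\cB,\E)$ one has $\pi^{I\dag}\pi^I_\ddag\cong\pi^{I*}\pi^I_*$ (the shifts $\{r_I\}$ and $\{-r_I\}$ cancel), where $\pi^{I*}$ and $\pi^I_*$ denote the ordinary functors, which preserve parity complexes because $\pi^I$ is proper, smooth and stratified. So it suffices to produce a natural isomorphism $\theta: (-)\star^B\uuE_{\overline{\cB_{w_I}}}\simto\pi^{I*}\pi^I_*(-)\{r_I\}$ of functors $\Parity_B(\cB,\E)\to\Parity_B(\cB,\E)$, together with the analogue of the asserted diagram, and then to apply $\Kb(-)$. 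As in \S\ref{ss:convolution} I freely assume $G$ to be of finite type, the general case following by restricting to finite-dimensional closed unions of strata.

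Next I would unwind the geometry. The subvariety $\overline{\cB_{w_I}}\subset\cB$ is exactly the fibre $(\pi^I)^{-1}(eP^I)\cong P^I/B$; being smooth of dimension $r_I$, we have $\uuE_{\overline{\cB_{w_I}}}=\underline{\E}_{\overline{\cB_{w_I}}}\{r_I\}$. The assignment $(g,pB)\mapsto(gpB,gB)$ defines an isomorphism $G\times^B\overline{\cB_{w_I}}\simto\cB\times_{\scP^I}\cB$ under which $m$ corresponds to the first projection $\mathrm{pr}_1$, the map $(g,pB)\mapsto gB$ to the second projection $\mathrm{pr}_2$, and the square with edges $\mathrm{pr}_1,\mathrm{pr}_2,\pi^I,\pi^I$ is cartesian. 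The descent characterisation of $\tboxtimes$ then gives, functorially in $\cF$, an isomorphism $\cF\tboxtimes\uuE_{\overline{\cB_{w_I}}}\cong\mathrm{pr}_2^*\cF\{r_I\}$ (both sides pull back to $\widetilde{\cF}\boxtimes\underline{\E}_{\overline{\cB_{w_I}}}\{r_I\}$ on $G\times\overline{\cB_{w_I}}$, since $\mathrm{pr}_2$ is the map $(g,pB)\mapsto gB$). Therefore
\[
\cF\star^B\uuE_{\overline{\cB_{w_I}}}=m_*\bigl(\cF\tboxtimes\uuE_{\overline{\cB_{w_I}}}\bigr)\cong(\mathrm{pr}_1)_*\mathrm{pr}_2^*\cF\{r_I\}\cong\pi^{I*}\pi^I_*\cF\{r_I\},
\]
the last isomorphism being base change for the cartesian square above (valid since $\pi^I$ is proper). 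This composite is the sought $\theta$.

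To check the diagram, let $\delta:\cB\hookrightarrow\cB\times_{\scP^I}\cB$ be the diagonal $gB\mapsto(gB,gB)$; it is the image of $G\times^B\{eB\}$ under the isomorphism above, and $\mathrm{pr}_1\circ\delta=\mathrm{pr}_2\circ\delta=\id$. Under these identifications, convolving $\cF$ with the adjunction morphism $\uuE_{\overline{\cB_{w_I}}}\to i_{e*}i_e^*\uuE_{\overline{\cB_{w_I}}}$ becomes $(\mathrm{pr}_1)_*$ applied to the unit $\mathrm{pr}_2^*\cF\{r_I\}\to\delta_*\delta^*\mathrm{pr}_2^*\cF\{r_I\}$; since $\mathrm{pr}_1\circ\delta=\mathrm{pr}_2\circ\delta=\id$ the target is $\cF\{r_I\}$, and the resulting map agrees with the top horizontal arrow followed by the right-hand vertical isomorphism (the latter being, by unitality, the identification $\cF\star^B\uuE_{\cB_e}\{r_I\}\cong(\mathrm{pr}_1)_*\delta_*\cF\{r_I\}=\cF\{r_I\}$). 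On the other hand, the standard compatibility of the base change isomorphism with the counit of adjunction shows that the composite of $\theta$ with the counit $\pi^{I\dag}\pi^I_\ddag\cF\{r_I\}\to\cF\{r_I\}$ is that same map $(\mathrm{pr}_1)_*\mathrm{pr}_2^*\cF\{r_I\}\to\cF\{r_I\}$ induced by $\id\to\delta_*\delta^*$. Hence the two composites around the square coincide, and passing to $\Kb(-)$ finishes the proof.

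The main obstacle is the bookkeeping in the last paragraph: one must match up three a priori distinct natural transformations — convolution with the counit $\uuE_{\overline{\cB_{w_I}}}\to i_{e*}i_e^*\uuE_{\overline{\cB_{w_I}}}$, the unit of $(\delta^*,\delta_*)$, and the counit of $(\pi^{I\dag},\pi^I_\ddag)$ — and invoke the standard but genuinely fiddly compatibilities between the base change isomorphism and the units/counits of adjunction. The geometric identifications and the reduction from $\Dmix_B(\cB,\E)$ down to $\Parity_B(\cB,\E)\subset\Db_B(\cB,\E)$, by contrast, are routine.
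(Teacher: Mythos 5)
Your proposal is correct and follows essentially the same route as the paper: reduce to parity complexes, identify $\cF \tboxtimes \uuE_{\overline{\cB_{w_I}}}$ with the pullback of $\cF\{r_I\}$ along the projection $G \times^B \overline{\cB_{w_I}} \to \cB$ (your $\mathrm{pr}_2$ under the identification with $\cB \times_{\scP^I} \cB$), apply proper base change to the cartesian square over $\scP^I$, and check the commutative diagram via the closed embedding $gB \mapsto (g,1B)$ (your diagonal $\delta$). The only cosmetic difference is that the paper works directly on $G \times^B \overline{\cB_{w_I}}$ rather than on the fibre product, and it leaves the base-change/adjunction compatibility you flag equally implicit.
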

In the setting of $\ell$-adic \'etale sheaves, this fact is well known; for an explanation, see, e.g., the proof of~\cite[Proposition~12.2]{ar:kdsf}.
\begin{proof}
By the definition of our functors, it suffices to prove the lemma in the case where $\cF$ is a parity complex.  Henceforth, we assume that this is the case.

Consider the space $G \times^B \overline{\cB_{w_I}}$ and the maps $p, m: G \times^B \overline{\cB_{w_I}} \to \cB$ given by $p: (g,hB) \mapsto gB$ and $m: (g,hB) \mapsto ghB$.  By definition, $\cF \star^B \uuE_{\overline{\cB_{w_I}}} \cong m_*(\cF \tboxtimes \uuE_{\overline{\cB_{w_I}}})$.  
In this case, it is easy to see that the twisted external tensor product $\cF \tboxtimes \uuE_{\overline{\cB_{w_I}}}$ is naturally isomorphic to $p^*\cF\{r_I\}$.  We thus have $\cF \star^B \uuE_{\overline{\cB_{w_I}}} \cong m_*p^*\cF \{r_I\}$.  The existence of $\theta$ then follows from the proper base change theorem and the fact the following square is cartesian:
\[
\xymatrix{
G \times^B \overline{\cB_{w_I}} \ar[r]^-p \ar[d]_m & \cB \ar[d]^{\pi^I} \\
\cB \ar[r]_{\pi^I} & \scP^I.
}
\]

Next, let $h: \cB \to G \times^B \overline{\cB_{w_I}}$ be the map $h: gB \mapsto (g,1B)$.  This map is a closed embedding, and $\cF \tboxtimes \uuE_{\cB_{e}} \cong h_*\cF$.  Moreover, the natural map $\cF \tboxtimes \uuE_{\overline{\cB_{w_I}}} \to \cF \tboxtimes \uuE_{\cB_{e}} \{r_I\}$ can be identified with the morphism $p^*\cF \{r_I\} \to h_*h^*p^* \cF \{r_I\} \cong h_*\cF\{r_I\}$ induced by adjunction.  The commutative diagram in the statement of the lemma is obtained from the diagram
\[
\xymatrix{
\cF \tboxtimes \uuE_{\overline{\cB_{w_I}}} \ar[r] \ar[d]_{\wr} & \cF \tboxtimes \uuE_{\cB_{e}}\{r_I\} \ar[d]^{\wr} \\
p^*\cF\{r_I\} \ar[r] & h_*\cF\{r_I\}}
\]
by applying $m_*$.
\end{proof}

\begin{prop}\label{prop:convo-facts}
\begin{enumerate}
\item If $\ell(yw) = \ell(y) + \ell(w)$, then we have isomorphisms 
\[
\dmix_{yw} \cong \dmix_y \star^B \dmix_w, \qquad \nmix_{yw} \cong \nmix_y \star^B \nmix_w
\]
in $\Dmix_B(\cB,\E)$.\label{it:convo-length}
\item We have isomorphisms
\[
\dmix_w \star^B \nmix_{w^{-1}} \cong \uuE_{\cB_e} \cong \nmix_{w^{-1}} \star^B \dmix_w
\]
in $\Dmix_B(\cB,\E)$.\label{it:convo-inv}
\end{enumerate}
\end{prop}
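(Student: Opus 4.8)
The plan is to prove~\eqref{it:convo-length} first and then deduce~\eqref{it:convo-inv} from it; in both cases I would reduce, by an induction on length combined with associativity of $\star^B$, to the situation where the relevant Weyl group element is a single simple reflection. For~\eqref{it:convo-length}, write $w=w's$ with $s$ simple and $\ell(w)=\ell(w')+1$; a standard length computation gives $\ell(yw')=\ell(y)+\ell(w')$ and $\ell(yw's)=\ell(yw')+1$, so associativity and induction on $\ell(w)$ reduce us to proving $\dmix_y\star^B\dmix_s\cong\dmix_{ys}$ and $\nmix_y\star^B\nmix_s\cong\nmix_{ys}$ whenever $\ell(ys)=\ell(y)+1$. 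For~\eqref{it:convo-inv}, choosing a reduced expression $w=s_1\cdots s_k$ and applying~\eqref{it:convo-length} gives $\dmix_w\cong\dmix_{s_1}\star^B\cdots\star^B\dmix_{s_k}$ and $\nmix_{w^{-1}}\cong\nmix_{s_k}\star^B\cdots\star^B\nmix_{s_1}$, so by associativity it suffices to prove $\dmix_s\star^B\nmix_s\cong\uuE_{\cB_e}\cong\nmix_s\star^B\dmix_s$ for a simple reflection $s$, after which the product telescopes from the inside out.

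To prove $\dmix_y\star^B\dmix_s\cong\dmix_{ys}$, I would apply the recollement triangle $j_!j^*\to\id\to i_*i^*\to$ on $\overline{\cB_s}\cong\mathbb{P}^1$ (available in $\Dmix_B$ by Proposition~\ref{prop:recollement} and~\S\ref{ss:loc-closed}) to $\uuE_{\overline{\cB_s}}$; since $\overline{\cB_s}$ is smooth this reads $\dmix_s\to\uuE_{\overline{\cB_s}}\to\uuE_{\cB_e}\{1\}\xrightarrow{[1]}$ in $\Dmix_B(\cB,\E)$, the second arrow being the adjunction $\uuE_{\overline{\cB_s}}\to i_{e*}i_e^*\uuE_{\overline{\cB_s}}\cong\uuE_{\cB_e}\{1\}$. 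Convolving with $\dmix_y$ and using that $\uuE_{\cB_e}$ is the unit yields a triangle $\dmix_y\star^B\dmix_s\to\dmix_y\star^B\uuE_{\overline{\cB_s}}\to\dmix_y\{1\}\xrightarrow{[1]}$. By Lemma~\ref{lem:convo-ewi} (with $I=\{s\}$) the middle term is $\pi^{s\dag}\pi^s_\ddag\dmix_y\{1\}$, which the equivariant analogue of~\eqref{eqn:pi-delta-nabla} identifies with $\pi^{s\dag}\dmix_{\overline y}$ since $\ell(ys)=\ell(y)+1$; moreover the commuting square of Lemma~\ref{lem:convo-ewi} shows that, under these identifications, the second arrow of the triangle is exactly the morphism $\epsilon$ of Lemma~\ref{lem:dt-w-ws} applied to $w=ys$. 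As a distinguished triangle is determined up to isomorphism of its first term by the morphism between its last two terms, this gives $\dmix_y\star^B\dmix_s\cong\dmix_{ys}$. The costandard isomorphism would be proved in the same way, using instead the recollement triangle $i_*i^!\to\id\to j_*j^*\to$ (which on $\overline{\cB_s}$ gives $\uuE_{\cB_e}\{-1\}\to\uuE_{\overline{\cB_s}}\to\nmix_s\xrightarrow{[1]}$), the costandard analogue of Lemma~\ref{lem:dt-w-ws}, and the Verdier dual of the square of Lemma~\ref{lem:convo-ewi} (invoking that $\star^B$ commutes with $\D$, since $m$ is proper).

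For the remaining case of~\eqref{it:convo-inv}, I would convolve $\dmix_s$ with the triangle $\uuE_{\cB_e}\{-1\}\to\uuE_{\overline{\cB_s}}\to\nmix_s\xrightarrow{[1]}$: the first term becomes $\dmix_s\{-1\}$, while the middle term becomes $\uuE_{\overline{\cB_s}}\{-1\}$ by Lemma~\ref{lem:convo-ewi}, together with Lemma~\ref{lem:ps-delta-nabla} (to compute $\pi^s_\ddag\dmix_s$) and the identification $\pi^{s\dag}\dmix_{\overline e}\cong\uuE_{\overline{\cB_s}}$ (a special case of Corollary~\ref{cor:ps-IC}, or Lemma~\ref{lem:ic-smooth}). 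One thus obtains a triangle $\dmix_s\{-1\}\to\uuE_{\overline{\cB_s}}\{-1\}\to\dmix_s\star^B\nmix_s\xrightarrow{[1]}$ whose first arrow, by the Verdier dual of the square of Lemma~\ref{lem:convo-ewi}, is (a Tate twist of) the morphism $\eta$ of Lemma~\ref{lem:dt-w-ws} for $w=s$. Comparing, after the twist, with the triangle $\dmix_s\xrightarrow{\eta}\pi^{s\dag}\dmix_{\overline e}\to\dmix_e\{1\}\xrightarrow{[1]}$ of Lemma~\ref{lem:dt-w-ws} gives $\dmix_s\star^B\nmix_s\cong\dmix_e=\uuE_{\cB_e}$, and $\nmix_s\star^B\dmix_s\cong\uuE_{\cB_e}$ follows by the symmetric computation with the roles of $\dmix_s$ and $\nmix_s$ exchanged.

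The technical heart of the argument is the single-reflection case, and within it the bookkeeping of the various identifications: one must carefully match the distinguished triangle coming out of a convolution with the triangle of Lemma~\ref{lem:dt-w-ws}, tracking all Tate twists and cohomological shifts and checking that the connecting morphisms really do agree up to a unit of $\E$. The commuting squares of Lemma~\ref{lem:convo-ewi} (and its Verdier dual) are precisely what make these identifications canonical, so no genuinely new difficulty arises; the only auxiliary input not already recorded in the text is the compatibility of $\star^B$ with Verdier duality, which is standard for the proper morphism $m$ and should be noted. An alternative, more computational, route to~\eqref{it:convo-inv} would be to evaluate $\dmix_s\star^B\nmix_s$ directly on $\overline{\cB_s}\cong\mathbb{P}^1$, recovering the classical fact that the standard and costandard objects attached to a simple reflection are mutually inverse under convolution.
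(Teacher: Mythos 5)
Your proof is correct and follows essentially the same route as the paper's: reduce by associativity to the case of a simple reflection, convolve the triangle $\dmix_s \to \uuE_{\overline{\cB_s}} \to \dmix_e\{1\} \xrightarrow{[1]}$ coming from Lemma~\ref{lem:dt-w-ws} (or its Verdier dual), identify the middle term via Lemma~\ref{lem:convo-ewi} together with~\eqref{eqn:pi-delta-nabla}, and recognize the resulting triangle as another instance of the triangle of Lemma~\ref{lem:dt-w-ws}. The only cosmetic differences are which side you convolve on in part~(2) and your explicit appeal to the compatibility of $\star^B$ with Verdier duality (which the paper's phrasing sidesteps); your careful matching of the connecting morphisms via the commuting square of Lemma~\ref{lem:convo-ewi} spells out a point the paper leaves implicit.
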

\begin{rmk}\label{rmk:convo-facts}
This proposition is analogous to~\cite[Facts~2.2(a) and~2.2(b)]{bbm}.  The functors $({-}) \star^B \dmix_w$ and $({-}) \star^B \nmix_w$ are denoted $R^!_w$ and $R^*_w$, respectively, in {\it loc.~cit}.  Note that~\cite[Fact~2.2(a)]{bbm} contains a misprint: it should instead assert that $R^!_{w_1} \circ R^!_{w_2} \cong R^!_{w_2w_1}$ when $\ell(w_2w_1) = \ell(w_1) + \ell(w_2)$.
\end{rmk}
\begin{proof}
For both parts, by associativity of the convolution product it is enough to consider the case where $w$ is a simple reflection $s$.  We will study convolution products with the following distinguished triangle from Lemma~\ref{lem:dt-w-ws}:
\begin{equation}\label{eqn:convo}
\dmix_s \to \uuE_{\overline{\cB_s}} \to \dmix_e\{1\} \xrightarrow{[1]}.
\end{equation}

For part~\eqref{it:convo-length}, we assume that $y$ is such that $ys > y$.  Therefore, $\pi^s_{\ddag}\dmix_y \cong \dmix_{\overline{y}}\{-1\}$ by~\eqref{eqn:pi-delta-nabla}.  Applying $\dmix_y \star^B ({-})$ to~\eqref{eqn:convo} and using Lemma~\ref{lem:convo-ewi}, we obtain a triangle $\dmix_y \star^B \dmix_s \to \pi^{s\dag} \dmix_{\overline{y}} \to \dmix_y\{ 1\}\xrightarrow{[1]}$.  This is again an instance of the distinguished triangle in Lemma~\ref{lem:dt-w-ws}, which tells us that the first term must be isomorphic to $\dmix_{ys}$, as desired.

For part~\eqref{it:convo-inv}, we apply $\nmix_s\{-1\} \star^B ({-})$ to~\eqref{eqn:convo} to obtain the distinguished triangle $\nmix_s \star^B \dmix_s\{-1\} \to \pi^{s\dag} \nmix_{\overline{s}} \to \nmix_s \xrightarrow{[1]} $.  This triangle is Verdier dual to~\eqref{eqn:convo}.  In particular, we have $\nmix_s \star^B \dmix_s \cong \nmix_e \cong \uuE_{\cB_e}$. This proves the second isomorphism. The first one can be proved similarly.
\end{proof}

\begin{prop}\label{prop:convo-exact}
Let $w \in W$.
\begin{enumerate}
\item The functors 
\[
({-}) \star^B \nmix_w, \ \nmix_w \star^B ({-}) : \Dmix_B(\cB,\E) \to \Dmix_B(\cB,\E)
\]
are right t-exact with respect to the perverse t-structure.\label{it:convo-right-exact}
\item The functors 
\[
({-}) \star^B \dmix_w, \ \dmix_w \star^B ({-}) : \Dmix_B(\cB,\E) \to \Dmix_B(\cB,\E)
\]
are left t-exact with respect to the perverse t-structure.\label{it:convo-left-exact}
\end{enumerate}
In particular, for any $w, y \in W$, $\nmix_y \star^B \dmix_w$ and $\dmix_w \star^B \nmix_y$ are perverse.
\end{prop}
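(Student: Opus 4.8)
The plan is to reduce the whole statement to a single rank‑one fact: for each simple reflection $s$, the functors $({-})\star^B\nmix_s$ and $\nmix_s\star^B({-})$ are right t‑exact. First I would observe that the assertions of~\eqref{it:convo-left-exact} follow formally from those of~\eqref{it:convo-right-exact}: by Proposition~\ref{prop:convo-facts}\eqref{it:convo-inv} and associativity of $\star^B$, the functor $({-})\star^B\dmix_s$ is a quasi‑inverse of $({-})\star^B\nmix_s$, hence in particular its right adjoint, and the right adjoint of a right t‑exact functor is left t‑exact (likewise on the other side). Then, for a general $w$, I would fix a reduced expression $w=s_1\cdots s_k$: by Proposition~\ref{prop:convo-facts}\eqref{it:convo-length} we have $\nmix_w\cong\nmix_{s_1}\star^B\cdots\star^B\nmix_{s_k}$ and $\dmix_w\cong\dmix_{s_1}\star^B\cdots\star^B\dmix_{s_k}$, so by associativity each of the four functors attached to $w$ is a $k$‑fold composition of the corresponding functors attached to the $s_i$, and one‑sided t‑exactness is stable under composition. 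This leaves only the rank‑one case.

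For that case I would use (the equivariant analogue of) Proposition~\ref{prop:t-structure}: $\p\Dmix_B(\cB,\E)^{\le 0}$ is generated under extensions by the $\dmix_x\la n\ra[m]$ with $x\in W$, $n\in\Z$, $m\ge 0$. Since $({-})\star^B\nmix_s$ is triangulated, commutes with $\{1\}$ and $[1]$ (hence with $\la 1\ra$), and $\p\Dmix_B(\cB,\E)^{\le 0}$ is closed under extensions and positive shifts, it suffices to show $\dmix_x\star^B\nmix_s\in\p\Dmix_B(\cB,\E)^{\le 0}$ for every $x\in W$. If $xs<x$, then $\dmix_x\cong\dmix_{xs}\star^B\dmix_s$ by Proposition~\ref{prop:convo-facts}\eqref{it:convo-length}, so $\dmix_x\star^B\nmix_s\cong\dmix_{xs}\star^B(\dmix_s\star^B\nmix_s)\cong\dmix_{xs}$ by Proposition~\ref{prop:convo-facts}\eqref{it:convo-inv}, which lies in $\p\Dmix_B(\cB,\E)^{\le 0}$. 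If $xs>x$, I would apply $\dmix_x\star^B({-})$ to the triangle~\eqref{eqn:convo} to obtain
\[
\dmix_{xs}\cong\dmix_x\star^B\dmix_s \to \dmix_x\star^B\uuE_{\overline{\cB_s}} \to \dmix_x\{1\} \xrightarrow{[1]},
\]
and to its Verdier dual $\dmix_e\{-1\}\to\uuE_{\overline{\cB_s}}\to\nmix_s\xrightarrow{[1]}$ to obtain
\[
\dmix_x\{-1\} \to \dmix_x\star^B\uuE_{\overline{\cB_s}} \to \dmix_x\star^B\nmix_s \xrightarrow{[1]}.
\]
Since $\dmix_x$ and $\dmix_{xs}$ lie in $\p\Dmix_B(\cB,\E)^{\le 0}$, so do $\dmix_x\{1\}$ and $\dmix_x\{-1\}[1]=\dmix_x\la 1\ra$; closure under extensions gives first $\dmix_x\star^B\uuE_{\overline{\cB_s}}\in\p\Dmix_B(\cB,\E)^{\le 0}$ and then $\dmix_x\star^B\nmix_s\in\p\Dmix_B(\cB,\E)^{\le 0}$. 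Right t‑exactness of $\nmix_s\star^B({-})$ is obtained by the symmetric argument, convolving the same two triangles with $\dmix_x$ on the right and using $\dmix_s\star^B\dmix_x\cong\dmix_{sx}$ when $sx>x$.

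The ``in particular'' clause is then immediate: $\dmix_w$ always lies in $\p\Dmix_B(\cB,\E)^{\le 0}$ and $\nmix_y$ always in $\p\Dmix_B(\cB,\E)^{\ge 0}$ (by Proposition~\ref{prop:t-structure}, needing no further hypothesis), so $\dmix_w\star^B\nmix_y$ lies in $\p\Dmix_B(\cB,\E)^{\le 0}$ by right t‑exactness of $({-})\star^B\nmix_y$ and in $\p\Dmix_B(\cB,\E)^{\ge 0}$ by left t‑exactness of $\dmix_w\star^B({-})$, hence is perverse; symmetrically for $\nmix_y\star^B\dmix_w$. The step I expect to be the main obstacle is the case $xs>x$ above: one cannot shortcut it by declaring convolution with the parity sheaf $\uuE_{\overline{\cB_s}}$ to be t‑exact, because it is \emph{not}---already $\uuE_{\overline{\cB_s}}\star^B\uuE_{\overline{\cB_s}}\cong\uuE_{\overline{\cB_s}}\{1\}\oplus\uuE_{\overline{\cB_s}}\{-1\}$ is not perverse---so the estimate must pass through the two explicit triangles, exploiting that when convolution with $\uuE_{\overline{\cB_s}}$ is applied to the standard object $\dmix_x$ with $xs>x$ the output is an extension of $\dmix_x\{1\}$ by $\dmix_{xs}$, both manifestly in $\p\Dmix_B(\cB,\E)^{\le 0}$.
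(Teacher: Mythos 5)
Your proposal is correct and follows essentially the same route as the paper: reduce part (2) to part (1) by adjunction via Proposition~\ref{prop:convo-facts}\eqref{it:convo-inv}, reduce to simple reflections by length-additivity and associativity, and then check $\dmix_x\star^B\nmix_s\in\p\Dmix_B(\cB,\E)^{\le 0}$ using the triangle~\eqref{eqn:convo}, its Verdier dual, and the description of $\p\Dmix_B(\cB,\E)^{\le 0}$ from (the equivariant analogue of) Proposition~\ref{prop:t-structure}. The only minor deviation is in the case $xs>x$ for $({-})\star^B\nmix_s$, where you bound $\dmix_x\star^B\uuE_{\overline{\cB_s}}$ by a second use of~\eqref{eqn:convo} instead of invoking Lemma~\ref{lem:convo-ewi} and the right t-exactness of $\pi^{s\dag}$ as the paper does; this is exactly the argument the paper itself uses for $\nmix_s\star^B({-})$, so your treatment is a harmless (slightly more uniform) variant.
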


This statement is analogous to~\cite[Proposition~8.2.4]{abg}.  (The proof in {\it loc.~cit.} seems to contain a misprint: it claims t-exactness properties opposite to those in the statement above.)

\begin{proof}
Proposition~\ref{prop:convo-facts} implies that $({-})\star^B \dmix_w$ (resp.~$\dmix_w \star^B ({-})$) is right adjoint to $({-})\star^B \nmix_{w^{-1}}$ (resp.~$\nmix_{w^{-1}} \star^B ({-})$).  Since a right adjoint to a right t-exact functor is left t-exact, it suffices to prove part~\eqref{it:convo-right-exact} of the proposition.

Let $s$ be a simple reflection.  We claim that for any $y \in W$, we have $\dmix_y \star^B \nmix_s \in \p\Dmix_B(\cB,\E)^{\le 0}$.  If $ys < y$, then Proposition~\ref{prop:convo-facts} tells us that $\dmix_y \star^B \nmix_s \cong \dmix_{ys}$.  On the other hand, if $ys > y$, apply $\dmix_y \star^B ({-})$ to the Verdier dual of~\eqref{eqn:convo} to obtain a triangle $\dmix_y \{-1\} \to \pi^{s\dag} \dmix_{\overline{y}} \to \dmix_y \star^B \nmix_s \xrightarrow{[1]}$.  (Here, we have used~\eqref{eqn:pi-delta-nabla} and Lemma~\ref{lem:convo-ewi}.)  The claim follows from the fact that $\pi^{s\dag}$ is right t-exact (see Corollary~\ref{cor:ps-exactness}).

Since $({-}) \star^B \nmix_s$ takes every $\dmix_y$ to an object of $\p\Dmix_B(\cB,\E)^{\le 0}$, the equivariant analogue of Proposition~\ref{prop:t-structure} tells us that $({-}) \star^B \nmix_s$ is right t-exact.  Using Propostion~\ref{prop:convo-facts} and induction on the length of $w$, we find that $({-}) \star^B \nmix_w$ is right t-exact for any $w \in W$, as desired.

The proof of right t-exactness for $\nmix_w \star^B ({-})$ is similar, but not quite identical.  As above, it is enough to prove that for any simple reflection $s$ and any $y \in W$, we have $\nmix_s \star^B \dmix_y \in \p\Dmix_B(\cB,\E)^{\le 0}$.  Also as above, if $sy < y$, then $\nmix_s \star^B \dmix_y \cong \dmix_{sy} \in \p\Dmix_B(\cB,\E)^{\le 0}$.  Suppose now that $sy > y$.  Applying $({-}) \star^B \dmix_y$ to~\eqref{eqn:convo} yields a triangle $\dmix_{sy} \to \uuE_{\overline{\cB_s}} \star^B \dmix_y \to \dmix_y\{1\} \xrightarrow{[1]}$.  This triangle shows that $\uuE_{\overline{\cB_s}} \star^B \dmix_y \in \p\Dmix_B(\cB,\E)^{\le 0}$.  Now, apply $({-}) \star^B \dmix_y$ to the Verdier dual of~\eqref{eqn:convo} to obtain a triangle $\dmix_y\{-1\} \to \uuE_{\overline{\cB_s}} \star^B \dmix_y \to \nmix_s \star^B \dmix_y \xrightarrow{[1]}$.  From the preceding observation, we conclude that $\nmix_s \star^B \dmix_y \in \p\Dmix_B(\cB,\E)^{\le 0}$, as desired.
\end{proof}

The main result of this section is the following.

\begin{thm}\label{thm:a2}
For any $I \subset S$ of finite type, the partial flag variety $\scP^I$ satisfies assumption~{\bf (A2)} of \S{\rm \ref{ss:qh-structure}}.
\end{thm}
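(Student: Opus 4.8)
The plan is to get everything out of the convolution results of \S\ref{ss:convolution} together with the projection $\pi^I\colon\cB\to\scP^I$. The case $I=\varnothing$ (so $\scP^\varnothing=\cB$) is essentially immediate: $\uuE_{\cB_e}=\dmix_e=\nmix_e$ is the unit for $\star^B$, so $\dmix_w\cong\nmix_e\star^B\dmix_w$ and $\nmix_w\cong\nmix_w\star^B\dmix_e$ are perverse by the last assertion of Proposition~\ref{prop:convo-exact}, and applying the t-exact functor $\For$ (which kills no nonzero object) transports this to $\Dmix_{(B)}(\cB,\E)$. For general finite-type $I$ I would argue as follows (and in fact the argument also recovers the case $I=\varnothing$); throughout one works $B$-equivariantly and transports to $\Dmix_{(B)}$ via $\For$ at the very end.

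Fix $\overline w\in W/W_I$ with minimal-length representative $v\in W^I$. Since $\pi^I$ is proper, smooth and stratified, $\pi^{I\dag}$ is t-exact (Corollary~\ref{cor:ps-exactness}); it is moreover conservative. Indeed, for $u\in W^I$ the induced map $\cB_u\to\scP^I_{\overline u}$ is a trivial fibration between affine spaces of the same dimension, hence an isomorphism, so base change (Proposition~\ref{prop:proper-smooth}) gives $i_u^*\pi^{I\dag}\cF\cong(\pi^I|_{\cB_u})^*\,i_{\overline u}^*\cF\{r_I\}$, and as $u$ ranges over $W^I$ this sees every stalk of $\cF$; thus $\pi^{I\dag}\cF=0$ forces $\cF=0$. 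Consequently, for $\cG\in\Dmix_B(\scP^I,\E)$ one has $\pi^{I\dag}\pH^i(\cG)\cong\pH^i(\pi^{I\dag}\cG)$ by t-exactness, so $\cG$ is perverse as soon as $\pi^{I\dag}\cG$ is, and it suffices to show that $\pi^{I\dag}\dmix_{\overline w}$ and $\pi^{I\dag}\nmix_{\overline w}$ are perverse on $\cB$.

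To identify these, apply the equivariant form of Lemma~\ref{lem:ps-delta-nabla} to the stratum $\cB_v$: because $\dim\cB_v=\dim\scP^I_{\overline w}$ one gets $\pi^I_*\dmix_v\cong\dmix_{\overline w}$ and $\pi^I_*\nmix_v\cong\nmix_{\overline w}$, hence $\pi^I_\ddag\dmix_v\{r_I\}\cong\dmix_{\overline w}$ and $\pi^I_\ddag\nmix_v\{r_I\}\cong\nmix_{\overline w}$. Plugging this into the isomorphism $\cF\star^B\uuE_{\overline{\cB_{w_I}}}\cong\pi^{I\dag}\pi^I_\ddag\cF\{r_I\}$ of Lemma~\ref{lem:convo-ewi} gives
\[
\pi^{I\dag}\dmix_{\overline w}\cong\dmix_v\star^B\uuE_{\overline{\cB_{w_I}}},\qquad
\pi^{I\dag}\nmix_{\overline w}\cong\nmix_v\star^B\uuE_{\overline{\cB_{w_I}}}.
\]
Now $\overline{\cB_{w_I}}\cong P^I/B$ is smooth, so $\uuE_{\overline{\cB_{w_I}}}\cong\IC^\mix_{w_I}$ is perverse by Lemma~\ref{lem:ic-smooth}; by Proposition~\ref{prop:convo-exact} the functor $\dmix_v\star^B({-})$ is left t-exact and $\nmix_v\star^B({-})$ is right t-exact, so $\pi^{I\dag}\dmix_{\overline w}\in\p\Dmix_B(\cB,\E)^{\ge 0}$ and $\pi^{I\dag}\nmix_{\overline w}\in\p\Dmix_B(\cB,\E)^{\le 0}$. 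The complementary bounds are automatic: $\dmix_{\overline w}\in\p\Dmix_B(\scP^I,\E)^{\le 0}$ and $\nmix_{\overline w}\in\p\Dmix_B(\scP^I,\E)^{\ge 0}$ by Proposition~\ref{prop:t-structure}, and $\pi^{I\dag}$ preserves these since it is t-exact. Hence both $\pi^{I\dag}\dmix_{\overline w}$ and $\pi^{I\dag}\nmix_{\overline w}$ are perverse, which completes the proof.

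The main obstacle is conceptual: one cannot transfer {\bf(A2)} from $\cB$ to $\scP^I$ by a push-forward, because $\pi^I_*$ is not t-exact (the fibre $P^I/B$ contributes cohomology in many degrees), so there is no direct way to push $\dmix_v,\nmix_v$ forward. One is therefore forced to route through the t-exact pull-back $\pi^{I\dag}$ and a conservativity argument, and the genuinely nontrivial step is the identification of $\pi^{I\dag}\dmix_{\overline w}$ and $\pi^{I\dag}\nmix_{\overline w}$ with convolutions against $\uuE_{\overline{\cB_{w_I}}}$ via Lemma~\ref{lem:convo-ewi}; after that the t-exactness statements of Proposition~\ref{prop:convo-exact} finish the argument at once.
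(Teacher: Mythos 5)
Your proposal is correct and follows essentially the same route as the paper: work equivariantly, identify $\pi^{I\dag}\dmix_{\overline w}\cong\dmix_w\star^B\uuE_{\overline{\cB_{w_I}}}$ via Lemma~\ref{lem:ps-delta-nabla} and Lemma~\ref{lem:convo-ewi}, invoke Lemma~\ref{lem:ic-smooth} and the t-exactness properties of convolution (Proposition~\ref{prop:convo-exact}), and descend along the t-exact functor $\pi^{I\dag}$, which the paper handles by citing Corollaries~\ref{cor:ps-exactness} and~\ref{cor:ps-IC} (it kills no nonzero perverse sheaf) where you give a direct stalkwise conservativity argument. The only cosmetic difference is that the paper first reduces to $\E=\K$ or $\F$ and treats only the standard objects, getting the costandard ones by Verdier duality, whereas you treat $\dmix_{\overline w}$ and $\nmix_{\overline w}$ symmetrically for all coefficients, which is equally valid.
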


This result applies, in particular, to the full flag variety $\cB$.

\begin{proof}
As observed in~\S\ref{ss:qh-structure} it is enough to treat the case $\E=\K$ or $\F$, which we assume from now on. Moreover, since $\D_X$ is t-exact in this case, it is enough to prove that the objects $\dmix_{\overline{w}}$ are perverse. Finally,
since the forgetful functor $\For : \Dmix_B(\scP^I,\E) \to \Dmix_{(B)}(\scP^I,\E)$ is t-exact, it is enough to prove that the objects $\dmix_{\overline{w}}$ of $\Dmix_B(\scP^I,\E)$ is perverse.

When $I$ is empty, the objects $\dmix_w \cong \dmix_w \star^B \nmix_e$ 
are perverse by Proposition~\ref{prop:convo-exact}.  Suppose now that $I$ is nonempty.  Let $w \in W^I$.
The object $\dmix_{\overline{w}}$ automatically belongs to $\p\Dmix_{B}(\scP^I,\E)^{\le 0}$, so we need only show that it lies in $\p\Dmix_{B}(\scP^I,\E)^{\ge 0}$.  Since $\pi^{I\dag}$ is t-exact and kills no nonzero perverse sheaf (see Corollaries~\ref{cor:ps-exactness} and~\ref{cor:ps-IC}), it suffices to show that $\pi^{I\dag}\dmix_{\overline{w}}$ lies in $\p\Dmix_{(B)}(\cB,\E)^{\ge 0}$.  By Lemmas~\ref{lem:ps-delta-nabla} and~\ref{lem:convo-ewi}, we have
\[
\pi^{I\dag}\dmix_{\overline{w}}
\cong \pi^{I\dag} \pi^I_* \dmix_w \cong \dmix_w \star^B \uuE_{\overline{\cB_{w_I}}}.
\]
Now $\uuE_{\overline{\cB_{w_I}}}$ is perverse by Lemma~\ref{lem:ic-smooth}.
By Proposition~\ref{prop:convo-exact}, it follows that $\dmix_w \star^B \uuE_{\overline{\cB_{w_I}}}$ lies in $\p\Dmix_{(B)}(\cB,\E)^{\ge 0}$, as desired.
\end{proof}

As an interesting special case we record the following result.

\begin{cor}
The affine flag variety and the affine Grassmannian of a reductive group satisfy assumption {\bf (A2)} of \S{\rm \ref{ss:qh-structure}}.
\end{cor}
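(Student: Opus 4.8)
The plan is to deduce both assertions directly from Theorem~\ref{thm:a2}, once one recognizes the affine flag variety and the affine Grassmannian as (partial) flag varieties of an affine Kac--Moody group with respect to a parabolic subgroup of finite type.

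Concretely, let $G$ be a connected reductive group and let $\mathscr{G}$ be its associated affine Kac--Moody group, with affine Weyl group $W_{\mathrm{aff}}$ and set of simple reflections $S_{\mathrm{aff}} = S_{\mathrm{f}} \sqcup \{s_0\}$, where $S_{\mathrm{f}}$ is the set of simple reflections of the finite Weyl group $W$ of $G$. First I would recall the standard dictionary: the affine flag variety $\mathrm{Fl}$ of $G$ is the full flag variety $\cB = \mathscr{G}/\mathscr{B}$ of $\mathscr{G}$ --- i.e.\ the case $I = \varnothing$ --- with its Iwahori orbits being precisely the Bruhat strata; and the affine Grassmannian $\Gr$ of $G$ is the partial flag variety $\scP^{S_{\mathrm{f}}}$, again with the Iwahori stratification matching the Bruhat stratification indexed by $W_{\mathrm{aff}}/W$. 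Next I would check that $S_{\mathrm{f}}$ is of finite type in the sense of~\S\ref{ss:partial}: this is immediate, since $W_{S_{\mathrm{f}}}$ is the finite Weyl group $W$, which is finite. Theorem~\ref{thm:a2}, applied with $I = \varnothing$ and with $I = S_{\mathrm{f}}$ respectively, then yields assumption~{\bf (A2)} for $\mathrm{Fl}$ and for $\Gr$.

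The only point needing a little care --- and the only genuine obstacle --- is that a general connected reductive $G$ is neither semisimple nor simply connected, so $\mathrm{Fl}$ and $\Gr$ need not literally equal $\mathscr{G}/\mathscr{B}$, resp.\ $\mathscr{G}/\scP^{S_{\mathrm{f}}}$, for a single connected Kac--Moody group: in general they acquire additional connected components. To handle this I would argue component by component: each connected component of $\mathrm{Fl}$ (resp.\ $\Gr$), together with its stratification by Iwahori orbits and its objects $\dmix_w$ and $\nmix_w$, is isomorphic to the analogous data on $\cB$ (resp.\ $\scP^{S_{\mathrm{f}}}$) for the affine Kac--Moody group of the simply connected cover of the derived group of $G$, up to a reindexing of connected components. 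Since assumption~{\bf (A2)} asserts the perversity of the objects $\dmix_w$ and $\nmix_w$ --- a condition checked one stratum at a time --- it is insensitive to this reindexing, so the reduction to Theorem~\ref{thm:a2} goes through. All the mathematical content is in Theorem~\ref{thm:a2}; the remainder is bookkeeping.
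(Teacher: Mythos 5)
Your proposal is correct and is exactly the argument the paper intends: the affine flag variety and affine Grassmannian are identified with $\cB$ (the case $I=\varnothing$) and $\scP^{I}$ for $I$ the set of finite simple reflections (which is of finite type) for the affine Kac--Moody group, so Theorem~\ref{thm:a2} applies; the paper states the corollary as an immediate special case without further proof. Your extra remark about connected components for non-simply-connected $G$ is harmless bookkeeping and does not change the route.
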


\subsection{Complements}
\label{ss:complements}

With Theorem~\ref{thm:a2} in hand, one might pursue a more detailed study of the structure of, say, standard or tilting objects in $\Perv^\mix_{(B)}(\cB,\E)$.  Indeed, a number of facts that are well known in characteristic~$0$ or in the non-mixed setting hold in $\Perv^\mix_{(B)}(\cB,\E)$, often with the same proofs. For simplicity, in this subsection we only consider the non-equivariant setting. These results will not be used in the rest of the paper.

Observe first that the distinguished triangle in Lemma~\ref{lem:dt-w-ws} can be rearranged to give a short exact sequence:
\begin{equation}
\label{eqn:ses-pis}
\dmix_{ws}\la -1\ra \hookrightarrow \dmix_w \twoheadrightarrow \pi^{s\dag}\dmix_{\overline{w}}.
\end{equation}
This can be used to establish the following lemma, by imitating the argument of~\cite[\S2.1]{bbm} or~\cite[Lemma~4.4.7]{by}. 

\begin{lem}
\label{lem:socles}
Assume that $\E = \K$ or $\F$, and
let $w \in W$.
\begin{enumerate}
\item
There exists an embedding $\IC^\mix_e \langle -\ell(w) \rangle \hookrightarrow \dmix_w$ whose cokernel has no composition factor of the form $\IC^\mix_e \langle n \rangle$. Moreover, $\IC^\mix_e \langle -\ell(w) \rangle$ is the socle of $\dmix_w$.\label{it:socle-std}
\item
There exists a surjection $\nmix_w \twoheadrightarrow \IC^\mix_e \langle \ell(w) \rangle$ whose kernel has no composition factor of the form $\IC^\mix_e \langle n \rangle$. Moreover, $\IC^\mix_e \langle \ell(w) \rangle$ is the head of $\nmix_w$.\label{it:cosoc-costd}
\end{enumerate}
\end{lem}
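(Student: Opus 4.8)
The plan is to deduce part~\eqref{it:cosoc-costd} from part~\eqref{it:socle-std} by Verdier duality. Indeed, $\D_\cB$ is t-exact (since $\E = \K$ or $\F$), it sends $\dmix_w$ to $\nmix_w$, it exchanges $\la n\ra$ with $\la -n\ra$ and subobjects with quotients, hence socles with heads, and $\IC^\mix_e$ is self-dual; applying $\D_\cB$ to the embedding and cokernel statement in~\eqref{it:socle-std} for $w$ yields exactly~\eqref{it:cosoc-costd}. So it suffices to prove~\eqref{it:socle-std}.

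For~\eqref{it:socle-std} I would argue by induction on $\ell(w)$. When $w = e$ we have $\dmix_e = \uuE_{\cB_e} = \IC^\mix_e$ and everything is trivial. For the inductive step, choose a simple reflection $s$ with $ws < w$ and use the short exact sequence~\eqref{eqn:ses-pis}
\[
0 \to \dmix_{ws}\la -1\ra \to \dmix_w \to \pi^{s\dag}\dmix_{\overline w} \to 0 .
\]
The key point is that \emph{no} composition factor of $\pi^{s\dag}\dmix_{\overline w}$ is of the form $\IC^\mix_e\la n\ra$: the functor $\pi^{s\dag}$ is exact and kills no nonzero perverse sheaf, and by Corollary~\ref{cor:ps-IC} it sends $\IC^\mix_{\overline v}$ to $\IC^\mix_{v'}$ where $v'$ is the longer of $v$ and $vs$, so every composition factor of $\pi^{s\dag}\dmix_{\overline w}$ is an $\IC^\mix_{v'}\la n\ra$ with $\ell(v') \geq 1$. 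Composing the embedding provided by the induction hypothesis for $ws$ (twisted by $\la-1\ra$, and using $-\ell(ws)-1 = -\ell(w)$) with the inclusion $\dmix_{ws}\la-1\ra \hookrightarrow \dmix_w$ gives an embedding $\IC^\mix_e\la -\ell(w)\ra \hookrightarrow \dmix_w$ factoring through $\dmix_{ws}\la -1\ra$; its cokernel then sits in a short exact sequence with $\la -1\ra$ of the (inductively controlled) cokernel for $ws$ and with $\pi^{s\dag}\dmix_{\overline w}$, so by the induction hypothesis and the key point it has no composition factor $\IC^\mix_e\la n\ra$. This establishes all of~\eqref{it:socle-std} except the identification of the socle.

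For the socle I would first reduce to the case where $\cB_w$ is open in $\cB$, by replacing $\cB$ with the projective variety $\overline{\cB_w}$ (which still satisfies~{\bf (A1)} and~{\bf (A2)}, the latter because the closed inclusion into $\cB$ induces a t-exact conservative functor) and using that $i_*$ for this closed inclusion is fully faithful, exact and t-exact. For $w \neq e$ the object $\dmix_w$ is not simple, since $\IC^\mix_e\la -\ell(w)\ra$, now known to be a composition factor, is not isomorphic to $\IC^\mix_w$; as $\dmix_w$ is indecomposable with head $\IC^\mix_w$ occurring with multiplicity one, $\IC^\mix_w$ is not a subobject of $\dmix_w$. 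Consequently any simple subobject $S \hookrightarrow \dmix_w$ is killed by $j^* = i_w^*$: otherwise $j^*S$ would be a nonzero subobject of the simple object $j^*\dmix_w \cong \uuE_{\cB_w}$, forcing $S \cong \IC^\mix_w$. So $S$ is supported on $\overline{\cB_w}\smallsetminus\cB_w$, and the composite $S \to \pi^{s\dag}\dmix_{\overline w}$ is either zero — in which case $S \hookrightarrow \dmix_{ws}\la -1\ra$, whose socle is the simple object $\IC^\mix_e\la -\ell(w)\ra$ by induction, so $S \cong \IC^\mix_e\la -\ell(w)\ra$ — or injective, in which case $S$ embeds into $\pi^{s\dag}\dmix_{\overline w}$. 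Ruling out this last alternative, i.e.\ showing that $\Hom(\IC^\mix_v\la n\ra, \dmix_w) = 0$ for $v \neq e$, is the technical heart of the lemma; it is carried out exactly as in~\cite[\S2.1]{bbm} (see also~\cite[Lemma~4.4.7]{by}), by a descending induction exploiting the recollement together with the compatibilities of $\pi^{s\dag}$, $({-})\star^B\dmix_s$ and $({-})\star^B\nmix_s$ with standard and costandard objects recorded in Propositions~\ref{prop:proper-smooth}, \ref{prop:convo-facts} and~\ref{prop:convo-exact}. I expect this final step to be the main obstacle, the delicate part being the bookkeeping of Tate twists through the inductive reductions needed to see that no $\IC^\mix_v\la n\ra$ with $v \neq e$ can appear in the socle of $\dmix_w$.
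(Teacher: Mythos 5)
Your proposal follows essentially the same route as the paper: the paper's entire proof consists of rearranging the triangle of Lemma~\ref{lem:dt-w-ws} into the short exact sequence~\eqref{eqn:ses-pis} and then ``imitating the argument of~\cite[\S 2.1]{bbm} or~\cite[Lemma~4.4.7]{by}'', which is exactly the skeleton you use, with the Verdier-duality reduction of~\eqref{it:cosoc-costd} to~\eqref{it:socle-std}, the induction on $\ell(w)$, and the bookkeeping of composition factors of $\pi^{s\dag}\dmix_{\overline{w}}$ via Corollary~\ref{cor:ps-IC} filled in correctly. The one step you defer---the vanishing $\Hom(\IC^\mix_v\la n\ra, \dmix_w)=0$ for $v \neq e$, handled by the convolution/adjunction argument of those references using Propositions~\ref{prop:convo-facts} and~\ref{prop:convo-exact}---is precisely the part the paper itself leaves to \cite{bbm,by}, so this is not a gap relative to the paper's own treatment.
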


Using this lemma, one can check (again assuming $\E = \K$ or $\F$, and using the reciprocity formula) that the graded ring
\[
A := \left( \bigoplus_{n \in \Z} \Hom(\cP^\mix_e, \cP^\mix_e\la n\ra) \la -n \ra \right)^{\mathrm{op}}
\]
is concentrated in even nonnegative degrees, and then, by the arguments of~\cite[\S 2.1]{bbm}, that the restriction of the exact functor
\[
\VV^\mix: \Perv^\mix_{(B)}(\cB,\E) \to A\lh\gmod
\quad\text{defined by}\quad
\VV^\mix(\cF) = \bigoplus_{n \in \Z} \Hom(\cP^\mix_e, \cF\la n\ra) \la -n \ra
\]
(where $A\lh\gmod$ is the category of finite dimensional graded $A$-modules) to the subcategory $\Tilt^\mix_{(B)}(\cB,\E)$ is fully faithful.

The next result is an analogue of a classical fact about category $\cO$.

\begin{prop}
Assume that $\E = \K$ or $\F$, and
let $w, y \in W$.  We have
\[
\dim \Hom(\dmix_w, \dmix_y \la n\ra) = 
\begin{cases}
1 & \text{if $w \le y$ and $n = \ell(y) - \ell(w)$,} \\
0 & \text{otherwise.}
\end{cases}
\]
Moreover, if $w \leq y$, then every nonzero map $\dmix_w \to \dmix_y \la \ell(y) - \ell(w) \ra$ is injective.
\end{prop}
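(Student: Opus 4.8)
The plan is to split the claim into three parts: (a) that $\Hom(\dmix_w,\dmix_y\la n\ra)$ vanishes unless $w\le y$ and $n=\ell(y)-\ell(w)$, and that every nonzero map is injective; (b) that in the surviving case the $\Hom$-space is at most one-dimensional; (c) that it is at least one-dimensional whenever $w\le y$ (and $n=\ell(y)-\ell(w)$). All categories involved are the non-equivariant ones $\Perv^\mix_{(B)}(\cB,\E)$, $\Dmix_{(B)}(\cB,\E)$ of \S\ref{ss:complements}, and convolution statements of \S\ref{ss:convolution} are used in their non-equivariant form (via $\For$). Throughout I will use that $\IC^\mix_e\cong\uuE_{\cB_e}$ (Lemma~\ref{lem:ic-smooth}, as $\cB_e$ is a point), that it is a simple object with one-dimensional endomorphism ring (Lemma~\ref{lem:t-structure}), and the structure of $\dmix_w$ from Lemma~\ref{lem:socles}(1): its socle is $\IC^\mix_e\la-\ell(w)\ra$, and $\IC^\mix_e\la m\ra$ occurs as a composition factor of $\dmix_w$ with multiplicity $\delta_{m,-\ell(w)}$.

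For (a), if $w\not\le y$ then $\cB_w$ is disjoint from the support $\overline{\cB_y}$ of $\dmix_y$, so the adjunction $i_{w!}\dashv i_w^!$ gives $\Hom(\dmix_w,\dmix_y\la n\ra)\cong\Hom(\uuE_{\cB_w},i_w^!\dmix_y\la n\ra)=0$. Now assume $w\le y$ and let $\varphi\colon\dmix_w\to\dmix_y\la n\ra$ be nonzero. If $\ker\varphi\ne0$, it contains the (simple) socle $\IC^\mix_e\la-\ell(w)\ra$ of $\dmix_w$, so $\im\varphi$ is a quotient of $\dmix_w/\IC^\mix_e\la-\ell(w)\ra$ and therefore has no composition factor of the form $\IC^\mix_e\la m\ra$; but $\im\varphi$ is a nonzero subobject of $\dmix_y\la n\ra$, hence contains its socle $\IC^\mix_e\la n-\ell(y)\ra$, a contradiction. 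Thus $\varphi$ is injective, which is the last assertion of the Proposition. Restricting $\varphi$ to the socle of $\dmix_w$ produces a nonzero morphism between the simple objects $\IC^\mix_e\la-\ell(w)\ra$ and $\IC^\mix_e\la n-\ell(y)\ra$, forcing $n=\ell(y)-\ell(w)$. For (b), when $n=\ell(y)-\ell(w)$ the socles of $\dmix_w$ and of $\dmix_y\la n\ra$ are both $\IC^\mix_e\la-\ell(w)\ra$, so given two nonzero maps $\varphi_1,\varphi_2$ their restrictions to the socle differ by a scalar $\lambda\in\E^\times$; then $\varphi_1-\lambda\varphi_2$ vanishes on the socle, hence is zero by the argument just given, so $\dim\Hom(\dmix_w,\dmix_y\la n\ra)\le1$.

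For (c), I will construct a nonzero morphism $\dmix_w\to\dmix_y\la\ell(y)-\ell(w)\ra$ when $w\le y$, by strong induction on $\ell(y)$. If $w=y$, take the identity. Otherwise pick a simple reflection $s$ with $ys<y$, and use the lifting property of the Bruhat order. If $ws<w$, then $ws\le ys$, so by induction there is a nonzero $\psi\colon\dmix_{ws}\to\dmix_{ys}\la\ell(y)-\ell(w)\ra$; the functor $(-)\star^B\dmix_s$ is faithful (if $\psi\star^B\id_{\dmix_s}$ were zero, then convolving further with $\nmix_s$ and using $\dmix_s\star^B\nmix_s\cong\uuE_{\cB_e}$ from Proposition~\ref{prop:convo-facts} and the unit property would force $\psi=0$), so, using $\dmix_{ws}\star^B\dmix_s\cong\dmix_w$ and $\dmix_{ys}\star^B\dmix_s\cong\dmix_y$ (Proposition~\ref{prop:convo-facts}(1)), we get a nonzero morphism $\dmix_w\to\dmix_y\la\ell(y)-\ell(w)\ra$. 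If instead $ws>w$, then $w\le ys$, so by induction there is a nonzero — hence, by (a), injective — morphism $\chi\colon\dmix_w\to\dmix_{ys}\la\ell(ys)-\ell(w)\ra$; composing with a twist of the injection $\dmix_{ys}\la-1\ra\hookrightarrow\dmix_y$ of~\eqref{eqn:ses-pis} yields an injection $\dmix_w\hookrightarrow\dmix_y\la\ell(y)-\ell(w)\ra$, in particular nonzero. This completes the induction, and hence the proof.

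I expect the main obstacle to be part (c): it is the sheaf-theoretic analogue of the strong linkage (Verma submodule) theorem, and the delicate points are organizing the induction on $\ell(y)$ around the correct Bruhat-combinatorial inputs (the lifting property) and checking that the maps produced are genuinely nonzero — handled in one case by the faithfulness of convolution with a standard object and in the other by the explicit inclusion~\eqref{eqn:ses-pis} together with the injectivity established in (a). Parts (a) and (b), by contrast, are essentially bookkeeping with the socle information supplied by Lemma~\ref{lem:socles}.
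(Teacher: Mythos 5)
Your proof is correct, and its ingredients are the ones the paper uses (Lemma~\ref{lem:socles}, induction on $\ell(y)$ via a simple reflection $s$ with $ys<y$, the convolution equivalence of Proposition~\ref{prop:convo-facts}, and the exact sequence~\eqref{eqn:ses-pis}), but you organize the argument differently. The paper runs a single induction that computes $\dim\Hom(\dmix_w,\dmix_y\la m\ra)$ exactly: in the case $ws<w$ it transports the dimension through the equivalence $({-})\star^B\nmix_s$, and in the case $ws>w$ it applies $\Hom(\dmix_w,-)$ to all of~\eqref{eqn:ses-pis} and shows $\Hom(\dmix_w,\pi^{s\dag}\dmix_{\overline{y}}\la m\ra)=0$ (since, by Corollary~\ref{cor:ps-exactness}, $\IC^\mix_w$ is not a composition factor of $\pi^{s\dag}\dmix_{\overline{y}}$ when $ws>w$), yielding the equality $\dim\Hom(\dmix_w,\dmix_y\la m\ra)=\dim\Hom(\dmix_w,\dmix_{ys}\la m-1\ra)$. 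You instead split the statement into an upper bound and a lower bound: the bound $\dim\le 1$ comes from a Schur-type argument with the simple socle $\IC^\mix_e\la-\ell(w)\ra$ and $\End(\IC^\mix_e)\cong\E$, combined with the injectivity statement, while the lower bound is obtained by constructing a nonzero map, using faithfulness of $({-})\star^B\dmix_s$ in one case and composing with the inclusion $\dmix_{ys}\la-1\ra\hookrightarrow\dmix_y$ from~\eqref{eqn:ses-pis} in the other. What your route buys is that you never need the composition-factor analysis of $\pi^{s\dag}\dmix_{\overline{y}}$ (so Corollary~\ref{cor:ps-exactness} is not invoked at that step), at the price of an extra, though easy, uniqueness argument via the socle; the paper's route is slightly more economical in that the single dimension count gives existence and uniqueness simultaneously. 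Both proofs rely on the same Bruhat lifting properties, which you make explicit and the paper leaves implicit.
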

\begin{proof}
First, using adjunction it is easily checked that $\Hom(\dmix_w, \dmix_y \la n\ra)=0$ if $w \not\leq y$. Now, assume that $w \le y$.
The description of socles in Lemma~\ref{lem:socles} implies that there is no nonzero map $\dmix_w \to \dmix_y\la n\ra$ if $n \ne \ell(y) - \ell(w)$, and that any nonzero map $\dmix_w \to \dmix_y\la \ell(y) - \ell(w)\ra$ is injective.

Let $m = \ell(y) - \ell(w)$.  
To compute $\dim \Hom(\dmix_w, \dmix_y\la m \ra)$, we proceed by induction on $\ell(y)$.  If $y= e$, then $w=e$ and the result is trivial.  Otherwise, choose a simple reflection $s$ such that $ys < y$.  If $ws < w$ as well, then applying the equivalence $({-}) \star^B \nmix_s$ and using Proposition~\ref{prop:convo-facts} we obtain $\dim \Hom(\dmix_w, \dmix_y\la m\ra) = \dim \Hom(\dmix_{ws}, \dmix_{ys}\la m\ra) = 1$.  If $ws > w$, then $\Hom(\dmix_w, \pi^{s\dag} \dmix_{\overline{y}}\la m\ra) = 0$ because, by Corollary~\ref{cor:ps-exactness}, $\IC^\mix_w$ cannot occur as a composition factor of $\pi^{s\dag}\dmix_{\overline{y}}$. Using~\eqref{eqn:ses-pis}, we therefore have $\dim \Hom(\dmix_w, \dmix_y\la m\ra) = \dim \Hom(\dmix_w, \dmix_{ys}\la m-1\ra)$, and the result follows by induction (since $w \leq ys$ under our assumptions).
\end{proof}

We conclude with a result that makes sense only when $G$ is of finite type.  In this case, let $w_0$ denote the longest element of $W$.

\begin{prop}[Geometric Ringel duality]\label{prop:ringel}
The functor 
\[
\Radon^\mix := ({-}) \star^B \dmix_{w_0} : \Dmix_{(B)}(\cB,\E) \to \Dmix_{(B)}(\cB,\E)
\]
is an equivalence of triangulated categories, with quasi-inverse $({-}) \star^B \nmix_{w_0}$. Moreover, this functor satisfies 
\[
\Radon^\mix(\nmix_w) \cong \dmix_{ww_0},
\qquad
\Radon^\mix(\cT^\mix_w) \cong \cP^\mix_{ww_0}.
\]
\end{prop}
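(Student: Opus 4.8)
The plan is to read off the equivalence statement and the behavior on costandard objects directly from the convolution identities of \S\ref{ss:convolution}, and then to treat the claim about tilting objects, which is the real content, by the usual Ringel-duality argument. I will work uniformly in $\E$ where possible.

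First I would note that since $w_0=w_0^{-1}$, Proposition~\ref{prop:convo-facts}\eqref{it:convo-inv} gives $\dmix_{w_0}\star^B\nmix_{w_0}\cong\uuE_{\cB_e}\cong\nmix_{w_0}\star^B\dmix_{w_0}$; combined with associativity of $\star^B$ and the fact that $\uuE_{\cB_e}$ is the unit object (Lemma~\ref{lem:convo-ewi}), this shows $({-})\star^B\dmix_{w_0}$ and $({-})\star^B\nmix_{w_0}$ are mutually quasi-inverse, so $\Radon^\mix$ is an equivalence. Since $\ell(w_0)=\ell(w)+\ell(ww_0)$, Proposition~\ref{prop:convo-facts}\eqref{it:convo-length} gives $\dmix_{w_0}\cong\dmix_{w^{-1}}\star^B\dmix_{ww_0}$, whence
\[
\nmix_w\star^B\dmix_{w_0}\cong(\nmix_w\star^B\dmix_{w^{-1}})\star^B\dmix_{ww_0}\cong\uuE_{\cB_e}\star^B\dmix_{ww_0}\cong\dmix_{ww_0}
\]
by another use of Proposition~\ref{prop:convo-facts}\eqref{it:convo-inv}. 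Applying the quasi-inverse yields $(\Radon^\mix)^{-1}(\dmix_v)\cong\nmix_{vw_0}$ for every $v\in W$.

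Next I would show $\Radon^\mix$ sends a $\nabla$-filtered perverse sheaf to a $\Delta$-filtered perverse sheaf: applying $\Radon^\mix$ to the distinguished triangles of such a filtration, the subquotients $\nmix_v\la n\ra$ become $\dmix_{vw_0}\la n\ra$, which are perverse by Theorem~\ref{thm:a2}; arguing by induction on the length of the filtration, each stage sits in a distinguished triangle with perverse outer terms, so the connecting map lies in an $\Ext^1$-group in $\Perv^\mix_{(B)}(\cB,\E)$ and the triangle is the rotation of a short exact sequence, so the new stage is again perverse and $\Delta$-filtered, with $(\Radon^\mix(\cF):\dmix_{vw_0}\la n\ra)=(\cF:\nmix_v\la n\ra)$. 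Applied to $\cT^\mix_w$ this gives that $\Radon^\mix(\cT^\mix_w)$ is perverse, $\Delta$-filtered and indecomposable (an equivalence preserves indecomposability, and $\End$ in $\Dmix$ of a perverse object agrees with $\End$ in $\Perv^\mix$). Since $\cT^\mix_w$ is supported on $\overline{\cB_w}$ with $i_w^*\cT^\mix_w\cong\uuE_{\cB_w}$, its costandard filtration has $(\cT^\mix_w:\nmix_w\la 0\ra)=1$ and $(\cT^\mix_w:\nmix_v\la n\ra)\ne 0$ only for $v\le w$; translating through $w_0$, the standard filtration of $\Radon^\mix(\cT^\mix_w)$ has $(\Radon^\mix(\cT^\mix_w):\dmix_{ww_0}\la 0\ra)=1$ and all other factors $\dmix_v\la n\ra$ with $v\ge ww_0$.

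Finally I would prove projectivity and identify the object. By the standard criterion for graded quasihereditary categories (Appendix~\ref{sec:homological}), a $\Delta$-filtered object $\cM$ is projective once $\Ext^1(\cM,\dmix_v\la n\ra)=0$ for all $v,n$; and, using the equivalence, $(\Radon^\mix)^{-1}(\dmix_v)\cong\nmix_{vw_0}$, and the identification $\Ext^1\cong\Hom([1])$ for perverse objects,
\[
\Ext^1(\Radon^\mix(\cT^\mix_w),\dmix_v\la n\ra)\cong\Hom(\cT^\mix_w,\nmix_{vw_0}\la n\ra[1])\cong\Ext^1(\cT^\mix_w,\nmix_{vw_0}\la n\ra),
\]
which vanishes since $\cT^\mix_w$ is $\Delta$-filtered and $\Ext^{>0}(\dmix,\nmix)=0$ (Lemma~\ref{lem:vanishing-d-m}). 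Thus $\Radon^\mix(\cT^\mix_w)$ is an indecomposable projective; as by reciprocity $\cP^\mix_y$ is the indecomposable projective whose standard filtration contains $\dmix_y\la 0\ra$ exactly once and has all other factors $\dmix_u\la n\ra$ with $u\ge y$, the comparison with the previous paragraph forces $\Radon^\mix(\cT^\mix_w)\cong\cP^\mix_{ww_0}$. When $\E=\O$, where the appendix does not apply verbatim, I would run the same argument with the parallel structure theory of \S\ref{ss:qh-structure-O} (Propositions~\ref{prop:projective-O} and~\ref{prop:tilting-O}), or reduce to the field cases via $\F({-})$ and $\K({-})$, which commute with $\star^B$ and with all the objects in play. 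The step I expect to be the actual obstacle is this last one: correctly formulating and, in the $\O$-case, justifying the homological criterion that a $\Delta$-filtered object with no extensions into standard objects is projective; everything else follows mechanically from Proposition~\ref{prop:convo-facts}, Lemma~\ref{lem:convo-ewi}, Theorem~\ref{thm:a2} and Lemma~\ref{lem:vanishing-d-m}.
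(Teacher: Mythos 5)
Your argument is correct and is essentially the paper's own: the paper only gives a proof sketch, deducing the result from Proposition~\ref{prop:convo-facts} and deferring to the arguments of~\cite[Proposition~2.3]{bbm} and~\cite[Corollary~B.9]{ar}, which are exactly the steps you spell out (invertibility from Proposition~\ref{prop:convo-facts}\eqref{it:convo-inv}, $\nmix_w \mapsto \dmix_{ww_0}$ from length-additivity, and tilting~$\mapsto$~projective via a standard filtration plus $\Ext^1$-vanishing against standard objects, with the $\O$-case handled by the structure theory of \S\ref{ss:qh-structure-O} or reduction to field coefficients). The Ext-projectivity criterion you flag as the delicate point is indeed the standard one (the subcategory of $\Delta$-filtered objects is closed under kernels of epimorphisms, so Ext-projectives in it are genuinely projective), and it is precisely what the cited references use.
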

\begin{proof}[Proof sketch]
This result is a consequence of Proposition~\ref{prop:convo-facts}; see~\cite[Proposition~2.3]{bbm} and \cite[Corollary~B.9]{ar}.
\end{proof}

\begin{rmk}
As in~\cite[\S 2.2]{bbm}, if $\E = \K$ or $\F$ one can check that we have $\VV^\mix \circ \Radon^\mix \cong \VV^\mix \circ \la -\ell(w_0) \ra$, and deduce (as in~\cite[Corollary~2.4]{bbm}) that $\VV^\mix$ is also fully faithful on projective mixed perverse sheaves.
\end{rmk}

\section{Self-duality and formality}
\label{sec:formality}

\subsection{More notation}
\label{ss:more-notation}

In this section we continue the study of the case of flag varieties of Kac--Moody groups begun in Section~\ref{sec:kac-moody}, but we assume in addition that the group is of finite type. More precisely we let $G$ be a connected complex reductive algebraic group, $B \subset G$ be a Borel subgroup and $T \subset B$ be a maximal torus. The variety $\cB:=G/B$ is the flag variety of a (finite type) Kac--Moody group, so the results of Section~\ref{sec:kac-moody} apply to this situation. We will use the obvious analogues of the categories and objects defined in that section. (We will not work with equivariant derived categories anymore.)

Let us briefly review the conventions established in~\cite[\S2.1--2.2]{ar}.  
The following assumption will be in force in this section:
\[
\text{the characteristic of $\F$ is a good prime for $G$.}
\]
We have the abelian category $\Perv_{(B)}(\cB,\E) \subset \Db_{(B)}(\cB,\E)$ of (ordinary, not mixed) perverse sheaves.  This category contains the usual menagerie of objects:
\[
\Delta_w(\E), \quad \nabla_w(\E), \quad \IC_w(\E), \quad \cT_w(\E), \quad \cP_w(\E)
\]
for $w \in W$. We also have the parity sheaf $\cE_w(\E) \in \Parity_{(B)}(\cB,\E) \subset \Db_{(B)}(\cB,\E)$, again for $w \in W$.
Let $\Tilt_{(B)}(\cB,\E) \subset \Perv_{(B)}(\cB,\E)$ be the full additive subcategory consisting of tilting sheaves.  As in~\cite[Lemma~B.5]{ar}, the natural functors
\begin{equation}\label{eqn:old-dereq}
\Kb\Tilt_{(B)}(\cB,\E) \to \Db\Perv_{(B)}(\cB,\E) \to \Db_{(B)}(\cB,\E)
\end{equation}
are equivalences of categories.

\begin{rmk}
The categories in the preceding paragraph could have been introduced in the general setting of Section~\ref{sec:mixed-der}, along with the equivalences~\eqref{eqn:old-dereq}, but they would have served no purpose: as noted in~\S\ref{ss:Dmix}, for general $X$, we have no way to relate $\Dmix_\scS(X,\E)$ to $\Db_\scS(X,\E)$.
\end{rmk}

Finally, let $\Gv$ be the Langlands dual group, with Borel subgroup $\Bv \subset \Gv$, maximal torus $\Tv \subset \Bv$, and flag variety $\cBv = \Gv/\Bv$. We assume that the system of positive roots of $(\Gv,\Tv)$ determined by $\Bv$ coincides with the system of positive coroots of $(G,T)$ determined by $B$.  In general, h\'a\v cek accents will be used to denote objects attached to $\Gv$ rather than to $G$.  For instance, $\dv_w(\E)$ is a standard object in $\Perv_{(\Bv)}(\cBv,\E)$, and $\cTv^\mix_w(\E)$ is a tilting object in $\Perv^\mix_{(\Bv)}(\cBv,\E)$.

\subsection{The functor $\nu$}
\label{ss:nu}

The main result of~\cite{ar} asserts that there exists a functor
\begin{equation}
\label{eqn:functor-nu}
\nu: \Parity_{(B)}(\cB,\E) \to \Tilt_{(\Bv)}(\cBv,\E).
\end{equation}
such that $\nu(\cE_w) \cong \cTv_{w^{-1}}$, along with an isomorphism $\nu \simto \nu \circ \{1\}$ such that the map
\begin{equation}\label{eqn:degr}
\bigoplus_{n \in \Z}\Hom(\cF, \cG\{n\}) \to \Hom(\nu \cF, \nu \cG)
\end{equation}
is an isomorphism for all $\cF,\cG$ in $\Parity_{(B)}(\cB,\E)$. (More precisely, this statement is obtained by applying~\cite[Theorem~2.1]{ar} to the group $\Gv$. In \cite{ar} we give an explicit construction of such a functor, but in this paper $\nu$ can be any functor with the above properties.) This result has strong consequences in the mixed setting that are invisible in the non-mixed world.  In this section, we exploit those consequences to prove the self-duality theorem and formality theorem for flag varieties (see \S\S\ref{ss:intro-self-duality}--\ref{ss:intro-formality}).

The functor $\nu$ of~\eqref{eqn:functor-nu} gives rise to a functor $\Dmix_{(B)}(\cB,\E)=\Kb\Parity_{(B)}(\cB,\E) \to \Kb\Tilt_{(\Bv)}(\cBv,\E)$.  Composing the latter with the equivalence $\Kb\Tilt_{(\Bv)}(\cBv,\E) \cong \Db_{(\Bv)}(\cBv,\E)$ of~\eqref{eqn:old-dereq}, we obtain a functor
\begin{equation}\label{eqn:functor-nu-extended}
\nu: \Dmix_{(B)}(\cB,\E) \to \Db_{(\Bv)}(\cBv,\E).
\end{equation}
(This use of $\nu$ should not result in any ambiguity.)  The version of $\nu$ in~\eqref{eqn:functor-nu-extended} still enjoys the property~\eqref{eqn:degr}, where now $\cF$ and $\cG$ may be arbitrary objects of $\Dmix_{(B)}(\cB,\E)$.  In addition, we have the following isomorphisms:
\[
\nu \circ \{1\} \cong \nu, \qquad
\nu \circ [1] \cong \{1\} \circ \nu, \qquad
\nu \circ \la 1\ra \cong \{1\} \circ \nu, \qquad \nu(\cE^\mix_w) \cong \cTv_{w^{-1}}.
\]

\subsection{The self-duality theorem}
\label{ss:self-duality}

We begin by calculating the value of the functor in~\eqref{eqn:functor-nu-extended} on certain special classes of objects.

\begin{lem}
\label{lem:nu-standard}
We have $\nu(\dmix_w) \cong \dv_{w^{-1}}$ and $\nu(\nmix_w) \cong \nv_{w^{-1}}$.
\end{lem}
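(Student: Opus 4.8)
The plan is to prove both isomorphisms simultaneously, by induction on $\ell(w)$. Everything rests on three observations. First, $\nu$ respects supports: $\dmix_w$ and $\nmix_w$ lie in the triangulated subcategory of $\Dmix_{(B)}(\cB,\E)$ generated by the $\cE^\mix_{w'}\{n\}$ with $w'\le w$ — the objects supported on $\overline{\cB_w}$, by the recollement of Proposition~\ref{prop:recollement} and full faithfulness of closed push-forward — and $\nu(\cE^\mix_{w'}\{n\})\cong\cTv_{w'^{-1}}$ is supported on $\overline{\cBv_{w'^{-1}}}\subseteq\overline{\cBv_{w^{-1}}}$ since $w'\le w$ forces $w'^{-1}\le w^{-1}$ for the Bruhat order; hence $\nu(\dmix_w)$ and $\nu(\nmix_w)$ are supported on $\overline{\cBv_{w^{-1}}}$. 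Second, $\dv_{w^{-1}}=(i_{w^{-1}})_!\uuE_{\cBv_{w^{-1}}}$ is the unique object of $\Db_{(\Bv)}(\cBv,\E)$ with $(i_{w^{-1}})^*\dv_{w^{-1}}\cong\uuE_{\cBv_{w^{-1}}}$ and $(i_v)^*\dv_{w^{-1}}=0$ for all $v\ne w^{-1}$, and dually $\nv_{w^{-1}}$ is characterized by the corresponding vanishing of $!$-costalks. Third, there are canonical morphisms $\alpha\colon\dmix_w\to\cE^\mix_w$ and $\beta\colon\cE^\mix_w\to\nmix_w$, adjoint to the identifications $(i_w)^*\cE^\mix_w\cong\uuE_{\cB_w}$, both restricting to isomorphisms on $\cB_w$, so that $\operatorname{cone}(\alpha)$ and $\operatorname{cone}(\beta)$ are supported on $\overline{\cB_w}\smallsetminus\cB_w$; applying the first observation, $\nu(\operatorname{cone}(\alpha))$ and $\nu(\operatorname{cone}(\beta))$ are supported on $\bigcup_{v<w}\overline{\cBv_{v^{-1}}}$, and in particular have, respectively, vanishing stalk and vanishing costalk on $\cBv_{w^{-1}}$.

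I would then argue as follows. From $\nu(\alpha)$ we get $(i_{w^{-1}})^*\nu(\dmix_w)\cong(i_{w^{-1}})^*\cTv_{w^{-1}}\cong\uuE_{\cBv_{w^{-1}}}$, the last step because a tilting object restricts to $\uuE$ on its open stratum. For a stratum $\cBv_{v^{-1}}$ with $v<w$ — these and $\cBv_{w^{-1}}$ being all the strata meeting the support — the stalk $(i_{v^{-1}})^*\nu(\dmix_w)$ is determined by $\bigoplus_k\Hom(\nu(\dmix_w),\nv_{v^{-1}}[k])$, and the inductive hypothesis $\nv_{v^{-1}}\cong\nu(\nmix_v)$ (valid since $\ell(v)<\ell(w)$) together with~\eqref{eqn:degr} identifies this with $\bigoplus_{k,n}\Hom_{\Dmix_{(B)}(\cB,\E)}(\dmix_w,\nmix_v\{n\}[k])$, which vanishes by Lemma~\ref{lem:vanishing-d-m} as $v\ne w$. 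Hence $\nu(\dmix_w)\cong\dv_{w^{-1}}$ by the characterization in the second observation. (For $\E=\O$ one notes in addition that $(i_{v^{-1}})^*\nu(\dmix_w)$ is a bounded complex of finitely generated $\O$-modules whose $\RHom(-,\O)$ vanishes, hence is $0$.) The isomorphism $\nu(\nmix_w)\cong\nv_{w^{-1}}$ follows by the mirror argument: $\nu(\beta)$ gives $(i_{w^{-1}})^!\nu(\nmix_w)\cong(i_{w^{-1}})^!\cTv_{w^{-1}}\cong\uuE_{\cBv_{w^{-1}}}$, while for $v<w$ the costalk $(i_{v^{-1}})^!\nu(\nmix_w)$ is determined by $\bigoplus_k\Hom(\nu(\dmix_v),\nu(\nmix_w)[k])\cong\bigoplus_{k,n}\Hom(\dmix_v,\nmix_w\{n\}[k])=0$, using~\eqref{eqn:degr}, Lemma~\ref{lem:vanishing-d-m}, and the inductive hypothesis $\dv_{v^{-1}}\cong\nu(\dmix_v)$. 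The base case $w=e$ is contained in this scheme ($\dmix_e=\nmix_e=\cE^\mix_e$, $\cBv_e$ a point, $\alpha=\beta=\id$), giving $\nu(\dmix_e)\cong\cTv_e=\dv_e=\nv_e$.

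The step I expect to require the most care is the support observation — making precise that $\nu$ carries the subcategory of objects supported on $\overline{\cB_w}$ into the subcategory of objects supported on $\overline{\cBv_{w^{-1}}}$, which relies on the description of these subcategories as homotopy categories of parity (respectively tilting) objects on the relevant closed subvarieties, on the full faithfulness of closed push-forward, and on the monotonicity of $w\mapsto w^{-1}$ for the Bruhat order — together with the bookkeeping that keeps the simultaneous induction non-circular: the stalk computation for $\nu(\dmix_w)$ invokes only the costandard case in strictly smaller length (it is phrased via $\Hom(-,\nv_{v^{-1}})$), and symmetrically for the costalk computation for $\nu(\nmix_w)$. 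Constructing $\alpha$ and $\beta$, and checking that $\cTv_{w^{-1}}$ restricts to $\uuE_{\cBv_{w^{-1}}}$ on its open stratum and has $\uuE_{\cBv_{w^{-1}}}$ as $!$-costalk there, are routine.
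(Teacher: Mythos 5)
Your proof is correct, and it follows the same overall strategy as the paper: induction along the Bruhat order, comparison of $\nu(\dmix_w)$ with $\cTv_{w^{-1}} \cong \nu(\cE^\mix_w)$ via the canonical map $\dmix_w \to \cE^\mix_w$ whose cone lives on the boundary, and the isomorphism \eqref{eqn:degr} to transport $\Hom$-vanishing. The differences are in the end-game and the bookkeeping. The paper identifies the subcategory of objects supported on $\overline{\cB_w}$ as the one generated by the $\dmix_v\{n\}$, $v \le w$, so its support statement $\nu(\Dmix_{(B)}(Z_w,\E)) \subset \Db_{(\Bv)}({\check Z}_{w^{-1}},\E)$ is itself part of the induction; it then concludes by the uniqueness (\cite[Corollaire~1.1.10]{bbd}-style) of the triangle $\dv_{w^{-1}} \to \cTv_{w^{-1}} \to (\text{boundary})$ characterized by the vanishing of $\Hom(\nu(\dmix_w), \dv_{v^{-1}}[m])$, so the induction for the standard case only ever uses the standard case at smaller elements, and the costandard case is handled separately. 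You instead get the support containment unconditionally, by generating the relevant subcategories by parity objects $\cE^\mix_{w'}\{n\}$ and using $\nu(\cE^\mix_{w'}) \cong \cTv_{w'{}^{-1}}$ together with monotonicity of inversion for the Bruhat order (a small simplification), and you conclude by the stalkwise characterization of $\dv_{w^{-1}}$, computing stalks through $\Hom$'s into $\nv_{v^{-1}} \cong \nu(\nmix_v)$ and Lemma~\ref{lem:vanishing-d-m}; this forces the simultaneous induction on both statements (standard at $w$ uses costandard at shorter $v$, and vice versa), which you correctly observe is non-circular, and it requires the extra remark over $\E = \O$ that vanishing of $\RHom({-},\O)$ on a bounded complex of finitely generated $\O$-modules forces the complex to vanish --- a point you supply and which is exactly where the field case would otherwise be used silently. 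Both routes are sound; the paper's avoids stalk-by-stalk analysis and decouples the two isomorphisms, while yours trades that for an induction-free support statement and a very explicit characterization of $\dv_{w^{-1}}$ and $\nv_{w^{-1}}$.
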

\begin{proof}
We prove only the first isomorphism; the second one can be treated similarly.
We proceed by induction on $w$ with respect to the Bruhat order.  When $w = e$, we have $\dmix_e \cong \cE_e^\mix$ and $\dv_e = \cTv_e$, so the result is clear.

Now suppose $w > e$, and that the result is known for all $v < w$.  Let $Z_w = \overline{\cB_w} \smallsetminus \cB_w$, and let ${\check Z}_{w^{-1}} = \overline{\cBv_{w^{-1}}} \smallsetminus \cBv_{w^{-1}}$.  Recall that $\Dmix_{(B)}(Z_w,\E)$ is identified with the full triangulated subcategory of $\Dmix_{(B)}(\cB,\E)$ generated by $\{ \dmix_v\{n\} \mid v < w\}$, and similarly for $\Db_{(\Bv)}({\check Z}_{w^{-1}},\E) \subset \Db_{(\Bv)}(\cBv,\E)$.  The inductive assumption implies that 
\begin{equation}
\label{eqn:nu-Z_w}
\nu(\Dmix_{(B)}(Z_w,\E)) \subset \Db_{(\Bv)}({\check Z}_{w^{-1}},\E).
\end{equation}

Let $N$ be the cone of the canonical map $\dmix_w \to \cE_w^\mix$.  Recall that $N$ is in $\Dmix_{(B)}(Z_w,\E)$, and that $\Hom(\dmix_w, \dmix_v\{n\}[m]) = 0$ for all $v < w$ and all $n, m \in \Z$.  Now consider the distinguished triangle
\[
\nu(\dmix_w) \to \cTv_{w^{-1}} \to \nu(N) \xrightarrow{\{1\}}.
\]
According to \eqref{eqn:nu-Z_w}, we have $\nu(N) \in \Db_{(\Bv)}({\check Z}_{w^{-1}}, \E)$.  Moreover, by~\eqref{eqn:degr} and induction, we have that $\Hom(\nu(\dmix_w), \dv_{v^{-1}}[m]) = 0$ for all $v < w$ and all $m \in \Z$.  These two properties uniquely characterize the distinguished triangle whose first arrow is $\dv_{w^{-1}} \to \cTv_{w^{-1}}$, so we must have $\nu(\dmix_w) \cong \dv_{w^{-1}}$.
\end{proof}

\begin{prop}
\label{prop:nu-tilt-parity}
The functor $\nu$ induces an equivalence of categories 
\[
\bar\nu : \Tilt^\mix_{(B)}(\cB,\E) \simto \Parity_{(\Bv)}(\cBv,\E)
\]
such that $\bar\nu(\cT^\mix_w) \cong \cEv_{w^{-1}}$ and $\bar\nu \circ \la 1\ra \cong \{1\} \circ \bar\nu$.
\end{prop}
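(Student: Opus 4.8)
The plan is to establish three properties of the restriction $\bar\nu$ of the functor $\nu$ of~\eqref{eqn:functor-nu-extended} to $\Tilt^\mix_{(B)}(\cB,\E)$: that it is fully faithful, that $\nu(\cT^\mix_w)\cong\cEv_{w^{-1}}$ for all $w$, and that it is essentially surjective onto $\Parity_{(\Bv)}(\cBv,\E)$. The compatibility $\bar\nu\circ\la 1\ra\cong\{1\}\circ\bar\nu$ is then inherited from the isomorphism $\nu\circ\la 1\ra\cong\{1\}\circ\nu$ recorded in~\S\ref{ss:nu}.

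For full faithfulness I would use~\eqref{eqn:degr}, which identifies $\Hom_{\Db_{(\Bv)}(\cBv,\E)}(\nu\cT,\nu\cT')$ with $\bigoplus_{n\in\Z}\Hom_{\Dmix_{(B)}(\cB,\E)}(\cT,\cT'\{n\})$. When $\cT$ and $\cT'$ are tilting, write $\cT'\{n\}\cong\cT'\la -n\ra[n]$; as $\cT'\la -n\ra$ is again tilting, the equivalence $\Kb\Tilt^\mix_{(B)}(\cB,\E)\simto\Dmix_{(B)}(\cB,\E)$ of Lemma~\ref{lem:perv-dereq} shows that $\Hom_{\Dmix_{(B)}(\cB,\E)}(\cT,\cT'\la -n\ra[n])=0$ for $n\neq 0$ (in the bounded homotopy category of an additive category there are no nonzero maps between degree-zero objects in nonzero cohomological degree). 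Hence the sum collapses to $\Hom_{\Dmix_{(B)}(\cB,\E)}(\cT,\cT')=\Hom_{\Perv^\mix_{(B)}(\cB,\E)}(\cT,\cT')$, so $\bar\nu$ is fully faithful; in particular, since $\cT^\mix_w$ is indecomposable in the Krull--Schmidt category $\Tilt^\mix_{(B)}(\cB,\E)$, the object $\nu(\cT^\mix_w)$ is indecomposable in $\Db_{(\Bv)}(\cBv,\E)$.

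The main step is the identification of $\nu(\cT^\mix_w)$. Fixing a standard filtration of $\cT^\mix_w$ in $\Perv^\mix_{(B)}(\cB,\E)$ with subquotients $\dmix_v\la p\ra$, the triangulated functor $\nu$ exhibits $\nu(\cT^\mix_w)$ as an iterated cone (in the notation of~\cite[\S1.3.9]{bbd}) of the objects $\nu(\dmix_v\la p\ra)\cong\dv_{v^{-1}}\{p\}$ supplied by Lemma~\ref{lem:nu-standard} together with $\nu\circ\la 1\ra\cong\{1\}\circ\nu$. Writing $\check{\imath}_z\colon\cBv_z\hookrightarrow\cBv$ for the inclusion of a stratum, the functor $\check{\imath}_z^*$ kills $\dv_{v^{-1}}\{p\}$ unless $z=v^{-1}$ (the standard sheaf $\dv_{v^{-1}}$ being a $!$-extension from $\cBv_{v^{-1}}$), in which case $\check{\imath}_z^*\dv_{v^{-1}}\{p\}\cong\uuE_{\cBv_z}\{p\}$. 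Applying $\check{\imath}_z^*$ to the iterated cone, $\check{\imath}_z^*\nu(\cT^\mix_w)$ is thus an iterated extension of the sheaves $\uuE_{\cBv_z}\{p\}$ over those $p$ with $(\cT^\mix_w:\dmix_{z^{-1}}\la p\ra)\neq 0$; by Lemma~\ref{lem:tilt-even} all such $p$ satisfy $p\equiv\ell(w)-\ell(z)\pmod 2$, and since $\Hom_{\Db_{(\Bv)}(\cBv,\E)}(\uuE_{\cBv_z}\{p'\},\uuE_{\cBv_z}\{p\}[1])\cong\mathbb{H}^{p-p'+1}(\cBv_z,\E)$ vanishes whenever $p\equiv p'\pmod 2$ (as $\cBv_z$ is an affine space), this extension splits. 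Dually --- using that $\check{\imath}_z^!\nv_{v^{-1}}\{p\}$ vanishes for $z\neq v^{-1}$ and equals $\uuE_{\cBv_z}\{p\}$ for $z=v^{-1}$, applied to the cone description of $\nu(\cT^\mix_w)$ coming from a costandard filtration --- one finds that $\check{\imath}_z^!\nu(\cT^\mix_w)$ is likewise a direct sum of copies of $\uuE_{\cBv_z}\{p\}$ with $p\equiv\ell(w)-\ell(z)\pmod 2$. Thus $\nu(\cT^\mix_w)$ is a parity complex. Moreover $\check{\imath}_z^*\nu(\cT^\mix_w)=0$ unless $z\le w^{-1}$ (since $\cT^\mix_w$ is supported on $\overline{\cB_w}$), and $\check{\imath}_{w^{-1}}^*\nu(\cT^\mix_w)\cong\uuE_{\cBv_{w^{-1}}}$ because $\dmix_w$ occurs in $\cT^\mix_w$ with multiplicity one and no Tate twist. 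By the uniqueness of indecomposable parity complexes~\cite[Theorem~2.12]{jmw}, these properties force $\nu(\cT^\mix_w)\cong\cEv_{w^{-1}}$.

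Finally I would assemble the statement: every object of $\Tilt^\mix_{(B)}(\cB,\E)$ is a direct sum of objects $\cT^\mix_w\la n\ra$, and $\nu(\cT^\mix_w\la n\ra)\cong\cEv_{w^{-1}}\{n\}$, so $\bar\nu$ takes values in $\Parity_{(\Bv)}(\cBv,\E)$; being fully faithful with a source that is idempotent-complete and with essential image containing every $\cEv_z\{m\}$, its essential image is closed under direct sums and direct summands and therefore equals all of $\Parity_{(\Bv)}(\cBv,\E)$. Hence $\bar\nu\colon\Tilt^\mix_{(B)}(\cB,\E)\simto\Parity_{(\Bv)}(\cBv,\E)$ is an equivalence with the asserted properties. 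The main obstacle is the parity bookkeeping in the central step: one must check carefully that the cone description of $\nu(\cT^\mix_w)$ inherited from a standard (resp.\ costandard) filtration interacts with stalk (resp.\ costalk) restriction as claimed, that Lemma~\ref{lem:tilt-even} supplies precisely the parity needed both to split the resulting extensions and to match the normalization of $\cEv_{w^{-1}}$, and that the whole argument runs uniformly over $\E\in\{\K,\O,\F\}$ --- the case $\E=\O$ relying on Proposition~\ref{prop:tilting-O} and~\S\ref{ss:qh-structure-O} for the existence of standard and costandard filtrations of tilting objects.
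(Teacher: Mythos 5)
Your proposal is correct and follows essentially the same route as the paper: full faithfulness via~\eqref{eqn:degr} together with the vanishing of $\Hom$'s between Tate twists of tilting objects in nonzero cohomological degrees, Lemma~\ref{lem:nu-standard} and Lemma~\ref{lem:tilt-even} to control $\nu(\cT^\mix_w)$, indecomposability plus the uniqueness statement of~\cite[Theorem~2.12]{jmw} to identify it with $\cEv_{w^{-1}}$, and essential surjectivity from the classification of objects of $\Parity_{(\Bv)}(\cBv,\E)$. The only (cosmetic) difference is that you verify parity by pushing a standard (resp.\ costandard) filtration through $\nu$ and splitting the resulting extensions of stalks (resp.\ costalks), whereas the paper encodes the same stalk/costalk information in the $\Hom$-group computations~\eqref{eqn:nu-tilt-stalk} and~\eqref{eqn:nu-tilt-costalk}.
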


\begin{proof}
Let $w \in W$. 
Let us show that
\begin{equation}\label{eqn:nu-tilt-stalk}
\Hom \bigl( \nu(\cT^\mix_w), \nv_v \{n\} \bigr) = 
\begin{cases}
0 & \text{if} \ n \not\equiv \ell(w)-\ell(v) \pmod 2;\\
\text{a free $\E$-module} & \text{if} \ n \equiv \ell(w)-\ell(v) \pmod 2.
\end{cases}
\end{equation}
Using \eqref{eqn:degr} and Lemma \ref{lem:nu-standard}, this would follow if we knew that 
\[
\Hom(\cT^\mix_w, \nmix_{v^{-1}}\{m\} [n]) = 
\begin{cases}
0 & \text{if} \ n \not\equiv \ell(w)-\ell(v) \pmod 2;\\
\text{a free $\E$-module} & \text{if} \ n \equiv \ell(w)-\ell(v) \pmod 2.
\end{cases}
\]
Now $\nmix_{v^{-1}}\{m\}[n] = \nmix_{v^{-1}} \la -m\ra[n+m]$, and $\Hom(\cT^\mix_w, \nmix_{v^{-1}}\la -m\ra [n+m])$ clearly vanishes if $n+m \ne 0$.  When $n+m = 0$, we have
\[
\Hom(\cT^\mix_w, \nmix_{v^{-1}}\la n\ra) \cong \E^{\oplus ( \cT^\mix_w : \dmix_{v^{-1}}\la n\ra )},
\]
and this is zero unless $n \equiv \ell(w)-\ell(v) \pmod 2$ by Lemma~\ref{lem:tilt-even}. One can prove similarly that
\begin{equation}\label{eqn:nu-tilt-costalk}
\Hom \bigl( \dv_v\{n\}, \nu(\cT^\mix_w) \bigr) = 
\begin{cases}
0 & \text{if} \ n \not\equiv \ell(w)-\ell(v) \pmod 2;\\
\text{a free $\E$-module} & \text{if} \ n \equiv \ell(w)-\ell(v) \pmod 2.
\end{cases}
\end{equation}
Together,~\eqref{eqn:nu-tilt-stalk} and~\eqref{eqn:nu-tilt-costalk} say that the stalks and costalks of $\nu(\cT^\mix_w)$ are concentrated in degrees of the same parity as $\ell(w)$, and that their cohomology sheaves are free local systems. So $\nu(\cT^\mix_w)$ is a parity complex.

Thus, $\nu$ restricts to a functor $\bar\nu: \Tilt^\mix_{(B)}(\cB,\E) \to \Parity_{(\Bv)}(\cBv,\E)$.  Consider the isomorphism~\eqref{eqn:degr} for $\cF, \cG \in \Tilt^\mix_{(B)}(\cB,\E)$.  On the left-hand side of~\eqref{eqn:degr}, all summands with $n \ne 0$ vanish, so $\bar\nu$ is fully faithful.
As a consequence, the object $\bar\nu(\cT^\mix_w)$ is indecomposable. Since the standard objects appearing in a standard filtration of $\cT^\mix_w$ are supported on $\overline{\cB_w}$, with $\dmix_w$ appearing with multiplicity one, using Lemma \ref{lem:nu-standard} we deduce that $\bar\nu(\cT^\mix_w) \cong \cEv_{w^{-1}}$.

Finally, since any object of $\Parity_{(\Bv)}(\cBv,\E)$ is a direct sum of shifts of objects $\cEv_w$, we deduce that $\bar\nu$ is essentially surjective, and hence an equivalence.
\end{proof} 

The first main result of this section is the following.  Recall that the assumption that $\ell$ is good for $G$ remains in effect.

\begin{thm}[Self-duality]\label{thm:self-duality}
There exists an equivalence of triangulated categories
\[
\kappa: \Dmix_{(B)}(\cB,\E) \simto \Dmix_{(\Bv)}(\cBv,\E)
\]
satisfying
$\kappa \circ [1] \cong [1] \circ \kappa$,  $\kappa \circ \la 1 \ra \cong \{1\} \circ \kappa$, $\kappa \circ \{1\} \cong \la 1 \ra \circ \kappa$, and
\[
\kappa(\dmix_w) \cong \dv^\mix_{w^{-1}}, \quad
\kappa(\nmix_w) \cong \nv^\mix_{w^{-1}}, \quad
\kappa(\cT^\mix_w) \cong \cEv^\mix_{w^{-1}}, \quad
\kappa(\cE^\mix_w) \cong \cTv^\mix_{w^{-1}}.
\]
\end{thm}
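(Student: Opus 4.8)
The plan is to build $\kappa$ directly from the equivalence $\bar\nu$ of Proposition~\ref{prop:nu-tilt-parity}. First I would use the equivalence $\Dmix_{(B)}(\cB,\E)\simto\Kb\Tilt^\mix_{(B)}(\cB,\E)$ of Lemma~\ref{lem:perv-dereq} together with the defining equality $\Dmix_{(\Bv)}(\cBv,\E)=\Kb\Parity_{(\Bv)}(\cBv,\E)$ to set
\[
\kappa\ :\ \Dmix_{(B)}(\cB,\E)\ \xleftarrow{\ \sim\ }\ \Kb\Tilt^\mix_{(B)}(\cB,\E)\ \xrightarrow{\ \Kb\bar\nu\ }\ \Kb\Parity_{(\Bv)}(\cBv,\E)\ =\ \Dmix_{(\Bv)}(\cBv,\E).
\]
This is a composite of triangulated equivalences, hence a triangulated equivalence, so $\kappa\circ[1]\cong[1]\circ\kappa$ is automatic. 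For the internal shifts I would note that $\la1\ra$ is a t-exact autoequivalence of $\Perv^\mix_{(B)}(\cB,\E)$ permuting standard and costandard objects, hence restricting to $\Tilt^\mix_{(B)}(\cB,\E)$, and that the equivalence of Lemma~\ref{lem:perv-dereq} intertwines it with $\la1\ra$ on $\Dmix_{(B)}(\cB,\E)$; likewise $\{1\}$ on $\Parity_{(\Bv)}(\cBv,\E)$ is $\{1\}$ on $\Dmix_{(\Bv)}(\cBv,\E)$. With $\bar\nu\circ\la1\ra\cong\{1\}\circ\bar\nu$ this gives $\kappa\circ\la1\ra\cong\{1\}\circ\kappa$, and then $\kappa\circ\{1\}\cong\la1\ra\circ\kappa$ follows from $\{1\}=\la-1\ra[1]$ on the source and $\la1\ra=\{-1\}[1]$ on the target. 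Finally, under the first equivalence $\cT^\mix_w$ is the complex concentrated in degree $0$ equal to $\cT^\mix_w$, and $\Kb\bar\nu$ carries it to the degree-$0$ complex $\bar\nu(\cT^\mix_w)\cong\cEv_{w^{-1}}$, that is, to $\cEv^\mix_{w^{-1}}$; so $\kappa(\cT^\mix_w)\cong\cEv^\mix_{w^{-1}}$.

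Next I would prove $\kappa(\dmix_w)\cong\dv^\mix_{w^{-1}}$ and $\kappa(\nmix_w)\cong\nv^\mix_{w^{-1}}$ by induction on $\ell(w)$; the case $w=e$ is trivial, since there $\dmix_e\cong\nmix_e\cong\cT^\mix_e$ and $\dv^\mix_e\cong\nv^\mix_e\cong\cEv^\mix_e$. For the inductive step, work inside $\overline{\cB_w}$ and use the recollement triangle $\dmix_w\to\cT^\mix_w\to C\xrightarrow{[1]}$ attached to the open stratum $\cB_w\subset\overline{\cB_w}$ with closed complement $Z_w=\overline{\cB_w}\smallsetminus\cB_w$, so that $j^*\cT^\mix_w\cong\uuE_{\cB_w}$, $\dmix_w\cong j_!j^*\cT^\mix_w$, and $C\cong i_*i^*\cT^\mix_w$ is supported on $Z_w$. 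Since $\Dmix_{(B)}(Z_w,\E)$ is the triangulated subcategory generated by the $\dmix_v\{n\}$ with $v<w$, the inductive hypothesis and $\kappa\circ\{1\}\cong\la1\ra\circ\kappa$ show that $\kappa$ maps it onto the triangulated subcategory $\Dmix_{(\Bv)}({\check Z}_{w^{-1}},\E)$ generated by the $\dv^\mix_u\{n\}$ with $u<w^{-1}$, where ${\check Z}_{w^{-1}}=\overline{\cBv_{w^{-1}}}\smallsetminus\cBv_{w^{-1}}$ (using $v\le w\iff v^{-1}\le w^{-1}$ for the Bruhat order). Applying $\kappa$ and $\kappa(\cT^\mix_w)\cong\cEv^\mix_{w^{-1}}$ gives a triangle $\kappa(\dmix_w)\to\cEv^\mix_{w^{-1}}\to\kappa(C)\xrightarrow{[1]}$ in $\Dmix_{(\Bv)}(\overline{\cBv_{w^{-1}}},\E)$; restricting to the open stratum $\cBv_{w^{-1}}$ kills $\kappa(C)$ and yields $j^*\kappa(\dmix_w)\cong\uuE_{\cBv_{w^{-1}}}$. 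On the other hand, for $A\in\Dmix_{(\Bv)}({\check Z}_{w^{-1}},\E)$ one has $\Hom(\kappa(\dmix_w),i_*A[m])\cong\Hom(\dmix_w,\kappa^{-1}(i_*A)[m])=0$ for all $m$, because $\kappa^{-1}(i_*A)$ is supported on $Z_w$ while $\dmix_w\cong j_!j^*\dmix_w$ and $j^*i_*=0$; hence $i^*\kappa(\dmix_w)=0$ and therefore $\kappa(\dmix_w)\cong j_!j^*\kappa(\dmix_w)\cong\dv^\mix_{w^{-1}}$. The costandard case is the mirror argument, using the triangle $K\to\cT^\mix_w\to\nmix_w\xrightarrow{[1]}$ with $K\cong i_*i^!\cT^\mix_w$ and deducing $i^!\kappa(\nmix_w)=0$.

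For $\kappa(\cE^\mix_w)\cong\cTv^\mix_{w^{-1}}$: because $\cE_w$ is a parity complex, all of its $*$-restrictions $i_v^*\cE_w$ and $!$-restrictions $i_v^!\cE_w$ to strata are direct sums of shifted constant sheaves (with a single untwisted copy of $\uuE_{\cB_w}$ when $v=w$); by dévissage along strata using Proposition~\ref{prop:recollement}, $\cE^\mix_w$ therefore admits a filtration with subquotients of the form $\dmix_v\{m\}$, and another with subquotients $\nmix_v\{m\}$, in each of which the terms indexed by $w$ appear exactly once and untwisted. Applying $\kappa$, and using $\kappa(\dmix_v\{m\})\cong\dv^\mix_{v^{-1}}\la m\ra$, $\kappa(\nmix_v\{m\})\cong\nv^\mix_{v^{-1}}\la m\ra$, we see that $\kappa(\cE^\mix_w)$ carries a standard filtration and a costandard filtration by the perverse objects $\dv^\mix_u\la m\ra$ and $\nv^\mix_u\la m\ra$ (perverse by Theorem~\ref{thm:a2}); hence $\kappa(\cE^\mix_w)$ is a tilting object of $\Perv^\mix_{(\Bv)}(\cBv,\E)$. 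It is indecomposable since $\cE^\mix_w$ is; it is supported on $\overline{\cBv_{w^{-1}}}$ (as $\kappa$ maps $\Dmix_{(B)}(\overline{\cB_w},\E)$ onto $\Dmix_{(\Bv)}(\overline{\cBv_{w^{-1}}},\E)$, by the previous paragraph); and $\dv^\mix_{w^{-1}}$ occurs exactly once, untwisted, in its standard filtration. By the classification of indecomposable tilting objects (Proposition~\ref{prop:tilting-O} when $\E=\O$, Proposition~\ref{prop:qh-structure} together with Appendix~\ref{sec:homological} otherwise), these properties force $\kappa(\cE^\mix_w)\cong\cTv^\mix_{w^{-1}}$.

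I expect the main obstacle to be the inductive identification of $\kappa$ on standard and costandard objects: one must keep careful track of the fact that $\kappa$ matches the recollement stratifications of $\cB$ and $\cBv$—ultimately because $v\le w\iff v^{-1}\le w^{-1}$—and verify the $\Hom$-vanishing of $\dmix_w$ against everything supported on $Z_w$. A secondary point requiring care is the assertion that $\cE^\mix_w$ admits standard and costandard filtrations in $\Dmix_{(B)}(\cB,\E)$, which rests on the stalk and costalk description of Remark~\ref{rmk:restriction-X_s}; the delicate feature here is that $\kappa$, by swapping $\{1\}$ and $\la1\ra$, converts the shifted (parity-style) filtrations of $\cE^\mix_w$ into genuine perverse $\Delta$- and $\nabla$-filtrations, which is exactly what makes $\kappa(\cE^\mix_w)$ land in the perverse heart even though $\cE^\mix_w$ need not be perverse.
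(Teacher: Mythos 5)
Your construction of $\kappa$ and your treatment of the tilting, standard, and costandard objects follow the paper quite closely: the paper defines $\kappa$ by exactly the same composition (Lemma~\ref{lem:perv-dereq} followed by Proposition~\ref{prop:nu-tilt-parity}), and identifies $\kappa(\dmix_w)$, $\kappa(\nmix_w)$ by an induction modelled on Lemma~\ref{lem:nu-standard}; your recollement-based variant (showing $i^*\kappa(\dmix_w)=0$ directly rather than invoking uniqueness of the triangle $\dv^\mix_{w^{-1}}\to\cEv^\mix_{w^{-1}}\to(\text{cone})$) is essentially the same argument in different clothing. Where you genuinely diverge is the last isomorphism $\kappa(\cE^\mix_w)\cong\cTv^\mix_{w^{-1}}$: the paper proves the theorem simultaneously with Proposition~\ref{prop:degr}, choosing a second functor $\nuv$ for the dual group, forming $\mu=\nuv\circ\kappa$, showing $\mu(\cE^\mix_w)\cong\cE_w\cong\mu(\kappa^{-1}\cTv^\mix_{w^{-1}})$, and then transferring an isomorphism back through $\mu$ by a locality argument on the local ring $\End(\cE_w)$; you instead argue intrinsically that $\kappa(\cE^\mix_w)$ is an indecomposable tilting object of $\Perv^\mix_{(\Bv)}(\cBv,\E)$, by converting the stalk/costalk dévissage of the parity complex into $\Delta$- and $\nabla$-filtrations (perverse by Theorem~\ref{thm:a2}, since $\kappa$ trades $\{1\}$ for $\la1\ra$) and then invoking the classification of indecomposable tiltings. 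Both routes work. Yours is more self-contained: it needs only $\nu$, condition {\bf (A2)}, and the tilting classification, and in particular does not require $\nuv$ or any degrading functor; the paper's route is organized the way it is because it produces Proposition~\ref{prop:degr} (the functor $\mu$ and its t-exactness), which is needed anyway for the formality theorem, so the extra machinery is not wasted there. Two small points you should make explicit: for $\E=\O$ the dévissage requires that the stalks and costalks of $\cE_w(\O)$ be \emph{free} $\O$-modules (true, and used by the paper in the proof of Corollary~\ref{cor:parity-tilting}, but not contained in Remark~\ref{rmk:restriction-X_s} alone); and in the $\O$-case the final identification should go through Proposition~\ref{prop:tilting-O} together with the equality of $(\cTv^\mix_{w^{-1}}:\nv^\mix_u\la n\ra)$-multiplicities over $\O$ and $\F$, exactly as the paper does in Corollary~\ref{cor:parity-tilting}, to pin down both the label $w^{-1}$ and the absence of a Tate twist.
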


We will prove this theorem simultaneously with Proposition~\ref{prop:degr} below. To state the latter result we choose a functor $\nuv : \Parity_{(\Bv)}(\cBv,\E) \to \Tilt_{(B)}(\cB,\E)$ which satisfies the same properties as the functor $\nu$ of
\eqref{eqn:functor-nu}, but with the roles of $G$ and $\Gv$ reversed. We denote by the same symbol the ``extended'' functor $\Dmix_{(\Bv)}(\cBv,\E) \to \Db_{(B)}(\cB,\E)$ defined as in~\eqref{eqn:functor-nu-extended}.

\begin{prop}\label{prop:degr}
Let $\mu: \Dmix_{(B)}(\cB,\E) \to \Db_{(B)}(\cB,\E)$ be the functor given by $\mu = \nuv \circ \kappa$, where $\kappa$ is as in Theorem~{\rm \ref{thm:self-duality}}.  There is an isomorphism $\mu \circ \la 1\ra \cong \mu$ such that the induced map
\begin{equation}\label{eqn:mu-degr}
\bigoplus_{n \in \Z} \Hom(\cF,\cG\la n\ra) \to \Hom(\mu \cF, \mu \cG)
\end{equation}
is an isomorphism for all $\cF,\cG \in \Dmix_{(B)}(\cB,\E)$.  Moreover, $\mu$ is t-exact and satisfies
\begin{gather*}
\mu(\dmix_w) \cong \Delta_w, \quad \mu(\nmix_w) \cong \nabla_w, \quad \mu(\IC^\mix_w) \cong \IC_w, \\
\mu(\cT^\mix_w) \cong \cT_w, \quad \mu(\cE^\mix_w) \cong \cE_w.
\end{gather*}
\end{prop}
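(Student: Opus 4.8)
The plan is to derive all the asserted properties of $\mu = \nuv \circ \kappa$ by composing the properties of its two factors: the equivalence $\kappa$ furnished by Theorem~\ref{thm:self-duality} (with all its listed commutation and object-value isomorphisms), and the extended degrading functor $\nuv$, for which I will freely use the $G \leftrightarrow \Gv$ analogues of Lemma~\ref{lem:nu-standard} and Proposition~\ref{prop:nu-tilt-parity} together with its defining property $\nuv(\cEv_w) \cong \cT_{w^{-1}}$. First I would build the isomorphism $\mu \circ \la 1\ra \cong \mu$ by composing $\kappa \circ \la 1\ra \cong \{1\}\circ\kappa$ with $\nuv \circ \{1\} \cong \nuv$. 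For the degrading statement~\eqref{eqn:mu-degr}, I would factor the natural map as
\[
\bigoplus_{n}\Hom(\cF,\cG\la n\ra)\;\xrightarrow{\ \kappa\ }\;\bigoplus_{n}\Hom(\kappa\cF,(\kappa\cG)\{n\})\;\xrightarrow{\ \nuv\ }\;\Hom(\nuv\kappa\cF,\nuv\kappa\cG);
\]
the first map is an isomorphism because $\kappa$ is an equivalence with $\kappa\la1\ra\cong\{1\}\kappa$, and the second is an isomorphism because it is exactly property~\eqref{eqn:degr} for the extended functor $\nuv$ (valid for arbitrary objects of $\Dmix_{(\Bv)}(\cBv,\E)$). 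A short diagram chase then identifies this composite with the map induced by the isomorphism $\mu\la1\ra\cong\mu$ constructed above.

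Next I would read off the values of $\mu$ on the standard classes by pasting the known values of $\kappa$ to the known behaviour of $\nuv$. For standards and costandards, $\mu(\dmix_w)\cong\nuv(\dv^\mix_{w^{-1}})\cong\Delta_w$ and $\mu(\nmix_w)\cong\nuv(\nv^\mix_{w^{-1}})\cong\nabla_w$ by the $\check{}$-analogue of Lemma~\ref{lem:nu-standard}. For tilting objects, $\mu(\cT^\mix_w)\cong\nuv(\cEv^\mix_{w^{-1}})\cong\cT_w$: here $\cEv^\mix_{w^{-1}}$ is the parity complex $\cEv_{w^{-1}}$ placed in cohomological degree $0$, and by construction the extended $\nuv$ sends it to $\nuv(\cEv_{w^{-1}})\cong\cT_w$ under $\Kb\Tilt_{(B)}(\cB,\E)\simto\Db_{(B)}(\cB,\E)$. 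For parity complexes, $\mu(\cE^\mix_w)\cong\nuv(\cTv^\mix_{w^{-1}})\cong\cE_w$ by the $\check{}$-analogue of Proposition~\ref{prop:nu-tilt-parity}, since $\cTv^\mix_{w^{-1}}$ is tilting.

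Then I would prove t-exactness. Since $\mu$ is triangulated and $\mu\la1\ra\cong\mu$, we get $\mu(\dmix_w\la n\ra[m])\cong\Delta_w[m]$ and $\mu(\nmix_w\la n\ra[m])\cong\nabla_w[m]$ for all $n,m$, and, when $\E=\O$, applying $\mu$ to the triangle defining ${}'\nmix_w$ gives $\mu({}'\nmix_w\la n\ra[m])\cong{}'\nabla_w[m]$. By Proposition~\ref{prop:t-structure}, $\p\Dmix_{(B)}(\cB,\E)^{\le0}$ is generated under extensions by the $\dmix_w\la n\ra[m]$ with $m\ge0$, all of which $\mu$ carries into $\p\Db_{(B)}(\cB,\E)^{\le0}$; as $\mu$ is triangulated it preserves extensions, so $\mu$ is right t-exact, and the symmetric argument with $\nmix_w$ (and, for $\E=\O$, ${}'\nmix_w$) shows it is left t-exact, hence t-exact. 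Finally, under assumption~\textbf{(A2)} we have $\IC^\mix_w=\im(\dmix_w\to\nmix_w)$ for the canonical adjunction map $\dmix_w\to\nmix_w$, which generates $\Hom(\dmix_w,\nmix_w)\cong\E$ by Lemma~\ref{lem:vanishing-d-m}; the isomorphism~\eqref{eqn:mu-degr} identifies this group with $\Hom(\Delta_w,\nabla_w)\cong\E$, so $\mu$ sends this map to a unit multiple of the canonical map $\Delta_w\to\nabla_w$. Since the t-exact functor $\mu$ restricts to an exact functor $\Perv^\mix_{(B)}(\cB,\E)\to\Perv_{(B)}(\cB,\E)$, it preserves images, whence $\mu(\IC^\mix_w)=\im(\Delta_w\to\nabla_w)=\IC_w$.

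I expect the only genuinely delicate point to be t-exactness: one must make sure that $\mu$ intertwines the mixed and ordinary standard (resp.\ costandard, and for $\E=\O$ the ${}'\nabla$) families \emph{exactly}, so that the ``generated under extensions'' characterization of Proposition~\ref{prop:t-structure} and its well-known non-mixed counterpart transfer along $\mu$. Everything else reduces to formal manipulation of the equivalence $\kappa$ and the degrading functor $\nuv$, so the real work has already been done in Theorem~\ref{thm:self-duality} and in~\cite{ar}.
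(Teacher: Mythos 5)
Your argument is essentially the paper's, with one important caveat. The degrading isomorphism \eqref{eqn:mu-degr} via factoring through $\kappa$ and then invoking~\eqref{eqn:degr} for the extended $\nuv$, the values on (co)standards via the $\Gv$-analogue of Lemma~\ref{lem:nu-standard}, the value on tiltings via $\nuv(\cEv^\mix_{w^{-1}}) \cong \cT_w$, and the deduction of t-exactness from Proposition~\ref{prop:t-structure} (including the ${}'\nabla$ objects when $\E=\O$) followed by $\mu(\IC^\mix_w)\cong\IC_w$ as the image of the canonical map $\Delta_w \to \nabla_w$ --- all of this is exactly the intended argument, and your write-up of the t-exactness and $\IC$ steps is a correct fleshing-out of what the paper states tersely.

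The caveat concerns $\mu(\cE^\mix_w) \cong \cE_w$. You obtain it by reading off $\kappa(\cE^\mix_w) \cong \cTv^\mix_{w^{-1}}$ from the statement of Theorem~\ref{thm:self-duality} and then applying Proposition~\ref{prop:nu-tilt-parity} for $\Gv$. This is logically fine if Theorem~\ref{thm:self-duality} is granted in full as a black box, but be aware that the paper proves the theorem and the proposition \emph{simultaneously}, and in that proof the isomorphism $\kappa(\cE^\mix_w) \cong \cTv^\mix_{w^{-1}}$ is the very last step, deduced \emph{from} $\mu(\cE^\mix_w) \cong \cE_w$ together with \eqref{eqn:mu-degr} and the locality of $\End(\cE_w)$. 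So your shortcut cannot be inserted into the paper's development without circularity. The paper instead proves $\mu(\cE^\mix_w) \cong \cE_w$ directly: one uses \eqref{eqn:mu-degr} to compute $\Hom(\mu(\cE^\mix_w), \nabla_v[k])$ and $\Hom(\Delta_v[k], \mu(\cE^\mix_w))$ as in the proof of Proposition~\ref{prop:nu-tilt-parity}, concluding (via Remark~\ref{rmk:restriction-X_s}) that $\mu(\cE^\mix_w)$ is a parity complex supported on $\overline{\cB_w}$ with the correct restriction to $\cB_w$, and then uses $\End(\mu(\cE^\mix_w)) \cong \End(\cE^\mix_w)$ to get indecomposability, hence $\mu(\cE^\mix_w) \cong \cE_w$. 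If you want your proof to stand independently of the full strength of Theorem~\ref{thm:self-duality} (i.e.\ using only the parts of $\kappa$ established before the parity/tilting exchange), you should substitute this direct characterization for your appeal to $\kappa(\cE^\mix_w) \cong \cTv^\mix_{w^{-1}}$.
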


Similarly, one can consider the functor $\muv:= \nu \circ \kappa^{-1} : D^\mix_{(\Bv)}(\cBv,\E) \to \Db_{(\Bv)}(\cBv,\E)$. This functor satisfies properties similar to those stated in Proposition~\ref{prop:degr}.
A summary of the functors and their behavior on various objects is shown in Figure~\ref{fig:functors}.

\begin{proof}[Proofs of Theorem~{\rm \ref{thm:self-duality}} and Proposition~{\rm \ref{prop:degr}}]
We define $\kappa$ to be the composition of the following equivalences:
\[
\Dmix_{(B)}(\cB,\E) \xrightarrow[\sim]{{\rm Lem.~\ref{lem:perv-dereq}}} \Kb \Tilt^\mix_{(B)}(\cB,\E)
\xrightarrow[\sim]{{\rm Prop.~\ref{prop:nu-tilt-parity}}} \Kb \Parity_{(\Bv)}(\cBv,\E) = \Dmix_{(\Bv)}(\cBv,\E).
\]
The claims about the interaction of $\kappa$ with $[1]$, $\la 1\ra$, and $\{1\}$ are clear.

The fact that $\kappa(\cT^\mix_w) \cong \cEv^\mix_{w^{-1}}$ is immediate from Proposition~\ref{prop:nu-tilt-parity}. Then the determination of $\kappa(\dmix_w)$ and $\kappa(\nmix_w)$ can be carried out using the same arguments as in the proof of Lemma~\ref{lem:nu-standard}.

Before studying $\kappa(\cE^\mix_w)$, we turn our attention to $\mu$. The isomorphism~\eqref{eqn:mu-degr} and the determination of $\mu(\dmix_w)$, $\mu(\nmix_w)$, and $\mu(\cT^\mix_w)$ all follow from what we already know about $\kappa$ and $\nuv$, using~\eqref{eqn:degr} and Lemma~\ref{lem:nu-standard} (for the group $\Gv$). Combining these facts with the description of the perverse t-structure in Proposition~\ref{prop:t-structure}, we deduce that $\mu$ is t-exact, and then that $\mu(\IC^\mix_w) \cong \IC_w$.

Next, one can use~\eqref{eqn:mu-degr} to study $\Hom(\mu(\cE^\mix_w),\nabla_v[k])$ and $\Hom(\Delta_v[k], \mu(\cE^\mix_w))$ as in the proof of Proposition~\ref{prop:nu-tilt-parity}, and deduce (using Remark~\ref{rmk:restriction-X_s}) that $\mu(\cE^\mix_w)$ is a parity complex supported on $\overline{\cB_w}$, and that its restriction to $\cB_w$ is $\uuline{\E}{}_{\cB_w}$.  The isomorphism~\eqref{eqn:mu-degr} also implies that $\End(\mu(\cE^\mix_w)) \cong \End(\cE^\mix_w)$, so $\mu(\cE^\mix_w)$ is indecomposable.  We conclude that $\mu(\cE^\mix_w) \cong \cE_w$.  The proof of Proposition~\ref{prop:degr} is now complete.

It remains to show that $\kappa(\cE^\mix_w) \cong \cTv^\mix_{w^{-1}}$.  By Proposition \ref{prop:nu-tilt-parity} applied to $\Gv$, we have $\nuv(\cTv^\mix_{w^{-1}}) \cong \cE_w$, i.e.~$\mu(\kappa^{-1} \cTv^\mix_{w^{-1}}) \cong \cE_w$. Fix such an isomorphism and an isomorphism $\mu(\cE_w^\mix) \cong \cE_w$, and consider the following instance of~\eqref{eqn:mu-degr}:
\[
\bigoplus_{n \in \Z} \Hom(\kappa^{-1} \cTv^\mix_{w^{-1}}, \cE_w^\mix \la n \ra) \simto \End(\cE_w).
\]
Since $\cE_w$ is indecomposable, the ring $\End(\cE_w)$ is local. Lifting the identity morphism through this isomorphism, we obtain a family of morphisms $\kappa^{-1} \cTv^\mix_{w^{-1}} \to \cE_w^\mix \la n \ra$ such that the sum of their images under $\mu$ is invertible. By locality, one  of these morphisms must have this property, i.e.~there exist $n \in \Z$ and $f : \kappa^{-1} \cTv^\mix_{w^{-1}} \to \cE_w^\mix \la n \ra$ whose image under $\mu$ is an isomorphism. As $\mu$ is triangulated and kills no object (by~\eqref{eqn:mu-degr}), this implies that $f$ itself is an isomorphism.  That is, $\kappa^{-1}(\cTv^\mix_{w^{-1}}) \cong \cE^\mix_w\la n\ra$.  Lastly, using the behavior of $\kappa$ on standard and costandard objects, it is easy to check that $n$ must be $0$.
\end{proof}

\begin{figure}
\[
\xymatrix@R=0.1cm@C=1cm{
& \Dmix_{(B)}(\cB,\E) \ar[ld]_-{\mu} \ar[r]^-{\kappa}_{\sim} \ar@/_2ex/[drr]_(.6){\nu} & \Dmix_{(\Bv)}(\cBv,\E) \ar[rd]^-{\muv} \ar@/^2ex/[dll]^(.6){\nuv}|(.26)\hole & \\
\Db_{(B)}(\cB,\E) & & & \Db_{(\Bv)}(\cBv,\E). \\
\Delta_w & \dmix_w \ar@{|->}[l] \ar@{|->}[r] & \dv^\mix_{w^{-1}} \ar@{|->}[r] & \dv_{w^{-1}} \\
\nabla_w & \nmix_w \ar@{|->}[l] \ar@{|->}[r] & \nv^\mix_{w^{-1}} \ar@{|->}[r] & \nv_{w^{-1}} \\
\cE_w & \cE_w^\mix \ar@{|->}[l] \ar@{|->}[r] & \cTv^\mix_{w^{-1}} \ar@{|->}[r] & \cTv_{w^{-1}} \\
\cT_w & \cT^\mix_w \ar@{|->}[l] \ar@{|->}[r] & \cEv_{w^{-1}}^\mix \ar@{|->}[r] & \cEv_{w^{-1}}.
}
\]
\caption{Functors and objects in the self-duality theorem}\label{fig:functors}
\end{figure}

As an immediate application of Theorem~\ref{thm:self-duality} and Proposition~\ref{prop:degr} we obtain the following criterion for a parity sheaf $\cE_w$ to be perverse. We will consider this question in a more systematic way in~\cite{ar3}.

\begin{cor}\label{cor:parity-tilting}
The parity sheaf $\cE_w \in \Db_{(B)}(\cB,\E)$ is perverse if and only if we have $(\cTv^\mix_{w^{-1}} : \nv^\mix_u\la n\ra) = 0$ for all $n > 0$.
\end{cor}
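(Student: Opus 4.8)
The plan is to move the problem into the mixed world using the functors of Theorem~\ref{thm:self-duality} and Proposition~\ref{prop:degr}, where perversity can be detected on stalks and costalks, and these are governed by the $\Delta$- and $\nabla$-filtrations of the tilting object $\cTv^\mix_{w^{-1}}$.

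First I would note that the functor $\mu$ of Proposition~\ref{prop:degr} detects perversity: it is t-exact, and the $n=0$ summand of~\eqref{eqn:mu-degr} shows that $\mu$ is faithful, hence kills no nonzero object; so if $\mu(\cF)$ is perverse then each $\mu(\pH^i\cF)\cong\pH^i(\mu\cF)$ with $i\neq0$ vanishes, forcing $\cF$ perverse. Since $\mu(\cE^\mix_w)\cong\cE_w$, this reduces the statement to the assertion that $\cE^\mix_w$ is perverse in $\Dmix_{(B)}(\cB,\E)$ if and only if $(\cTv^\mix_{w^{-1}}:\nv^\mix_u\la n\ra)=0$ for all $n>0$. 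By Theorem~\ref{thm:self-duality}, $\cE^\mix_w\cong\kappa^{-1}(\cTv^\mix_{w^{-1}})$, and $\kappa^{-1}$ is a triangulated equivalence carrying $\nv^\mix_u\la n\ra$ to $\nmix_{u^{-1}}\{n\}=\nmix_{u^{-1}}\la -n\ra[n]$ and $\dv^\mix_u\la n\ra$ to $\dmix_{u^{-1}}\la -n\ra[n]$.

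The heart of the argument will be the computation of $i_v^!\cE^\mix_w$ and $i_v^*\cE^\mix_w$ for each $v\in W$. Fixing a $\nabla$-filtration of the tilting object $\cTv^\mix_{w^{-1}}$ (which exists by Proposition~\ref{prop:tilt-class} when $\E=\K,\F$ and by Proposition~\ref{prop:tilting-O} when $\E=\O$), applying the triangulated functor $i_v^!\circ\kappa^{-1}$, and using that $i_v^!\nmix_{u^{-1}}=0$ for $u^{-1}\neq v$ (distinct strata are disjoint) while $i_v^!\nmix_{v^{-1}}\cong\uuE_{\cB_v}$, one exhibits $i_v^!\cE^\mix_w$ as an iterated extension of the objects $\uuE_{\cB_v}\la -n\ra[n]$, the object $\uuE_{\cB_v}\la -n\ra[n]$ occurring $(\cTv^\mix_{w^{-1}}:\nv^\mix_{v^{-1}}\la n\ra)$ times. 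Passing to $\Db(\E\lh\gmod)$ via Lemma~\ref{lem:t-structure}, these become the $\E\la -n\ra[n]$, and each such extension splits because $\Ext^1_{\E\lh\gmod}(\E\la a\ra,\E\la b\ra)=0$ for all $a,b$ — semisimplicity when $\E$ is a field, projectivity of $\E\la a\ra$ when $\E=\O$. Hence $i_v^!\cE^\mix_w\cong\bigoplus_n(\uuE_{\cB_v}\la -n\ra[n])^{\oplus(\cTv^\mix_{w^{-1}}:\nv^\mix_{v^{-1}}\la n\ra)}$, and the same argument with a $\Delta$-filtration and $i_v^*$ (now using $i_v^*\dmix_{u^{-1}}=0$ for $u^{-1}\neq v$) gives $i_v^*\cE^\mix_w\cong\bigoplus_n(\uuE_{\cB_v}\la -n\ra[n])^{\oplus(\cTv^\mix_{w^{-1}}:\dv^\mix_{v^{-1}}\la n\ra)}$. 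Since $\uuE_{\cB_v}\la -n\ra[n]$ sits in perverse degree $-n$ on $\cB_v$, Definition~\ref{defn:t-structure} then yields: $\cE^\mix_w$ is perverse if and only if $(\cTv^\mix_{w^{-1}}:\nv^\mix_u\la n\ra)=0$ for all $n>0$ \emph{and} $(\cTv^\mix_{w^{-1}}:\dv^\mix_u\la n\ra)=0$ for all $n<0$.

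It then remains to see that these two conditions agree, which amounts to the identity $(\cTv^\mix_{w^{-1}}:\dv^\mix_u\la n\ra)=(\cTv^\mix_{w^{-1}}:\nv^\mix_u\la -n\ra)$. For $\E=\K,\F$ this follows by applying the t-exact duality $\D_X$ (Proposition~\ref{prop:t-structure}) to a $\nabla$-filtration, using that $\D_X$ exchanges $\dv^\mix_u\la n\ra$ with $\nv^\mix_u\la -n\ra$, fixes each $\IC^\mix_u$, and fixes $\cTv^\mix_{w^{-1}}$ — the latter because $\D_X$ preserves indecomposable tilting objects and $i_{w^{-1}}^*\D_X(\cTv^\mix_{w^{-1}})\cong\D_{\cBv_{w^{-1}}}(i_{w^{-1}}^!\cTv^\mix_{w^{-1}})\cong\uuE_{\cBv_{w^{-1}}}$. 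For $\E=\O$ one reduces to $\E=\F$, since by Proposition~\ref{prop:tilting-O} and~\eqref{eqn:standard-scalars} the functor $\F(-)$ carries $\Delta$- and $\nabla$-filtrations of $\cTv^\mix_{w^{-1}}(\O)$ to $\Delta$- and $\nabla$-filtrations of $\cTv^\mix_{w^{-1}}(\F)$ with the correspondingly reduced subquotients, so the multiplicities over $\O$ and over $\F$ coincide. Combining this with the previous paragraph proves the corollary. The main obstacle I anticipate is the (co)stalk computation: one must check that no cancellation occurs among the graded pieces of $i_v^!\cE^\mix_w$ and $i_v^*\cE^\mix_w$ — this is exactly where the $\Ext^1$-vanishing in $\E\lh\gmod$, which persists over $\O$ because $\E$ is projective over itself, is needed — and one must ensure these multiplicities behave well under modular reduction.
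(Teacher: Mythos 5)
Your proposal is correct, but it takes a genuinely different route from the paper's. The paper exploits Verdier self-duality of the parity sheaf $\cE_w$ in $\Db_{(B)}(\cB,\E)$, so that perversity reduces to the one-sided condition $\Hom(\Delta_{u^{-1}}\{n\},\cE_w)=0$ for $n>0$; it then transfers this Hom-vanishing through~\eqref{eqn:mu-degr} and $\kappa$ into $\Hom(\dv^\mix_u\la n\ra,\cTv^\mix_{w^{-1}})$, which is exactly the costandard multiplicity, and it disposes of $\E=\O$ at the outset by reducing the whole statement to $\F$ (via $\F(\cE_w(\O))\cong\cE_w(\F)$, freeness of stalks and costalks, and invariance of the tilting multiplicities under $\F(-)$). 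You instead transfer perversity itself to the mixed category via t-exactness and conservativity of $\mu$, compute $i_v^*\cE^\mix_w$ and $i_v^!\cE^\mix_w$ from the $\Delta$- and $\nabla$-filtrations of $\cTv^\mix_{w^{-1}}$ using $\kappa^{-1}$ (with a splitting argument in $\Db(\E\lh\gmod)$), and then reconcile the resulting two-sided criterion by self-duality of the indecomposable tilting object, running the main argument uniformly over $\O$ and reducing only the multiplicity identity to $\F$. Both arguments work; yours is longer but makes the stalks and costalks of $\cE^\mix_w$ explicit, which is of independent interest, while the paper's is shorter precisely because self-duality of $\cE_w$ comes for free. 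Two points you should make explicit: the vanishing $i_v^!\nmix_{u^{-1}}=0$ for $u^{-1}\neq v$ is not mere ``disjointness of strata'' but follows from the recollement identity $i^{(*)}j_{(!)}=0$ together with the support condition (equivalently, by Verdier duality from $i_v^*\dmix_{u^{-1}}=0$); and $\D(\cTv^\mix_{w^{-1}})\cong\cTv^\mix_{w^{-1}}$ requires knowing that the only costandard factor over the open stratum of the support is the untwisted $\nv^\mix_{w^{-1}}$, which follows from the normalization $(T^\gr_s:\dgr_s)=(T^\gr_s:\ngr_s)=1$ of Proposition~\ref{prop:tilt-class} (e.g.\ by comparing classes in the Grothendieck group). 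Finally, you could shorten your own argument by invoking $\D(\cE^\mix_w)\cong\cE^\mix_w$ directly, which lets you check only the costalk condition and skip the tilting self-duality step altogether.
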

\begin{proof}
First we observe
that $\cE_w(\O)$ is perverse if and only if $\cE_w(\F)$ is perverse. (This follows from the facts that $\F(\cE_w(\O)) \cong \cE_w(\F)$---see \cite[Proposition~2.39]{jmw}---and that $\cE_w(\O)$ has free stalks and costalks.) On the other hand, it follows from Proposition~\ref{prop:tilting-O} that the multiplicities $(\cTv^\mix_{w^{-1}}(\E) : \nv^\mix_u(\E)\la n\ra)$ for $\E = \O$ and $\E = \F$ coincide.  In other words, the case $\E = \O$ of the corollary follows from the case $\E = \F$.

We henceforth assume that $\E = \K$ or $\F$.  Because $\cE_w$ is self-Verdier-dual, it is perverse if it just lies in $\p\Db_{(B)}(\cB,\E)^{\ge 0}$, or in other words, if $\Hom(\Delta_{u^{-1}}\{n\}, \cE_w) = 0$ for all $u \in W$ and all $n > 0$.  By Proposition~\ref{prop:degr}, the latter holds if and only if $\Hom(\dmix_{u^{-1}}\{n\},\cE^\mix_w\la m\ra) = 0$ for all $n > 0$ and all $m \in \Z$.  Next, we apply $\kappa$, and translate that condition to $\Hom(\dv^\mix_u\la n\ra, \cTv^\mix_{w^{-1}}\{ m\}) = 0$.  This $\Hom$-group always vanishes for $m \ne 0$.  When $m = 0$, it vanishes if and only if $(\cT^\mix_{w^{-1}} : \nv^\mix_u\la n\ra) = 0$, as desired.
\end{proof}

We conclude this subsection by showing that the various functors constructed above are compatible with the functors $\F({-})$ and $\K({-})$.

\begin{prop}
\label{prop:nu-F}
The functors $\nu_\E$ and $\kappa_\E$ are compatible with extension of scalars in the sense that the following diagram commutes up to isomorphisms of functors:
\[
\xymatrix@C=1.5cm{
\Dmix_{(\Bv)}(\cBv,\K) & \Dmix_{(\cBv)}(\cBv,\O) \ar[r]^-{\F(-)} \ar[l]_-{\K(-)} & \Dmix_{(\cBv)}(\cBv,\F) \\
\Dmix_{(B)}(\cB,\K) \ar[d]_-{\nu_\K} \ar[u]^-{\kappa_\K}_-{\wr} & \Dmix_{(B)}(\cB,\O) \ar[r]^-{\F(-)} \ar[l]_-{\K(-)} \ar[d]_-{\nu_\O} \ar[u]^-{\kappa_\O}_-{\wr}& \Dmix_{(B)}(\cB,\F) \ar[d]^-{\nu_\F} \ar[u]_-{\kappa_\F}^-{\wr} \\
\Db_{(\Bv)}(\cBv,\K) & \Db_{(\cBv)}(\cBv,\O) \ar[r]^-{\F(-)} \ar[l]_-{\K(-)} & \Db_{(\cBv)}(\cBv,\F).
}
\]
\end{prop}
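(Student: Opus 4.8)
The plan is to reduce the statement to a single compatibility assertion about the functor $\nu$ of~\eqref{eqn:functor-nu} at the level of parity sheaves, and then transport it through the formal equivalences that enter the definitions of the extended functor $\nu$ of~\eqref{eqn:functor-nu-extended} and of $\kappa$.

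First I would pin down the three functors $\nu_\O$, $\nu_\F$, $\nu_\K$ so that $\F(-)$ and $\K(-)$ intertwine them on parity sheaves, i.e.\ so that there are isomorphisms $\nu_\F \circ \F(-) \cong \F(-) \circ \nu_\O$ and $\nu_\K \circ \K(-) \cong \K(-) \circ \nu_\O$ of functors $\Parity_{(B)}(\cB,\O) \to \Tilt_{(\Bv)}(\cBv,\F)$, resp.\ $\to \Tilt_{(\Bv)}(\cBv,\K)$. This may be read off from the explicit construction of~\cite{ar}, but it can also be arranged within the present framework, where $\nu$ need only satisfy the properties of~\S\ref{ss:nu}: since $\Hom$-spaces between parity $\O$-sheaves are free and compatible with modular reduction (see the proof of~\cite[Proposition~2.39]{jmw}), the functors $\F(-)$ and $\K(-)$ identify $\Parity_{(B)}(\cB,\F)$ and $\Parity_{(B)}(\cB,\K)$ with the ``extension-of-scalars'' categories $\F \otimes_\O \Parity_{(B)}(\cB,\O)$ and $\K \otimes_\O \Parity_{(B)}(\cB,\O)$, and likewise for $\Tilt_{(\Bv)}(\cBv,-)$ by Proposition~\ref{prop:tilting-O} and its evident analogue for $\K$; being $\O$-linear, the chosen functor $\nu_\O$ then induces functors $\nu_\F$ and $\nu_\K$ carrying the intertwining isomorphisms, and a routine check using freeness of the $\Hom$-spaces shows that $\nu_\F$ and $\nu_\K$ still satisfy all the properties of~\S\ref{ss:nu} (so that every earlier result of Section~\ref{sec:formality}, including the construction of $\kappa$, still applies to them). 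I would carry out this verification first.

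Next I would promote the compatibility to the extended functor $\nu$ of~\eqref{eqn:functor-nu-extended}: applying $\Kb(-)$ to the intertwining isomorphism on $\Parity$ gives the corresponding isomorphism between the induced functors $\Dmix_{(B)}(\cB,\O) = \Kb\Parity_{(B)}(\cB,\O) \to \Kb\Tilt_{(\Bv)}(\cBv,\F)$ (recall that $\F(-)$ on $\Dmix$ is, by definition, induced from $\F(-)$ on $\Parity$), and composing with the equivalence $\Kb\Tilt_{(\Bv)}(\cBv,\E) \cong \Db_{(\Bv)}(\cBv,\E)$ of~\eqref{eqn:old-dereq}, which commutes with $\F(-)$ and $\K(-)$ by~\cite[Lemma~B.6]{ar}, yields the bottom row of the diagram. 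For $\kappa$, recall that it is the composition of the equivalence $\Dmix_{(B)}(\cB,\E) \simto \Kb\Tilt^\mix_{(B)}(\cB,\E)$ of Lemma~\ref{lem:perv-dereq} with $\Kb\bar\nu$, where $\bar\nu : \Tilt^\mix_{(B)}(\cB,\E) \simto \Parity_{(\Bv)}(\cBv,\E)$ is the restriction of the extended $\nu$ (Proposition~\ref{prop:nu-tilt-parity}). The first equivalence commutes with $\F(-)$ and $\K(-)$ by Lemma~\ref{lem:perv-ext-scalars} (note $\F(-)$ and $\K(-)$ send $\Tilt^\mix_{(B)}(\cB,\O)$ into $\Tilt^\mix_{(B)}(\cB,\F)$, resp.\ $\Tilt^\mix_{(B)}(\cB,\K)$, by Proposition~\ref{prop:tilting-O}). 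For the second, since $\F(-)$ on $\Db_{(\Bv)}$ preserves parity complexes and restricts there to $\F(-)$ on $\Parity_{(\Bv)}$, the compatibility of the extended $\nu$ restricts to an isomorphism $\bar\nu_\F \circ \F(-) \cong \F(-) \circ \bar\nu_\O$ of functors $\Tilt^\mix_{(B)}(\cB,\O) \to \Parity_{(\Bv)}(\cBv,\F)$; applying $\Kb(-)$ gives the compatibility of $\Kb\bar\nu$, and hence of $\kappa$. The argument for $\K(-)$ is identical.

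The hard part, I expect, will be the first step: verifying that ``extension of scalars'' really produces well-defined functors $\nu_\F$, $\nu_\K$ retaining all the properties of~\S\ref{ss:nu}, and correctly matching up $\F(-)$ and $\K(-)$ on $\Dmix$ and on $\Tilt^\mix$ with the termwise functors appearing in Lemmas~\ref{lem:perv-dereq} and~\ref{lem:perv-ext-scalars}. Once compatibility is known at the level of parity sheaves, propagating it through the remaining equivalences is a routine diagram chase.
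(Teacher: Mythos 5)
Your proposal is correct and takes essentially the same route as the paper: the parity-level compatibility of $\nu$ with $\F(-)$ and $\K(-)$ is precisely \cite[Theorem~2.1(4)]{ar} (so the step you flag as the hard part is just that citation), the bottom half of the diagram then follows using \cite[Lemma~B.6]{ar}, and the top half by restricting to $\bar\nu$ and applying Lemma~\ref{lem:perv-ext-scalars}, exactly as you describe. The only caveat is that your alternative base-change construction of $\nu_\K$ is not quite immediate as stated, since $\K(\cE_w(\O))$ may decompose and hence $\K(-)$ need not be essentially surjective on parity categories without a Karoubi-envelope step; but that route is unnecessary.
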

\begin{proof}
The compatibility of the functor~\eqref{eqn:functor-nu} with extension of scalars was established in \cite[Theorem~2.1(4)]{ar}. From that, we deduce the commutativity of the bottom half of the above diagram, using~\cite[Lemma~B.6]{ar}.

That portion of the statement gives rise to a similar commutative diagram for the functor $\bar\nu$ considered in Proposition~\ref{prop:nu-tilt-parity}: 
\[
\xymatrix@C=1.5cm{
\Tilt^\mix_{(B)}(\cB,\K) \ar[d]_-{\bar\nu_\K}^-{\wr} & \Tilt^\mix_{(B)}(\cB,\O) \ar[r]^-{\F(-)} \ar[l]_-{\K(-)} \ar[d]_-{\bar\nu_\O}^-{\wr} & \Tilt^\mix_{(B)}(\cB,\F) \ar[d]^-{\bar\nu_\F}_-{\wr} \\
\Parity_{(\Bv)}(\cBv,\K) & \Parity_{(\cBv)}(\cBv,\O) \ar[r]^-{\F(-)} \ar[l]_-{\K(-)} & \Parity_{(\cBv)}(\cBv,\F).
}
\]
We now deduce the commutativity of the upper half of the diagram in the statement of the proposition, using Lemma~\ref{lem:perv-ext-scalars}.
\end{proof}

The preceding proposition immediately implies the following additional result.

\begin{cor}
\label{cor:mu-F}
The functors $\mu_\E$ are compatible with extension of scalars in the sense that the following diagram commutes up to isomorphisms of functors:
\[
\xymatrix@C=1.5cm{
\Dmix_{(B)}(\cB,\K) \ar[d]_-{\mu_\K} & \Dmix_{(B)}(\cB,\O) \ar[r]^-{\F(-)} \ar[l]_-{\K(-)} \ar[d]_-{\mu_\O} & \Dmix_{(B)}(\cB,\F) \ar[d]^-{\mu_\F} \\
\Db_{(B)}(\cB,\K) & \Db_{(B)}(\cB,\O) \ar[r]^-{\F(-)} \ar[l]_-{\K(-)} & \Db_{(B)}(\cB,\F).
}
\]
\end{cor}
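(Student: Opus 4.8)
The plan is to exploit the factorization $\mu = \nuv \circ \kappa$ furnished by Proposition~\ref{prop:degr}, which reduces the assertion to the compatibility of each of the two functors $\kappa$ and $\nuv$ with $\F(-)$ and $\K(-)$ separately. Since a composite of functors commuting (up to natural isomorphism) with $\F(-)$ and $\K(-)$ again commutes with $\F(-)$ and $\K(-)$, it suffices to treat $\kappa$ and $\nuv$ in turn.

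First I would record that $\kappa_\E$ commutes with $\F(-)$ and $\K(-)$: this is precisely the commutativity of the top portion of the diagram in Proposition~\ref{prop:nu-F} (the two squares whose vertical arrows are $\kappa_\K$, $\kappa_\O$, $\kappa_\F$). Next, I would observe that Proposition~\ref{prop:nu-F} was stated and proved for the pair $(G,\Gv)$, but its hypotheses and its proof are symmetric under interchanging $G$ and $\Gv$; applying it in the opposite direction therefore yields, in particular, that the functor $\nuv_\E \colon \Dmix_{(\Bv)}(\cBv,\E) \to \Db_{(B)}(\cB,\E)$ of~\S\ref{ss:self-duality} commutes with $\F(-)$ and $\K(-)$. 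Here one uses, as in the proof of Proposition~\ref{prop:nu-F}, that $\nuv$ (like $\nu$) is taken to be the explicit functor constructed in~\cite{ar}, whose compatibility with extension of scalars is part of~\cite[Theorem~2.1(4)]{ar}, together with~\cite[Lemma~B.6]{ar} and Lemma~\ref{lem:perv-ext-scalars} to pass to the extended functor on $\Dmix$.

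Finally I would assemble the two facts: since $\mu_\E = \nuv_\E \circ \kappa_\E$ and each of $\kappa_\E$ and $\nuv_\E$ intertwines, up to isomorphism of functors, with $\F(-)$ and with $\K(-)$, so does $\mu_\E$; this is exactly the statement that the displayed diagram commutes up to isomorphism of functors. There is no serious obstacle in this argument: it is purely formal once Proposition~\ref{prop:nu-F} is available, and the only point requiring a little care is bookkeeping---keeping straight which instance of Proposition~\ref{prop:nu-F} (the one for $(G,\Gv)$ or the one for $(\Gv,G)$) supplies compatibility of $\kappa$ versus compatibility of $\nuv$, and checking that the chosen $\nuv$ inherits compatibility with extension of scalars from the construction of~\cite{ar}.
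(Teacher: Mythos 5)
Your proposal is correct and follows essentially the paper's own route: the paper gives no separate argument, stating that Corollary~\ref{cor:mu-F} follows immediately from Proposition~\ref{prop:nu-F}, which is exactly the deduction you spell out via $\mu_\E = \nuv_\E \circ \kappa_\E$, using Proposition~\ref{prop:nu-F} once for $(G,\Gv)$ (compatibility of $\kappa_\E$) and once with the roles of $G$ and $\Gv$ exchanged (compatibility of $\nuv_\E$). Your side remark that $\nuv$ should be chosen so that the compatibility from \cite[Theorem~2.1(4)]{ar} applies matches the implicit convention in the paper's proof of Proposition~\ref{prop:nu-F}.
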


\subsection{Formality}

It follows in particular from Proposition~\ref{prop:degr} that the functor $\mu$ restricts to a functor $\mu_{\mathsf{T}} : \Tilt^\mix_{(B)}(\cB,\E) \to \Tilt_{(B)}(\cB,\E)$.
The following technical lemma will be needed below.

\begin{lem}
\label{lem:mu-Tilt}
The following diagram commutes up to an isomorphism of functors:
\[
\xymatrix@C=2cm{
\Kb \Tilt^\mix_{(B)}(\cB,\E) \ar[r]^-{\sim}_{{\rm Lem.~\ref{lem:perv-dereq}}} \ar[d]_-{\Kb \mu_{\mathsf{T}}} & D^\mix_{(B)}(\cB,\E) \ar[d]^-{\mu} \\
\Kb \Tilt_{(B)}(\cB,\E) \ar[r]^-{\sim}_-{\eqref{eqn:old-dereq}} & \Db_{(B)}(\cB,\E).
}
\]
\end{lem}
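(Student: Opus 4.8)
The plan is to unwind the explicit construction of $\mu$ from the proofs of Theorem~\ref{thm:self-duality} and Proposition~\ref{prop:degr}, and then to observe that, once transported across the two horizontal equivalences, $\mu$ restricted to tilting objects literally \emph{is} the functor $\mu_{\mathsf{T}}$. Write $e_1 : \Kb\Tilt^\mix_{(B)}(\cB,\E) \simto \Dmix_{(B)}(\cB,\E)$ for the equivalence of Lemma~\ref{lem:perv-dereq}, with a chosen quasi-inverse $e_1^{-1}$ (the one used to define $\kappa$), and $e_2 : \Kb\Tilt_{(B)}(\cB,\E) \simto \Db_{(B)}(\cB,\E)$ for the equivalence~\eqref{eqn:old-dereq}. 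Recall from the proof of Theorem~\ref{thm:self-duality} that $\kappa = \Kb\bar\nu \circ e_1^{-1}$, where $\bar\nu$ is as in Proposition~\ref{prop:nu-tilt-parity}, and that the extended functor $\nuv : \Dmix_{(\Bv)}(\cBv,\E) \to \Db_{(B)}(\cB,\E)$ (built as in~\eqref{eqn:functor-nu-extended}, with the roles of $G$ and $\Gv$ interchanged) equals $e_2 \circ \Kb\nuv$, the inner $\nuv$ being the functor $\Parity_{(\Bv)}(\cBv,\E) \to \Tilt_{(B)}(\cB,\E)$ fixed in \S\ref{ss:self-duality}. Since $\mu = \nuv \circ \kappa$ and $\Kb$ is strictly functorial, this gives
\[
\mu = e_2 \circ \Kb(\nuv \circ \bar\nu) \circ e_1^{-1},
\]
where $\nuv \circ \bar\nu$ is a functor $\Tilt^\mix_{(B)}(\cB,\E) \to \Tilt_{(B)}(\cB,\E)$.

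The key step will be to identify $\mu_{\mathsf{T}}$ with $\nuv \circ \bar\nu$. For this, note that both $e_1$ and $e_2$ are the \emph{natural} functors of Lemma~\ref{lem:perv-dereq} and~\eqref{eqn:old-dereq}, each being the composite of the obvious functor $\Kb(\text{tiltings}) \to \Db(\text{perverse sheaves})$ with a realization functor; in particular their restrictions to the subcategories of complexes concentrated in degree~$0$ are isomorphic to the canonical inclusions $\Tilt^\mix_{(B)}(\cB,\E) \hookrightarrow \Dmix_{(B)}(\cB,\E)$ and $\Tilt_{(B)}(\cB,\E) \hookrightarrow \Db_{(B)}(\cB,\E)$. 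Hence, for $\cT \in \Tilt^\mix_{(B)}(\cB,\E)$, the object $e_1^{-1}(\cT)$ is isomorphic to the one-term complex concentrated in degree~$0$ with value $\cT$; applying $\Kb(\nuv\circ\bar\nu)$ produces the one-term complex with value $(\nuv\circ\bar\nu)(\cT)$; and $e_2$ carries this to $(\nuv\circ\bar\nu)(\cT)$, viewed in $\Db_{(B)}(\cB,\E)$. This yields a natural isomorphism $\mu(\cT) \cong (\nuv\circ\bar\nu)(\cT)$ for $\cT \in \Tilt^\mix_{(B)}(\cB,\E)$; since $\Tilt_{(B)}(\cB,\E)$ is a full subcategory of $\Db_{(B)}(\cB,\E)$, this is exactly the statement that $\mu_{\mathsf{T}} \cong \nuv \circ \bar\nu$.

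Finally, applying $\Kb(-)$ to the isomorphism $\mu_{\mathsf{T}} \cong \nuv \circ \bar\nu$ gives $\Kb\mu_{\mathsf{T}} \cong \Kb(\nuv\circ\bar\nu)$, whence
\[
\mu \circ e_1 \cong e_2 \circ \Kb(\nuv\circ\bar\nu) \circ e_1^{-1} \circ e_1 \cong e_2 \circ \Kb(\nuv\circ\bar\nu) \cong e_2 \circ \Kb\mu_{\mathsf{T}},
\]
which is precisely the commutativity asserted by the lemma. I expect the only genuinely non-formal point to be the claim that the equivalences of Lemma~\ref{lem:perv-dereq} and~\eqref{eqn:old-dereq} restrict to the obvious inclusions on degree-$0$ complexes; this is immediate from their construction (a one-term complex maps to the object it represents), but it is exactly what allows the argument to go through without appealing to any delicate uniqueness statement for triangulated functors out of a bounded homotopy category.
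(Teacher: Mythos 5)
Your proof is correct, but it follows a genuinely different route from the paper's. The paper proves the lemma by inserting the intermediate categories $\Db\Perv^\mix_{(B)}(\cB,\E)$ and $\Db\Perv_{(B)}(\cB,\E)$ into the square (using the full chains of functors from Lemma~\ref{lem:perv-dereq} and~\eqref{eqn:old-dereq}), letting $\mu_{\mathsf{P}}$ be the exact restriction of $\mu$ to perverse sheaves: the left subsquare (homotopy category of tiltings versus derived category of the hearts) commutes by inspection, while the right subsquare---compatibility of the realization functors with the t-exact functor $\mu$---is handled by citing Be{\u\i}linson's Lemma~A.7.1. You instead exploit the specific construction of $\mu$: unwinding $\mu = \nuv\circ\kappa$ exhibits it literally as $e_2 \circ \Kb(\nuv\circ\bar\nu)\circ e_1^{-1}$, i.e.\ as $\Kb$ of an additive functor $\Tilt^\mix_{(B)}(\cB,\E)\to\Tilt_{(B)}(\cB,\E)$ conjugated by the two horizontal equivalences, and then the only thing to check is that $\mu_{\mathsf{T}}\cong\nuv\circ\bar\nu$, which you get by evaluating on one-term complexes using the fact that $e_1$ and $e_2$ restrict, on complexes concentrated in degree $0$, to the canonical inclusions of the tilting subcategories (a property of the ``natural'' functors in Lemma~\ref{lem:perv-dereq} and~\eqref{eqn:old-dereq}, since the realization functor is the identity on the heart). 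Your argument therefore avoids both the t-exactness of $\mu$ (Proposition~\ref{prop:degr}) and Be{\u\i}linson's compatibility lemma, at the price of being tailored to this particular $\mu$; the paper's argument is more robust in that it would apply verbatim to any triangulated functor known to be t-exact and to preserve tilting objects, independently of how it was constructed. The one point you rightly isolate---that the horizontal equivalences restrict to the obvious inclusions in degree zero---is exactly the standard property of realization functors, so your ``only non-formal point'' is indeed sound, and your handling of the quasi-inverse $e_1^{-1}$ and of naturality is careful enough for the claimed isomorphism of functors.
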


\begin{proof}
Let us complete our diagram with the other equivalences in Lemma~\ref{lem:perv-dereq} and \eqref{eqn:old-dereq}:
\[
\xymatrix{
\Kb \Tilt^\mix_{(B)}(\cB,\E) \ar[r]^-{\sim} \ar[d]_-{\Kb \mu_{\mathsf{T}}} & \Db \Perv^\mix_{(B)}(\cB,\E) \ar[r]^-{\sim} \ar[d]_-{\Db \mu_{\mathsf{P}}} & D^\mix_{(B)}(\cB,\E) \ar[d]^-{\mu} \\
\Kb \Tilt_{(B)}(\cB,\E) \ar[r]^-{\sim} & \Db \Perv_{(B)}(\cB,\E) \ar[r]^-{\sim} & \Db_{(B)}(\cB,\E).
}
\]
(Here, $\mu_{\mathsf{P}} :  \Perv^\mix_{(B)}(\cB,\E) \to  \Perv_{(B)}(\cB,\E)$ is the restriction of $\mu$, an exact functor of abelian categories, and $\Db \mu_{\mathsf{P}}$ is the induced functor between bounded derived categories.) The left square in this diagram is easily seen to be commutative. The right square also commutes by~\cite[Lemma~A.7.1]{beilinson}. The claim follows.
\end{proof}

Let $\cE^\mix := \bigoplus_{w \in W} \cE^\mix_w$.  
Let us choose a bounded complex $\cF^\bullet$ of objects of $\Tilt^\mix_{(B)}(\cB, \E)$ whose image under the composition 
\[
C^{\mathrm{b}} \Tilt^\mix_{(B)}(\cB, \E) \to \Kb \Tilt^\mix_{(B)}(\cB, \E) \xrightarrow[{\rm Lem.~\ref{lem:perv-dereq}}]{\sim} D^\mix_{(B)}(\cB,\E)
\]
is $\cE^\mix$.  Consider the differential bigraded algebra (or dgg-algebra) $\underline{E}^{\bullet,\bullet}$ given by
\[
\underline{E}^{i,j} = \uHom^i(\cF^\bullet, \cF^\bullet\la -j\ra),
\]
where $\uHom^i(A^\bullet,B^\bullet) = \prod_{q-p = i} \Hom(A^p,B^q)$.  This $\Z^2$-graded $\E$-algebra is endowed with a differential (of bidegree $(1,0)$) induced by that of $\cF^\bullet$.

Next, let $\cE := \bigoplus_{w \in W} \cE_w$, and let $\cG^\bullet \in C^{\mathrm{b}} \Tilt_{(B)}(\cB,\E)$ be the image of $\cF^\bullet$ under the functor induced by $\mu_{\mathsf{T}}$. 
By Lemma~\ref{lem:mu-Tilt}, the image of this complex under the composition
\[
C^{\mathrm{b}} \Tilt_{(B)}(\cB, \E) \to \Kb \Tilt_{(B)}(\cB, \E) \xrightarrow[\eqref{eqn:old-dereq}]{\sim} \Db_{(B)}(\cB,\E)
\]
is isomorphic to $\mu(\cE^\mix) \cong \cE$.
Form the dg-algebra $E^\bullet$ given by
\[
E^i = \uHom^i(\cG^\bullet, \cG^\bullet).
\]
By Proposition~\ref{prop:degr}, $E^\bullet$ is just the dg-algebra obtained by forgetting the second grading on $\underline{E}^{\bullet,\bullet}$.  Next, form the cohomology rings
\[
\underline{\mathbf{E}} := \mathsf{H}^\bullet(\underline{E}^{\bullet,\bullet})
\qquad\text{and}\qquad
\mathbf{E} := \mathsf{H}^\bullet(E^\bullet).
\]
Again, $\underline{\mathbf{E}}$ is a bigraded ring, and $\mathbf{E}$ is obtained from it by forgetting the second grading.  It follows from the equivalence~\eqref{eqn:old-dereq} that
\[
\mathbf{E} \ \cong \ \bigoplus_{k \in \Z} \Hom_{\Db_{(B)}(\cB,\E)}(\cE, \cE\{k\})
\]
as a graded algebra.

\begin{lem}\label{lem:diagonal-cohomology}
We have $\underline{\mathbf{E}}^{i,j} = 0$ unless $i = j$.  Moreover, the ring $\mathbf{E}$ has finite global dimension.
\end{lem}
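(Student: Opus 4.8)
The plan is to prove the two assertions in turn; both follow from the machinery already in place together with standard facts, and neither needs new geometric input. For the vanishing of $\underline{\mathbf{E}}^{i,j}$ off the diagonal, recall that for a bounded complex over an additive category, $\mathsf{H}^i$ of the Hom-complex $\uHom^\bullet$ computes morphisms shifted by $i$ in the bounded homotopy category. Thus
\[
\underline{\mathbf{E}}^{i,j} = \mathsf{H}^i\bigl(\uHom^\bullet(\cF^\bullet,\cF^\bullet\la -j\ra)\bigr) \cong \Hom_{\Kb\Tilt^\mix_{(B)}(\cB,\E)}(\cF^\bullet,\cF^\bullet\la -j\ra[i]),
\]
and transporting this along the equivalence of Lemma~\ref{lem:perv-dereq} — which sends $\cF^\bullet$ to $\cE^\mix$ and commutes with $[1]$ and $\la 1\ra$ — I would rewrite it as $\Hom_{\Dmix_{(B)}(\cB,\E)}(\cE^\mix,\cE^\mix\la -j\ra[i])$. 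Since $\la -j\ra = \{j\}[-j]$, this is $\Hom_{\Kb\Parity_{(B)}(\cB,\E)}(\cE^\mix,\cE^\mix\{j\}[i-j])$, where $\cE^\mix$ and $\cE^\mix\{j\}$ both lie in $\Parity_{(B)}(\cB,\E)$, viewed as complexes concentrated in homological degree $0$. Morphisms in a bounded homotopy category between two objects of the underlying additive category, shifted against one another, vanish unless the shift is $0$; hence $\underline{\mathbf{E}}^{i,j} = 0$ for $i\neq j$, as desired.

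For the global dimension of $\mathbf{E}$, I would begin from the isomorphism of graded rings $\mathbf{E} \cong \bigoplus_{k\in\Z}\Hom_{\Db_{(B)}(\cB,\E)}(\cE,\cE\{k\})$ recorded before the lemma. Since $\Parity_{(B)}(\cB,\E)$ is a full subcategory of $\Db_{(B)}(\cB,\E)$, the $k$-th summand is $\Hom_{\Parity_{(B)}(\cB,\E)}(\cE,\cE\{k\})$, so the isomorphism~\eqref{eqn:degr} for the functor $\nu$ (using $\nu\circ\{1\}\cong\nu$) upgrades to a ring isomorphism $\mathbf{E} \simto \End_{\Perv_{(\Bv)}(\cBv,\E)}(\nu\cE)$, where $\nu\cE \cong \bigoplus_{w\in W}\cTv_{w^{-1}}$ contains each indecomposable tilting object of $\Perv_{(\Bv)}(\cBv,\E)$ exactly once, i.e.\ is a characteristic tilting object. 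When $\E = \K$ or $\F$, the category $\Perv_{(\Bv)}(\cBv,\E)$ is quasihereditary (see~\cite{ar}), so the opposite ring of $\End(\nu\cE)$ is its Ringel dual, which is again quasihereditary and hence of finite global dimension (see Appendix~\ref{sec:homological}); therefore $\mathbf{E}$, and a fortiori its category of graded modules, has finite global dimension. When $\E = \O$, the algebra $\mathbf{E}(\O) \cong \End(\nu\cE(\O))$ is finitely generated and $\O$-free by Proposition~\ref{prop:tilting-O}, and $\mathbf{E}(\O)\otimes_\O\F \cong \mathbf{E}(\F)$ has finite global dimension by the previous case; a module-finite, $\O$-free algebra whose reduction modulo $\varpi$ has finite global dimension has finite global dimension itself (at most one more than that of the reduction, by the usual change-of-rings argument along $0\to\mathbf{E}(\O)\xrightarrow{\varpi}\mathbf{E}(\O)\to\mathbf{E}(\F)\to 0$), which completes the proof.

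The homotopy-category bookkeeping in the first part and the compatibility of~\eqref{eqn:degr} with composition in the second are routine. The point requiring a little attention is the identification, in the second part, of (the ungraded ring underlying) $\mathbf{E}$ with the endomorphism ring of a \emph{full} characteristic tilting object — so that the theory of quasihereditary categories and Ringel duality applies verbatim — together with the mild reduction of the $\O$-case to the field case; I do not foresee any real obstacle beyond this.
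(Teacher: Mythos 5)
Your argument is correct, and the first half (the vanishing of $\underline{\mathbf{E}}^{i,j}$ for $i\neq j$) is exactly the paper's argument: transport along Lemma~\ref{lem:perv-dereq} to get $\Hom_{\Dmix_{(B)}(\cB,\E)}(\cE^\mix,\cE^\mix\la -j\ra[i])$, which vanishes off the diagonal because $\cE^\mix\{j\}$ is a degree-zero object of $\Kb\Parity_{(B)}(\cB,\E)$. For the finite global dimension the two proofs diverge at one step. Both use $\nu$ and~\eqref{eqn:degr} to identify $\mathbf{E}$ (for $\E=\K$ or $\F$) with $\End(\cTv)$, $\cTv=\bigoplus_w\cTv_w$ the full tilting object; but where you invoke abstract Ringel duality (the endomorphism algebra of a characteristic tilting object of a quasihereditary category is again quasihereditary, hence of finite global dimension), the paper instead applies the geometric Radon transform of \cite[\S 2.3]{ar}, which sends tilting objects to projective objects, reducing to the finiteness of the global dimension of $\End(\cPv)$, which is \cite[Corollary~3.2.2]{bgs}. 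The two routes are essentially equivalent in strength: yours needs Ringel's theorem as an external input (note that Appendix~\ref{sec:homological}, which you cite for it, does not actually contain this statement -- the correct reference is \cite{ringel}), while the paper's route stays within results already quoted in the text, at the cost of using a geometric equivalence special to flag varieties. Your treatment of the $\E=\O$ case (module-finiteness and $\O$-freeness of $\mathbf{E}(\O)$, base change $\mathbf{E}(\O)\otimes_\O\F\cong\mathbf{E}(\F)$, and the change-of-rings bound on global dimension) is precisely the content of the argument the paper outsources to \cite[Lemma~5.5.3]{rsw}, so that part matches as well.
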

\begin{proof}
It follows from Lemma~\ref{lem:perv-dereq} that $\underline{\mathbf{E}}^{i,j} \cong \Hom_{\Dmix_{(B)}(\cB,\E)}(\cE^\mix, \cE^\mix\la -j\ra[i])$, and the latter clearly vanishes unless $i = j$.  For the second assertion, we first observe that the case $\E=\O$ follows from the case $\E=\F$, by the arguments in \cite[Lemma~5.5.3]{rsw}. So we assume that $\E=\K$ or $\F$. We have to prove that the ring
\[
\bigoplus_{k \in \Z} \Hom_{\Db_{(B)}(\cB,\E)}(\cE, \cE\{k\})
\]
has finite global dimension. Using the functor $\nu$ of~\eqref{eqn:functor-nu}, this is equivalent to proving that the ring $\End(\cTv)$ has finite global dimension, where $\cTv:=\bigoplus_{w \in W} \cTv_w$. Finally, using the Radon transform of \cite[\S 2.3]{ar}, it is enough to prove that the ring $\End(\cPv)$ has finite global dimension, where $\cPv:=\bigoplus_{w \in W} \cPv_w$.
This follows simply from the fact that all quasihereditary categories have finite cohomological dimension~\cite[Corollary~3.2.2]{bgs}.
\end{proof}

Using Lemma~\ref{lem:diagonal-cohomology} we can define
the derived category $\mathsf{dgDerf}\lh\mathbf{E}$ of finitely generated right dg-modules over the dg-algebra $\mathbf{E}$ (endowed with the trivial differential) as in \cite[\S 5.4]{rsw}. The following result is a generalization of~\cite[Theorem 5.5.8]{rsw}.  We continue to assume that $\ell$ is good for $G$.

\begin{thm}
\label{thm:formality}
There exists an equivalence of categories
\[
\Db_{(B)}(\cB,\E) \cong \mathsf{dgDerf}\lh\mathbf{E}.
\]
\end{thm}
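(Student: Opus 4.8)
The plan is to realize $\Db_{(B)}(\cB,\E)$ as a category of perfect dg-modules over $E^\bullet$, to show that $E^\bullet$ is \emph{formal}, and then to replace $E^\bullet$ by its cohomology $\mathbf{E}$. For the first step, I would recall that $\cE = \bigoplus_{w \in W} \cE_w$ is a classical generator of the triangulated category $\Db_{(B)}(\cB,\E)$: applying the recollement triangle $j_! j^* \to \id \to i_* i^* \to$ to $\cE_w$, where $j : \cB_w \hookrightarrow \overline{\cB_w}$ is the open inclusion and $i$ the complementary closed inclusion, one has $j_! j^* \cE_w \cong \Delta_w$ (since $j^* \cE_w \cong \uuE_{\cB_w}$), while $i^* \cE_w$ is a parity complex on a closed union of smaller strata, hence a direct sum of shifts of the corresponding parity sheaves, so $i_* i^* \cE_w$ lies in the triangulated subcategory generated by $\{\cE_v : v < w\}$; an induction on $\ell(w)$ then puts every $\Delta_w$ in $\langle\cE\rangle$, and the standard objects generate $\Db_{(B)}(\cB,\E)$. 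Using the dg-enhancement of $\Db_{(B)}(\cB,\E)$ furnished by bounded complexes of tilting sheaves via \eqref{eqn:old-dereq} — in which $\cG^\bullet$ is a representative of $\cE$ and $E^\bullet = \uHom^\bullet(\cG^\bullet,\cG^\bullet)$ is its endomorphism dg-algebra — the standard recognition theorem for dg-enhancements shows that $\RHom(\cG^\bullet,-)$ induces an equivalence $\Db_{(B)}(\cB,\E) \simto \mathsf{dgDerf}\lh E^\bullet$ with the category of perfect dg-$E^\bullet$-modules (that ``perfect'' may be read as ``finitely generated'' is justified by the finiteness of global dimension of $\mathbf{E}$, Lemma~\ref{lem:diagonal-cohomology}).

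The heart of the argument is the formality of $E^\bullet$, and the point is that $E^\bullet$ comes equipped with an extra internal grading enjoying a purity property. By Proposition~\ref{prop:degr} and the construction preceding Lemma~\ref{lem:diagonal-cohomology}, $E^\bullet$ is obtained from the differential bigraded algebra $\underline{E}^{\bullet,\bullet}$, with $\underline{E}^{i,j} = \uHom^i(\cF^\bullet, \cF^\bullet\la -j\ra)$ and differential of bidegree $(1,0)$, simply by forgetting the grading $j$; thus $E^\bullet$ carries an internal grading preserved by the differential, and by Lemma~\ref{lem:diagonal-cohomology} its cohomology is \emph{pure}: $H^i(E^\bullet)$ is concentrated in internal degree $i$, i.e.\ $\underline{\mathbf{E}}^{i,j} = 0$ for $i \ne j$. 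I would then invoke the classical ``purity implies formality'' mechanism, as in the proof of \cite[Theorem~5.5.8]{rsw}: run Kadeishvili's minimal-model construction inside the category of internally graded complexes to equip $\underline{\mathbf{E}}$ with an $A_\infty$-algebra structure $\{m_n\}_{n \ge 2}$, $A_\infty$-quasi-isomorphic to $\underline{E}^{\bullet,\bullet}$, in which $m_n$ has cohomological degree $2 - n$ and internal degree $0$. For homogeneous $x_1,\dots,x_n \in \underline{\mathbf{E}}$, purity forces the total cohomological and internal degrees of $x_1 \otimes \cdots \otimes x_n$ to coincide, whereas $m_n(x_1 \otimes \cdots \otimes x_n)$ would have cohomological degree larger than its internal degree by $2-n$; hence $m_n = 0$ for $n \ge 3$, and $\underline{E}^{\bullet,\bullet}$ — a fortiori $E^\bullet$ — is formal.

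Since the category of perfect dg-modules over a dg-algebra depends only on its $A_\infty$-quasi-isomorphism type, formality of $E^\bullet$ gives $\mathsf{dgDerf}\lh E^\bullet \cong \mathsf{dgDerf}\lh\mathbf{E}$, where $\mathbf{E}$ is regarded as a dg-algebra with trivial differential (this makes sense by Lemma~\ref{lem:diagonal-cohomology}); composing with the equivalence of the first paragraph yields $\Db_{(B)}(\cB,\E) \cong \mathsf{dgDerf}\lh\mathbf{E}$. The case $\E = \O$ requires no new idea: parity complexes have free $\Hom$-modules over $\O$, so $E^\bullet$ is a complex of free $\O$-modules and $\mathbf{E}$ is $\O$-free, and the degree count in the formality argument is unchanged; alternatively one reduces to $\E = \F$ as in \cite[Lemma~5.5.3]{rsw}. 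I expect the main obstacle to be not any single computation but the bookkeeping needed to make the two inputs fit together: one must be sure that the internal grading on $E^\bullet$ — which it inherits, through the functors $\mu$ and $\nuv$, from the mixed category — is genuinely compatible with the dg-enhancement computing $\RHom(\cE,\cE)$, so that the purity statement of Lemma~\ref{lem:diagonal-cohomology} really applies to the dg-algebra governing $\Db_{(B)}(\cB,\E)$. Once this is in place, the formality step is the standard weight argument, and the generation of $\Db_{(B)}(\cB,\E)$ by $\cE$ is routine.
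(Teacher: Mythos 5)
Your proposal follows essentially the same route as the paper: realize $\Db_{(B)}(\cB,\E)$ through the tilting enhancement \eqref{eqn:old-dereq} as dg-modules over $E^\bullet = \uHom^\bullet(\cG^\bullet,\cG^\bullet)$, deduce formality of $E^\bullet$ from the diagonal-purity statement of Lemma~\ref{lem:diagonal-cohomology} (the paper invokes the argument of \cite[Lemma~5.5.1]{rsw} where you phrase the same degree count via Kadeishvili's minimal model), and then pass to finitely generated dg-modules over $\mathbf{E}$ using the finite global dimension statement, exactly as in \cite[Theorem~5.5.8]{rsw}. The only slip is your assertion that $i^*\cE_w$ is a parity complex (restriction to a \emph{closed} union of strata need not preserve parity, since costalks are not controlled); this is harmless for generation, because by induction the objects $\cE_v\{n\}$ with $v<w$ already generate the full constructible derived category of the closed complement, so $i_*i^*\cE_w$ lies in that subcategory in any case.
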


This result implies that the hypotheses of~\cite[Theorem~1.2.1]{rsw} can be weakened.  Specifically, the ``modular Koszul duality'' relationship between Soergel's modular category $\cO$ and $\Perv_{(B)}(\cB,\F)$, proved in~\cite[Theorem~1.2.1]{rsw} for $\ell > 2\dim \cB + 1$, actually holds as soon as $\ell$ is larger than the Coxeter number for $G$.

\begin{proof}
Consider the functor $\uHom^\bullet(\cG^\bullet, {-})$ from the category of bounded complexes of objects of $\Tilt_{(B)}(\cB,\E)$ to the category of right $E^\bullet$-dg-modules. This functor descends to homotopy categories, and composing with the natural functor to the derived category $\mathsf{dgDer}\lh E^\bullet$ of right $E^\bullet$-dg-modules, we obtain a functor
\[
\Phi : \Db_{(B)}(\cB,\E) \cong \Kb \Tilt_{(B)}(\cB,\E) \to \mathsf{dgDer}\lh E^\bullet.
\]
It follows from Lemma \ref{lem:diagonal-cohomology} and a classical argument (see e.g.~\cite[Lemma 5.5.1]{rsw}) that the dg-algebra $E^\bullet$ is formal; in particular we obtain a natural equivalence of triangulated categories $\mathsf{dgDer}\lh E^\bullet \simto \mathsf{dgDer}\lh \mathbf{E}$.  Composing with $\Phi$ we obtain a natural functor
\[
\Psi : \Db_{(B)}(\cB,\E) \to \mathsf{dgDer}\lh \mathbf{E}.
\]
It is not difficult to check that this functor factors through an equivalence of triangulated categories $\Db_{(B)}(\cB,\E) \simto \mathsf{dgDerf}\lh \mathbf{E}$: see e.g.~the arguments in the proof of \cite[Theorem 5.5.8]{rsw}.
\end{proof}

\appendix
\section{Quasihereditary categories}
\label{sec:homological}

The theory of quasihereditary categories is by now quite well known, but most of the standard references work in the ungraded setting.  Here, we record the definition and a number of useful facts (mostly without proof) in the graded case.  Throughout, $\bk$ will be a field, and $\cA$ will be a finite-length $\bk$-linear abelian category.  

Assume $\cA$ is equipped with an automorphism $\la 1\ra: \cA \to \cA$.  Let $\Irr(\cA)$ be the set of isomorphism classes of irreducible objects of $\cA$, and let $\scS = \Irr(\cA)/\Z$, where $n \in \Z$ acts on $\Irr(\cA)$ by $\la n\ra$.  Assume that $\scS$ is equipped with a partial order $\le$, and that for each $s \in \scS$, we have a fixed representative simple object $\Lgr_s$. Assume also we are given, for any $s \in \scS$, objects $\dgr_s$ and $\ngr_s$, and morphisms $\dgr_s \to \Lgr_s$ and $\Lgr_s \to \ngr_s$. For $\scT \subset \scS$, we denote by $\cA_{\scT}$ the Serre subcategory of $\cA$ generated by the objects $\Lgr_t \la n \ra$ for $t \in \scT$ and $n \in \Z$. We write $\cA_{\leq s}$ for $\cA_{\{t \in \scS \mid t \leq s\}}$, and similarly for $\cA_{<s}$.

\begin{defn}\label{defn:qhered}
The category $\cA$ is said to be \emph{graded quasihereditary} if the following conditions hold:
\begin{enumerate}
\item The set $\scS$ is finite.\label{it:qh-def-fin}
\item For each $s \in \scS$, we have 
\[
\Hom(\Lgr_s,\Lgr_s \la n \ra) = \begin{cases}
\bk & \text{if $n=0$;} \\
0 & \text{otherwise.}
\end{cases}
\]
 \label{it:qh-def-split}
\item For any $\scT \subset \scS$ closed (for the order topology) and such that $s \in \scT$ is maximal, $\dgr_s \to \Lgr_s$ is a projective cover in $\cA_{\scT}$ and $\Lgr_s \to \ngr_s$ is an injective envelope in $\cA_{\scT}$.
\item The kernel of $\dgr_s \to \Lgr_s$ and the cokernel of $\Lgr_s \to \ngr_s$ belong to $\cA_{<s}$.
\label{it:qh-def-ker}
\item We have $\Ext^2(\dgr_s, \ngr_t\la n\ra) = 0$ for all $s, t \in \scS$ and $n \in \Z$.\label{it:qh-def-ext2}
\end{enumerate}
\end{defn}

If $\cA$ satisfies Definition~\ref{defn:qhered},
the objects $\dgr_s\la n\ra$ are called \emph{standard objects}, and the objects $\ngr_s\la n\ra$ are called \emph{costandard objects}.

\begin{defn}\label{defn:tilting}
Let $X$ be an object in a graded quasihereditary category.  We say that $X$ \emph{admits a standard} (resp.~\emph{costandard}) \emph{filtration} if there exists a filtration $F_\bullet X$ such that each $\Gr^F_i X$ is isomorphic to some $\dgr_s\la n\ra$ (resp.~$\ngr_s\la n\ra$).  We say that $X$ is \emph{tilting} if it admits both a standard and a costandard filtration.
\end{defn}

Every direct factor of a tilting object is tilting.
If $X$ admits a standard filtration $F_\bullet X$, it is easy to see that for fixed $s \in \scS$ and $n \in \Z$, the number of $i$ such that $\Gr^F_i X \cong \dgr_s\la n\ra$ is equal to $\dim \Hom(X, \ngr_s\la n\ra)$, and is hence independent of the choice of filtration $F_\bullet X$.  We introduce the notation
\[
(X : \dgr_s\la n\ra) := \dim \Hom(X, \ngr_s\la n\ra)
\]
for objects with a standard filtration.  Similarly, if $Y$ admits a costandard filtration, then the multiplicity of a given $\ngr_s\la n\ra$ in any costandard filtration is equal to
\[
(Y : \ngr_s\la n\ra) := \dim \Hom(\dgr_s\la n\ra, Y).
\]
The following is a graded analogue of~\cite[Theorem~3.2.1 and Corollary~3.2.2]{bgs}.  We omit the proof.

\begin{thm}\label{thm:bgs}
Let $\cA$ be a graded quasihereditary category.  Then $\cA$ has enough projectives, and every projective admits a standard filtration. Moreover, if $P^\gr_s$ denotes a projective cover of $\Lgr_s$, then $(P^\gr_s : \dgr_t\la n\ra) = [\ngr_t\la n\ra : \Lgr_s]$ for all $t \in \scS$.  Lastly, every object in $\cA$ admits a finite projective resolution.
\end{thm}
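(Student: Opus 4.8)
The plan is to follow the proof of \cite[Theorem~3.2.1 and Corollary~3.2.2]{bgs} (and of the analogous statements for ungraded highest weight categories), the internal shift $\la 1\ra$ being carried along harmlessly by extra twists $\la n\ra$ throughout. The one piece of input that is \emph{not} a formal consequence of recollement, and that must be dealt with first, is the pair of vanishing statements
\[
\Hom(\dgr_s,\ngr_t\la n\ra)=\begin{cases}\bk&\text{if $s=t$ and $n=0$,}\\ 0&\text{otherwise,}\end{cases}
\qquad
\Ext^i(\dgr_s,\ngr_t\la n\ra)=0\ \text{ for all }i\geq 1.
\]
The $\Hom$ statement is immediate from the axioms: using that $\dgr_s$ has simple head $\Lgr_s$ and $\ngr_t$ simple socle $\Lgr_t$, together with~\eqref{it:qh-def-split} and~\eqref{it:qh-def-ker}, a comparison of which composition factors can occur in the image of a morphism $\dgr_s\to\ngr_t\la n\ra$ forces it to vanish unless $s=t$ and $n=0$, in which case it is spanned by the composition $\dgr_s\to\Lgr_s\to\ngr_s$. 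For the $\Ext$ part I would first observe, using~\eqref{it:qh-def-ker}, that when $\scT$ is an order ideal in which $s$ is maximal, $\dgr_s$ and every extension of it by an object of $\cA_{\scT}$ lie in $\cA_{\scT}$; hence the condition of Definition~\ref{defn:qhered} that $\dgr_s\to\Lgr_s$ be a projective cover in $\cA_{\scT}$ yields $\Ext^1_{\cA}(\dgr_s,\Lgr_u\la m\ra)=0$ whenever $u\not>s$, and a composition-series dévissage of $\ngr_t\la n\ra$ then gives $\Ext^1(\dgr_s,\ngr_t\la n\ra)=0$ whenever $s\not<t$. The remaining cases of $\Ext^1$, and all of $\Ext^{\geq 2}$, I would obtain by a simultaneous induction on $|\scS|$ and on the poset, using the short exact sequence $0\to K_s\to\dgr_s\to\Lgr_s\to 0$ (and its dual $0\to\Lgr_s\to\ngr_s\to C_s\to 0$), the hypothesis~\eqref{it:qh-def-ext2}, and the auxiliary claim --- proved along the way by the same induction --- that $K_s$ admits a filtration with subquotients $\dgr_u\la m\ra$ ($u<s$), and $C_s$ one with subquotients $\ngr_u\la m\ra$ ($u<s$).

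Granting the vanishing statements, I would produce enough projectives, each with a standard filtration, by the usual recollement induction on $|\scS|$. Pick a maximal $s_0\in\scS$; then $\dgr_{s_0}$ is projective in $\cA$ (since $\Ext^1_{\cA}(\dgr_{s_0},\Lgr_u\la m\ra)=0$ for all $u$, nothing lying above $s_0$), and it is the projective cover of $\Lgr_{s_0}$. The Serre subcategory $\cA':=\cA_{\scS\smallsetminus\{s_0\}}$ is graded quasihereditary on the smaller poset $\scS\smallsetminus\{s_0\}$ (with the same standard and costandard objects), so by induction it has projective covers with standard filtrations; one then obtains the projective cover in $\cA$ of each $\Lgr_t$ ($t\neq s_0$) by forming a ``maximal'' extension of the one in $\cA'$ by copies of $\dgr_{s_0}\la m\ra$, the $\Ext$-vanishing guaranteeing both projectivity in $\cA$ and the existence of a standard filtration of the result. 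Since $\cA$ has finite length and $\scS$ is finite by~\eqref{it:qh-def-fin}, all such filtrations are finite and there are only finitely many indecomposable projectives.

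The reciprocity formula is then a standard computation: for an object $P$ with a standard filtration, applying $\Hom(-,\ngr_t\la n\ra)$ to the filtration and using the $\Hom$ and $\Ext^1$ vanishing above shows $(P:\dgr_t\la n\ra)=\dim\Hom(P,\ngr_t\la n\ra)$; taking $P=P^\gr_s$ and using projectivity gives $\dim\Hom(P^\gr_s,\ngr_t\la n\ra)=[\ngr_t\la n\ra:\Lgr_s]$, which is the asserted identity. Finally, for finite projective resolutions I would induct on the poset: the kernel of $P^\gr_s\twoheadrightarrow\Lgr_s$ has a standard filtration with pieces $\dgr_u\la m\ra$ ($u<s$) by the construction just given, hence lies in $\cA_{<s}$, which is graded quasihereditary on a strictly smaller poset; combining the inductive hypothesis with the $\Ext$-vanishing of the first step to bound $\Ext^{\bullet}_{\cA}(\Lgr_s,-)$ uniformly in homological degree then bounds the projective dimension of every object of $\cA$.

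I expect the main obstacle to lie in the first step --- extracting the vanishing of $\Ext^i(\dgr_s,\ngr_t\la n\ra)$ for \emph{all} $i\geq 1$ from the comparatively weak hypothesis~\eqref{it:qh-def-ext2}, which asserts only the case $i=2$. The delicate point is to organize the nested induction (on $|\scS|$ and on the poset) so that the auxiliary statement ``$K_s=\ker(\dgr_s\to\Lgr_s)$ has a filtration by strictly smaller standard objects'' --- which is in effect a special case of the conclusion that projectives have standard filtrations --- is available precisely where the $\Ext$-dévissage requires it, with no circularity. Once that scaffolding is in place, the remaining steps are faithful, if somewhat tedious, graded transcriptions of the classical arguments.
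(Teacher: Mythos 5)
The paper itself gives no argument for this theorem---it is stated with ``We omit the proof,'' deferring to \cite[Theorem~3.2.1 and Corollary~3.2.2]{bgs}---so your proposal must be measured against that classical argument, and it contains a genuine gap. The auxiliary claim on which your whole first step rests, namely that $K_s=\ker(\dgr_s\to\Lgr_s)$ admits a filtration with subquotients $\dgr_u\la m\ra$, $u<s$ (and dually for $C_s$), is simply false in general. Already for the principal block of category $\cO$ for $\mathfrak{sl}_3$ (in its graded incarnation, which satisfies Definition~\ref{defn:qhered}), the kernel of $\Delta_s\to L_s$ for $s$ a simple reflection has composition factors $L_{st}$, $L_{ts}$, $L_{w_0}$, each with multiplicity one, so a standard filtration would have to use both $\Delta_{st}$ and $\Delta_{ts}$ and hence produce $L_{w_0}$ twice; no such filtration exists (and forgetting the grading shows the graded claim fails too). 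Consequently no induction can prove that claim, and your proposed derivation of $\Ext^i(\dgr_s,\ngr_t\la n\ra)=0$ for all $i\ge 1$ \emph{before} constructing projectives collapses. The correct logical order is the reverse one, which the paper itself signals immediately after the statement: one first constructs the projectives together with their standard filtrations, using only the $\Ext^1$-vanishing supplied by the projective-cover/injective-envelope axioms plus the $\Ext^2$-vanishing axiom of Definition~\ref{defn:qhered}\eqref{it:qh-def-ext2}, and only afterwards deduces~\eqref{eqn:ext-std-costd} \`a la Ringel, by dimension shifting along $0\to K\to P^\gr_s\to\dgr_s\to 0$, where $K$ is filtered by $\dgr_u\la m\ra$ with $u>s$. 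This also corrects your final step: $\ker(P^\gr_s\twoheadrightarrow\Lgr_s)$ does \emph{not} have a standard filtration with pieces $\dgr_u\la m\ra$, $u<s$; the standard pieces of $P^\gr_s$ below the top $\dgr_s$ satisfy $u>s$, while $\ker(\dgr_s\to\Lgr_s)$ merely lies in $\cA_{<s}$, and the finiteness of projective dimensions is proved by treating standard objects (descending induction) and simples (ascending induction) separately.

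A second, related soft spot: even if one grants the full $\Ext$-vanishing, projectivity of your universal extension $0\to\bigoplus_m\dgr_{s_0}\la m\ra^{\oplus r_m}\to P\to Q\to 0$ (with $Q$ the projective cover of $\Lgr_t$ in $\cA_{\scS\smallsetminus\{s_0\}}$) is not a formal consequence of it. For an object $X$ with a standard filtration, $\Ext^{\ge 1}(X,\ngr_t\la n\ra)=0$ does not imply $\Ext^1(X,\Lgr_t\la n\ra)=0$: from $0\to\Lgr_t\to\ngr_t\to C_t\to 0$ one only gets an identification of $\Ext^1(X,\Lgr_t)$ with the cokernel of $\Hom(X,\ngr_t)\to\Hom(X,C_t)$. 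Controlling this---equivalently, controlling $\Ext^2$ of $Q$ across the Serre subcategory $\cA_{\scS\smallsetminus\{s_0\}}$---is precisely where axiom~\eqref{it:qh-def-ext2} does its work in the argument of \cite{bgs}, and it is the step your sketch asserts rather than proves. The reciprocity computation and the overall induction on $|\scS|$ are otherwise the standard ones and are fine.
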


As explained in~\cite[Corollary~3]{ringel}, the fact that projectives admit standard filtrations can be used to show that
\begin{equation}\label{eqn:ext-std-costd}
\Ext^k(\dgr_s, \ngr_t\la n\ra) = 0 \qquad\text{for all $s, t \in \scS$, $n \in \Z$, and $k \ge 1$.}
\end{equation}

The following statement gives the well-known classification of indecomposable tilting objects.  For a proof, see~\cite[Proposition~2]{ringel}.

\begin{prop}\label{prop:tilt-class}
For each $s \in \scS$, there is a unique indecomposable tilting object $T_s^\gr$ contained in $\cA_{\le s}$ and satisfying $(T_s^\gr : \dgr_s) = (T_s^\gr : \ngr_s) = 1$.  Moreover, every indecomposable tilting object is isomorphic to some $T_s^\gr \la n\ra$.
\end{prop}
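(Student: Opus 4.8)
The plan is to run the standard Ringel-type argument (\cite[Proposition~2]{ringel}) in the graded setting; the grading shift $\la 1\ra$ enters only formally, so I suppress it below, writing $\dgr_s$, $\ngr_s$ for the ``$\la 0\ra$'' objects and carrying $\la n\ra$-twists only when needed. Write $\mathcal{F}(\dgr)$ (resp.\ $\mathcal{F}(\ngr)$) for the full subcategory of $\cA$ of objects admitting a standard (resp.\ costandard) filtration. The first task is the relative homological algebra of these two classes. From~\eqref{eqn:ext-std-costd}, by induction along a filtration, one gets $\Ext^k(M,N)=0$ for all $k\ge 1$ whenever $M\in\mathcal{F}(\dgr)$ and $N\in\mathcal{F}(\ngr)$. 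The key technical lemma to prove is the characterization
\[
M\in\mathcal{F}(\dgr)\iff \Ext^1(M,\ngr_t\la n\ra)=0\ \text{ for all }\ t\in\scS,\ n\in\Z,
\]
together with its dual for $\mathcal{F}(\ngr)$: the implication ``$\Rightarrow$'' is immediate from the previous sentence, while ``$\Leftarrow$'' is proved by induction on $\scS$ using the projective covers of standard objects from Theorem~\ref{thm:bgs} and the quotient/truncation functors attached to the closed subsets of $\scS$. Granting this, $\mathcal{F}(\dgr)$ and $\mathcal{F}(\ngr)$ are each closed under extensions, under direct summands, and under kernels of epimorphisms (resp.\ cokernels of monomorphisms) that remain in the class, and an object is tilting precisely when it lies in $\mathcal{F}(\dgr)\cap\mathcal{F}(\ngr)$. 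I also record the elementary consequence of the projective-cover axiom (the third condition of Definition~\ref{defn:qhered}) that for every $u\in\scS$ and $n\in\Z$ the object $\dgr_u\la n\ra$ is projective in $\cA_{\le u}$ and $\ngr_u\la n\ra$ is injective in $\cA_{\le u}$; as $\cA_{\le u}$ is a Serre subcategory this gives $\Ext^1_\cA(\dgr_u\la n\ra,B)=0=\Ext^1_\cA(B,\ngr_u\la n\ra)$ for every $B\in\cA_{\le u}$.

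For existence I would work inside $\cA_{\le s}$ and build a tilting object by a universal-extension procedure. Set $M_0=\dgr_s$; as long as $\Ext^1(\dgr_t\la n\ra,M_i)\ne 0$ for some $t<s$ and some $n$, replace $M_i$ by the universal extension $M_{i+1}$ sitting in $0\to M_i\to M_{i+1}\to N_i\to 0$ with $N_i$ a finite direct sum of standard objects indexed by $t<s$. Each step keeps the object in $\mathcal{F}(\dgr)\cap\cA_{\le s}$, and processing the indices $t$ in a suitable (upward) order makes the process terminate in an object $M$ with $\Ext^1(\dgr_t\la n\ra,M)=0$ for all $t<s$, hence for all $t\le s$ by the last sentence of the previous paragraph. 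The characterization lemma then gives $M\in\mathcal{F}(\ngr)$, so $M$ is tilting. Since only the initial copy of $\dgr_s$ ever appears, $(M:\dgr_s)=1$, and a short computation of $\Hom(\dgr_s,M)$ along the standard filtration (using $\Hom(\dgr_s,\dgr_t\la n\ra)=0$ for $t<s$) gives $(M:\ngr_s)=1$ as well. As $\cA$ is finite-length over a field it is Krull--Schmidt; decomposing $M$ into indecomposable tilting summands, exactly one of them --- call it $T_s^\gr$ --- has $(T_s^\gr:\dgr_s)\ne 0$, and one checks that summand satisfies $(T_s^\gr:\dgr_s)=(T_s^\gr:\ngr_s)=1$ and lies in $\cA_{\le s}$.

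For uniqueness and the classification I would use a reordering lemma: in a standard filtration of an object of $\cA_{\le s}$, a standard subquotient whose index is maximal among those occurring can be pushed to the bottom, each required swap being permitted because the relevant $\dgr_u\la m\ra$ is projective in $\cA_{\le u}$. Hence any tilting $T$ as in the statement, with $(T:\dgr_s)=1$ and $s$ maximal in the index set of $\cA_{\le s}$, fits in a short exact sequence $0\to\dgr_s\to T\to T/\dgr_s\to 0$ with $T/\dgr_s\in\mathcal{F}(\dgr)$, and likewise for a second such object $T'$. Using $\Ext^1(T/\dgr_s,T')=0=\Ext^1(T'/\dgr_s,T)$, I extend the inclusion $\dgr_s\hookrightarrow T'$ along $\dgr_s\hookrightarrow T$ to a morphism $\varphi:T\to T'$, and symmetrically obtain $\psi:T'\to T$ extending $\dgr_s\hookrightarrow T$. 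Then $\psi\varphi$ restricts to the identity on the subobject $\dgr_s\ne 0$, so it is not nilpotent; since $\End(T)$ is local, $\psi\varphi$ is an automorphism and $\varphi$ is an isomorphism. (The same argument with different choices shows $T_s^\gr$ is well defined.) For the final assertion, let $T$ be an arbitrary indecomposable tilting object and let $s$ be maximal among the indices $u$ for which some $\dgr_u\la n\ra$ occurs in a standard filtration of $T$; fix such an $n$. Reordering gives $0\to\dgr_s\la n\ra\to T\to T'\to 0$ with $T'\in\mathcal{F}(\dgr)$, and likewise $0\to\dgr_s\la n\ra\to T_s^\gr\la n\ra\to T''\to 0$ with $T''\in\mathcal{F}(\dgr)$ since $(T_s^\gr\la n\ra:\dgr_s\la n\ra)=1$. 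As before the vanishing of $\Ext^1$ from $\mathcal{F}(\dgr)$ into $\mathcal{F}(\ngr)$ produces $\varphi:T_s^\gr\la n\ra\to T$ and $\psi:T\to T_s^\gr\la n\ra$ both extending the relevant inclusion of $\dgr_s\la n\ra$; then $\psi\varphi$ acts as the identity on the subobject $\dgr_s\la n\ra$, hence is an automorphism of $T_s^\gr\la n\ra$, so $\varphi$ splits and $T_s^\gr\la n\ra$ is a direct summand of $T$. Since $T$ is indecomposable, $T\cong T_s^\gr\la n\ra$.

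The step I expect to be the main obstacle is the characterization lemma of the first paragraph --- equivalently, the closure of $\mathcal{F}(\dgr)$ under direct summands and under kernels of epimorphisms with target in $\mathcal{F}(\dgr)$, and the dual statements. This is the only genuinely homological input and must be proved by induction on the finite poset $\scS$, using the truncation functors on $\cA$ together with the structure of projective covers of standard objects provided by Theorem~\ref{thm:bgs}; compare the proof of~\cite[Proposition~2]{ringel} and~\cite[\S3.2]{bgs}. Once it is available, the existence construction, the reordering lemma, and the uniqueness and classification arguments are routine, although the termination of the universal-extension procedure and the bookkeeping in the reordering still require some care.
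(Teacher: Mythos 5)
Your argument is correct and is essentially the proof the paper points to: the paper gives no proof of Proposition~\ref{prop:tilt-class} beyond citing \cite[Proposition~2]{ringel}, and your universal-extension construction, the $\Ext^1$-vanishing characterization of standard/costandard-filtered objects, and the locality-of-$\End(T)$ uniqueness and classification arguments are exactly Ringel's, transplanted to the graded setting where they go through verbatim (with $\la n\ra$-twists carried along). One small correction: in the universal-extension step the indices $t<s$ should be processed in \emph{decreasing} order, since adjoining quotients of the form $\dgr_t\la m\ra$ can only create new nonzero groups $\Ext^1(\dgr_u\la n\ra,-)$ for $u<t$ (because $\Ext^1(\dgr_u,\dgr_t\la m\ra)\neq 0$ forces $t>u$), whereas the ``upward'' order you suggest could revive $\Ext$-groups already killed, and termination would then be unclear.
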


Finally, we have two useful derived-equivalence results.

\begin{lem}\label{lem:tilt-equiv}
Let $\cA$ be a graded quasihereditary category, and let $\Tilt(\cA) \subset \cA$ be the full additive subcategory consisting of tilting objects.  The natural functor $\Kb\Tilt(\cA) \to \Db(\cA)$ is an equivalence of categories.
\end{lem}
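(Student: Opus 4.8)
The plan is the standard two-step argument: I would first prove that the functor $\Kb\Tilt(\cA) \to \Db(\cA)$ is fully faithful, and then that it is essentially surjective.

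For full faithfulness, the key input is the vanishing $\Ext^k_{\cA}(T,T') = 0$ for all $k \ge 1$ and all $T, T' \in \Tilt(\cA)$. I would prove this by choosing a standard filtration of $T$ and a costandard filtration of $T'$ and running the associated long exact $\Ext$-sequences, which reduces the claim to the vanishing $\Ext^k(\dgr_s, \ngr_t\la n\ra) = 0$ for $k \ge 1$ recorded in~\eqref{eqn:ext-std-costd}. Once this is available, full faithfulness follows from the standard homological lemma about additive subcategories of an abelian category with no higher self-extensions, in the form of~\cite[Lemma~A.7.1]{beilinson}.

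For essential surjectivity, I would let $\mathcal{D} \subseteq \Db(\cA)$ denote the essential image. It is a strictly full triangulated subcategory, hence closed under extensions of objects of $\cA$; it is stable under $\la 1\ra$, since $\la 1\ra$ preserves $\Tilt(\cA)$; and it contains every tilting object. The first step would be to show, by induction on $|\{t \in \scS \mid t \le s\}|$, that $\dgr_s\la n\ra \in \mathcal{D}$ for all $s \in \scS$ and $n \in \Z$. For the inductive step I would invoke the structure of the indecomposable tilting object $T^\gr_s$ (see~\cite[Proposition~2]{ringel} and its proof): there is a short exact sequence $0 \to \dgr_s \to T^\gr_s \to C_s \to 0$ in which $C_s$ admits a standard filtration with subquotients of the form $\dgr_t\la m\ra$, $t < s$. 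By the inductive hypothesis those subquotients lie in $\mathcal{D}$, hence so does the iterated extension $C_s$, and then so does $\dgr_s$, since it fits in a distinguished triangle together with $T^\gr_s$ and $C_s$; applying $\la n\ra$ gives $\dgr_s\la n\ra \in \mathcal{D}$. The base case is trivial, as for minimal $s$ one has $C_s = 0$ and $\dgr_s = T^\gr_s$.

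Finally I would deduce that $\cA \subseteq \mathcal{D}$: by Theorem~\ref{thm:bgs} every projective object of $\cA$ has a standard filtration and therefore lies in $\mathcal{D}$, and since (again by Theorem~\ref{thm:bgs}) every object of $\cA$ admits a finite projective resolution, every object of $\cA$ lies in the triangulated subcategory generated by the projectives, hence in $\mathcal{D}$. As $\cA$ generates $\Db(\cA)$ as a triangulated category, this yields $\mathcal{D} = \Db(\cA)$, completing the proof. I do not expect a serious obstacle: the only steps requiring care are the long-exact-sequence bookkeeping that extracts the vanishing of $\Ext^{>0}$ among tilting objects from~\eqref{eqn:ext-std-costd}, and the structural fact about $T^\gr_s$ imported into the induction, both of which are part of the standard theory of graded quasihereditary categories.
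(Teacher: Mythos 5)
Your argument is correct and coincides with the paper's: the paper likewise deduces $\Ext^k_{\cA}(T,T')=0$ for $k>0$ from~\eqref{eqn:ext-std-costd} and then cites~\cite[Proposition~1.5]{bbm} for exactly the full-faithfulness and generation steps you spell out (the induction producing the standard objects from the indecomposable tiltings, then all of $\cA$). One cosmetic remark: the full-faithfulness input is the standard lemma on a full additive subcategory with vanishing higher $\Ext$'s (as in the BBM reference), which is a more apt citation than Beilinson's Lemma~A.7.1.
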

\begin{proof}
It follows from~\eqref{eqn:ext-std-costd} that if $T$ and $T'$ are tilting objects, then $\Ext^k_\cA(T,T') = 0$ for all $k > 0$. The rest of the argument follows~\cite[Proposition~1.5]{bbm}.
\end{proof}

\begin{lem}\label{lem:qher-dereq}
Let $\cA$ be a graded quasihereditary category.  Suppose that $\cA$ is the heart of a bounded t-structure on a triangulated category $\cT$, and that $\la 1 \ra$ is the restriction of an automorphism of $\cT$ (denoted similarly).  Suppose that the following conditions hold.
\begin{enumerate}
\item $\cT$ is a full subcategory of the bounded derived category of an abelian category, or of the bounded homotopy category of some additive category.
\item In $\cT$, we have $\Hom(\dgr_s, \ngr_t\la n\ra[k]) = 0$ whenever $k > 0$.
\end{enumerate}
Then there is an equivalence of categories $\Db(\cA) \simto \cT$.
\end{lem}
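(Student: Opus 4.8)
The plan is to identify $\cT$ with $\Kb\Tilt(\cA)$ and then invoke Lemma~\ref{lem:tilt-equiv}. First I would record the following vanishing, which is the only place hypothesis~(2) enters: if $T$ has a standard filtration and $T'$ a costandard filtration, then $\Hom_\cT(T, T'[k]) = 0$ for all $k > 0$. Each successive quotient of a standard (resp.\ costandard) filtration sits in a short exact sequence in $\cA$, which is a distinguished triangle in $\cT$ since $\cA$ is the heart of the given $t$-structure; so a dévissage along these filtrations reduces the computation of $\Hom_\cT(T, T'[k])$ to the groups $\Hom_\cT(\dgr_s\la m\ra, \ngr_t\la m'\ra[k])$, which vanish for $k > 0$ by hypothesis~(2). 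In particular $\Hom_\cT(T_1, T_2[k]) = 0$ for all $T_1, T_2 \in \Tilt(\cA)$ and all $k > 0$.

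Next I would construct the comparison functor. Hypothesis~(1) places $\cT$ as a full subcategory of a bounded derived category of an abelian category, or of a bounded homotopy category of an additive category; in either ambient category, convolutions of bounded complexes exist and depend functorially on the complex. Together with the vanishing from the previous paragraph, this is exactly the input needed to run the argument of \cite[Proposition~1.5]{bbm}, producing a fully faithful triangulated functor
\[
F : \Kb\Tilt(\cA) \to \cT
\]
which sends a one-term complex $T$ to the object $T \in \cA \subseteq \cT$. (Concretely, full faithfulness comes down to the observation that for $T_1, T_2 \in \Tilt(\cA)$ one has $\Hom_{\Kb\Tilt(\cA)}(T_1, T_2[k]) = \Hom_\cA(T_1,T_2)$ for $k = 0$ and $0$ otherwise, while $\Hom_\cT(T_1, T_2[k])$ equals $\Hom_\cA(T_1,T_2)$ for $k = 0$ because $\cA$ is the heart, vanishes for $k < 0$ for the same reason, and vanishes for $k > 0$ by the first paragraph; a dévissage then extends this to all of $\Kb\Tilt(\cA)$.)

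It remains to check that $F$ is essentially surjective. Its essential image is a triangulated subcategory of $\cT$ containing $\Tilt(\cA)$, and since the $t$-structure is bounded, $\cA$ generates $\cT$; so it is enough to show that every $M \in \cA$ lies in the essential image of $F$. By Lemma~\ref{lem:tilt-equiv}, $M$ is isomorphic in $\Db(\cA)$ to a bounded complex $T^\bullet$ of tilting objects, necessarily with cohomology (computed termwise in the abelian category $\cA$) equal to $M$ in degree $0$ and $0$ elsewhere. Since $F$ is triangulated and each $T^i$ lies in the heart, the cohomology of $F(T^\bullet)$ for the $t$-structure of $\cT$ is computed by the complex $T^\bullet$ regarded as a complex in $\cA$; hence $F(T^\bullet)$ lies in $\cA$ and is isomorphic to $M$. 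Therefore $F$ is an equivalence, and composing a quasi-inverse of $F$ with the equivalence $\Db(\cA) \simto \Kb\Tilt(\cA)$ of Lemma~\ref{lem:tilt-equiv} yields the desired equivalence $\Db(\cA) \simto \cT$.

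I expect the main obstacle to be the middle step: extracting from hypothesis~(1) a genuinely functorial comparison functor $F$ and proving it fully faithful. The two dévissages are routine, but one has to be careful that the convolution of a bounded complex of tilting objects is well defined up to canonical isomorphism and varies functorially in the complex---which is precisely what the embedding of $\cT$ into an honest derived or homotopy category provides, and is the analogue of the realization-functor input cited as \cite[Lemma~A.7.1]{beilinson} in the proof of Lemma~\ref{lem:mu-Tilt}.
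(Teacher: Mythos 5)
Your proposal is correct and follows essentially the same route as the paper: hypothesis~(1) is used to produce the comparison functor (in the paper, the realization functor $\Db(\cA)\to\cT$ of \cite[\S3.1]{bbd} or \cite[\S2.5]{ar:kdsf}, which is exactly what makes your ``functorial convolutions'' rigorous), hypothesis~(2) feeds the d\'evissage of \cite[Proposition~1.5]{bbm} as in Lemma~\ref{lem:tilt-equiv}, and the conclusion is obtained by comparing with the equivalence $\Kb\Tilt(\cA)\simto\Db(\cA)$. The only (cosmetic) difference is that the paper constructs the functor on all of $\Db(\cA)$ and checks that its restriction to $\Kb\Tilt(\cA)$ is an equivalence, whereas you build the functor on $\Kb\Tilt(\cA)$ directly and then invert.
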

\begin{proof}
The first condition allows us to construct a ``realization functor'' $\Db(\cA) \to \cT$, using either~\cite[\S3.1]{bbd} or~\cite[\S2.5]{ar:kdsf}.  Then, using the second condition, one can repeat the proof of Lemma~\ref{lem:tilt-equiv} to deduce that in the diagram $\Kb\Tilt(\cA) \to \Db(\cA) \to \cT$, the first functor and the composition of the two functors are equivalences of categories. It follows that $\Db(\cA) \to \cT$ is an equivalence as well.
\end{proof}


\end{document}